\newcommand{\abs}[1]{\lvert#1\rvert}
\newcommand{\norm}[1]{\lVert#1\rVert}
\newcommand{\paren}[1]{\left(#1\right)}
\newcommand{\set}[1]{\left\{#1\right\}}
\newcommand{\bigo}{\mathrm{O} }
\newcommand{\matlab}{\textsc{Matlab} }
\newcommand{\ddx}{\frac{d}{dx}}
\newcommand{\ddr}{\frac{d}{dr}}
\newcommand{\cont}{\mathrm{cont.}}
\newcommand{\ind}{{\mathrm{ind}}}
\newcommand{\rad}{{\mathrm{rad}}}
\newcommand{\embd}{{\mathrm{em}}}
\newcommand{\rmax}{r_{\max}}
\newcommand{\xmax}{x_{\max}}
\newcommand{\grad}{\nabla}
\newcommand{\kerg}{\ker_{\mathrm{g}}}
\newcommand{\spn}{\mathrm{span}}
\newcommand{\bx}{\mathbf{x}}
\newcommand{\inner}[2]{\left\langle #1,#2\right\rangle}
\newcommand{\R}{\mathbb{R}}
\newcommand{\calL}{\mathcal{L}}
\newcommand{\calB}{\mathcal{B}}
\newcommand{\calV}{\mathcal{V}}
\newcommand{\calH}{\mathcal{H}}
\newcommand{\calU}{\mathcal{U}}
\newcommand{\by}{\bold{y}}
\newcommand{\bz}{\bold{z}}
\newtheorem{thm}{Theorem}
\newtheorem{prop}{Proposition}[section]
\newtheorem{cor}[prop]{Corollary}
\newtheorem{lem}[prop]{Lemma}
\newtheorem{rem}[prop]{Remark}
\newtheorem{defn}[prop]{Definition}
\newcommand{\RR}{{\mathbb R}}
\def\squarebox#1{\hbox to #1{\hfill\vbox to #1{\vfill}}}
\def\R{\mathbb R}
\def\reals{\mathbb R}
\def\cal{\mathcal}
\def\exp{e^}
\def\reals{{\mathbb R}}
\def\be{\begin{eqnarray*}}
\def\ee{\end{eqnarray*}}
\def\ben{\begin{eqnarray}}
\def\een{\end{eqnarray}}
\numberwithin{equation}{section}
\numberwithin{prop}{section}
\title
[Spectral Analysis for Matrix Hamiltonian Operators]
{Spectral Analysis for Matrix Hamiltonian Operators}
\author[J.L. Marzuola]
{Jeremy L. Marzuola}
\email{jm3058@columbia.edu}
\address{Applied Mathematics Department, Columbia University \\
200 S. W. Mudd, 500 W. 120th St., New York City, NY 10027, USA}
\author[G. Simpson]
{Gideon Simpson}
\email{simpson@math.toronto.edu}
\address{Mathematics Department, University of Toronto \\
Toronto, Ontario, Canada}
\begin{document}

%%%%%% bibliography settings %%%%%%%%%%%%%%%
\bibliographystyle{plain}
  
\begin{abstract}
  In this work, we study the spectral properties of matrix
  Hamiltonians generated by linearizing the nonlinear Schr\"odinger
  equation about soliton solutions.  By a numerically assisted proof,
  we show that there are no embedded eigenvalues for the three
  dimensional cubic equation.  Though we focus on a proof of the $3d$
  cubic problem, this work presents a new algorithm for verifying
  certain spectral properties needed to study soliton stability.

  Source code for verification of our comptuations, and for further
  experimentation, are available at
  \url{http://www.math.toronto.edu/simpson/files/spec_prop_code.tgz}. 
\end{abstract}

\maketitle

\tableofcontents

\section{Introduction}
\label{sec:intro}

The nonlinear Schr\"odinger equation (NLS) in $\R \times \R^d$,
\begin{equation}
  \label{eqn:nls}
  i \psi_t + \Delta \psi + g (|\psi|^2) \psi =  0, \quad \psi (0,\bx) = \psi_0 (\bx), 
\end{equation}
appears in many different contexts.  In applications, it appears as a
leading order approximation in nonlinear optics, many body quantum
systems, and hydrodynamics.  It is also intrinsically interesting as
a canonical example of the competition between nonlinearity and
dispersion.

For appropriate choices of the nonlinearity $g: \R \to \R$, the
equation is known to possess \emph{soliton} solutions, nonlinear bound
states satisfying \eqref{eqn:nls} with the \emph{ansatz}
\begin{equation*}
  \psi(t,\bx) = e^{i \lambda t} R(\bx;\lambda),
\end{equation*}
where $\lambda>0$ is the \emph{soliton} parameter. It is conjectured
that any solution of \eqref{eqn:nls} with appropriate nonlinearity
that does not disperse as $t \rightarrow \infty$ must eventually
converge to a finite sum of stable solitons.  This is referred to as the
``soliton resolution'' conjecture, a notoriously difficult problem to
formulate, see \cite{Tao2008}.

A natural property to investigate is the stability of the solitons.
In \cite{W1,W2,GSS}, a criterion for \emph{orbital} stability is
established.  Briefly, it says that if the derivative of the $L^2$ norm of
$R(\bx; \lambda)$ taken with respect to $\lambda$ is positive, then
the soliton is orbitally stable.  The perturbation 
remains small in a particular norm, $H^1$ in the case of NLS.  If this
 derivative is negative, the soliton is unstable.

Though these results on the orbital stability of solitons are very
powerful, relying on much of the variational structure of the
equations, they have three weaknesses.  The first is that they do not
say if a perturbed soliton reaches an asymptotically constant state;
orbital stability only assures us that the perturbation remains small.  The second
is that this approach provides no information if the
derivative of the $L^2$ norm vanishes.  This is the case of the $L^2$ critical focusing NLS
equation, with $g(s) = s^{d/2}$ and for \emph{saturated}
nonlinearities which possess \emph{minimal mass} solitons.  Finally,
the orbital stability fundamentally depends on the underlying equation
possessing a known variational structure.  Though this is not a valid
criticism for \eqref{eqn:nls}, it is a problem for other equations,
such as those studied in \cite{simpson08as}.

Alternatively, results such as
\cite{BusPer,BusSul,Cuc,Schlag1,KS1,RodSchSof}, and many others, prove
{\it asymptotic} stability of a soliton or a collection of solitons;
the system converges to specific solitons to as $t \to \infty$, and
the rest of the mass disperses.  Asymptotic stability is usually
proven perturbatively.  The leading order behavior of the perturbation
to the soliton is governed by the linearized operator.  First, linear
stability is proven by assesing the spectrum of the linearized
operator.   Then the nonlinearity is shown to be dominated by the
linear flow.  The spectrum of the linearized operator of \eqref{eqn:nls} with monomial
nonlinearity was studied in \cite{CGNT2007}.

Embedded eigenvalues of the linear operator are detrimental  to
proving the necessary linear estimates.  Indeed, they obstruct 
the needed dispersive estimates, as
demonstrated in \cite{CucPel}. Thus, it is standard to make the
assumption that there are no eigenvalues
embedded in the essential spectrum.  It is known that such a condition
cannot be proven directly using abstract properties of the linearized
operator, but must in fact be directly related to algebraic properties
of the soliton itself.  In this work we develop an algorithm
for studying the spectral properties of the operator appearing when
one linearizes \eqref{eqn:nls} about a soliton solution.  Furthermore,
we use this algorithm to prove the absence of \emph{embedded}
eigenvalues or resonances for four NLS problems.  We collect these
results in the following section.

\begin{rem}
  Though we only prove Theorem
  \ref{thm:speccond} for a small number of cases, our objective in
  this work is to present an approach for
  verfiying the spectral hypotheses required for  soliton stability theory.
\end{rem}

\subsection{Main Results}
\label{int:main}

Our results hinge on a so called {\it spectral property} based on
linearized matrix Schr\"odinger operators.  The specific form, and its
motivations, are developed Section \ref{sec:normal}.  In general, this
property can be formulated as:

\begin{defn}[The Generalized Spectral Property]  
\label{def:specprop}
  Let $d \geq 1$.  Given $L_\pm$ and a skew adjoint operator
  $\Lambda$, consider the two real Schr\"odinger operators
  \begin{equation*}
    \mathcal{L}_+ = -\Delta + \calV_+, \ \mathcal{L}_- = - \Delta + \calV_-,
  \end{equation*}
  defined by
  \begin{align*}
    \mathcal{L}_+ f & =\frac{1}{2}[L_+,\Lambda]f= \frac{1}{2} \left[
      {L}_+ \Lambda f -
      \Lambda {L}_+ f \right], \\
    \mathcal{L}_- f & =\frac{1}{2}[L_-,\Lambda]f= \frac{1}{2} \left[
      {L}_- \Lambda f - \Lambda {L}_- f \right]
  \end{align*}
  and
  \begin{equation*}
    \calV_\pm = \frac{1}{2}\bx \cdot \grad V_\pm.
  \end{equation*}
  Let the real quadratic form for $\mathbf{z} = (u,v)^T \in H^1\times
  H^1$ be
  \begin{align*}
    \calB(\mathbf{z},\mathbf{z}) & = \calB_+ (u,u) + \calB_- (v,v) \\
    & = \inner{\mathcal{L}_+ u}{ u} + \inner{\mathcal{L}_- v}{ v}.
  \end{align*}

  The system is said to satisfy a spectral property on the subspace $\calU
  \subseteq H^1\times H^1$ if there exists a universal constant
  $\delta_0 > 0$ such that $\forall \mathbf{z} \in \calU$,
  \begin{equation*}
    \calB(\mathbf{z},\mathbf{z}) > \delta_0 \int \paren{ | \nabla \mathbf{z} |^2 + e^{-|\by|} |\mathbf{z}|^2 } d\by.
  \end{equation*}
\end{defn}

In this work, the skew adjoint operator is
\begin{equation*}
  \Lambda f \equiv \frac{d}{2} f + \bx \cdot \nabla f  = \frac{d}{d \lambda} \left[ \lambda^{\frac{d}{2}} f( \lambda \bx) \right].
\end{equation*}
This has particular significance for the $L^2$ critical equation,
though we employ it in supercritcal problems.  This (mis)application
is discussed in Section \ref{s:discussion}.

Our results rely on the key observation of G. Perelman \cite{GPer} that
\begin{thm}
  \label{thm:coercive}
  Given the $JL$ operator, arising from the linearization of NLS about a
  soliton, where
\begin{equation*}
JL = \begin{pmatrix} 0  & L_- \\ -L_+ & 0 \end{pmatrix}
\end{equation*}
assume the $L_\pm$ operators satisfies the Spectral Property in the
  sense of Definition \ref{def:specprop}.  Then $JL$ has no
  embedded eigenvalues on the subspace $\calU$.
\end{thm}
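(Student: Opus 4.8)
The plan is to argue by contradiction, exploiting the commutator structure built into Definition~\ref{def:specprop}, in the spirit of Perelman's calculation. Suppose $JL$ possessed an embedded eigenvalue; since the essential spectrum of $JL$ lies on the imaginary axis, such an eigenvalue is of the form $i\mu$ with $\mu\in\R$ nonzero, and it comes with an eigenfunction $\mathbf{z}=(u,v)^T\in\calU$ solving $L_- v = i\mu u$ and $-L_+ u = i\mu v$. Since $L_\pm$ are real operators, the first move is to pass to real data: splitting $\mathbf{z}$ into real and imaginary parts, the relation $JL\mathbf{z}=i\mu\mathbf{z}$ decouples into (at most two) real pairs $(a,b)$, each lying in the appropriate real section of $\calU$ and satisfying
\[
  L_+ a = \mu\, b, \qquad L_- b = \mu\, a .
\]
It therefore suffices to show that every such real pair is identically zero.

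The second and central step is purely algebraic: evaluate $\calB$ on the pair $(a,b)$ and show it vanishes. Using that $\Lambda$ is skew adjoint and $L_\pm$ are self adjoint, and that on real functions the $L^2$ pairing is symmetric, the commutator defining $\mathcal{L}_+$ telescopes:
\[
  \inner{\mathcal{L}_+ a}{a} = \tfrac12\inner{L_+\Lambda a}{a} - \tfrac12\inner{\Lambda L_+ a}{a} = \inner{L_+ a}{\Lambda a},
\]
and likewise $\inner{\mathcal{L}_- b}{b} = \inner{L_- b}{\Lambda b}$. Inserting the eigenvalue relations $L_+ a = \mu b$ and $L_- b = \mu a$ gives
\[
  \calB\big((a,b)^T,(a,b)^T\big) = \mu\big(\inner{b}{\Lambda a} + \inner{a}{\Lambda b}\big) = 0,
\]
the last equality being one more application of the skew adjointness of $\Lambda$.

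The third step is to invoke the Spectral Property on $\calU$: applied to $(a,b)^T\in\calU$ it yields
\[
  0 = \calB\big((a,b)^T,(a,b)^T\big) > \delta_0 \int \paren{ |\nabla (a,b)|^2 + e^{-|\by|}|(a,b)|^2 }\, d\by,
\]
which forces $(a,b)^T\equiv 0$. Running this for each real pair produced by the decomposition gives $\mathbf{z}=0$, contradicting that $\mathbf{z}$ was an eigenfunction. Hence $JL$ has no embedded eigenvalue on $\calU$.

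The delicate point — and the only real content beyond the formal calculation — is the analytic justification of the two integrations by parts above. Since $\Lambda = \tfrac d2 + \bx\cdot\grad$ contains the unbounded multiplier $\bx\cdot\grad$, the pairings $\inner{L_\pm a}{\Lambda a}$ must first be shown to be well defined and the identity $\inner{\Lambda f}{g} = -\inner{f}{\Lambda g}$ legitimately applicable, which requires the eigenfunction and its first derivatives to decay rapidly, e.g.\ $\la\bx\ra\,\mathbf{z}$, $\bx\cdot\grad\mathbf{z}\in L^2$ (exponential localization is more than enough). Such decay cannot come from soft properties of $JL$ alone — this is precisely where the algebraic structure of the soliton must enter — and has to be supplied by a separate a priori estimate (an Agmon-type argument, or the ODE asymptotics of the eigenvalue system) showing that $L^2$ eigenfunctions at energies embedded in the essential spectrum are localized; one must also check that the real/imaginary-part reduction keeps the data inside $\calU$. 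Granting that input, the three steps above complete the proof.
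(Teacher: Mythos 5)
Your argument is correct and is essentially the paper's own proof: the same telescoping of the commutator using self-adjointness of $L_\pm$ and skew-adjointness of $\Lambda$ shows $\calB(\mathbf{z},\mathbf{z})=0$ on any purported imaginary eigenfunction, which the coercivity of Definition~\ref{def:specprop} forbids. Your preliminary reduction to real pairs and your remark that the pairings $\inner{L_\pm a}{\Lambda a}$ need a decay estimate on embedded eigenfunctions to be justified are harmless refinements of (indeed, welcome additions to) the computation the paper performs directly on the complex eigenfunction.
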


\begin{proof}
Let us assume we have an embedded eigenstate
$\mathbf{z}_{\embd} = (u_{\embd}, v_{\embd})^T \in \calU$ corresponding to
eigenvalue $ i \tau_{\embd}$, $\tau_\embd > \lambda_0$.  Then,
\begin{align*}
  L_{-} v_{\embd} & = i \tau_{\embd} u_{\embd} , \\
  L_{+} u_{\embd} & = - i \tau_{\embd} v_{\embd}.
\end{align*}
Plugging directly into the form,
\begin{equation*}
  \begin{split}
    \calB(\mathbf{z}_{\embd},\mathbf{z}_{\embd}) & = \inner{\calL_+
      u_\embd}{u_\embd} + \inner{\calL_- v_\embd}{v_\embd}\\
    & = \frac{1}{2}\set{ \inner{\Lambda u_\embd}{L_+ u_\embd} +
      \inner{L_+ u_\embd}{\Lambda u_\embd}} \\
    &\quad+ \frac{1}{2} \set{\inner{\Lambda v_\embd}{L_- v_\embd} +
      \inner{L_- v_\embd}{\Lambda v_\embd}}\\
    & = \frac{1}{2}\set{ i \tau_\embd\inner{\Lambda u_\embd}{ v_\embd}
      -i \tau_\embd
      \inner{ v_\embd}{\Lambda u_\embd}} \\
    &\quad+ \frac{1}{2} \set{-i \tau_\embd\inner{\Lambda v_\embd}{
        u_\embd} +
      i \tau_\embd \inner{ u_\embd}{\Lambda v_\embd}}\\
    & = \frac{i \tau_\embd}{2} \set{\inner{\Lambda u_\embd}{ v_\embd}
      -\inner{ v_\embd}{\Lambda u_\embd} + \inner{v_\embd}{\Lambda
        u_\embd} - \inner{\Lambda u_\embd}{v_\embd} }\\
    &=0.
  \end{split}
\end{equation*}
\end{proof}

We remark that this holds not just for embedded eigenvalues, but for
any purely imaginary eigenvalue.  Thus, if the spectral property
holds, we are assured that there are no imaginary eigenvalues \emph{on
  the designated subspace}.  The subspace $\calU$ will be set by our analysis
of the spectrum in Section \ref{spec:lin}.  

\begin{defn}
\label{def:speccond}
Separately, we say that a linearized NLS problem satisfies of a {\it
  spectral condition} if it lacks both:
\begin{itemize}
\item Embedded eigenvalues,
\item Endpoint resonances.
\end{itemize}
\end{defn}

Our second theorem, which relies on the first is:
\begin{thm}
  \label{thm:speccond}
  The spectral condition holds for the 3d cubic equation,
  \eqref{eqn:nls} with $d=3$ and $g(s)=s$, linearized about the
  ground state soliton $R$.
\end{thm}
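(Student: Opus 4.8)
The plan is to reduce Theorem~\ref{thm:speccond} to two separable tasks, following the logical structure set up in Definitions~\ref{def:specprop} and~\ref{def:speccond} and Theorem~\ref{thm:coercive}. First, by Theorem~\ref{thm:coercive}, the absence of embedded eigenvalues for the $JL$ operator of the $3d$ cubic problem follows once we establish that the pair $(L_+, L_-)$ arising from linearization about the ground state $R$ satisfies the Generalized Spectral Property on the appropriate subspace $\calU$. So the first and main body of work is to verify the quadratic-form bound $\calB(\mathbf{z},\mathbf{z}) > \delta_0 \int (|\nabla \mathbf{z}|^2 + e^{-|\by|}|\mathbf{z}|^2)\,d\by$ for all $\mathbf{z} \in \calU$. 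Since $R = R(\bx;\lambda)$ for $d=3$, $g(s)=s$ is explicit (it solves $-\Delta R + R = R^3$, i.e.\ $\lambda=1$ up to scaling), the potentials $V_\pm$, hence $\calV_\pm = \tfrac12 \bx\cdot\nabla V_\pm$, and the commutator operators $\calL_\pm = \tfrac12[L_\pm,\Lambda]$ are all explicit radial functions of $|\bx|$. I would first identify $\calU$ by computing, in Section~\ref{spec:lin}, the discrete spectrum and generalized kernel of $JL$ (translations, scaling/dilation mode, phase), so that $\calU$ is the orthogonal complement of the finitely many bad directions where $\calB$ is known to be non-positive; this is where the choice of $\Lambda$ as the $L^2$-critical dilation generator is exploited even though the problem is supercritical, as the excerpt flags.

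The heart of the argument is then a \emph{numerically assisted} lower bound on $\calB$ restricted to $\calU$. Concretely, I would decompose into spherical harmonics (radial sector $\ell=0$, plus $\ell=1$, etc.), reducing each $\calL_\pm$ to a one-dimensional Schr\"odinger operator on the half-line with an explicit potential. On each sector I would split the form into a ``tail'' piece — where, outside a large ball $|\bx| \le \rmax$, the potentials are exponentially small and the Laplacian dominates, giving the bound for free with a clean analytic estimate — and a ``compact core'' piece on $|\bx|\le\rmax$, where I discretize $\calL_\pm$ on a fine grid, compute the relevant eigenvalues of the discretized operator restricted to the computationally accessible part of $\calU$, and use rigorous a~posteriori error bounds (interval arithmetic / validated numerics, with explicit control of the truncation and discretization errors, e.g.\ via the code distributed with the paper) to certify that the smallest eigenvalue exceeds $\delta_0$ by a margin larger than the error bar. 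Summing the sector bounds and combining core and tail yields the Spectral Property, hence, by Theorem~\ref{thm:coercive}, no embedded eigenvalues.

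Second, I would handle the remaining clause of Definition~\ref{def:speccond}: the absence of endpoint resonances at the edge of the essential spectrum. For NLS linearized about a soliton the essential spectrum is $(-\infty,-\lambda]\cup[\lambda,\infty)$, and an endpoint resonance is a bounded, non-$L^2$ solution of $(JL \mp i\lambda)\mathbf{z} = 0$ at the threshold. I would analyze the corresponding second-order ODE system at threshold: the decaying solution is determined by the exponentially localized soliton potential, and one tracks whether it connects to a bounded (resonance) or genuinely growing solution at spatial infinity via a shooting/Wronskian computation, again certified by validated numerics on the explicit ground state. Combining the two parts gives the spectral condition for the $3d$ cubic equation.

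The main obstacle, I expect, is making the numerically assisted step of part one fully rigorous: one must convert the finite-dimensional eigenvalue computation on a truncated domain into a genuine lower bound for the infinite-dimensional form $\calB$ on $\calU$. This requires (i) a quantitative proof that the contribution from $|\bx|>\rmax$ and from high spherical-harmonic sectors cannot destroy positivity — relying on exponential decay of $R$ and hence of $\calV_\pm$ — and (ii) explicit, rather than heuristic, discretization and interval-arithmetic error bounds on the core eigenvalue, together with a correct treatment of the finitely many constraints defining $\calU$ (projecting them onto the discretized space without losing rigor). The delicate points are the size of the spectral gap $\delta_0$ relative to the achievable numerical precision, and confirming that $\calU$ has been chosen to exclude \emph{exactly} the modes on which $\calB$ fails to be coercive.
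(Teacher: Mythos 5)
Your high-level reduction matches the paper exactly: establish the Generalized Spectral Property on a constrained subspace $\calU$, invoke Theorem~\ref{thm:coercive} to exclude embedded eigenvalues, and handle endpoint resonances separately. But the machinery you propose for the core verification is genuinely different from what the paper does, and one step would fail as formulated. The paper never discretizes $\calL_\pm^{(k)}$ or computes eigenvalues of a truncated matrix with interval arithmetic. Instead it uses a Sturm-oscillation argument (Theorem~\ref{thm:rs_idx}, after Reed--Simon XIII.8): one solves the ODE initial value problem $\calL_\pm^{(k)} U^{(k)}=0$ from the origin and counts zero crossings; this number equals the index (the number of negative directions) of the associated form on the $k$-th harmonic, and monotonicity of the index in $k$ reduces the problem to finitely many harmonics. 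Positivity on $\calU$ is then obtained not by bounding a constrained eigenvalue but by solving $\calL u = f$ for each orthogonality vector $f$ (namely $R$, $\phi_2$, $rR$, $R+rR'$) and checking the signs of the resulting inner products $\inner{\calL^{-1}f}{f}$; for instance, positivity of $\calB_+^{(0)}$ requires orthogonality to \emph{both} $R$ and $\phi_2$, the second component of the unstable eigenvector, since $K_1^{(0)}>0$ means orthogonality to $R$ alone is insufficient. Your identification of $\calU$ with the complement of the generalized kernel misses this essential role of the unstable mode. The numerics themselves are adaptive collocation with \emph{a posteriori} asymptotic consistency checks, not validated interval arithmetic, and the endpoint-resonance clause is obtained from the Birman--Schwinger gap-condition computation of Demanet--Schlag rather than a threshold shooting/Wronskian argument.

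The concrete gap is in the sentence ``certify that the smallest eigenvalue exceeds $\delta_0$.'' Since $\calV_\pm=\tfrac12\bx\cdot\grad V_\pm$ decays at infinity, the essential spectrum of $\calL_\pm$ is $[0,\infty)$, so the infimum of $\calB(\bz,\bz)/\norm{\bz}_{L^2}^2$ over $\calU$ is $0$, not a positive $\delta_0$; no amount of numerical precision on a truncated core can certify a uniform $L^2$ spectral gap that does not exist. The spectral property is a \emph{weighted} coercivity estimate, and the paper reaches it in two stages: first prove $\calB\geq 0$ on $\calU$ via the index-plus-orthogonality argument (including a cutoff regularization because the relevant test function $\overline{Q}$ is not in $L^2$), and then show that the index of the perturbed operator $\calL_\pm - \delta_0 e^{-\abs{\bx}^2}$ is unchanged for $\delta_0$ small (Proposition~\ref{prop:idx_perturbation}), which upgrades non-negativity to the weighted lower bound of Definition~\ref{def:specprop}. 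Your core/tail splitting could in principle be repaired by organizing it around this perturbation rather than around a constrained eigenvalue gap, but as written the certification target is vacuous.
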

We adapt the methods of \cite{FMR} to give a
numerically assisted proof of this result.

\begin{rem}
  Though the main result of this paper will be to establish Theorem
  \ref{thm:speccond}, our algorithm can also be used to
  establish the spectral condition for the one dimensional equation
  with $g(s) = s^{2.5}$ and $g(s)=s^3$.  Again, this is for the
  problem linearized about the ground state soliton. 
\end{rem}

Separately, we establish that for 3d problems with nonlinearities
satisfying the conditions necessary for the existence of a soliton, as
discussed in \cite{BeLi}, one need only test for
embedded eigenvalues that are:
\begin{itemize}
\item Near the endpoints of the essential spectrum,
\item On a sufficiently low spherical harmonic.
\end{itemize}
In Appendix \ref{sec:mourre} we give a proof of the
following result using positive commutator arguments otherwise known
as Mourre estimates:

\begin{thm}
  \label{thm:spec}
  Given a Hamiltonian $\mathcal{H}$, there exists some $M > 0$ such
  that for $|\mu| > M$, there are no solutions $u_{\mu}$ such that
  \begin{eqnarray*}
    \mathcal{H} u_\mu = \mu u_\mu.
  \end{eqnarray*}
  Similarly, if $d \geq 2$, for any embedded eigenvalue, $u_{\mu}$,
  there exists a $K>0$ such that the spherical harmonic decomposition
  \begin{eqnarray*}
    u_\mu = \sum_{k = 0}^\infty \alpha_k (r,\mu) \phi_k (\phi,\theta)
  \end{eqnarray*} 
  consists only of harmonics with $k <K$.
\end{thm}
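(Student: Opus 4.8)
The plan is to exhibit a conjugate operator $A$ for which the commutator $[\mathcal{H}, iA]$ is positive modulo relatively compact errors, and then use the Virial-type identity that an embedded eigenfunction $u_\mu$ forces $\langle u_\mu, [\mathcal{H}, iA] u_\mu\rangle = 0$ to derive a contradiction once $|\mu|$ is large. The natural choice of conjugate operator here is the generator of dilations $A = \frac{1}{2}(x\cdot\nabla + \nabla\cdot x)$, which is essentially self-adjoint on $\CIc$. For a Schrödinger operator $\mathcal{H} = -\Delta + V$ with $V$ exponentially decaying (as is the case for the potentials built from the soliton $R$), one computes $[\mathcal{H}, iA] = 2(-\Delta) - x\cdot\nabla V = 2\mathcal{H} - 2V - x\cdot\nabla V$. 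First I would establish that this commutator identity is valid in the appropriate form-sense on $\dom(\mathcal{H})\cap\dom(A)$, using the decay of $V$ and $x\cdot\nabla V$ to control the lower-order terms as $\mathcal{H}$-bounded (indeed $\mathcal{H}$-compact) perturbations.

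Next, for the first assertion (absence of embedded eigenvalues with $|\mu| > M$), I would argue as follows. Suppose $\mathcal{H} u_\mu = \mu u_\mu$ with $u_\mu \in L^2$. Standard elliptic regularity plus the exponential decay of $V$ gives that $u_\mu$ and its derivatives decay fast enough that $u_\mu \in \dom(A)$ and the Virial identity $\langle u_\mu, [\mathcal{H},iA]u_\mu\rangle = 0$ holds rigorously (this is the classical Kato/Agmon-type argument; alternatively one invokes the regularized commutator $[\mathcal{H}, iA_\epsilon]$ with $A_\epsilon = \frac{1}{2}(x\cdot\nabla\, \theta(\epsilon x) + \mathrm{h.c.})$ and passes to the limit). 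Plugging in the commutator formula and using $\mathcal{H}u_\mu = \mu u_\mu$,
\begin{equation*}
  0 = 2\mu\norm{u_\mu}^2 - \inner{u_\mu}{\paren{2V + \bx\cdot\grad V} u_\mu}.
\end{equation*}
Since $\norm{2V + \bx\cdot\grad V}_{\linf} =: C_0 < \infty$, this yields $2|\mu|\norm{u_\mu}^2 \le C_0 \norm{u_\mu}^2$, hence $|\mu| \le C_0/2 =: M$. Thus no eigenvalue (embedded or otherwise) exists with $|\mu| > M$.

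For the second assertion in dimension $d \ge 2$, I would pass to the spherical harmonic decomposition $u_\mu = \sum_k \alpha_k(r,\mu)\phi_k(\omega)$, which reduces $\mathcal{H}$ on the $k$-th sector to the half-line operator $-\partial_r^2 - \frac{d-1}{r}\partial_r + \frac{k(k+d-2)}{r^2} + V_k(r)$, where $V_k$ is the (radial part of the) potential on that sector, still exponentially decaying. Running the same dilation-commutator/Virial argument on the $k$-th sector, the centrifugal term $k(k+d-2)/r^2$ contributes a term of definite sign: its commutator with $iA$ on the half-line equals $-2k(k+d-2)/r^2 \le 0$, which \emph{helps}, while more importantly the term $2\mathcal{H}_k = 2\mu$ appears with the centrifugal piece already absorbed, so the identity on the $k$-sector becomes $0 = 2\mu\norm{\alpha_k}^2 + (\text{nonnegative centrifugal contribution}) - \langle\alpha_k, (2V_k + r\partial_r V_k)\alpha_k\rangle$; if $\mu > 0$ this already forces $\alpha_k \equiv 0$ for every $k$ once the potential term is dominated, and if $\mu < 0$ one needs $2|\mu| + 2k(k+d-2)\langle r^{-2}\rangle_k \le C_0$, which fails for $k$ large since the centrifugal expectation is bounded below on the support where $\alpha_k$ is concentrated — giving $k < K$ for an explicit $K = K(\mu, C_0, d)$. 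I would need to be a little careful that $\langle \alpha_k, r^{-2}\alpha_k\rangle$ does not degenerate; this is handled by the fact that an $L^2$ eigenfunction on the half-line with a centrifugal barrier cannot concentrate at $r = \infty$ where $V_k \to 0$, so $\mathcal{H}_k\alpha_k = \mu\alpha_k$ with $\mu < 0$ in the sector would force $\alpha_k$ into the classically forbidden region, again bounding $k$.

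\textbf{Main obstacle.} The technical heart — and the step I expect to require the most care — is the rigorous justification of the Virial identity $\langle u_\mu, [\mathcal{H}, iA] u_\mu\rangle = 0$, i.e.\ showing the a priori regularity and decay of the putative embedded eigenfunction $u_\mu$ is strong enough that all the formal integrations by parts are licit and no boundary terms at infinity survive. This is where the exponential decay of the potentials $V_\pm$ (inherited from the exponential decay of the soliton $R$, itself a consequence of the results in \cite{BeLi}) is essential: it guarantees, via a Combes–Thomas or Agmon estimate, that $u_\mu$ decays exponentially, so that $\bx\cdot\grad u_\mu \in L^2$ and the cutoff-regularization limit is clean. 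The remaining bookkeeping — splitting into spherical harmonic sectors and tracking the explicit constant $K$ — is routine once the Virial identity is in hand.
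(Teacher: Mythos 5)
There is a genuine gap here, and it is structural rather than technical: your argument is written for a \emph{scalar, self-adjoint} Schr\"odinger operator $-\Delta+V$, but the $\mathcal{H}$ in Theorem \ref{thm:spec} is the matrix Hamiltonian of \eqref{eqn:mathcalH} (equivalently $JL$), which is not self-adjoint. The virial identity $\inner{u_\mu}{[\mathcal{H},iA]u_\mu}=0$ — the crux of your first step — is proved by moving $\mathcal{H}$ across the inner product onto the eigenfunction so that both terms produce a factor $\mu$; for a non-self-adjoint operator this gives $\inner{\mathcal{H}^*u_\mu}{iAu_\mu}$, and $u_\mu$ is not an eigenfunction of $\mathcal{H}^*$ (the adjoint's eigenfunctions are obtained by swapping components). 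Consequently the identity $0=2\mu\norm{u_\mu}^2-\inner{u_\mu}{(2V+\bx\cdot\grad V)u_\mu}$ and the resulting bound $|\mu|\leq \tfrac12\norm{2V+\bx\cdot\grad V}_{\linf}$ do not follow, and the off-diagonal coupling $V_2$ between the two components is never confronted in your argument. The paper's proof deals with this by squaring: an eigenvalue $\mu$ of $\mathcal{H}$ yields the scalar but \emph{fourth-order} equation $L_-L_+u=\mu^2u$, and the dilation-commutator computation is carried out on $L_-L_+$. That computation is genuinely more involved than the second-order one: expanding $\inner{[ML_-L_+ + L_+L_-M]u}{u}=0$ produces $\int(\Delta u)^2$ and $\int W\,|\grad u|^2$ terms with potential-dependent coefficients, and one needs the a priori frequency bound $\norm{\grad u}_{L^2}\lesssim \mu\norm{u}_{L^2}$ to see that the $\mu^2\norm{u}_{L^2}^2$ term dominates them for $\mu$ large. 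None of this structure appears in your proposal.

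The same objection applies to your spherical-harmonics argument (each angular sector is still governed by the fourth-order product operator after squaring). In addition, the step you flag yourself — that $\inner{r^{-2}\alpha_k}{\alpha_k}$ does not degenerate when $\alpha_k$ concentrates at large $r$ — is asserted rather than proved, and it is exactly the delicate point; your sign bookkeeping for the centrifugal contribution is also inconsistent (you call it both $\leq 0$ and nonnegative). The paper instead isolates the $\alpha^4/r^4$ term produced by the squared operator and shows it dominates the remaining terms separately on $r\leq 1$, an intermediate region, and $r\geq r_\star$, using the exponential decay of $V_\pm$. Your identification of the generator of dilations as the conjugate operator, and of the exponential decay of the potentials as the source of the regularity needed to justify the formal integrations by parts, does match the paper; but as written the proof does not apply to the operator the theorem is about.
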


\begin{rem}
  Such results are well-known using resolvent estimate techniques;
  however, our approach provides easily computable limits on $M$ and
  $K$ in terms of the soliton solution.  
\end{rem}

\subsection{Organization of Results}
\label{sec:org}

In Sections \ref{sec:nlsprops}, \ref{sec:sollin} and
\ref{spec:lin}, we review fundamental properties of 
\eqref{eqn:nls} and the associated linearized operator.

In Sections \ref{spec:embres} and \ref{specnum:disc}, we collect
results and adapt the techniques of \cite{ES1} to prove properties of
the discrete spectrum and the absence of embedded resonances.

Finally, in Section \ref{sec:normal}, we prove an appropriate spectral property as
in Definition \ref{def:specprop}, based on the work in \cite{FMR}.
G.~Perelman's observation then rules out embedded eigenvalues.  This
proves Theorem \ref{thm:speccond}.

In Appendices \ref{spec:eig} and \ref{spec:sphhar},
we use Mourre multipliers to eliminate large embedded eigenvalues and
large spherical harmonics from the expansion of an embedded
eigenvalue.  Though these results have been known via resolvent
methods for some time, we aim to collect as much analytic
information about the spectrum as possible, providing bounds for future
estimates and computations.  An overview of our numerical methods with
benchmarks is then presented in Appendix \ref{sec:numerics}.

{\sc Acknowledgments.} This paper is an extension of a result of a
thesis done by the first author under the direction of Daniel Tataru
at the University of California, Berkeley that arose from a discussion
with Wilhelm Schlag and Galina Perelman at the Mathematisches
Forschungsinstitut Oberwolfach.  The first author is supported by an
NSF Postdoctoral Fellowship.  The second author is supported in part
by NSERC.  In addition, the authors wish to thank Gadi Fibich, Michael
Weinstein, Ian Zwiers and especially Wilhelm Schlag for many helpful
conversations throughout the development of the paper.

\section{Properties of the Nonlinear Schr\"odinger Equation}
\label{sec:nlsprops}

In this section we briefly review some important properties of
\eqref{eqn:nls}.  For additional details, we refer the reader to the
texts \cite{sulem1999nse, Caz}.

In general, for nonlinearity $g: \R \to \R$, \eqref{eqn:nls} possesses
the following invariants for data $\psi_0 \in H^1$ and $|\bx|\psi_0 \in
L^2$:
\begin{description}
\item[Conservation of Mass (or Charge)]
  \begin{equation*}
    Q (\psi) = \frac{1}{2} \int_{\R^d} |\psi|^2 d\bx = \frac{1}{2} \int_{\R^d} |\psi_0|^2 d\bx,
  \end{equation*}

\item[Conservation of Energy]
  \begin{align*}
    E(\psi) & =  \int_{\R^d} | \nabla \psi |^2 d\bx - \int_{\R^d} G(|\psi|^2) d\bx = \int_{\R^d} | \nabla \psi_0 |^2 d\bx - \int_{\R^d} G(|\psi_0|^2)
    d\bx,
  \end{align*}
  where
  \begin{equation*}
    G(t) = \int_0^t g(s) ds.
  \end{equation*}

\item[Pseudo-Conformal Conservation Law]
  \begin{equation*}
    \| (\bx + 2 i t \nabla ) \psi \|^2_{L^2} - 4 t^2 \int_{\R^d} G(|u|^2) d\bx = \|\b x \psi \|^2_{L^2} - \int_0^t \theta (s) ds,
  \end{equation*}
  where
  \begin{equation*}
    \theta (s) = \int_{\R^d} (4 (d+2) G(|\psi|^2) - 4 d g(|\psi|^2) |\psi|^2) d\bx.
  \end{equation*}
  Note that $(\bx + 2 i t \nabla )$ is the Hamilton flow of the linear
  Schr\"odinger equation, so the above identity relates how the
  solution to the nonlinear equation is effected by the linear flow.
\end{description}
Detailed proofs of these conservation laws can be arrived at easily
using energy estimates or Noether's Theorem, which relates
conservation laws to symmetries of an equation.

In this work, we restrict our attention to \emph{focusing}
nonlinearities, such that $g(s)\geq 0$ for all $ s\in \R$.  These are
the nonlinearities that can yield soliton solutions.  Often, $g(s) =
s^\sigma$ for some $\sigma>0$.  We examine one instance of the power
nonlinearity, the three dimensional cubic problem ($\sigma = 1$).

As noted, a soliton solution takes the form
\begin{equation*}
  \psi (t,\bx) = e^{i \lambda t} R(\bx;\lambda),
\end{equation*} 
where $\lambda > 0$ and $R (\bx;\lambda)$ is a positive, radially
symmetric, exponentially decaying solution of the equation:
\begin{equation}
  \label{eqn:sol}
  \Delta R - \lambda R + g(\abs{R}^2) R= 0.
\end{equation}
For power nonlinearities, the existence and uniqueness of the
ground state soliton is well known.  Additionally, the scaling
properties of this case permit us to take
$\lambda = 1$.  We do this in all that follows.

Existence of the soliton is proved by in \cite{BeLi} by minimizing the functional
\[
T(\psi) = \int | \nabla \psi |^2 d\bx
\] 
with respect to the constraint of fixed
\[
V(\psi) = \int [ G(|\psi|^2) - \frac{\lambda}{2} |\psi|^2 ] d\bx.
\]
Then, using the minimizing sequence and Schwarz symmetrization, one obtains the
existence of the nonnegative, spherically symmetric, decreasing
soliton solution.  Uniqueness is established in \cite{Mc} by ODE
methods.

An important relation is that $Q({\lambda}) = Q(R(\cdot;\lambda))$ and
$E({\lambda}) = E(R(\cdot;\lambda))$ are differentiable with respect
to $\lambda$.  This fact can be determined from the early works of
Shatah, namely \cite{Sh1}, \cite{Sh2}.  By differentiating Equation
\eqref{eqn:sol}, $Q$ and $E$ with respect to $\lambda$, we have
\begin{equation*}
  \partial_{\lambda} E = - \lambda \partial_{\lambda} Q.
\end{equation*}

Variational techniques developed in \cite{GSS} and \cite{ShSt} tell us
that when $\delta ( \lambda ) = E({\lambda}) + \lambda Q({\lambda})$
is convex, or $\delta '' (\lambda) > 0$, the soliton is orbitally stable.
For $\delta '' (\lambda) < 0$ the soliton is unstable to small perturbations.
This stability (instability) directly is closely related to the
eigenvalues of the matrix Hamiltonians resulting from linearizing NLS about a soliton.  For a brief
reference on this subject, see \cite{SS}, Chapter 4.

\section{Linearization about a Soliton}
\label{sec:sollin}

Let us write down the form of NLS linearized about a soliton
solution.  First, we assume we have a solution $\psi = e^{i
  \lambda t}(R + \phi(\bx,t))$.  Inserting this into the equation, we have
\begin{equation}
  i (\phi)_t + \Delta (\phi) =-g( R^2) \phi - 2 g'( R^2 )  R^2 \text{Re}(\phi) + \bigo(\phi^2),
\end{equation}
by splitting $\phi$ up into its real and imaginary parts, then doing a
Taylor Expansion. Hence, if $\phi = u + iv$, we get
\begin{equation}
  \partial_t \begin{pmatrix}
    u \\
    v
  \end{pmatrix} =J L \begin{pmatrix}
    u \\
    v
  \end{pmatrix},
\end{equation}
where
\begin{equation}
  JL = \begin{pmatrix}
    0 & L_{-} \\
    -L_{+} & 0
  \end{pmatrix} = \begin{pmatrix} 0 & 1 \\ -1 & 0 \end{pmatrix}\begin{pmatrix}  L_+ & 0 \\ 0 & L_- \end{pmatrix}
\end{equation}
and
\begin{align}
  L_{-} &= - \Delta + \lambda -V_-,&& V_-  =g( R ),\\
  L_{+} &= - \Delta + \lambda - V_+,&&  V_+ = g( R ) + 2 g' (R^2) R^2.
\end{align}

Alternatively, if we formulate the problem in terms of $\phi$ and
$\phi^*$,
\begin{equation}
  \partial_t \begin{pmatrix}\phi \\ \phi^* \end{pmatrix} = i \mathcal{H} \begin{pmatrix}\phi \\ \phi^* \end{pmatrix},
\end{equation}
where
\begin{align}
  \label{eqn:mathcalH}
  \mathcal{H} = \begin{pmatrix}  -\Delta + \lambda - V_1 & -V_2 \\
    V_2 &\Delta - \lambda + V_1\end{pmatrix}
\end{align}
and
\begin{equation}
  V_1  = g(R^2) + g'(R^2)R^2, \quad V_2  = g'(R^2)R^2.
\end{equation}
The potentials in the two formulations are related by $V_+ = V_1 +
V_2$ and $V_- = V_1 - V_2$.

There are many things we can immediately say about $L_{-}$, $L_{+}$,
$JL$ and $\mathcal{H}$.  For a reference on the spectral theory
involved, see Hislop-Sigal \cite{HS} or Reed-Simon \cite{RSv4}.  First
of all, both $L_{-}$ and $L_{+}$ are self-adjoint operators.  Also,
$L_{-}$ is a non-negative definite operator and its null space is
$\text{span}\{ R \}$.  Note also that the functions $\frac{\partial
  R}{\partial x_j}$ for $j = 1,2,...,d$ are in the null space of
$L_{+}$.  By comparison with the operator $\Delta + \lambda$ and using
the fact that $R$ decays exponentially, we see that the essential
spectrum of $\mathcal{H}$ is the set $(-\infty,\lambda] \cup
[\lambda,\infty)$ from Weyl's Theorem, see \cite{ES1} and \cite{RSv4}.
Equivalently, the essential spectrum of $JL$ is $(-i \infty, i
\lambda] \cup [i \lambda, i \infty)$.  Indeed, $\sigma(JL) = i
\sigma(\mathcal{H})$.  Finally, using the fact that $L_{-}$ is
non-negative definite and looking at eigenvalues $\mathcal{H}^2$, we
see
\begin{eqnarray}
  L_{-} L_{+} u = \nu^2 u.
\end{eqnarray}
However, this can be rewritten as
\begin{eqnarray*}
  T v = L_{-}^{\frac{1}{2}} L_{+} L_{-}^{\frac{1}{2}} v = \nu^2 v
\end{eqnarray*}
for $v = L_{-}^{\frac{1}{2}} u$.  Since the operator $T$ is
self-adjoint, we must have $\nu \in \R \cup i \R$.

Typically, asymptotic stability is studied with the following
assumptions made on the matrix Hamiltonian:
\begin{defn}
  \label{spec:defn1}
  A Hamiltonian, $\mathcal{H}$ is called admissible if the following
  hold:
  \begin{enumerate}
  \item There are no embedded eigenvalues in the essential spectrum.
  \item The only real eigenvalue in $[-\lambda, \lambda ]$ is $0$.
  \item The values $\pm \lambda$ are not resonances.
  \end{enumerate}
\end{defn}

\begin{defn}
  \label{spec:defn2}
  Let NLS have nonlinearity $g$.  We call $g$ admissible at $\lambda$ if there
  exists a soliton, $R_\lambda$, for NLS and the
  Hamiltonian, $\mathcal{H}$, resulting from linearization about
  $R_\lambda$ is admissible in terms of Definition \ref{spec:defn1}.
\end{defn}

\begin{rem}
For simplicity in exposition, a matrix
Hamiltonian, $\mathcal{H}$, is said to be {\it admissible} if it
satisfies several {\it spectral conditions}.  One of these properties
is a lack of embedded eigenvalues in the essential spectrum, which to establish we employ a {\it spectral property} as in \cite{FMR}.
\end{rem}

The spectral conditions of $\mathcal{H}$ from Definition \ref{spec:defn1} are generally required to
prove dispersive estimates for the evolution operator associated with
the linearized Hamiltonian equation, which in turn are required to
prove asymptotic stability results for solitons.  See
\cite{Schlag1,RodSchSof} for further discussion.  Let $P_d$ and $P_c$
be the projections onto the discrete and continuous spectrum of
$\mathcal{H}$.  

See Figure \ref{fig:spec_decomp} for a description of the spectral
decomposition for $\mathcal{H}$ resulting from linearizing about
solitons with subcritical, critical and supercritical stability
properties.  

\begin{figure}
  \centering \scalebox{0.75}{\includegraphics{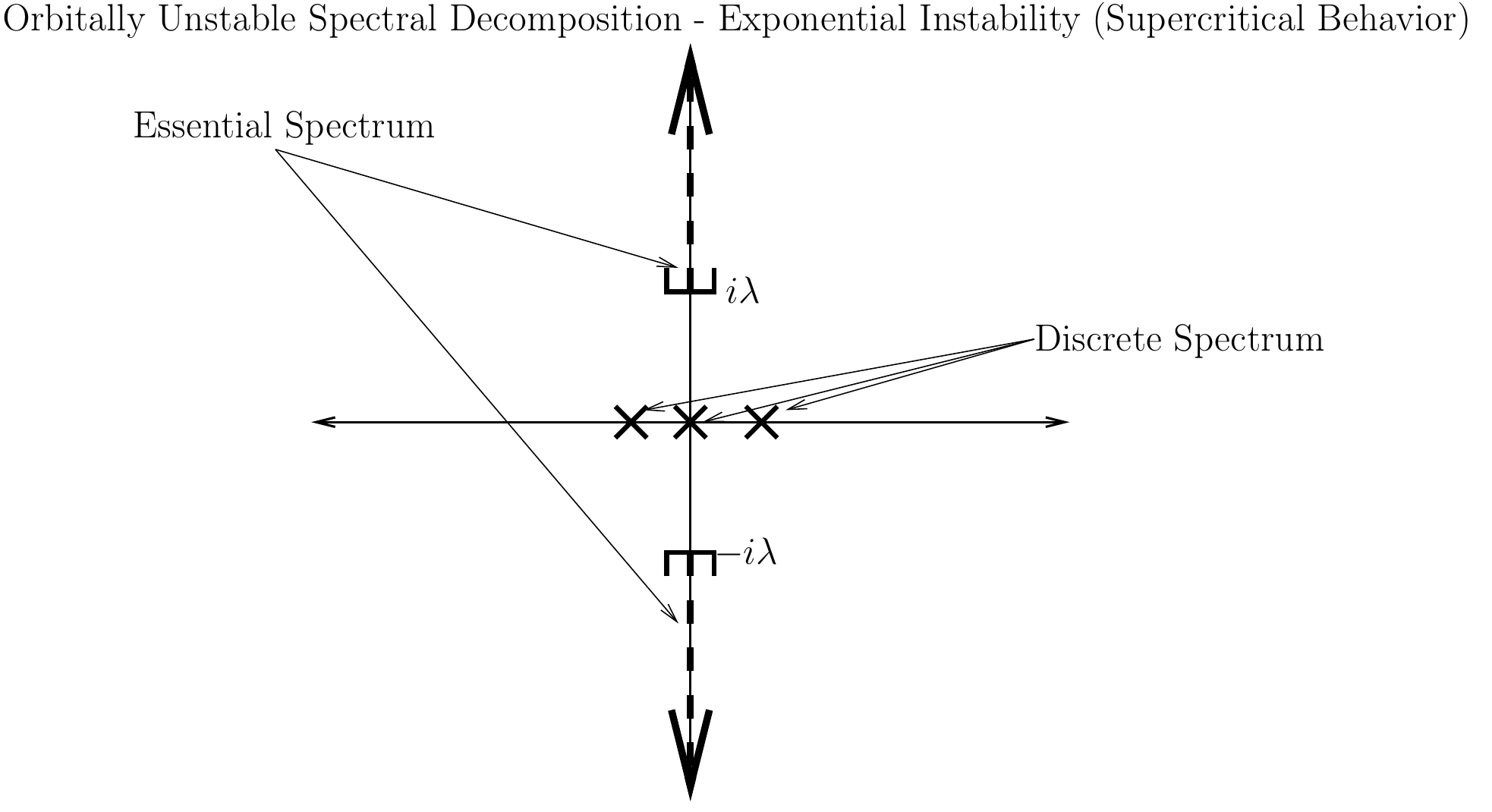}}

  \caption{Plots of the spectral decomposition for $\mathcal{H}$ when
    $R_\lambda$ is exponentially unstable (supercritical behavior).}
  \label{fig:spec_decomp}
\end{figure}

\section{Spectral Properties of the Linearized Hamiltonian}
\label{spec:lin}

We now give more detailed and formal statements on the spectral
properties of the operator under investigation.

\subsection{A Survey of Results on the Spectrum of $\calH$}
\label{sec:survey}

\subsubsection{An Analytic Result on the Spectrum of $\cal{H}$}
\label{sec:Hanalytic}

We formalize the heuristic discussion from Section \ref{sec:sollin}
with the following theorem from \cite{ES1}.  Let us write the operator as
\begin{eqnarray*}
  \mathcal{H} = \mathcal{H}_0 + V = \left[ \begin{array}{cc}
      -\Delta + \lambda & 0 \\
      0 & \Delta + \lambda
    \end{array} \right] + \left[ \begin{array}{cc}
      -V_1 & -V_2 \\
      V_2 & V_1
    \end{array} \right].
\end{eqnarray*}

In \cite{ES1}, the authors proved the following properties of the spectrum:
\begin{thm}[Erdogan-Schlag]

  Assume there are no embedded eigenvalues in the continuous spectrum
  of $\sigma ( \mathcal{H} )$.  The essential spectrum of
  $\mathcal{H}$ equals $(-\infty, -\lambda] \cup [\lambda, \infty)$.
  Moreover, $\sigma ( \mathcal{H} ) = - \sigma( \mathcal{H}) =
  \overline{\sigma( \mathcal{H} )} = \sigma( \mathcal{H}^* )$ and
  $\sigma( \mathcal{H} ) \subset \reals \cup i \reals$.  The discrete
  spectrum consists of eigenvalues $\{ z_j \}_{j=1}^N$, $0 \leq N \leq
  \infty$, of finite multiplicity.  For each $z_j \neq 0$, the
  algebraic and geometric multiplicities coincide and $\mathrm{Ran}
  (\mathcal{H} - z_j)$ is closed.  The zero eigenvalue has finite
  multiplicity.
\end{thm}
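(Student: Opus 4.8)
The plan is to treat the statement as four separate claims and prove them in order: the location of the essential spectrum, the symmetries of $\sigma(\calH)$, the meromorphic/Fredholm structure away from $\sigma_{\mathrm{ess}}(\calH)$ together with the inclusion $\sigma(\calH)\subset\R\cup i\R$, and finally semisimplicity at nonzero eigenvalues. For the essential spectrum, write $\calH=\calH_0+V$ with $\calH_0=\diag(-\Delta+\lambda,\,\Delta-\lambda)$ and $V$ the bounded matrix-multiplication operator with entries $-V_1,-V_2,V_2,V_1$. Since $R$ decays exponentially, so do $V_1,V_2$, and $V$ is $\calH_0$-compact: truncating $V$ to a ball, multiplication by a compactly supported bounded function composed with $(\calH_0-z)^{-1}$ is compact by Rellich, and the tails satisfy $\|\chi_{|\bx|>n}V\|_\infty\to0$. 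A Fourier computation gives $\sigma(\calH_0)=[\lambda,\infty)\cup(-\infty,-\lambda]$, entirely essential spectrum, with connected complement in $\C$, so stability of the Fredholm essential spectrum under relatively compact perturbations yields $\sigma_{\mathrm{ess}}(\calH)=[\lambda,\infty)\cup(-\infty,-\lambda]$ and, in fact, that $\calH-z$ is Fredholm of index $0$ for $z\notin\sigma(\calH_0)$.

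For the symmetries, set $\sigma_1=\twobytwo{0}{1}{1}{0}$, $\sigma_3=\twobytwo{1}{0}{0}{-1}$. A direct computation from the block form gives $\sigma_1\calH\sigma_1=-\calH$ and $\sigma_3\calH\sigma_3=\calH^*$, while $\calH$ commutes with complex conjugation $\mathcal{C}$ (all its coefficients being real). The first identity makes $\calH$ similar to $-\calH$, so $\sigma(\calH)=-\sigma(\calH)$; the third gives $\mathcal{C}(\calH-z)\mathcal{C}=\calH-\bar z$, hence $\sigma(\calH)=\overline{\sigma(\calH)}$; and combining with the universal identity $\sigma(\calH^*)=\overline{\sigma(\calH)}$ yields $\sigma(\calH)=\sigma(\calH^*)$. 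The relation $\calH^*=\sigma_3\calH\sigma_3$ will be reused below.

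For the discrete spectrum, write $(\calH-z)^{-1}=\bigl(I+(\calH_0-z)^{-1}V\bigr)^{-1}(\calH_0-z)^{-1}$ on $\C\setminus\sigma(\calH_0)$; since $z\mapsto(\calH_0-z)^{-1}V$ is an analytic family of compact operators, the analytic Fredholm theorem makes $(\calH-z)^{-1}$ meromorphic there with finite-rank principal parts. Thus $\sigma(\calH)\setminus\sigma_{\mathrm{ess}}(\calH)=\{z_j\}_{j=1}^{N}$, $0\le N\le\infty$, accumulating only on $\sigma_{\mathrm{ess}}(\calH)$, consists of eigenvalues of finite algebraic multiplicity, each $\calH-z_j$ is Fredholm of index $0$ and hence has closed range, and the zero eigenvalue (which lies in the gap $(-\lambda,\lambda)$) has finite multiplicity. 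To locate the $z_j$ I use $\sigma(JL)=i\sigma(\calH)$ from Section~\ref{sec:sollin}: for $z_j\ne0$, a $JL$-eigenvector $\twodcolvec{u}{v}$ with $\nu=iz_j$ satisfies $L_-v=\nu u$ and $-L_+u=\nu v$, so $L_+L_-v=-\nu^2v=z_j^2v$. Since $L_-\ge0$ with $\ker L_-=\spn\{R\}$, put $w=L_-^{\half}v$; then $w\ne0$ (otherwise $L_-v=0$ forces $u=0$ and then $v=0$) and $L_-^{\half}L_+L_-^{\half}w=z_j^2w$, so self-adjointness of $T:=L_-^{\half}L_+L_-^{\half}$ gives $z_j^2\in\R$, i.e.\ $z_j\in\R\cup i\R$; together with $\sigma_{\mathrm{ess}}(\calH)\subset\R$ this proves $\sigma(\calH)\subset\R\cup i\R$.

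The remaining point, and the one I expect to be the main obstacle, is semisimplicity at $z_j\ne0$: algebraic and geometric multiplicities coincide, i.e.\ $\calH$ has no Jordan block at $z_j$. Here I would combine the two structural facts already in hand — the $\sigma_3$-self-adjointness $\calH^*=\sigma_3\calH\sigma_3$ and the positivity $L_-\ge0$ underlying the factorization. The scheme is to show that on the generalized eigenspace of $\calH$ at $z_j$ the map $(u,v)\mapsto L_-^{\half}v$ lands in the generalized eigenspace of the self-adjoint operator $T$ at $z_j^2$, is injective there, and is dimension-exact once one accounts for the finitely many directions possibly lost in $\ker L_-^{\half}=\spn\{R\}$ and in reconstructing the $u$-component from $v$; since a self-adjoint operator is semisimple, $T$'s generalized eigenspace at $z_j^2\ne0$ already equals $\ker(T-z_j^2)$, and pulling this back forces $\ker(\calH-z_j)^2=\ker(\calH-z_j)$. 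Making this correspondence precise — in particular verifying \emph{exactness} of the dimension count rather than mere inequalities, and tracking how $\ker L_-$ and $\ker L_+$ interact with a nonzero eigenvalue — is the technical crux; everything else is the standard Weyl and analytic-Fredholm package together with the explicit conjugations.
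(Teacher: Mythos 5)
A point of context first: the paper does not prove this theorem at all --- it is quoted verbatim from Erdogan--Schlag \cite{ES1} --- so your proposal is being measured against the standard argument rather than anything in the text. The parts you actually execute are correct and follow that standard route: relative compactness of $V$ with respect to $\calH_0$ plus Weyl/stability of the Fredholm essential spectrum on the connected complement of $\sigma(\calH_0)$; the conjugation identities $\sigma_1\calH\sigma_1=-\calH$, $\sigma_3\calH\sigma_3=\calH^*$, and reality of the coefficients for the three spectral symmetries; the analytic Fredholm theorem for discreteness, finite algebraic multiplicity, and closed range; and the symmetrization $T=L_-^{\half}L_+L_-^{\half}$ for $\sigma(\calH)\subset\R\cup i\R$ (this last step is exactly the computation the paper itself sketches at the end of Section \ref{sec:sollin}).

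The genuine gap is the one you flag yourself: semisimplicity at $z_j\neq 0$. Announcing that the correspondence with the self-adjoint (really: symmetric) operator $T$ ``should'' be dimension-exact is not a proof of the only nontrivial clause in the theorem. Concretely, what is missing is the following chain. If $(\calH-z_j)\psi=\chi\neq 0$ with $(\calH-z_j)\chi=0$, then $(\calH^2-z_j^2)\psi=2z_j\chi$ and $(\calH^2-z_j^2)\chi=0$, so the block-diagonal operator $\calH^2$ has a rank-two Jordan chain at $z_j^2$; restricting to the second components gives a putative chain for $L_+L_-$. One must then verify (i) that these components do not vanish, (ii) that applying $L_-^{\half}$ does not kill them --- i.e.\ that they are not swallowed by $\ker L_-^{\half}=\spn\{R\}$, which is where a projection such as $P_R^c=I-P_R$ must enter --- and (iii) that the resulting vectors lie in the operator domain of $T$ so that the ``no Jordan blocks for a symmetric operator'' argument applies. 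None of (i)--(iii) is supplied, and (ii) in particular is not automatic since $z_j$ may be real and $L_-$ has a kernel. The paper performs precisely this sequence of checks in the proof of the generalized-kernel theorem of Section \ref{s:kerg} (passing to $\chi_1=P_R^c\chi$, $\psi_1=P_R^c\psi$ and deriving $(L_-^{\half}L_+L_-^{\half}-E^2)^2L_-^{\half}\psi_1=0$ with $L_-^{\half}\psi_1\neq0$), so your key lemma is the right one; the reduction to it is what your proposal still owes.
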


\subsubsection{Absence of Embedded Resonances}
\label{spec:embres}

This result is developed in the earlier work of Erdogan-Schlag
\cite{ES1} and Agmon \cite{Ag}.  

Define the space
\begin{eqnarray*}
  X_{\sigma} = L^{2,\sigma} \times L^{2,\sigma},
\end{eqnarray*}
where
\begin{eqnarray*}
  L^{2,\sigma} = \{ f | |\bx|^\sigma f \in L^2\}.
\end{eqnarray*}

Then, we have the following Theorem, proved in \cite{ES1}:
\begin{thm}[Erdogan-Schlag]
  Let $V_1$, $V_2$ have sufficient decay at $\infty$.  Then for any
  $\mu$ such that $|\mu| > \lambda$, $(\mathcal{H}_0 - (\mu \pm
  i0))^{-1} V:X_{-\frac{1}{2}-} \to X_{-\frac{1}{2}-}$ is a compact
  operator, and
  \begin{eqnarray*}
    I + (\mathcal{H}_0 - (\mu \pm i0))^{-1} V
  \end{eqnarray*}
  is invertible on these spaces.
\end{thm}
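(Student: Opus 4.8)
The plan is to exploit the diagonal structure of $\mathcal{H}_0$, reducing the matrix problem to a single scalar Helmholtz resolvent together with one boundedly invertible elliptic block, and then to invoke the limiting absorption principle and a Rellich--Kato uniqueness theorem. I would begin by recording the effect of the potential: since $R$ decays exponentially and $g$ is smooth with $g(0)=0$, the potentials $V_1=g(R^2)+g'(R^2)R^2$ and $V_2=g'(R^2)R^2$ decay exponentially, so multiplication by the matrix potential $V$ maps $X_{-\frac12-}$ boundedly into $X_\sigma$ for \emph{every} $\sigma$ (conjugating $V$ by any polynomial weight leaves an exponentially decaying, hence bounded, function). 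Next I would write $\mathcal{H}_0-\mu=\operatorname{diag}(-\Delta+\lambda-\mu,\ \Delta+\lambda-\mu)$. For $\mu>\lambda$ the first entry is $-\Delta-k^2$ with $k^2=\mu-\lambda>0$, which sits on the continuous spectrum, while the second entry equals $-(-\Delta+(\mu-\lambda))$ and is boundedly invertible on $L^2$; for $\mu<-\lambda$ the two roles are interchanged. The regularization $\mu\pm i0$ is felt only by the Helmholtz block, for which the limiting absorption principle of Agmon and Kato--Kuroda supplies the boundary values $(-\Delta-k^2\mp i0)^{-1}\colon L^{2,\frac12+}\to L^{2,-\frac12-}$ as bounded operators, norm continuous in the spectral parameter away from $k=0$.

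With these ingredients, $K:=(\mathcal{H}_0-(\mu\pm i0))^{-1}V$ factors as $X_{-\frac12-}\xrightarrow{\ V\ }X_{\frac12+}\xrightarrow{(\mathcal{H}_0-(\mu\pm i0))^{-1}}X_{-\frac12-}$ and is thus bounded. For compactness I would split $V=\chi_{\{|x|\le r\}}V+\chi_{\{|x|>r\}}V$; exponential decay forces $\|\chi_{\{|x|>r\}}V\|_{X_{-\frac12-}\to X_{\frac12+}}\to0$ as $r\to\infty$, so $K$ is an operator-norm limit of the truncations $K_r:=(\mathcal{H}_0-(\mu\pm i0))^{-1}\chi_{\{|x|\le r\}}V$, and it suffices to check that each $K_r$ is compact. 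Each $K_r$ gains two derivatives of regularity from the resolvent and compact support from the cutoff, so it is of the form $a(x)\,b(-i\nabla)\cdots$ with $a$ and $b$ vanishing at infinity; the standard compactness criterion for such operators on $L^2(\R^d)$ (in the limiting-absorption version valid up to the edge of the continuous spectrum) then gives compactness of $K_r$, hence of $K$. Consequently $I+K$ is Fredholm of index zero on $X_{-\frac12-}$, and invertibility is equivalent to injectivity.

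It therefore remains to rule out a nonzero $\phi=(\phi_1,\phi_2)\in X_{-\frac12-}$ with $(I+K)\phi=0$; this is the crux. Such a $\phi$ solves $(\mathcal{H}-\mu)\phi=0$ distributionally, carrying the outgoing (respectively incoming) radiation condition coming from the $\pm i0$ prescription. Taking $\mu>\lambda$, the definite block $-\Delta+(\mu-\lambda)+V_1$, which is $\ge\mu-\lambda>0$ since $V_1\ge0$ for the focusing nonlinearities under consideration, is invertible, so the second equation of the system gives $\phi_2=(-\Delta+(\mu-\lambda)+V_1)^{-1}V_2\phi_1$; substituting into the first equation produces a scalar Helmholtz equation $(-\Delta-k^2)\phi_1=W\phi_1$, where $W$ is a nonlocal, exponentially decaying ``potential'' assembled from $V_1,V_2$, with $\phi_1\in L^{2,-\frac12-}$ and the outgoing condition. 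I would finish by invoking the Rellich--Kato--Agmon uniqueness theorem: an outgoing $L^{2,-\frac12-}$ solution of such a Helmholtz-type equation at positive energy with rapidly decaying potential must actually lie in $L^2$, and then Rellich's lemma together with unique continuation forces $\phi_1\equiv0$ (equivalently, Kato's theorem excludes positive eigenvalues). Hence $\phi_1=0$, so $\phi_2=0$, and $I+K$ is injective, therefore invertible.

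The hard part is exactly this last uniqueness step --- the absence of embedded resonances. The boundedness and compactness claims are routine limiting-absorption bookkeeping, but excluding a nontrivial $X_{-\frac12-}$ solution is where the matrix structure (one sign-definite block) and the quantitative exponential decay of the soliton $R$ are genuinely used, and it is the portion of \cite{ES1} on which this argument leans most heavily.
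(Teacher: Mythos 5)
Your architecture (limiting absorption for the Helmholtz block, exponential decay of $V$ giving boundedness into arbitrarily good weighted spaces, truncation plus local compactness of the resolvent, Fredholm alternative) is the same skeleton as the Erdogan--Schlag/Agmon argument the paper cites, and that part is fine. The fatal problem is the last step. You cannot exclude a nonzero kernel element of $I+K$ by appealing to ``Kato's theorem excludes positive eigenvalues'': after eliminating $\phi_2$ the effective perturbation $W$ acting on $\phi_1$ is, as you yourself note, \emph{nonlocal}, and the Kato/Agmon absence-of-positive-eigenvalues theorems are theorems about local, suitably decaying potentials. For the matrix Hamiltonian $\calH$ there is no abstract exclusion of embedded eigenvalues --- this is precisely the point the paper makes in the introduction (``such a condition cannot be proven directly using abstract properties of the linearized operator'') and is the reason the entire numerically assisted spectral-property machinery exists. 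If your final step were valid, Theorem \ref{thm:speccond} would follow with no computation at all. The correct statement, and what \cite{ES1} actually proves, is conditional: \emph{assuming} no embedded eigenvalues (the standing hypothesis carried by the preceding theorem in this subsection), one shows by the Agmon-type bootstrapping with the explicit kernel of $R_0(z)=(-\Delta-z)^{-1}$ that any $\phi\in X_{-\frac12-}$ in the kernel of $I+K$ in fact lies in $L^2$, hence would be an embedded eigenfunction, contradicting the hypothesis. You correctly identified the ``resonance function is actually in $L^2$'' step, but then tried to prove unconditionally what must be taken as a hypothesis here.

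A secondary error: in your reduction you invert the block ``$-\Delta+(\mu-\lambda)+V_1$,'' but the second row of $\calH-\mu$ gives $(-\Delta+(\mu+\lambda)-V_1)\phi_2=V_2\phi_1$; the potential enters with a \emph{minus} sign, so this operator is an attractive Schr\"odinger operator shifted by $\mu+\lambda$, and its invertibility is not automatic from positivity of $V_1$ (for the $3d$ cubic soliton, $\|V_1\|_{L^\infty}=2R(0)^2$ is much larger than $2\lambda$). This is avoidable --- Erdogan--Schlag run the bootstrapping directly on the full vector-valued system rather than reducing to a scalar equation --- but as written your reduction step is not justified.
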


  The proof relies on a similar argument to a restriction theorem from
  harmonic analysis, which follows from a calculation using the specific
  structure of $R_0 (z) = (-\Delta-z)^{-1}$.  This strategy emulates
  closely that of the bootstrapping argument of Agmon, \cite{Ag}, for scalar operators.

\subsubsection{Discrete Spectrum}
\label{specnum:disc}

We wish to show that the spectrum of $\mathcal{H}$ when linearized
about the ground state has the discrete spectral
decomposition Figure \ref{fig:spec_decomp}.  

\begin{rem}
For the $3d$ cubic
nonlinearity, the structure of the discrete spectrum away from the
essential spectrum has been verified numerically in \cite{DeSc}, whose
methods we recall briefly here.
\end{rem}

In \cite{Schlag1}, using arguments derived from \cite{Per1}, it is
shown that the discrete spectrum for supercritical exponents is
determined by the discrete spectrum of $L_{\pm}$.  We present here a
slightly stronger version that works for linearizations about a
minimal mass soliton, $R=R_{\min}$, in saturated nonlinearities.
Though we will not numerically analyze any saturated nonlinearities in
the current work, we present the generalized argument saturated
nonlinearities are also of interest.

The verification of the discrete spectrum heavily relies on the
following result:
following
\begin{thm}[Schlag]
  Assume that $L_{-}$ has no discrete eigenvalues on the interval
  $(0,\lambda]$ and $\mathcal{H}$ is a Hamiltonian as in
  \eqref{eqn:mathcalH} resulting from linearizing about a minimal mass
  soliton.  Then, the only discrete eigenvalue for $\mathcal{H}$ in
  the interval $[-\lambda,\lambda]$ is $0$.
\end{thm}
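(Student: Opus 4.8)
The plan is to show that any eigenfunction of $\mathcal{H}$ with eigenvalue $\mu \in [-\lambda,\lambda]$, $\mu \neq 0$, would force a forbidden eigenvalue of one of the scalar operators $L_\pm$, contradicting the hypothesis on $L_-$ together with known facts about $L_+$. Recall that $\mathcal{H}$ and $JL$ are related by conjugation, with $\sigma(JL) = i\,\sigma(\mathcal{H})$; so an eigenvalue $\mu \in (-\lambda,\lambda)$ of $\mathcal{H}$ corresponds to an eigenvalue $i\mu$ of $JL$ inside the spectral gap, i.e. to a pair $(u,v)$ with $L_- v = i\mu\, u$ and $-L_+ u = i\mu\, v$ (up to normalization). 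Eliminating one component, this gives $L_+ L_- v = \mu^2 v$ and $L_- L_+ u = \mu^2 u$, with $0 \le \mu^2 < \lambda^2$. The first step is therefore to pass from the $\mathcal{H}$-eigenvalue problem to the factored scalar problem $L_- L_+ u = \mu^2 u$, being careful that $u \neq 0$ (if $u = 0$ then $L_- v = 0$, so $v \in \mathrm{span}\{R\}$, and then $L_+ u = -i\mu v \ne 0$ unless $\mu = 0$ — so $\mu = 0$, excluded).

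The second step is the variational heart of the argument. Since $L_-$ is non-negative with kernel $\mathrm{span}\{R\}$, one works with $L_-^{1/2}$ as in the excerpt: setting $w = L_-^{1/2} u$ on the orthogonal complement of $R$, the self-adjoint operator $T = L_-^{1/2} L_+ L_-^{1/2}$ satisfies $T w = \mu^2 w$. Now the key point for a \emph{minimal mass} soliton is that $\langle L_+ \partial_\lambda R, \partial_\lambda R\rangle$ is governed by $\partial_\lambda Q$, which vanishes at the minimal mass point; this is what makes $L_+$ "almost" non-negative on the relevant subspace and pins the bottom of the spectrum of $T$ at $0$. Concretely, I would show: (a) $T \ge 0$ on the appropriate subspace, using that $L_+ \ge 0$ on $\{R\}^\perp$ up to the single bad direction controlled by $\partial_\lambda Q = 0$ at the minimal mass; and (b) the only way to have $Tw = \mu^2 w$ with $\mu^2 < \lambda^2$ and $w$ in the essential-spectrum gap is $\mu^2 = 0$. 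Part (b) uses that $\sigma_{\mathrm{ess}}(L_\pm) = [\lambda,\infty)$, so $\sigma_{\mathrm{ess}}(T) \subseteq [\lambda^2,\infty)$ roughly, hence any eigenvalue $\mu^2$ with $\mu^2 < \lambda^2$ is discrete, and combined with the hypothesis that $L_-$ has no eigenvalue in $(0,\lambda]$ one rules out a strictly positive such $\mu^2$.

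The third step handles the excluded pieces: translating $\mu^2 = 0$ back, we get $L_- L_+ u = 0$, so $L_+ u \in \ker L_- = \mathrm{span}\{R\}$, i.e. $L_+ u = cR$; either $c = 0$ and $u \in \ker L_+ = \mathrm{span}\{\partial_{x_j} R\}$ (the symmetry modes, which are part of the generalized kernel at $0$), or $c \ne 0$ and $u$ is a generalized eigenfunction at $0$. In all cases the corresponding $\mathcal{H}$-eigenvalue is $0$, which is what we wanted: the only discrete eigenvalue of $\mathcal{H}$ in $[-\lambda,\lambda]$ is $0$.

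I expect the main obstacle to be Step 2(a) — establishing the correct non-negativity (or index-one) statement for $L_+$ restricted to the subspace that survives after factoring through $L_-^{1/2}$, and tracking precisely how the minimal-mass condition $\partial_\lambda Q = 0$ enters. This is where the difference between the subcritical/supercritical case (handled in \cite{Schlag1} via \cite{Per1}) and the minimal-mass case shows up, and getting the subspace bookkeeping right — which directions lie in the generalized kernel, which are genuinely positive — is the delicate part. The passage through $L_-^{1/2}$ also requires some care about domains and about the behaviour on $\ker L_-$, but that is routine once the variational inequality is in hand.
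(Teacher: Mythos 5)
Your reduction in Steps 1 and 3 matches the paper's, and you have correctly located where the minimal-mass condition enters (via $\langle R, \partial_\lambda R\rangle = \tfrac12 \partial_\lambda Q = 0$). But Step 2 --- which you yourself flag as the obstacle --- is where the actual proof lives, and what you have written there does not close. Non-negativity of $T = L_-^{1/2} L_+ L_-^{1/2}$ on a subspace cannot rule out an eigenvalue $\mu^2 \in (0,\lambda^2]$: a non-negative operator may perfectly well have discrete spectrum strictly between $0$ and the bottom of its essential spectrum. Your part (b) simply asserts that the hypothesis on $L_-$ ``rules out a strictly positive such $\mu^2$,'' but no mechanism is supplied for transferring a statement about the spectrum of $L_-$ into a statement about the spectrum of $T$ (or of $L_- L_+$); as written, (b) is a restatement of the theorem rather than an argument for it.

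The missing idea is an eigenvalue count via the Courant--Fischer minimax principle applied to $A = P L_+ P$, where $P$ projects off $R$. One first determines exactly how many eigenvalues $A$ has below $\lambda$ (there are $d+2$, using the function $g(\alpha) = \langle (L_+-\alpha)^{-1}R, R\rangle$, whose monotonicity and whose zero at $\alpha=0$ --- this is precisely the minimal-mass condition --- pin down the count). Then, assuming a nonzero eigenvalue $E$ with $E^2 \le \lambda^2$ exists, one exhibits the $(d+3)$-dimensional test space $\mathcal{G} = \spn\{R,\ \partial_j R,\ \partial_\lambda R,\ u_E\}$ and proves the quadratic-form inequality $\langle A f, f\rangle \le E^2 \langle P L_-^{-1} P f, f\rangle < \lambda \langle f, f\rangle$ on $\mathcal{G}$; this is exactly where the hypothesis that $L_-$ has no eigenvalue in $(0,\lambda]$ is consumed, since it yields $\langle PL_-^{-1}Pf,f\rangle < \langle f,f\rangle$. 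Minimax then forces at least $d+3$ eigenvalues of $A$ below $\lambda$, contradicting the count of exactly $d+2$. Without this counting step (or an equivalent bridge between the spectra of $L_-$ and $L_+$), your outline does not yield the theorem.
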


\begin{rem}
  A very similar theorem appeared in \cite{Schlag1} proving the same
  result for the Hamiltonian $\mathcal{H}$ of the form
  \eqref{eqn:mathcalH} formed from $g(s) = s$ with $\lambda = 1$ and
  $\bx \in \RR^3$.  The proof follows with minimal changes and is
  adaptable to many cases, hence we include it below for completeness.
\end{rem}

\begin{proof}
  We argue by contradiction.  To this end, assume $\mathcal{H}$ has an
  eigenvalue away from $0$, say at $E$.  Let $\lambda = 1$ for
  simplicity.  Then $\mathcal{H}^2$ has an eigenvalue at some value
  $E^2 \in (0,1]$.  Hence, we have
  \begin{eqnarray*}
    L_{-} L_{+} u_E = E^2 u_E,
  \end{eqnarray*}
  for $E^2<1$.  Since $L_{-}$ is self-adjoint, we see that $u_E \perp
  \phi$.  By elliptic regularity, we have that $u_E \in H^4_{loc}$.
  Let $P$ be the projection orthogonal to $\phi$.  Let $A = P L_{+}P$.
  Using that
  \begin{equation*}
    \ker(L_{+}) = \spn\{ \partial_j \phi | 1 \leq j \leq d \}
  \end{equation*}
  and
  \begin{eqnarray}
    \label{eqn:es2}
    (\mathcal{H} - z)^{-1} & = & (\mathcal{H}_0 -z)^{-1} [I - U_1 [ I
    - U_2 J (\mathcal{H}_0 -z)^{-1} U_1]^{-1} \\ 
    && \times U_2 J (\mathcal{H}_0 -z)^{-1} U_1]^{-1}] ,  \notag
  \end{eqnarray}
  we have
  \begin{equation*}
    \ker (L_{+}) = \spn\{ \partial_\lambda R, \ \partial_j R | 1 \leq j \leq d \}
  \end{equation*}
  since
  \begin{eqnarray*}
    \langle R, \partial_\lambda R \rangle = 0.
  \end{eqnarray*}
  Take $E_0$ to be the unique negative eigenvalue for $L_{+}$.  Then,
  define
  \begin{eqnarray*}
    g(\alpha) = \langle (L_{+} - \alpha)^{-1} R, R \rangle,
  \end{eqnarray*}
  which is well-defined and differentiable on $(E_0,1)$ since $\phi$
  is orthogonal to the kernel of $L_{+}$.  We have
  \begin{eqnarray*}
    g'(\alpha) & = & \langle (L_{+} - \alpha)^{-1} R,R \rangle > 0
\end{eqnarray*}
and
\begin{eqnarray*} 
    g(0) & = & \frac12 \langle R, \partial_\lambda R \rangle = 0.
  \end{eqnarray*}
  Hence, $g(0) = 0$ is the only $0$ for $g$ in the interval $(E_0,1)$
  since
  \begin{eqnarray*}
    \lim_{\alpha \to E_0} g(\alpha) \to -\infty .
  \end{eqnarray*}

  Conversely, if $A f = \alpha f$ for some $-\infty < \alpha < 1$,
  $\alpha \neq 0$ and $f \in L^2$, then $f \perp R$ and
  \begin{eqnarray*}
    (PL_{+}P-\lambda)f = (A-\lambda) f = 0.
  \end{eqnarray*}
  Since
  \begin{eqnarray*}
    E_0 \langle f, f \rangle \leq \langle L_{+} f, f \rangle = \lambda \langle f, f \rangle,
  \end{eqnarray*}
  we know that $\lambda \geq E_0$.  If $\lambda = E_0$, then $f$ is a
  ground state of $L_{+}$ and hence not orthogonal to $R$.  However,
  $g(\lambda) = 0$, hence $\lambda = 0$.  So, $A$ has a collection of
  eigenvalues at $0$.  Define
  \begin{eqnarray*}
    \mathcal{G} = \spn\{ R, R_j, R_\lambda , u_E \}.  
  \end{eqnarray*}
  We would like to show that $\dim (\mathcal{G}) = d+3$.  Since $\phi$
  is orthogonal to all the other functions, we need only show that the
  equation
  \begin{eqnarray}
    \label{eqn:lin}
    c_1 u_E  + c_2 R_\lambda + \sum_{j =1}^d c_{j+2} R_j = 0
  \end{eqnarray}
  has only the trivial solution $c_j = 0$ for all $j$.  By applying
  $L_{+}$ to \eqref{eqn:lin}, we see
  \begin{eqnarray*}
    c_1 L_{+} u_E + c_2 R = 0.
  \end{eqnarray*}
 Taking the inner product with $u_E$, we conclude $c_1 = 0$.
  This implies that $c_2 = 0$.  As a result, $c_j = 0$ for $j =
  2,\dots,d+2$.  Now, if we can show that
  \begin{eqnarray}
    \label{eqn:max}
    \sup_{\| f \|_{L^2}, f \in \mathcal{G} } \langle A f, f \rangle < 1, 
  \end{eqnarray}
  then by the Courant minimax principle, there would be at least $d+3$
  eigenvalues less than $1$ for $A$.  However, we have shown there are
  exactly $d+2$ of them.  Note that neither the minimal eigenfunction
  for $L_{+}$ nor $\phi$ itself are eigenvalues of $A$ due to
  orthogonality arguments.  Hence, if we can prove \eqref{eqn:max}, we
  have proved the result.

 By our assumption on the spectrum of $L_{-}$, 
\begin{eqnarray*}
\langle P L_{-}^{-1} P f, f \rangle < \langle f, f \rangle
\end{eqnarray*}
 for $f \neq 0$.  Since $E \leq 1$ by assumption, we can prove the stronger result that
  \begin{eqnarray*}
   \langle A f, f \rangle \leq E^2 \langle P L_{-}^{-1} P f, f \rangle
 \end{eqnarray*}
 for all $f = a u_E + b \phi + \bold{c} \cdot \nabla R + d R_\lambda.$
  To this end, we have
  \begin{equation*}
    \begin{split}
      \langle A f, f \rangle & =  \langle L_{+} (a u_E), a u_E + \bold{c} \cdot \nabla \phi + d \phi_\lambda \rangle \\
      &\quad +  \langle L_{+} \bold{c} \cdot \nabla R, a u_E + \bold{c} \cdot \nabla R + d R_\lambda \rangle \\
      & =  E^2 \langle L_{-}^{-1} (a u_E),  a u_E + \bold{c} \cdot \nabla R + d R_\lambda \rangle \\
      &\quad +  E^2 \langle d R_\lambda, L_{-}^{-1} (a u_E) \rangle \\
      & \leq   E^2 \langle L_{-}^{-1} (a u_E),  a u_E + \bold{c} \cdot \nabla R + d R_\lambda \rangle + E^2 \langle d R_\lambda, L_{-}^{-1} (a u_E) \rangle \\
      &\quad +  E^2 \langle L_{-}^{-1} (\bold{c} \cdot \nabla R + d R_\lambda), (\bold{c} \cdot \nabla R + d R_\lambda) \rangle \\
      & \leq E^2 \langle P L_{-}^{-1} P f, f \rangle
    \end{split}
  \end{equation*}
  since $L_{-}^{-1} {u_E} \perp \nabla R$ and $L_{-}$ is positive
  definite on $\mathcal{G} \setminus R$.
\end{proof}

In order to test the discrete spectral assumptions, we
briefly recall the work of \cite{DeSc},  which requires some numerical computation.
First, let review the Birman-Schwinger method.  let $H = L_{-} - \lambda = -\Delta - V$ for $V > 0$.
Since we are looking for small, positive eigenvalues of $L_{-}$, so
take $H f = -\alpha^2 f$ for $0< \alpha < \lambda$ so we have $L_{-} f
= (\lambda - \alpha^2) f$.  Set $U = \sqrt{V}$ and $g = Uf$, then
\begin{eqnarray*}
  g = U (-\Delta + \alpha^2)^{-1} U g.
\end{eqnarray*}
In other words, $g \in L^2$ is an eigenfunction for
\begin{eqnarray*}
  K(\alpha) = U (-\Delta + \alpha^2)^{-1} U g
\end{eqnarray*}
with eigenvalue $1$ where $K > 0$, compact.  Conversely, if $g
\in L^2$ satisfies $K(\alpha) g = g$, then
\begin{eqnarray*}
  f = U^{-1} g = (-\Delta + \alpha^2)^{-1} Ug \in L^2
\end{eqnarray*}
and $Hf = -\alpha^2 f$.  The eigenvalues of $K(\alpha)$ are seen to be
strictly increasing as $\alpha \to 0$ since
\begin{eqnarray*}
  K'(\alpha) = -2 \lambda U (-\Delta + \alpha^2)^{-2} U.
\end{eqnarray*}
This implies that
\begin{eqnarray*}
  \# \left\{ \alpha : \text{Ker} (H-\alpha^2) \neq \{0\} \right\} = \# \left\{ E>1 : \text{Ker} (K(0)-E)  \neq \{0\} \right\}
\end{eqnarray*}
counted with multiplicity.

Finally, use the symmetric resolvent identity to see
\begin{eqnarray*}
  (H-z)^{-1} & = & (-\Delta -z)^{-1} + (-\Delta -z)^{-1} U \\
  && \times \left[ I - U (-\Delta -z)^{-1} U \right]^{-1} U (-\Delta -z)^{-1}.
\end{eqnarray*}
Hence, the Laurent expansion about $z=0$ does not require negative
powers for $z$ iff $I + U(-\Delta -z)^{-1} U$ is invertible at $z =0$,
i.e.
\begin{equation*}
  \ker (I - U (-\Delta)^{-1} U) = \{ 0 \}
\end{equation*}
by the Fredholm alternative since $V$ has exponential decay.  Hence,
if $H$ has no resonance or eigenvalue at $0$, then $K(0)$ will not
have an eigenvalue at $1$.  If we then count the eigenvalues
$\alpha_j$ in decreasing order for $K(0)$, then $H$ has exactly $N$
negative eigenvalues and neither an eigenvalue or resonance at $0$ iff
$\alpha_1 \geq \alpha_2 \geq \dots \geq \alpha_N > 1$ and
$\alpha_{N+1} < 1$.  Hence, we can study numerically study $K$ for a
soliton of the saturated nonlinear Schr\"odinger equation.  To do so,
we must accurately find a soliton, then use it as potential in the
truncation and discretization scheme presented in \cite{DeSc}, where
the gap condition is verified for the Hamiltonian resulting from
linearization about the $3d$ cubic ground state soliton.
 
\subsection{Generalized Kernel}
\label{s:kerg}

Let us review the generalized kernel of a Hamiltonian resulting
from linearizing about a soliton.  Following \cite{W1},
we see by direct calculation that the vectors
\begin{eqnarray*}
  \left[ \begin{array}{c}
      0 \\
      R
    \end{array} \right], \left[ \begin{array}{c}
      R_j \\
      0
    \end{array} \right]
\end{eqnarray*}
for all $j = 1, \dots, d$ are contained in $\ker(JL)$.  Now, as
$Q(R_\lambda)$ is differentiable with respect to $\lambda$, we have by
a simple calculation that $L_{+} \partial_\lambda R = - R$ and $L_{-}
(x \phi) = -2 \nabla R$.  Hence, the vectors
\begin{eqnarray*}
  \left[ \begin{array}{c}
      0 \\
      x_j R
    \end{array} \right], \left[ \begin{array}{c}
      (\partial_\lambda R)_{\lambda_0} \\
      0
    \end{array} \right]
\end{eqnarray*}
in the generalized null space of order $2$.  Notice that so far we
have constructed at $2d+2$ dimensional null space.  Since we know the
null spaces of $L_{-}$ and $L_{+}$ exactly, these are unique.  For
power nonlinearities, $g(s) = s^{\sigma}$, 
\begin{equation}
(\partial_\lambda R)_{\lambda_0=1} = \frac12 (\frac{1}{\sigma} R + \bx \cdot
\grad R).
\end{equation}
We use this explicit form in our calculations. 

As a result, we have the following
\begin{thm}
  Let $g$ be the $L^2$ supercritical monomial nonlinearity.  There exists a
  $2d+2$ dimensional null space for $\mathcal{H}$, the matrix
  Hamiltonian resulting from linearization about the ground state
  soliton ($\lambda = 1$), consisting of the span of the vectors
  \begin{eqnarray*}
    \left\{ \left[ \begin{array}{c}
          0 \\
          R
        \end{array} \right], \left[ \begin{array}{c}
          R_j \\
          0
        \end{array} \right],
      \left[ \begin{array}{c}
          0 \\
          x_j R
        \end{array} \right], \left[ \begin{array}{c}
          (\partial_\lambda R)_{\lambda_0} \\
          0
        \end{array} \right] \right\}.
  \end{eqnarray*}
\end{thm}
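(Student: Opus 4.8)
The plan is to verify directly that each of the $2d+2$ listed vectors lies in the (generalized) kernel of $\mathcal{H}$, then argue that together they span exactly a $(2d+2)$-dimensional space, and finally that this exhausts the generalized kernel. First I would record the two ingredients already assembled in the preceding discussion: $L_- R = 0$, $L_+ R_j = 0$ for $1\le j\le d$ (these are the standard facts about the null spaces of $L_\pm$ recalled in Section~\ref{sec:sollin}), and the two ``second-order'' relations $L_+ \partial_\lambda R = -R$ and $L_-(x_j R) = -2 R_j$, the latter coming from differentiating \eqref{eqn:sol} in $\lambda$ and in the translation parameter respectively. Writing $\mathcal{H}$ (equivalently $JL$) in the block form of Section~\ref{sec:sollin}, a one-line matrix multiplication shows $(0,R)^T$ and $(R_j,0)^T$ are genuine null vectors, while $JL\,(0,x_jR)^T = (L_-(x_jR),0)^T = (-2R_j,0)^T$ and $JL\,(\partial_\lambda R,0)^T = (0,-L_+\partial_\lambda R)^T = (0,R)^T$, so applying $JL$ a second time annihilates these, placing them in the generalized kernel of order $2$. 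For the power nonlinearity I would also insert the explicit expression $(\partial_\lambda R)_{\lambda_0=1} = \tfrac12(\tfrac{1}{\sigma}R + \bx\cdot\grad R)$ so the spanning set is concrete.

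Next I would establish linear independence, hence that the span has dimension exactly $2d+2$. The cleanest route is to use the orthogonality structure: $R > 0$ is radial, the $R_j = \partial_{x_j}R$ are odd in the respective variable, and $\langle R, \partial_\lambda R\rangle = \tfrac12 \partial_\lambda Q(R_\lambda)$, which for the $L^2$-supercritical monomial nonlinearity is nonzero (indeed negative). Decomposing into the ``$u$-slot'' vectors $\{R_j, \partial_\lambda R\}$ and the ``$v$-slot'' vectors $\{R, x_j R\}$, within each slot one checks independence by parity and by the non-vanishing pairing; no cross-slot relations are possible since the two slots are disjoint coordinate subspaces of $H^1\times H^1$.

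The main obstacle is the uniqueness claim — that there is \emph{nothing more} in the generalized kernel. Here I would lean on the fact, emphasized in the text, that the null spaces of the scalar self-adjoint operators $L_-$ and $L_+$ are known \emph{exactly}: $\ker L_- = \spn\{R\}$ and $\ker L_+ = \spn\{R_1,\dots,R_d\}$ (this uses that $R$ is the ground state of $L_-$ and the nondegeneracy of the ground state soliton established in \cite{Mc}, so $L_+$ has exactly the translational zero modes and a single negative eigenvalue, no extra kernel). Given this, I would run the standard Jordan-chain analysis for $JL$: a vector in $\ker(JL)$ must have $L_- v = 0$ and $L_+ u = 0$, forcing $u\in\spn\{R_j\}$, $v\in\spn\{R\}$; a vector in the generalized kernel of order $2$ must map under $JL$ into $\ker(JL)$, which yields the solvability conditions $L_+ u' \in \spn\{R\}$ and $L_- v' \in \spn\{R_j\}$, solved (modulo $\ker L_\pm$) precisely by $\partial_\lambda R$ and $x_j R$; and the chain terminates at order $2$ because going to order $3$ would require $R \perp \ker L_+$, i.e. $\langle R, R_j\rangle = 0$ (true by parity) \emph{and} $\langle R,\partial_\lambda R\rangle = 0$ — but the latter fails in the supercritical case, so no order-$3$ generalized eigenvector exists. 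Assembling these three steps gives the theorem; I would present the construction and independence explicitly and state the uniqueness as following from the exact knowledge of $\ker L_\pm$ together with $\partial_\lambda Q \neq 0$, citing \cite{W1} for the Jordan-chain bookkeeping.
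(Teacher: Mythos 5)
Your construction of the $2d+2$ vectors via the order-two relations $L_+\partial_\lambda R=-R$ and $L_-(x_jR)=-2R_j$ is exactly what the paper does in the text \emph{preceding} the theorem, but your route to the rest of the statement is genuinely different from the paper's written proof. The paper's proof block spends nearly all of its effort on a complementary fact --- that no \emph{nonzero} eigenvalue $E$ can carry a Jordan block, shown by transferring a putative generalized eigenvector of $L_+L_-$ at $E^2$ to the self-adjoint operator $L_-^{1/2}L_+L_-^{1/2}$ --- together with the remark that the adjoint's generalized kernel is obtained by swapping components; the uniqueness of the generalized kernel at $0$ is dispatched in one sentence (``since we know the null spaces of $L_\pm$ exactly, these are unique''). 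Your Jordan-chain analysis supplies the argument the paper omits and is the more on-point proof of the stated theorem; what you lose relative to the paper is the statement about nonzero eigenvalues, which is not actually part of the claim.

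There is, however, a concrete slip in your termination step, and as written it points the wrong way. There are two chains, each with its own order-three solvability condition. For $(0,R)\to(\partial_\lambda R,0)$, extending requires $L_-v=\partial_\lambda R$ to be solvable, i.e.\ $\langle R,\partial_\lambda R\rangle=0$; this is the condition you correctly identify as failing in the supercritical case. For $(R_j,0)\to(0,x_jR)$, the order-three condition is solvability of $L_+u=-x_jR$, i.e.\ $x_jR\perp\ker L_+=\spn\{R_k\}$, and this fails because $\langle x_jR,R_j\rangle=-\tfrac12\|R\|_{L^2}^2\neq 0$ after integrating by parts. The condition you invoke, $\langle R,R_j\rangle=0$ ``by parity,'' is the \emph{order-two} solvability condition that guarantees $\partial_\lambda R$ exists in the first place; were it the order-three condition, its truth would \emph{extend} the chain rather than terminate it, contradicting the $2d+2$ count. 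With the two conditions correctly matched to their chains, your argument closes.
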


\begin{proof}
The generalized null space of the adjoint can be found by reversing
the location of the non-zero elements in the above vectors.

Suppose that there exists a generalized eigenspace for the eigenvalue
$E \neq 0$.  Then, there exists $\chi \neq 0$ and $\psi \neq 0$ such
that $(\mathcal{H} - E) \psi = \chi$ and $(\mathcal{H} - E) \chi = 0$.
Then, note that
\begin{eqnarray*}
  (\mathcal{H}^2 - E^2) \psi & = & (A + E) \chi = 2 E \chi, \\
  (\mathcal{H}^2 - E^2) \chi & = & 0.
\end{eqnarray*}
Hence, $\mathcal{H}^2$ has a generalized eigenspace at $E^2$.  As a
result, we see that $T = L_{+} L_{-}$ has a generalized eigenspace at
$E^2$.  Let $T \chi = E^2 \chi$ and $(T-E^2) \psi = c \chi$, for some
$c \neq 0$.  Hence,
\begin{eqnarray*}
  (L_{-}^{\frac{1}{2}} L_{+} L_{-}^{\frac{1}{2}} - E^2) L_{-}^\frac{1}{2} \psi_1 & = & c \chi_1 , \\
  (L_{-}^{\frac{1}{2}} L_{+} L_{-}^{\frac{1}{2}} - E^2)^2
  L_{-}^\frac{1}{2} \psi_1 & = & c L_{-}^{\frac{1}{2}} ( L_{+} L_{-} -
  E^2) \chi_1 = 0, 
\end{eqnarray*}
where given $P^c_R = I - P_R$, we have $\chi_1 = P^c_R \chi \neq 0$
since $T \chi = E^2 \chi$ and $\psi_1 = P^c_R \psi \neq 0$ since
$(T-E^2) \psi = c \chi$.  However, this means that the self-adjoint
operator $L_{-}^{\frac{1}{2}} L_{+} L_{-}^{\frac{1}{2}}$ has a
generalized eigenvalue, which is impossible by an orthogonality
argument.

Since we have assumed there are no eigenvalues at the endpoints of the
continuous spectrum, there can be no accumulation and the number of
discrete eigenvalues is finite.
\end{proof}

\subsection{Natural Orthogonality Conditions}
\label{sec:verification}

As noted in Definition \ref{def:specprop}, even if the spectral
property holds, it only implies the absence of embedded eigenvalues on
a subspace of $\calU \subset L^2\times L^2$.  That
we must limit ourselves to a subspace will become clear
in Section \ref{sec:index_computations}, where we demonstrate that the operators $\calL_\pm$
have negative eigenvalues. 

This subspace will be defined as the
orthogonal complement to the span of a set of vectors.  If this
collection of vectors is not chosen properly, we may find that the
spectral property holds though the operator still has embedded
eigenvalues.  Thus the constraints on the set of vectors whose
orthogonal complement will define $\calU$ are:
\begin{enumerate}
\item They must be orthogonal to any embedded eigenvalues,
\item Orthogonality with respect to them should induce positivity of
  $\calL$ on $\calU$.
\end{enumerate}

A way of meeting both of these requirements is to use the discrete spectrum of the
adjoint matrix Hamiltonian, $\mathcal{H}^*$.  To that end, we rely on the following simple results.
\begin{lem}
  If $(\lambda, \vec{u})$ is an eigenvalue, eigenvector pair for $JL$
  and $(\sigma, \vec{v})$ is an eigenvalue, eigenvector pair for
  $(JL)^\ast$, then
  \[
  (\lambda - \sigma^\ast) \inner{\vec{u}}{\vec{v}} = 0 .
  \]
  Thus, if $\lambda - \sigma^\ast \neq 0$, the states are orthogonal.
\end{lem}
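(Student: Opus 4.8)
The statement is the standard biorthogonality between the discrete spectrum of an operator and that of its adjoint, so the plan is simply to pair the eigenvalue relation for $JL$ against the eigenvalue relation for $(JL)^\ast$ through the definition of the adjoint. Concretely, I would start from the identity
\[
\inner{JL\,\vec u}{\vec v} = \inner{\vec u}{(JL)^\ast \vec v},
\]
valid because $\vec u$ lies in the domain of $JL$ (it is an eigenvector) and $\vec v$ lies in the domain of $(JL)^\ast$ (likewise). The left-hand side equals $\inner{\lambda \vec u}{\vec v} = \lambda \inner{\vec u}{\vec v}$ by linearity in the first slot, while the right-hand side equals $\inner{\vec u}{\sigma \vec v} = \sigma^\ast \inner{\vec u}{\vec v}$ because the inner product is conjugate-linear in the second slot. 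Subtracting the two expressions gives $(\lambda - \sigma^\ast)\inner{\vec u}{\vec v} = 0$, which is exactly the claim, and the final sentence (orthogonality when $\lambda \neq \sigma^\ast$) is then immediate.

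The only points that require a word of justification are the domain memberships and the regularity of the eigenfunctions. Since $JL$ is a matrix Schr\"odinger operator with exponentially decaying, bounded potentials, any $L^2$ eigenfunction is automatically in $H^2 \times H^2$ by elliptic regularity, hence in $\dom(JL)$; the same applies to $(JL)^\ast$, whose potential matrix is the transpose of that of $JL$. Thus the pairing above is legitimate with no boundary terms, and one may equally phrase the argument as a direct integration by parts using the explicit block form of $JL$ and $(JL)^\ast$ if one prefers to avoid invoking abstract adjoint theory.

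I do not anticipate a genuine obstacle here: the lemma is a one-line computation once the adjoint relation is written down, and its role in the paper is organizational, namely to justify (together with the next results on the generalized kernel of $\mathcal{H}^\ast$) that the subspace $\calU$ on which we verify the spectral property can be taken as the orthogonal complement of the discrete eigenvectors of $(JL)^\ast$ without excluding any embedded eigenvector of $JL$. If anything needs care it is only bookkeeping with the chosen inner-product convention, so that the complex conjugate lands on $\sigma$ rather than on $\lambda$; this is consistent with the convention already used in the proof of Theorem~\ref{thm:coercive}.
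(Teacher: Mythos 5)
Your proof is correct and is exactly the argument the authors intend: the paper states this lemma without proof (it is introduced as one of several ``simple results''), and the adjoint-pairing computation $\inner{JL\,\vec u}{\vec v}=\inner{\vec u}{(JL)^\ast\vec v}$ with the conjugate landing on $\sigma$ is the standard one-line justification. The remarks on domains and elliptic regularity are fine and consistent with the paper's setting of exponentially decaying potentials.
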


\begin{cor}
  An eigenstate of $JL$ associated with an imaginary (possibly
  embedded) eigenvalue, $i \tau \neq 0$, is orthogonal to
  $\kerg((JL)^\ast)$.
\end{cor}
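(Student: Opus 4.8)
The plan is to deduce the corollary from the lemma by an induction on the order of the generalized eigenvector. Write $\vec{u}$ for the eigenstate of $JL$ with $JL\vec{u} = i\tau\vec{u}$, $\tau\neq 0$, and recall that $\kerg((JL)^\ast) = \bigcup_{k\geq 1}\kerr\big(((JL)^\ast)^k\big)$. We must show $\inner{\vec{u}}{\vec{v}}=0$ for every $\vec{v}$ in this union; it suffices to prove, for each $k\geq 1$, that $\vec{v}\in\kerr\big(((JL)^\ast)^k\big)$ implies $\inner{\vec{u}}{\vec{v}}=0$.

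For the base case $k=1$, any $\vec{v}\in\kerr((JL)^\ast)$ is an eigenvector of $(JL)^\ast$ with eigenvalue $\sigma = 0$, hence $\sigma^\ast = 0$. Applying the preceding lemma with $\lambda = i\tau$ gives $(i\tau - 0)\inner{\vec{u}}{\vec{v}} = 0$, and since $\tau\neq 0$ we conclude $\inner{\vec{u}}{\vec{v}} = 0$. For the inductive step, assume the statement holds at level $k-1$ and let $((JL)^\ast)^k\vec{v} = 0$. Then $(JL)^\ast\vec{v}\in\kerr\big(((JL)^\ast)^{k-1}\big)$, so by the inductive hypothesis $\inner{\vec{u}}{(JL)^\ast\vec{v}} = 0$. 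Moving $JL$ across the pairing by the defining property of the adjoint, $\inner{JL\,\vec{u}}{\vec{v}} = \inner{\vec{u}}{(JL)^\ast\vec{v}} = 0$, and since $JL\,\vec{u} = i\tau\vec{u}$ this reads $i\tau\inner{\vec{u}}{\vec{v}} = 0$; once more $\tau\neq 0$ forces $\inner{\vec{u}}{\vec{v}} = 0$. This closes the induction and proves the corollary.

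The only point requiring care — and the step I would treat as the main obstacle — is the manipulation $\inner{JL\,\vec{u}}{\vec{v}} = \inner{\vec{u}}{(JL)^\ast\vec{v}}$, which is a genuine integration by parts on $L^2\times L^2$ and must be justified on the relevant domains. This is harmless here because the elements of $\kerg((JL)^\ast)$ are given by the explicit smooth, exponentially decaying formulas recorded in Section~\ref{s:kerg} (images of $R$, $R_j$, $x_jR$, $\partial_\lambda R$ under the obvious reflection), so all boundary terms vanish and the pairing against $\vec{u}\in\calU\subset L^2\times L^2$ is well defined. With that observation in place the argument above is complete; no further estimates are needed.
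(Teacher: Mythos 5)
Your proof is correct. The paper states this corollary without proof as an immediate consequence of the preceding lemma, and your induction on the order of the generalized eigenvector is exactly the right way to complete that step: the lemma alone only covers genuine eigenvectors of $(JL)^\ast$ (i.e.\ $\ker((JL)^\ast)$), whereas $\kerg((JL)^\ast)$ contains order-two generalized eigenvectors (Section~\ref{s:kerg}), and your inductive step $\inner{JL\,\vec{u}}{\vec{v}} = \inner{\vec{u}}{(JL)^\ast\vec{v}}$ together with $\tau\neq 0$ handles these cleanly.
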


\begin{cor}
  Let $(i \tau \neq 0, \vec{\psi})$ and $(\lambda >0, \vec{\phi})$ be
  eigenvalue, eigenvector pairs of $JL$.  Then
  \begin{align*}
    \inner{\psi_1}{\phi_2} & = 0,\\
    \inner{\psi_2}{\phi_1} & = 0.
  \end{align*}
\end{cor}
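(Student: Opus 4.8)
The plan is to unpack the two eigenvalue relations componentwise and play them against the self-adjointness of $L_\pm$. Writing $\vec{\psi}=(\psi_1,\psi_2)^T$ and $\vec{\phi}=(\phi_1,\phi_2)^T$, the equations $JL\vec{\psi}=i\tau\vec{\psi}$ and $JL\vec{\phi}=\lambda\vec{\phi}$ become
\begin{equation*}
  L_-\psi_2 = i\tau\,\psi_1,\qquad L_+\psi_1 = -i\tau\,\psi_2,\qquad L_-\phi_2=\lambda\,\phi_1,\qquad L_+\phi_1=-\lambda\,\phi_2 .
\end{equation*}
Set $a=\inner{\psi_1}{\phi_2}$ and $b=\inner{\psi_2}{\phi_1}$; these are the two quantities we must show vanish.

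First I would evaluate $\inner{L_-\psi_2}{\phi_2}$ in two ways: by the first relation it equals $i\tau\,a$, while moving $L_-$ onto the second entry (self-adjointness) and using the third relation gives $\bar\lambda\,b=\lambda\,b$, since $\lambda>0$ is real. Likewise $\inner{L_+\psi_1}{\phi_1}$ equals $-i\tau\,b$ by the second relation and $-\lambda\,a$ by self-adjointness together with the fourth relation. Hence
\begin{equation*}
  i\tau\,a=\lambda\,b,\qquad\qquad i\tau\,b=\lambda\,a .
\end{equation*}
Substituting one identity into the other yields $\bigl((i\tau)^2-\lambda^2\bigr)a=0$. Since $i\tau$ is purely imaginary and nonzero while $\lambda>0$ is real, $(i\tau)^2=-\tau^2\neq\lambda^2$, so $a=0$, and then $b=0$ as well, which is the assertion.

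The only delicate point is justifying the pairings $\inner{L_-\psi_2}{\phi_2}=\inner{\psi_2}{L_-\phi_2}$ and $\inner{L_+\psi_1}{\phi_1}=\inner{\psi_1}{L_+\phi_1}$ when $\vec{\psi}$ is a (possibly) \emph{embedded} eigenvector, hence a priori only polynomially bounded rather than $L^2$. This is settled by the exponential decay of the genuine eigenvector $\vec{\phi}$ attached to $\lambda>0$ (and of the potentials $V_\pm$), which makes all the relevant integrals absolutely convergent and legitimizes the integration by parts, with elliptic regularity supplying the needed smoothness. As an alternative to the direct computation, one can instead deduce the two identities above from the preceding Lemma: a short check shows that $\bigl(\lambda,(\phi_2,\phi_1)^T\bigr)$ and $\bigl(-\lambda,(-\phi_2,\phi_1)^T\bigr)$ are eigenpairs of $(JL)^\ast$, and pairing each of these against the eigenpair $(i\tau,\vec{\psi})$ of $JL$ gives $a+b=0$ and $-a+b=0$ respectively, again forcing $a=b=0$.
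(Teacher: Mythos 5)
Your proof is correct and is essentially the paper's argument: the relations $i\tau a=\lambda b$ and $i\tau b=\lambda a$ that you derive from the self-adjointness of $L_\pm$ are exactly what one obtains by pairing $\vec{\psi}$ against the adjoint eigenvectors built from $\vec{\phi}$, which is how the paper proceeds (and which you spell out yourself in the closing paragraph), and both routes close by using that $i\tau$ is nonzero purely imaginary while $\lambda>0$ is real, so $(i\tau)^2\neq\lambda^2$. Incidentally, your identification of the adjoint eigenpairs is the correct one --- $(\lambda,(\phi_2,\phi_1)^T)$ and $(-\lambda,(-\phi_2,\phi_1)^T)$ --- whereas the paper's displayed pair $(\lambda,(-\phi_2,-\phi_1)^T)$ contains a sign slip, that vector being proportional to $(\phi_2,\phi_1)^T$; the two linear equations and the conclusion are unaffected.
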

\begin{proof}
  By the Hamiltonian symmetry of the problem, $-\lambda$ $(\phi_2,
  \phi_1)^T$ and $\lambda$, $(-\phi_2,-\phi_1)^T$ are eigenvalue pairs
  of the adjoint, $(JL)^\ast$.  Therefore,
  \begin{align*}
    \inner{\psi_1}{\phi_2} - \inner{\psi_2}{\phi_1} & = 0, \\
    -\inner{\psi_1}{\phi_2} - \inner{\psi_2}{\phi_1} & = 0.
  \end{align*}
  Adding and subtracting these equations gives the result.
\end{proof}
These trivial observations motivate using the known spectrum of the
adjoint system in constructing the orthogonal subspace.  For the 3D
cubic problem, we can thus use make use of eigenstates coming from the
origin and the two off axis, real eigenvalues.

\section{Bilinear Forms and the Spectral Property}
\label{sec:normal}

We show here how the Spectral
Property \ref{def:specprop} is a condition sufficient
for showing there are no embedded eigenvalues, thus proving Theorems
\ref{thm:coercive} and \ref{thm:speccond}.

For general nonlinearities, we assume the operator resulting from
linearizing about a soliton $R$ has as discrete spectrum that is one
of the following:
\begin{itemize}
\item $(i)$ a $2d+2$ dimensional null space given by $R, \grad
  R, \partial_\lambda R, \bx R$ plus $2$ eigenfunctions with symmetric
  discrete eigenvalues $\lambda_0$, $-\lambda_0$ such that
  $\lambda_0^2 \in \reals$,
\item $(ii)$ a $2d+4$ dimensional null space given by $R, \grad
  R, \partial_\lambda R, \bx R, \alpha, \beta$.
\end{itemize}

In the following subsections, we establish the following:
\begin{thm}
  The generalized spectral property holds for the 3d cubic problem for
  $( f, g)^T  \in \calU \subset L^2 \times L^2 $ specified by the following
  orthogonality conditions:
 \begin{equation*}
    \inner{f}{R} =0,\quad \inner{g}{R + \bx \cdot \nabla R}=0,
    \quad\inner{f}{\phi_2} =0,\quad\inner{f}{ x_j R}
    =0\quad\text{for $j=1,\ldots d$},    \end{equation*}
  where $\vec{\phi} = ( \phi_1, \phi_2)^T$ is the
  eigenstate associated with the positive eigenvalue $\sigma>0$.
\end{thm}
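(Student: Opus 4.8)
The plan is to establish the two summands of $\calB(\mathbf{z},\mathbf{z})=\inner{\calL_+ f}{f}+\inner{\calL_- g}{g}$ separately --- the orthogonality conditions split into conditions on $f$ alone and on $g$ alone --- and to reduce each to a finite, rigorously verifiable computation by exploiting radial symmetry, in the spirit of \cite{FMR}. By Definition \ref{def:specprop}, $\calL_\pm=-\Delta+\calV_\pm$ with $\calV_\pm=\tfrac{1}{2}\bx\cdot\grad V_\pm$, so for the $3d$ cubic ($V_-=R^2$, $V_+=3R^2$) one has $\calV_-=R\,\bx\cdot\grad R$ and $\calV_+=3R\,\bx\cdot\grad R$: radial, bounded, exponentially decaying, and strictly negative. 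Consequently $\sigma_{\mathrm{ess}}(\calL_\pm)=[0,\infty)$ and $\calL_\pm$ has only finitely many non-positive eigenvalues. Since $\int e^{-|\by|}|f|^2\lesssim\|f\|_{L^6}^2\lesssim\|\grad f\|_{L^2}^2$ in $d=3$ (Hölder and Sobolev), the weighted $H^1$ norm on the right of the spectral property is controlled by $\|\grad\cdot\|_{L^2}^2$; and a standard weak-compactness argument (a minimizing sequence for the ratio $\inner{\calL_\pm f}{f}/\|\grad f\|_{L^2}^2$ on the constrained subspace, normalized in $\dot H^1$, has a nonzero weak limit which is still constrained and at which the form is $\leq 0$ by Rellich and the decay of $\calV_\pm$, contradicting strict positivity) shows that the desired coercive estimate is equivalent to strict positivity $\inner{\calL_\pm\,\cdot}{\cdot}>0$ on the constrained subspace. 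Thus the theorem reduces to: $\inner{\calL_+ f}{f}>0$ for every $0\ne f\perp\{R,\phi_2,x_1R,\dots,x_dR\}$, and $\inner{\calL_- g}{g}>0$ for every $0\ne g\perp\{R+\bx\cdot\grad R\}$.

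Next I would use the radial symmetry of $\calV_\pm$ to split $\calL_\pm=\bigoplus_{k\ge0}\calL_\pm^{(k)}$ with $\calL_\pm^{(k)}=-\partial_r^2-\tfrac{2}{r}\partial_r+\tfrac{k(k+1)}{r^2}+\calV_\pm$ on $L^2(r^2\,dr)$; the constraint vectors $R,\phi_2,R+\bx\cdot\grad R$ lie in the sector $k=0$, while the $x_jR$ lie in the sector $k=1$ (each copy carrying the radial profile $rR(r)$), and the quadratic forms split accordingly. For $k$ large the centrifugal term dominates: using $\max_{r>0}r^2e^{-r}=4e^{-2}$ one has $\tfrac{k(k+1)}{r^2}\ge\tfrac{e^2}{4}k(k+1)\,e^{-r}$ pointwise, so $(1-\delta_0)\big(-\partial_r^2-\tfrac{2}{r}\partial_r+\tfrac{k(k+1)}{r^2}\big)+\calV_\pm-\delta_0 e^{-r}\ge 0$ as soon as $k(k+1)$ exceeds an explicit constant determined by $\delta_0$ and $\sup_{r>0}e^{r}|\calV_\pm(r)|$. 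Hence for all $k\ge k_0$ the spectral property holds on that sector with no constraint, and only the finitely many sectors $k<k_0$ remain.

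For each $k<k_0$ I would adapt the rigorous-numerics scheme of \cite{FMR}: compute the soliton $R$ (equivalently the radial potential $\calV_\pm$) with explicit error bounds; restrict $\calL_\pm^{(k)}$ to a finite interval $[0,\rmax]$ with a Dirichlet condition at $\rmax$, noting that the discarded tail contributes nonnegatively (from $-\partial_r^2\ge 0$, the centrifugal term, and the exponential smallness of $\calV_\pm$ there); discretize with controlled truncation and discretization errors; append the orthogonality constraints active in that sector; and certify that the smallest eigenvalue of the resulting constrained matrix exceeds the accumulated error bounds. This yields $\inner{\calL_+^{(k)}h}{h}>0$ on $\{h\perp R,\phi_2\}$ for $k=0$, on $\{h\perp rR\}$ for $k=1$, and unconstrained for $2\le k<k_0$; and $\inner{\calL_-^{(k)}h}{h}>0$ on $\{h\perp R+\bx\cdot\grad R\}$ for $k=0$ and unconstrained for $1\le k<k_0$. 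Here the index computations are what guarantee that the number and placement of the imposed constraints exactly match the non-positive directions of $\calL_\pm^{(k)}$, so that no unexpected zero mode survives; and the Corollaries of Section \ref{sec:verification}, together with $\partial_\lambda R=\tfrac{1}{2}(R+\bx\cdot\grad R)$ at $\lambda=1$, show that any embedded eigenstate of $JL$ is automatically orthogonal to all of $R$, $\phi_2$, the $x_jR$, and $R+\bx\cdot\grad R$, so that $\calU$ genuinely contains the states it must test. Assembling the three steps gives $\calB(\mathbf{z},\mathbf{z})>\delta_0\int(|\grad\mathbf{z}|^2+e^{-|\by|}|\mathbf{z}|^2)\,d\by$ on $\calU$; by Theorem \ref{thm:coercive} this rules out embedded eigenvalues, and together with the absence of endpoint resonances (Section \ref{spec:embres}) it proves Theorem \ref{thm:speccond}.

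The main obstacle is the third step: turning numerical positivity into a theorem requires simultaneous, fully rigorous control of the soliton approximation, the domain truncation, the spatial discretization, and the constraint handling, so that the certified gap is a bona fide lower bound --- this is precisely the verification machinery the rest of the paper must supply. A secondary delicacy is the index count itself: one must pin down how many non-positive eigenvalues $\calL_\pm^{(k)}$ carries on each low sector and verify that the chosen constraint vectors are non-degenerate against the corresponding eigenspaces, since the theorem is stated with exactly the constraints needed for the argument to close. By comparison, the high-harmonic truncation and the passage from strict positivity to weighted coercivity in the first step are soft.
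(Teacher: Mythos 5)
Your skeleton matches the paper's: decompose $\calB$ into spherical harmonics, dispose of all but finitely many harmonics, and verify positivity on the low harmonics subject to exactly the listed orthogonality conditions ($R,\phi_2$ in the $k=0$ sector of $\calL_+$, the profile $rR$ in each $k=1$ component, $R+\bx\cdot\grad R$ in the $k=0$ sector of $\calL_-$). But the two load-bearing steps are done differently. For the passage from positivity to weighted coercivity, you use a concentration-compactness argument on the Rayleigh quotient; the paper instead perturbs the operators to $\overline{\calL}_\pm^{(k)}=\calL_\pm^{(k)}-\delta_0 e^{-\abs{\bx}^2}$ (Proposition \ref{prop:idx_perturbation}), proves $\overline{\calB}\geq 0$ on $\calU$, and recovers the gradient term by a final absorption step; both routes work. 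For the verification itself, the paper does \emph{not} certify eigenvalues of a constrained discretized operator. It (i) computes the index of each $\calL_\pm^{(k)}$ by a Sturm-oscillation zero count of the solution of $\calL^{(k)}U^{(k)}=0$ (Theorem \ref{thm:rs_idx}, Proposition \ref{prop:index_computations}), with monotonicity in $k$ playing the role of your centrifugal-domination bound; and (ii) solves $\calL U=f$ for each constraint vector $f$ and checks the sign of $\inner{\calL U}{U}$ (Proposition \ref{prop:ip_estimates_3d}). In particular, for $\calL_+^{(0)}$ both $K_1^{(0)}>0$ and $K_2^{(0)}>0$, so neither orthogonality condition alone suffices; it is the negativity of the combined quantity $-\bigl((K_3^{(0)})^2-K_1^{(0)}K_2^{(0)}\bigr)/K_2^{(0)}$ that closes the argument. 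This reduces the numerics to a handful of scalars with comfortable margins, at the price of the cutoff regularization with $\chi_A$, needed because $U=(\calL_+^{(0)})^{-1}R$ decays only like $r^{-1}$ and is not in $L^2$.

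One concrete flaw in your third step as stated: the constrained operator $\calL_\pm^{(k)}\big|_{\calU}$ has essential spectrum $[0,\infty)$, so even when the quadratic form is strictly positive there is no $L^2$ spectral gap; the smallest eigenvalue of your truncated, discretized, constrained matrix converges to $0$ as $\rmax\to\infty$ and the mesh is refined, and therefore cannot be certified to exceed a fixed accumulated error bound. To make your scheme work you must certify the \emph{generalized} (weighted) eigenvalue problem $\calL h=\mu\, e^{-r}h$ on the constrained subspace --- consistent with your own step-one reduction --- or else count negative directions and test the constraint vectors against them, which is precisely what the paper's index-plus-inner-product argument does. With that repair your proposal is a sound, if computationally heavier, alternative.
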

This subspace is motivated by the observations in Section
\ref{sec:verification}, as all of the elements we are orthogonal to
arise from the spectrum of the adjoint problem.

\begin{rem}
Note that henceforward we assume here the unstable eigenfunction for the
$3d$ cubic problem is radial.  This claim is
substantiated by direct integration in our numerical results section,
as well as the fact that the spectral decomposition for the critical
problem remains valid under the assumption of radial symmetry, hence
the multiplicity of radial eigenfunctions must be at least $4$.  Since
$R$, $R_\lambda$ are the only radial components of the kernel, the
unstable eigenmode must also be radial.
\end{rem}

Following \cite{FMR}, we computationally verify the spectral property
in the following steps.  First, the bilinear form is decomposed by
spherical harmonics into
\begin{equation}
\begin{split}
\calB(\mathbf{z},\mathbf{z}) & = \calB_+(f,f) + \calB_-(g,g) \\
&=\sum_{k=0}^\infty \calB_+^{(k)}(f^{(k)},f^{(k)}) + \sum_{k=0}^\infty
\calB_-^{(k)}(g^{(k)},g^{(k)}) ,
\end{split}
\end{equation}
where $\mathbf{z} = (f,g)^T$, and $f^{(k)}$ and $g^{(k)}$ are the
components of $f$ and $g$ in the $k$-th spherical harmonic.
We then identify the dimension of the subspace of negative eigenvalues for
$\calL_\pm^{(k)}$.  Though at first this would appear to require an
infinite number of computations, a monotonicity property of
these operators with respect to $k$ limits this to a finite number of
harmonics.  We then show that our orthogonality conditions are
sufficient to point us away from the negative directions, allowing us
to prove our result.

\subsection{The Index of an Operator}

For a bilinear form $B$ on a vector space $V$, the index of $B$ with
respect to $V$ is given by
\begin{equation*}
\begin{split}
  \text{ind}_V (B)  \equiv \max \{ k \in \mathbb{N} \mid&  \text{there exists a subspace $P$ of codimension $k$} \\
  & \text{such that $B|_{P}$ is positive} \} .
\end{split}
\end{equation*}
Our results rely on the following generalization of Theorem XIII.8 of
\cite{RSv4}, which is in turn an extension of the Sturm
Oscillation Theorem (Section XIII.7 of \cite{RSv4}):
\begin{thm}
  \label{thm:rs_idx}
  Let $U^{(k)}$ be the solution to
  \begin{equation*}
    L^{(k)} U^{(k)} = - \frac{d^2}{dr^2}U^{(k)} - \frac{d-1}{r} \ddr U^{(k)} + V(r) U^{(k)} + \frac{k(k +
      d-1)}{r^2} U^{(k)}= 0
  \end{equation*}
  with initial conditions given by the limits
  \begin{equation*}
    \lim_{r\to 0} \frac{U^{(k)}(r)}{r^k}  = 1, \quad\lim_{r\to 0} \ddr \frac{U^{(k)}(r)}{r^k}  = 0,
  \end{equation*}
  where $V$ is sufficiently smooth and decaying at $\infty$.  Then,
  the number $N(U^{(k)})$ of zeros of $U^{(k)}$ is finite and
  \begin{align*}
    \ind_{H^1_\rad} (B^{(0)}) &= N(U^{(0)}), \\
    \ind_{H^1_{\rad+}} (B^{(k)}) &= N(U^{(k)}),\quad k \geq 1,
  \end{align*}
  where $B^{(k)}$ is the bilinear form associated to $L^{(k)}$.
\end{thm}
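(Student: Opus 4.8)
The target statement (Theorem~\ref{thm:rs_idx}) relates the index of the bilinear form $B^{(k)}$ on the radial Sobolev space to the number of zeros of a distinguished solution $U^{(k)}$ of the ODE $L^{(k)}U^{(k)} = 0$. The natural approach is to reduce the $d$-dimensional radial problem to a one-dimensional Schr\"odinger problem on the half-line and then invoke the classical Sturm oscillation machinery as packaged in Theorem~XIII.8 of~\cite{RSv4}. First I would perform the Liouville-type substitution $U^{(k)}(r) = r^{-(d-1)/2} W^{(k)}(r)$, which removes the first-order term $\tfrac{d-1}{r}\ddr U^{(k)}$ and converts the radial operator into $-\tfrac{d^2}{dr^2} + V(r) + \tilde q_k(r)$ on $L^2((0,\infty),dr)$, where the effective centrifugal potential is $\tilde q_k(r) = \tfrac{1}{r^2}\bigl[k(k+d-1) + \tfrac{(d-1)(d-3)}{4}\bigr]$. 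The boundary behavior $U^{(k)}(r)/r^k \to 1$ as $r\to 0$ translates into the standard Dirichlet-type behavior for $W^{(k)}$ near the origin, and — crucially — the map $U^{(k)}\mapsto W^{(k)}$ is a bijection between the relevant function spaces that preserves the number of zeros on $(0,\infty)$ and preserves the sign of the quadratic form (it is a unitary conjugation up to the weight). So $N(U^{(k)}) = N(W^{(k)})$ and $\ind_{H^1_\rad}(B^{(k)}) = \ind(\tilde B^{(k)})$ for the transformed one-dimensional form $\tilde B^{(k)}$.

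**Key steps.** After the reduction, the proof proceeds in these stages. (1) Verify that $\tilde q_k(r)$, for $k=0$ in dimension $d$ (respectively $k\geq 1$), is nonnegative and of the form $c_k/r^2$ with $c_k \geq -1/4$, so that the limit-point/limit-circle analysis at $0$ behaves as in the standard setting and the operator $L^{(k)}$ is essentially self-adjoint on the appropriate core; this is where the distinction between $H^1_\rad$ (for $k=0$) and $H^1_{\rad+}$ (for $k\geq 1$) enters — the latter imposes the extra vanishing at the origin needed because the centrifugal barrier is not strong enough on its own when $d$ is small. (2) Show finiteness of $N(U^{(k)})$: since $V$ decays at infinity and $\tilde q_k \to 0$, the operator $L^{(k)}$ has finitely many negative eigenvalues and no eigenvalue accumulating at $0$ from below (using exponential decay of $V$ as in the earlier discrete-spectrum discussion), which by the Sturm comparison argument bounds the number of zeros of the zero-energy solution. (3) Invoke the Sturm oscillation correspondence: the number of zeros in $(0,\infty)$ of the solution to $L^{(k)}U^{(k)} = 0$ satisfying the prescribed initial condition at $r=0$ equals the number of negative eigenvalues of $L^{(k)}$ counted with multiplicity, which by the Rayleigh-Ritz/min-max characterization is exactly $\ind(B^{(k)})$ on the corresponding space. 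This last identification is the content of the cited generalization of Theorem~XIII.8; I would state the one-dimensional version carefully and then transport it back through the substitution.

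**Main obstacle.** The technical heart — and the step I expect to require the most care — is handling the singular endpoint $r=0$ correctly and uniformly in $d$ and $k$. The classical Sturm oscillation theorem in~\cite{RSv4} is stated for regular or at worst half-line problems with a well-behaved endpoint; here the centrifugal term $k(k+d-1)/r^2$ plus the $(d-1)(d-3)/4r^2$ artifact of the substitution puts us in the Bessel-type regime, where one must pin down the correct limiting boundary condition and verify that the "initial conditions via limits" given in the statement select the unique (up to scalar) solution that is $L^2$ near $0$ — i.e. the recessive solution. Getting the bookkeeping right so that a zero of $U^{(k)}$ at an interior point corresponds to a genuine sign change relevant to the form, and that no spurious zero is introduced or lost at the origin, is the delicate part; once that correspondence is nailed down, the rest follows from min-max and the comparison theorem. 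A secondary, more routine point is checking that the orthogonality/codimension bookkeeping in the definition of $\ind_V(B)$ matches the eigenvalue count (this is just the standard variational characterization of the $n$-th eigenvalue), which I would dispatch by citing the min-max principle directly.
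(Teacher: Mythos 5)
Your proposal is correct in substance and rests on the same key lemma as the paper --- the Sturm oscillation theorem in the form of Theorem XIII.8 of \cite{RSv4}, combined with the min--max characterization of the index --- but it reaches that lemma by a different normalization of the radial operator. The paper's (very terse) proof factors out the expected vanishing at the origin, setting $U^{(k)}(r) = r^k \widetilde{U}^{(k)}(r)$, which cancels the centrifugal term entirely and leaves the drift operator $\widetilde{L}^{(k)} = -\tfrac{d^2}{dr^2} - \tfrac{d-1+2k}{r}\ddr + V$, i.e.\ a radial Schr\"odinger operator in the effective dimension $d+2k$ with \emph{regular} initial data $\widetilde{U}^{(k)}(0)=1$, $\ddr\widetilde{U}^{(k)}(0)=0$; the oscillation argument of \cite{RSv4} is then adapted with the origin as an ordinary initial point (this substitution also doubles as the numerical regularization of Appendix \ref{s:numerical_sing}). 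You instead perform the Liouville substitution $U^{(k)} = r^{-(d-1)/2}W^{(k)}$, producing a genuine one-dimensional Schr\"odinger operator on the half-line with an inverse-square potential and a singular endpoint at $r=0$. Both reductions are unitary between the relevant weighted $L^2$ spaces, preserve zeros on $(0,\infty)$, and preserve the sign of the quadratic form, so both are legitimate; the paper's choice sidesteps the limit-point/limit-circle analysis at the origin that you correctly identify as the delicate step of your route, while your choice connects directly to classical half-line oscillation theory and makes the role of $H^1_{\rad+}$ for $k\geq 1$ transparent as the form domain in the presence of the centrifugal barrier. Your finiteness argument for $N(U^{(k)})$ (decay of $V$ plus the free asymptotics, equivalently finiteness of the negative spectrum) is the standard one. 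One small caution: the centrifugal coefficient $k(k+d-1)$ in the theorem statement is inconsistent with the $k(k+d-2)$ used elsewhere in the paper, and a direct computation shows that the substitution $U^{(k)}=r^k\widetilde{U}^{(k)}$ cancels precisely $k(k+d-2)/r^2$; your effective potential should therefore be computed with $k(k+d-2)$, which in particular recovers the familiar $k(k+1)/r^2$ barrier when $d=3$.
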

The space $H^1_\rad$ is the set of radially symmetric $H^1(\R^d)$
functions.  The space $H^1_{\rad+}$ is the subset of $H^1_\rad$ for
which
\[
\int \frac{\abs{f}^2}{\abs{\bx}^2} d\bx < \infty.
\]
We will omit the subscript notation in our subsequent 
index computations. It will be $H^1_\rad$ for $k=0$ and $H^1_{\rad+}$ for $k\geq 1$.

If one wishes to remove the limits from the statement of the initial
conditions, let $U^{(k)}(r)= r^k \widetilde{U}^{(k)}(r)$.  Then the operator becomes
\[
\widetilde{L}^{(k)} = - \frac{d^2}{dr^2} - \frac{d-1 + 2k}{r} \frac{d}{dr} + V
\]
and the initial conditions become $\widetilde{U}^{(k)}(0) = 1$ and $\ddr
\widetilde{U}^{(k)}(0) = 0$.  Indeed, we use precisely this change of variables
when making our numerical computations; see Appendix
\ref{s:numerical_sing}.  The proof can be adapted from the proof of
Theorem XIII.8 in \cite{RSv4}.
\begin{cor}
\label{c:idx_mono}
The index is monotonic with respect to $k$,
\[
\ind(B^{(k+1)})\leq \ind(B^{(k)}) .
\]
\end{cor}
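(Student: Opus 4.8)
The plan is to deduce Corollary \ref{c:idx_mono} directly from Theorem \ref{thm:rs_idx} by comparing the zero counts $N(U^{(k)})$ and $N(U^{(k+1)})$ via a Sturm comparison argument. First I would observe that the radial operators $L^{(k)}$ differ only in the centrifugal term: writing
\[
L^{(k)} = -\frac{d^2}{dr^2} - \frac{d-1}{r}\frac{d}{dr} + V(r) + \frac{k(k+d-1)}{r^2},
\]
the effective potential $W_k(r) = V(r) + k(k+d-1)r^{-2}$ is pointwise increasing in $k$, since $k(k+d-1)$ is increasing for $k \geq 0$ when $d \geq 1$. So $U^{(k+1)}$ solves an ODE with a larger potential than $U^{(k)}$, and the classical Sturm comparison theorem says the solution of the equation with the larger potential oscillates no faster: between any two consecutive zeros of $U^{(k+1)}$ there must be a zero of $U^{(k)}$ — except that one has to be careful about the behavior at the singular endpoint $r=0$ and at $r=\infty$.

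The key steps, in order: (1) Put both equations in a form where Sturm comparison applies cleanly — I would use the substitution $U^{(k)}(r) = r^{-(d-1)/2} Y^{(k)}(r)$ to remove the first-order term, turning $L^{(k)} U = 0$ into $-Y'' + q_k(r) Y = 0$ with $q_k(r) = V(r) + \frac{k(k+d-1)}{r^2} + \frac{(d-1)(d-3)}{4r^2}$, and note $q_{k+1} \geq q_k$ pointwise; the prescribed initial conditions $U^{(k)}(r)/r^k \to 1$ translate into matching behavior of $Y^{(k)}$ near $0$, and crucially the zeros of $U^{(k)}$ on $(0,\infty)$ coincide with the zeros of $Y^{(k)}$ there. (2) Apply the Sturm comparison / interlacing theorem on $(0,\infty)$: since the solutions are pinned down at $r=0$ by the same type of asymptotic (both are the "regular" solution, which is positive for small $r$), between consecutive zeros of the more-oscillatory solution $Y^{(k)}$ lies at least one zero of $Y^{(k+1)}$ — wait, the direction is the reverse: larger potential ⇒ fewer zeros, so $Y^{(k+1)}$ has at most as many zeros as $Y^{(k)}$, i.e. $N(U^{(k+1)}) \leq N(U^{(k)})$. (3) Conclude via Theorem \ref{thm:rs_idx} that $\ind(B^{(k+1)}) = N(U^{(k+1)}) \leq N(U^{(k)}) = \ind(B^{(k)})$, valid for $k \geq 1$ in $H^1_{\rad+}$ and separately for the $k=0 \to k=1$ step where the index is taken in $H^1_\rad$ — here one should check that passing from $H^1_\rad$ to the smaller space $H^1_{\rad+}$ can only decrease (or preserve) the index, which is immediate from the definition of $\ind$ as a codimension.

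The main obstacle I expect is handling the singular endpoint $r=0$ rigorously in the Sturm comparison: the standard theorem is stated for regular Sturm-Liouville problems on compact intervals, so one must justify that the "regular solution" selected by the limiting initial conditions $\lim_{r\to0} U^{(k)}(r)/r^k = 1$ is exactly the one to which comparison applies, and that no zero is lost or gained in the limit $r \to 0^+$ (none can be, since $U^{(k)} \sim r^k > 0$ near $0$) or as $r\to\infty$ (where $V$ decays and the potential is eventually positive, so solutions are eventually monotone and the zero count is finite, as already asserted in Theorem \ref{thm:rs_idx}). Once the endpoint analysis is pinned down, the monotonicity $k(k+d-1) \nearrow$ does all the real work and the corollary follows in a couple of lines. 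Since Theorem \ref{thm:rs_idx} is quoted as an extension of the Sturm Oscillation Theorem from \cite{RSv4}, I would lean on that same machinery rather than reproving it, and present the corollary's proof as essentially a one-paragraph comparison argument with a remark that the endpoint subtleties are handled exactly as in the proof of Theorem XIII.8 of \cite{RSv4}.
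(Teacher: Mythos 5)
Your argument is correct in outline, but it takes a longer road than the one the paper has in mind. The paper states the corollary without proof, and the argument it actually deploys in the analogous situation (Proposition \ref{prop:idx_perturbation}) works directly at the level of the quadratic forms, never returning to the ODE: since the operators $L^{(k)}$ differ only in the centrifugal coefficient, which is increasing in $k$, one has for $f$ in the common form domain
\[
B^{(k+1)}(f,f) \;=\; B^{(k)}(f,f) \;+\; c_k \int \frac{\abs{f}^2}{\abs{\bx}^2}\,d\bx, \qquad c_k>0,
\]
so every subspace on which $B^{(k)}$ is positive is a subspace on which $B^{(k+1)}$ is positive, and the index (the number of negative directions, i.e.\ the minimal codimension of a positive subspace) can only decrease. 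The $k=0\to1$ step needs exactly the remark you make at the end: restricting a positive subspace of $H^1_\rad$ to the smaller space $H^1_{\rad+}$ can only shrink its codimension. That is the whole proof, and it does not use Theorem \ref{thm:rs_idx} at all. Your route --- convert both indices to zero counts via Theorem \ref{thm:rs_idx}, then run Sturm comparison on the regular solutions --- is legitimate, but it imports precisely the delicacies you flag: the anchoring of the comparison at the singular endpoint $r=0$ (to recover the ``extra'' zero on $(0,r_1)$ one needs the Wronskian of the two regular solutions, which vanishes as $r\to0^+$ because both behave like powers of $r$), and the finiteness of the zero count at infinity. What your approach buys in exchange is a strictly stronger, pointwise conclusion --- interlacing of the zeros of $U^{(k)}$ and $U^{(k+1)}$ --- which is a nice consistency check on the numerics but is not needed for the corollary. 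If you keep your version, state the Sturm comparison with the correct orientation from the start (larger potential in $-Y''+qY=0$ means fewer oscillations) rather than correcting it midstream, and spell out the Wronskian argument at $r=0$; otherwise the two-line form comparison is preferable.
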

This has the useful consequence that once we find an $k$ for which
$\ind(B^{(k)}) = 0$, we can immediately conclude that $B^{(k')}\geq 0$
for all $k' \geq k$.

Once we have computed the number of directions of each
$\calL_\pm^{(k)}$ that prevents it from being positive, we can check
that we have a sufficient number of orthogonal conditions to point us
into the positive subspace.

\subsection{Numerical Estimates of the Index}
\label{sec:index_computations}
To compute the indexes of the operators, we proceed as follows.

In dimension three, we solve the initial value problems
\begin{align}
  \calL_+^{(0)} U^{(0)} &= 0, \quad U^{(0)} (0)= 1, \quad \frac{d}{dr}U^{(0)} (0)=0,  \\
  \calL_-^{(0)} Z^{(0)} &= 0, \quad Z^{(0)} (0)= 1, \quad
  \frac{d}{dr}Z^{(0)} (0)=0
\end{align}
for radially symmetric functions $U^{(0)}$ and $Z^{(0)}$.  For higher
harmonics, $k>0$, we solve the initial value problems
\begin{align}
  \calL_+^{(k)} U^{(k)} &= 0, \quad U^{(k)} (0)= 0, \quad \lim_{r\to
    0}
  r^{-k} U^{(k)} (r)=1  ,\\
  \calL_-^{(k)} Z^{(k)} &= 0, \quad Z^{(k)} (0)= 0, \quad \lim_{r\to
    0} r^{-k} Z^{(k)} (r)=1
\end{align}
for radially symmetric functions $U^{(k)}$ and $Z^{(k)}$.

\begin{prop}
  \label{prop:index_computations}
  The indexes of 3d Cubic NLS are:
  \begin{gather*}
    \ind \calL_+^{(0)} = 1, \quad \ind \calL_+^{(1)} = 1, \quad \ind
    \calL_+^{(2)} = 0,\\
    \ind \calL_-^{(0)} = 1,\quad \ind \calL_-^{(1)} = 0.
  \end{gather*}
\end{prop}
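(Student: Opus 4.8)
The plan is to apply \thmref{thm:rs_idx} directly, reducing each index computation to counting the zeros $N(U^{(k)})$, $N(Z^{(k)})$ of solutions to the singular ODE initial value problems $\calL_\pm^{(k)} U^{(k)} = 0$ stated just above. By \corref{c:idx_mono} (monotonicity of the index in $k$), once we exhibit a $k$ with $\ind \calL_+^{(k)} = 0$ and likewise for $\calL_-$, all higher harmonics are automatically nonnegative, so only finitely many ODEs need to be solved; the proposition asserts that $k=2$ suffices for $\calL_+$ and $k=1$ for $\calL_-$. Before doing any of this, I would first assemble the explicit potentials: write $V_\pm = g(R^2) + (\text{correction})$ for $g(s)=s$, $d=3$, $\lambda = 1$, so that $R = R(r)$ solves $R'' + \tfrac{2}{r}R' - R + R^3 = 0$, and then compute $\calV_\pm = \tfrac12 \bx\cdot\nabla V_\pm$ via the commutator definition $\calL_\pm f = \tfrac12[L_\pm,\Lambda]f$ with $\Lambda = \tfrac{d}{2} + \bx\cdot\nabla$. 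The upshot is that each $\calL_\pm^{(k)}$ is the Schrödinger operator $-\partial_r^2 - \tfrac{d-1}{r}\partial_r + \calV_\pm(r) + \tfrac{k(k+d-1)}{r^2}$ with a known, exponentially decaying, smooth-enough potential built from $R$ and its derivatives.

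Next I would establish the $k=0$ counts by a combination of analytic identities and numerical integration. For $\calL_-^{(0)}$: the function $\Lambda R$ (equivalently $\tfrac12(R + \bx\cdot\nabla R) = \partial_\lambda R|_{\lambda=1}$, up to the $\sigma=1$ normalization) lies in $\ker L_-$ composed appropriately — more precisely one expects $\calL_- $ annihilates some explicitly known combination, or that the Wronskian/variation-of-parameters argument pins the zero of $Z^{(0)}$. One exhibits that $Z^{(0)}$ has exactly one sign change, giving $\ind \calL_-^{(0)} = 1$, and that $Z^{(1)}$ has no zero, giving $\ind \calL_-^{(1)} = 0$; then monotonicity closes out $k\geq 2$. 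The analogous argument for $\calL_+^{(0)}$ should produce one zero (index $1$), for $\calL_+^{(1)}$ one zero (index $1$), and for $\calL_+^{(2)}$ no zero (index $0$), after which \corref{c:idx_mono} handles all $k\geq 3$. Wherever an exact nodal count is not available in closed form, I would fall back on rigorous numerical ODE integration of the regularized problems $\widetilde U^{(k)}(0)=1$, $\widetilde U^{(k)\prime}(0)=0$ for $\widetilde L^{(k)} = -\partial_r^2 - \tfrac{d-1+2k}{r}\partial_r + \calV_\pm$, counting sign changes on a truncated interval $[0,\rmax]$ and checking the tail via the known exponential decay of $\calV_\pm$ (so that beyond $\rmax$ the solution is monotone and cannot acquire further zeros); this is exactly the scheme flagged for Appendix \ref{s:numerical_sing}.

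I expect the main obstacle to be twofold. The first difficulty is purely numerical-analytic: the ODEs are singular at $r=0$ (the $\tfrac{1}{r}$ and $\tfrac{k(k+d-1)}{r^2}$ terms), so one needs a careful series expansion near the origin to start the integration and an error bound showing the computed zero count is exact, not an artifact of discretization — this is where the ``numerically assisted'' character of the proof really bites, and it requires controlling both the local truncation error and the tail behavior rigorously. The second, more conceptual difficulty is making sure the potentials $\calV_\pm$ are computed correctly from the commutator structure: a sign error or a missing term in $\bx\cdot\nabla V_\pm$ would corrupt every subsequent count, so I would cross-check the formula for $\calV_\pm$ against the Pohozaev/virial identities satisfied by $R$ and against the known action of $\Lambda$ on the kernel elements of $L_\pm$ (e.g. verifying $\calL_+(\Lambda$-image of $\partial_\lambda R)$ behaves as predicted). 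Once the potentials are verified and the ODE integration is certified, the proposition follows immediately from \thmref{thm:rs_idx} and \corref{c:idx_mono} with no further argument.
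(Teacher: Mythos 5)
Your proposal is correct and follows essentially the same route as the paper: reduce each index to a zero count via Theorem \ref{thm:rs_idx}, handle the singularity at the origin by the change of variables $U^{(k)} = r^k\widetilde U^{(k)}$, numerically integrate the resulting initial value problems, and confirm the count is complete by checking that the solution has entered the free regime $C_0^{(k)} r^k + C_1^{(k)} r^{2-d-k}$ with constants of the right sign, with Corollary \ref{c:idx_mono} disposing of all higher harmonics. The only difference is one of emphasis — you ask for certified error bounds and float possible closed-form nodal counts at $k=0$, whereas the paper simply reports the computed profiles at tolerance $10^{-13}$ together with the asymptotic consistency check.
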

Once this proposition is established, Corollary \ref{c:idx_mono}
immediately gives us
\begin{cor}
\label{c:idx_mono_cubic}
For 3d Cubic NLS,
\begin{align*}
\ind \calB_+^{(k)} =0&\quad\textrm{for $k>2$},\\
\ind \calB_-^{(k)} =0&\quad\textrm{for $k>1$},
\end{align*}
where $\calB_{\pm}^{(k)}$ is the bilinear form associated with $\calL_{\pm}^{(k)}$.
\end{cor}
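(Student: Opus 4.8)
The corollary is a one-line consequence of \propref{prop:index_computations} together with the monotonicity in \corref{c:idx_mono}: since $\ind\calB_+^{(2)}=\ind\calL_+^{(2)}=0$, monotonicity gives $0\le\ind\calB_+^{(k)}\le\ind\calB_+^{(2)}=0$ for every $k\ge 2$, and likewise $0\le\ind\calB_-^{(k)}\le\ind\calB_-^{(1)}=0$ for every $k\ge 1$. So all the substance lies in \propref{prop:index_computations}, and the plan is to convert each of its five index values into a count of zeros and then certify that count.

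The reduction is \thmref{thm:rs_idx}: for each sign and each $k$, $\ind\calL_\pm^{(k)}$ equals the number of zeros on $(0,\infty)$ of the solution of the singular initial value problem $\calL_\pm^{(k)}U=0$ with $U(r)\sim r^k$ as $r\to 0$ (the functions $U^{(k)}, Z^{(k)}$ in the text). To make $\calL_\pm^{(k)}=-\Delta+\calV_\pm+(\text{centrifugal})$ explicit one first needs the ground state $R$ of $-\Delta R+R=R^3$ on $\R^3$, since $\calV_\pm=\tfrac12\,\bx\cdot\grad V_\pm$ is built from $R$ and $R'$; I would obtain $R$ by shooting on its radial ODE and certify it a posteriori by an interval enclosure, using the decay $R(r)=O(e^{-r}/r)$ to bound the tail contribution.

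Next, following the paper's own substitution $\widetilde U=r^{-k}U$, the problem becomes regular at the origin ($\widetilde U(0)=1$, $\ddr\widetilde U(0)=0$, operator $-\partial_r^2-\tfrac{d-1+2k}{r}\partial_r+\calV_\pm$); I would integrate it outward with a validated ODE solver and count the guaranteed sign changes on $[0,\rmax]$. This is carried out for $\calL_+^{(0)},\calL_+^{(1)},\calL_+^{(2)}$ and for $\calL_-^{(0)},\calL_-^{(1)}$, stopping at $k=2$ on the $+$ side and $k=1$ on the $-$ side precisely because \corref{c:idx_mono} then propagates the vanishing index to all larger $k$. Matching the expected counts (one zero for $\calL_+^{(0)},\calL_+^{(1)},\calL_-^{(0)}$; none for $\calL_+^{(2)},\calL_-^{(1)}$) against the claimed indices closes \propref{prop:index_computations}, hence the corollary.

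The main obstacle is making the zero count rigorous at the two ends of $[0,\infty)$. At $r=0$ the operator has a regular singular point, so the validated integration must be seeded from a rigorously enclosed power-series expansion of $\widetilde U$ on a small $[0,r_0]$ before the step-by-step enclosures take over. For the tail one must show no further zero occurs beyond a finite $\rmax$: because $\calV_\pm$ decays exponentially, for large $r$ the equation is a small perturbation of the zero-energy free radial equation, whose solutions tend to a constant when $k=0$ and behave like $c_1 r^{k}+c_2 r^{-(k+1)}$ when $k\ge 1$; using that $r^{d-1}U'$ is monotone on any interval where $(\calV_\pm+\text{centrifugal})U$ has a fixed sign, one certifies from the enclosed values of $U$ and $U'$ at $\rmax$ that $U$ cannot change sign again (for $k=0$ this needs the extra quantitative smallness $\int_{\rmax}^\infty r^{d-1}|\calV_\pm|\,|U|\,dr<r^{d-1}|U'(\rmax)|$, which the exponential decay of $\calV_\pm$ supplies). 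Carrying the validated enclosures out to a large enough $\rmax$ without loss of precision, and certifying the soliton profile sharply enough that the potential enclosures are tight, are the delicate quantitative points.
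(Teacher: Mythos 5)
Your proposal is correct and follows the paper's own route exactly: the corollary is the immediate combination of Proposition~\ref{prop:index_computations} with the monotonicity of the index in Corollary~\ref{c:idx_mono}, and the substance is pushed into the numerically assisted zero counts of Theorem~\ref{thm:rs_idx} for the finitely many base harmonics, with the far-field asymptotics $C_0^{(k)} r^k + C_1^{(k)} r^{2-d-k}$ ruling out additional zeros beyond $\rmax$. The only difference is one of implementation rigor — you propose validated interval enclosures where the paper uses an adaptive collocation solver with \emph{a posteriori} consistency checks on the asymptotic constants — but the mathematical argument is the same.
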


Using the method discussed in Section \ref{sec:numerics}, we compute
$U^{(\alpha)}$ and $Z^{(\alpha)}$ for each problem, $\alpha =0,1,2$.
The profiles appear in Figures
\ref{fig:3dcubic_index_k0},\ref{fig:3dcubic_index_k1},\ref{fig:3dcubic_index_k2}.
All were computed with a tolerance setting $10^{-13}$.

As a consistency check on the numerics, we note that asymptotically,
the potential vanishes, and
\begin{align}
  \calL_\pm^{(k)} &\approx -\frac{d^2}{dr^2}
  -\frac{d-1}{r}U^{(k)}+\frac{k(k+d-2)}{r^2}.
\end{align}
In the region where $r \gg 1$, and the equations are essentially free
and the solutions must behave as:
\begin{subequations}
  \begin{equation}
    \label{eq:Uk_asympt}
    U^{(k)}(r) \approx C_0^{(k)} r^k + C_1^{(k)} r^{2 - d - k}
  \end{equation}
  and
  \begin{equation}
    \label{eq:Zk_asympt}
    Z^{(k)}(r) \approx D_0^{(k)} r^k + D_1^{(k)} r^{2 - d - k}.
  \end{equation}
\end{subequations}
Estimating these constants from the numerics, we see that they have
the ``correct'' signs.  For instance in Figure
\ref{fig:3dcubic_index_k0} (a), $U^{(0)}$ clearly has one zero
crossing.  For sufficiently large $r$, the function appears to be
increasing past a local minimum.  However, since the constants appear
to have stabilized, we contend we have entered the free region; the
signs and magnitudes of the constants thus forbid another zero.

\begin{figure}
  \centering \subfigure[$k=0$ harmonic]{
    \includegraphics[width=2.5in]{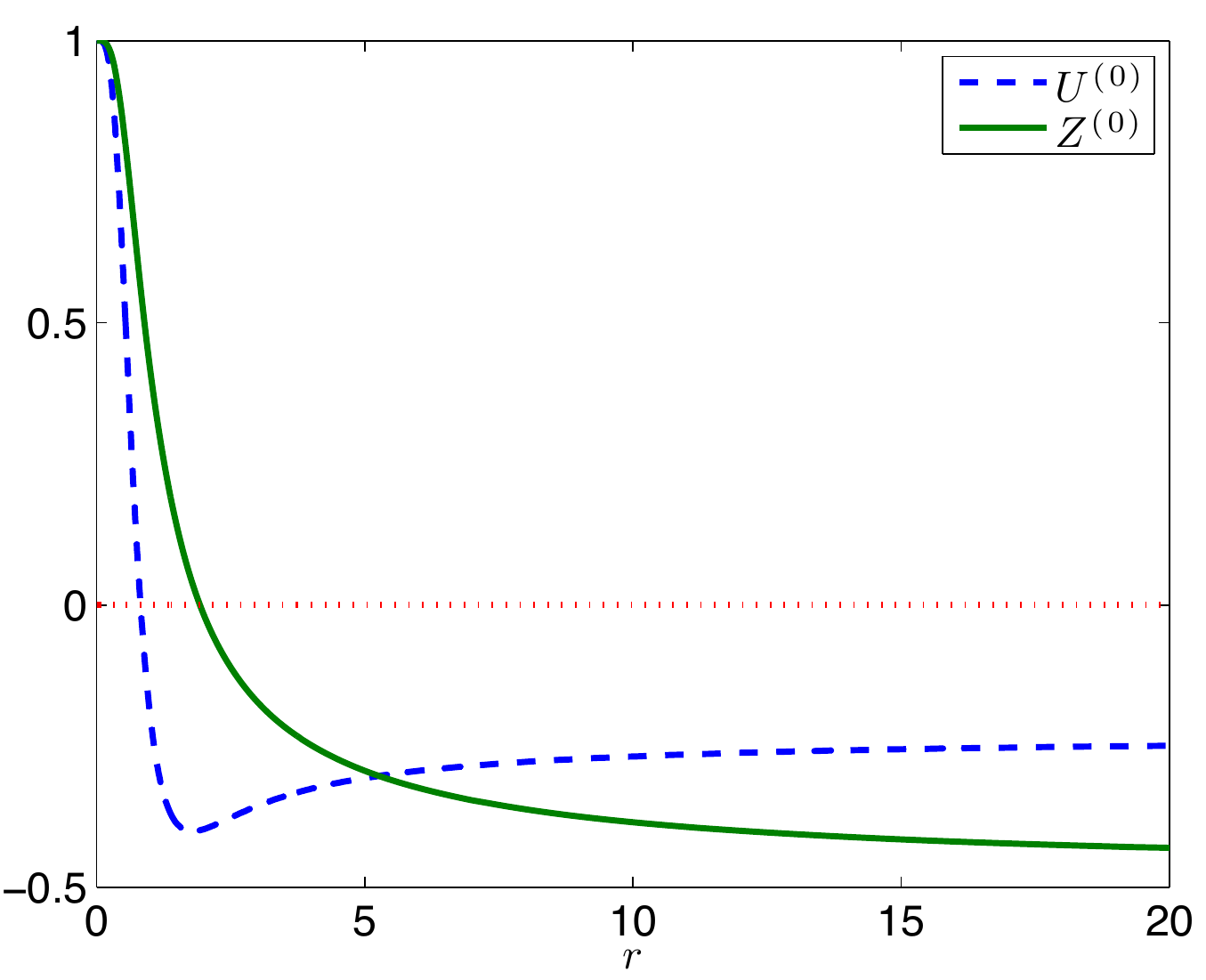}
  } \subfigure[$k=0$ harmonic asymptotics]{
    \includegraphics[width=2.5in]{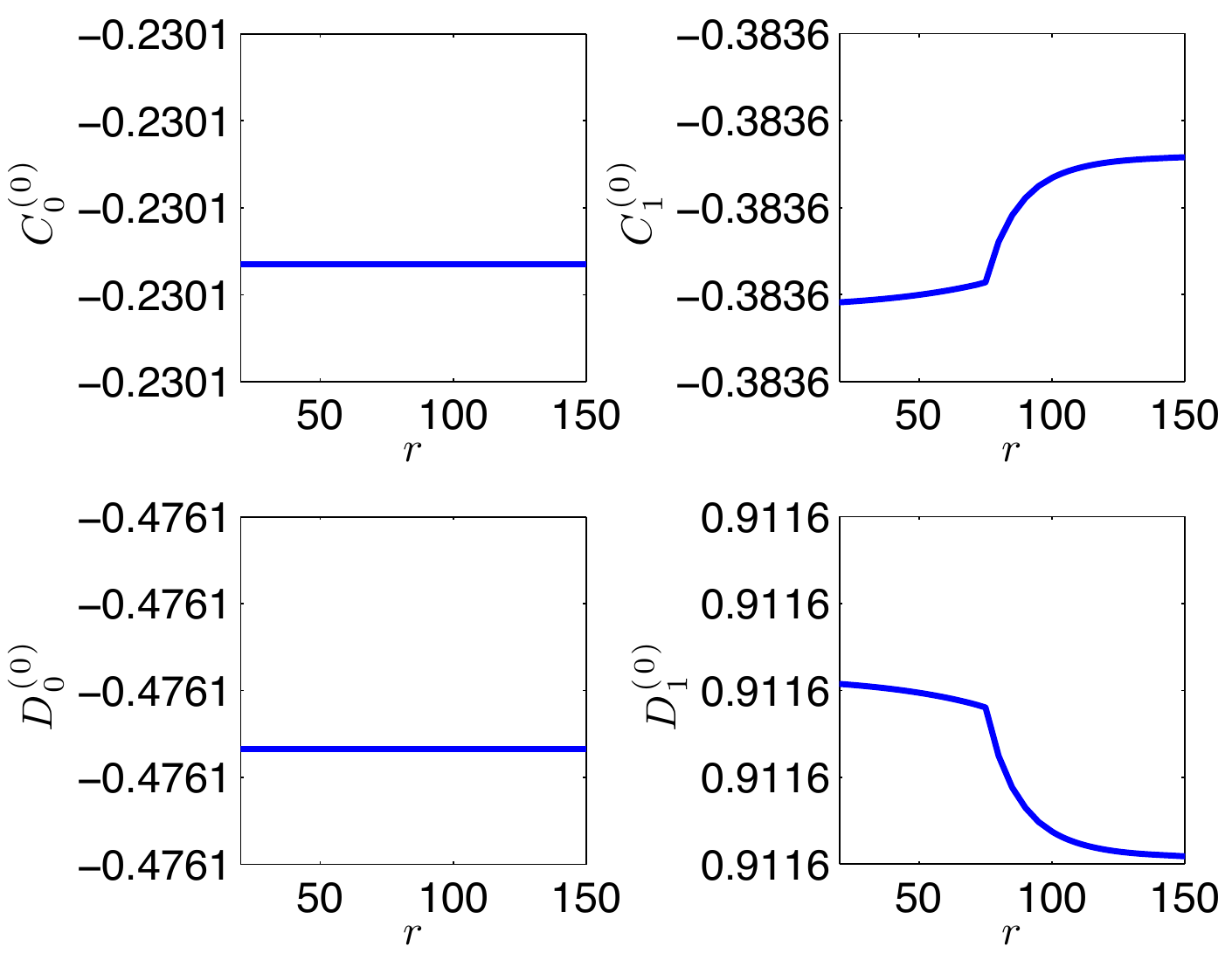}
  }
  \caption{Index computations for 3d Cubic NLS.  The number of zero
    crossings (other than $r=0$), determines the codimension of the
    subspace on which the operator $\calL_\pm^{(k)}$ is positive.}
  \label{fig:3dcubic_index_k0}
\end{figure}

\begin{figure}
  \centering \subfigure[$k=1$ harmonic]{
    \includegraphics[width=2.5in]{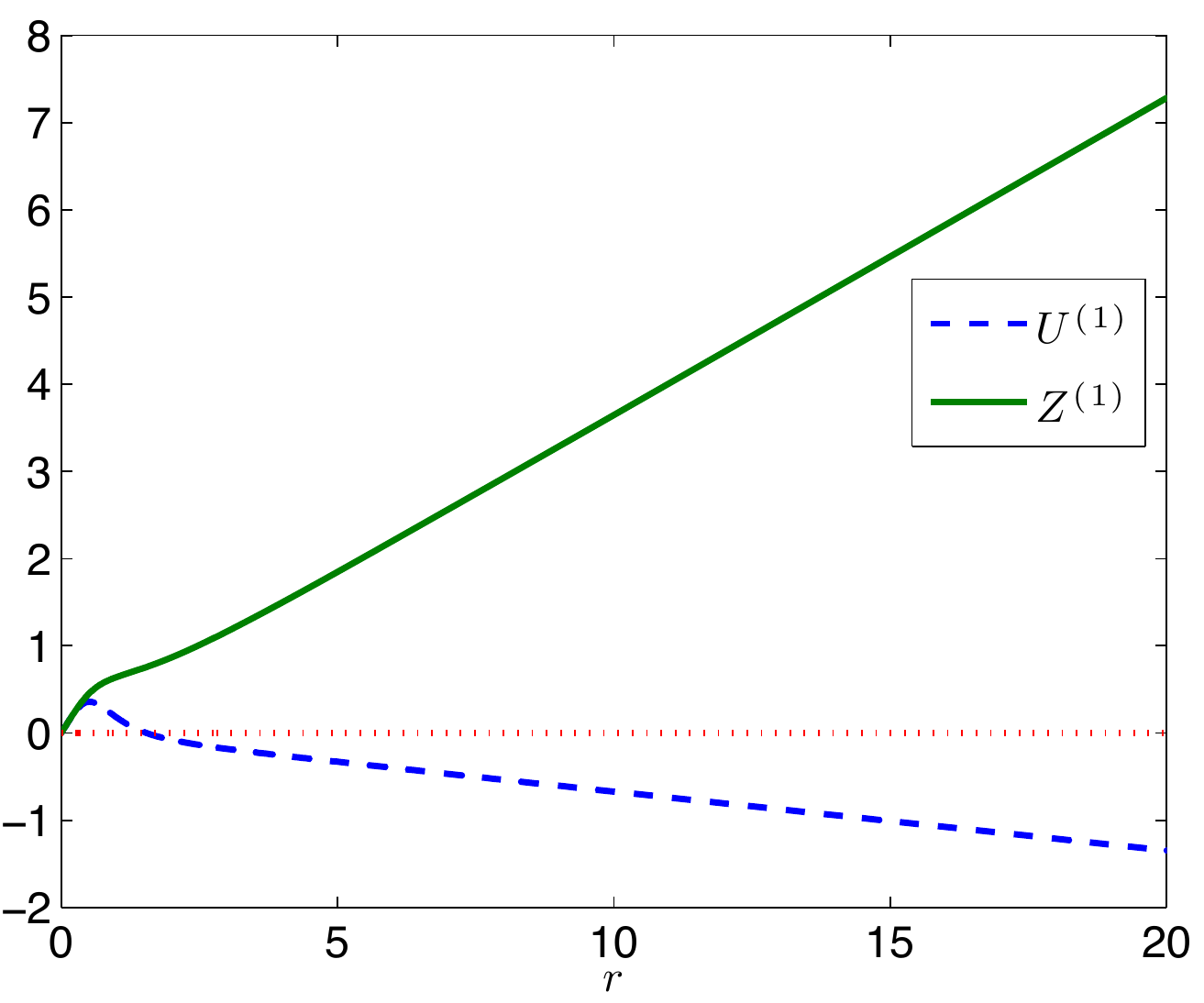}
  } \subfigure[$k=1$ harmonic asymptotics]{
    \includegraphics[width=2.5in]{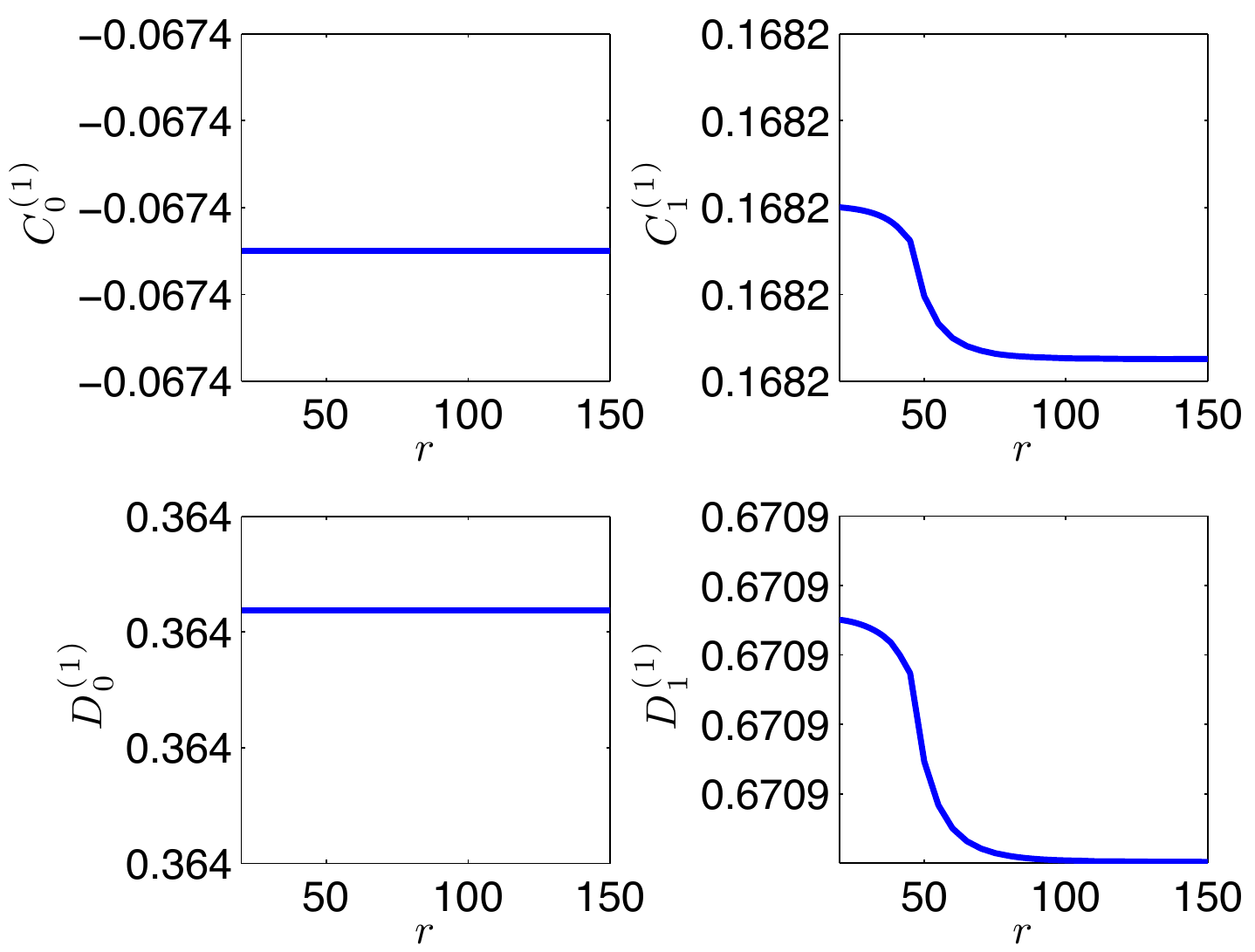}
  }
  \caption{Index computations for 3d Cubic NLS.  The number of zero
    crossings (other than $r=0$), determines the codimension of the
    subspace on which the operator $\calL_\pm^{(k)}$ is positive.}
  \label{fig:3dcubic_index_k1}
\end{figure}

\begin{figure}
  \centering \subfigure[$k=2$ harmonic]{
    \includegraphics[width=2.5in]{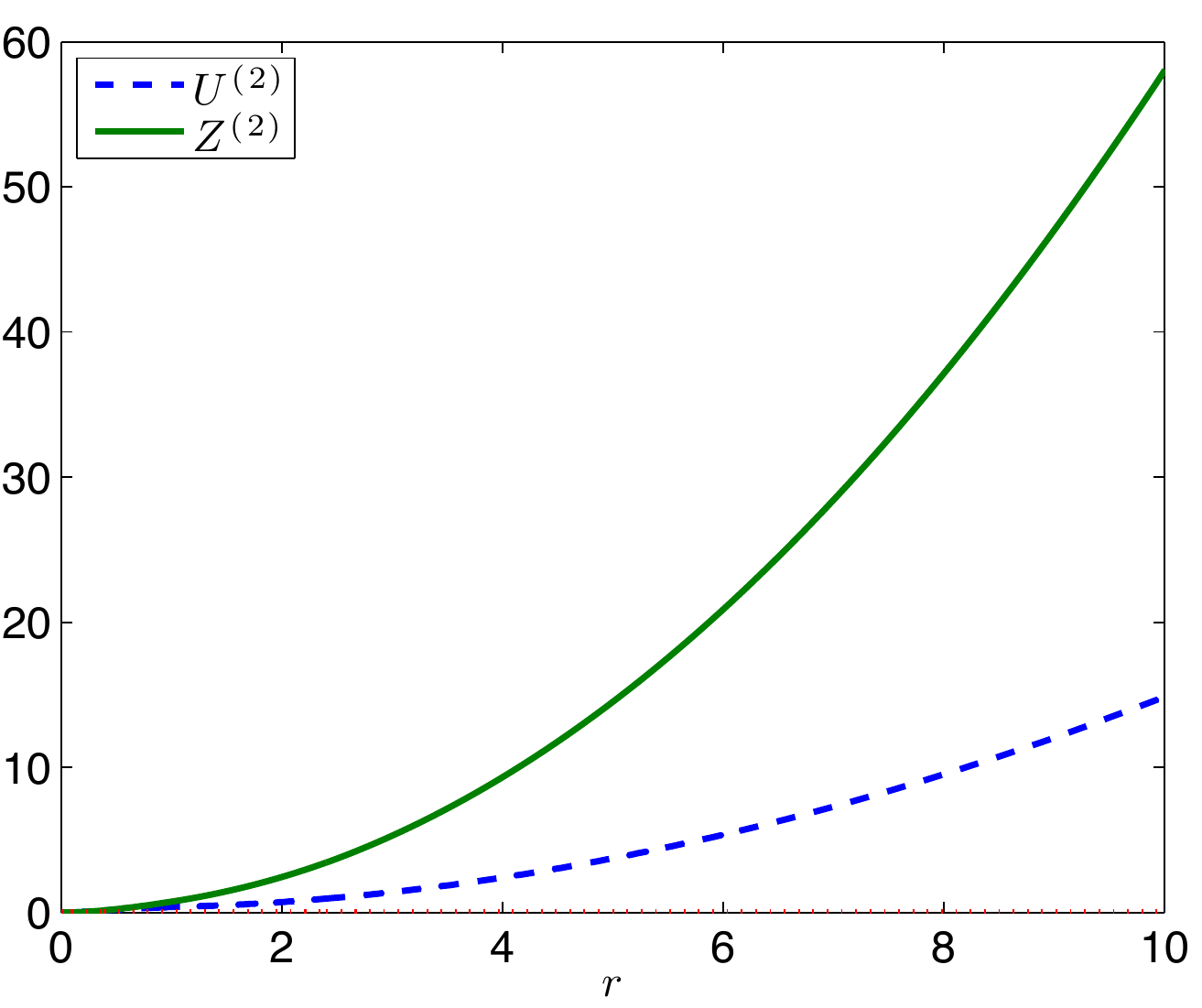}
  } \subfigure[$k=2$ harmonic asymptotics]{
    \includegraphics[width=2.5in]{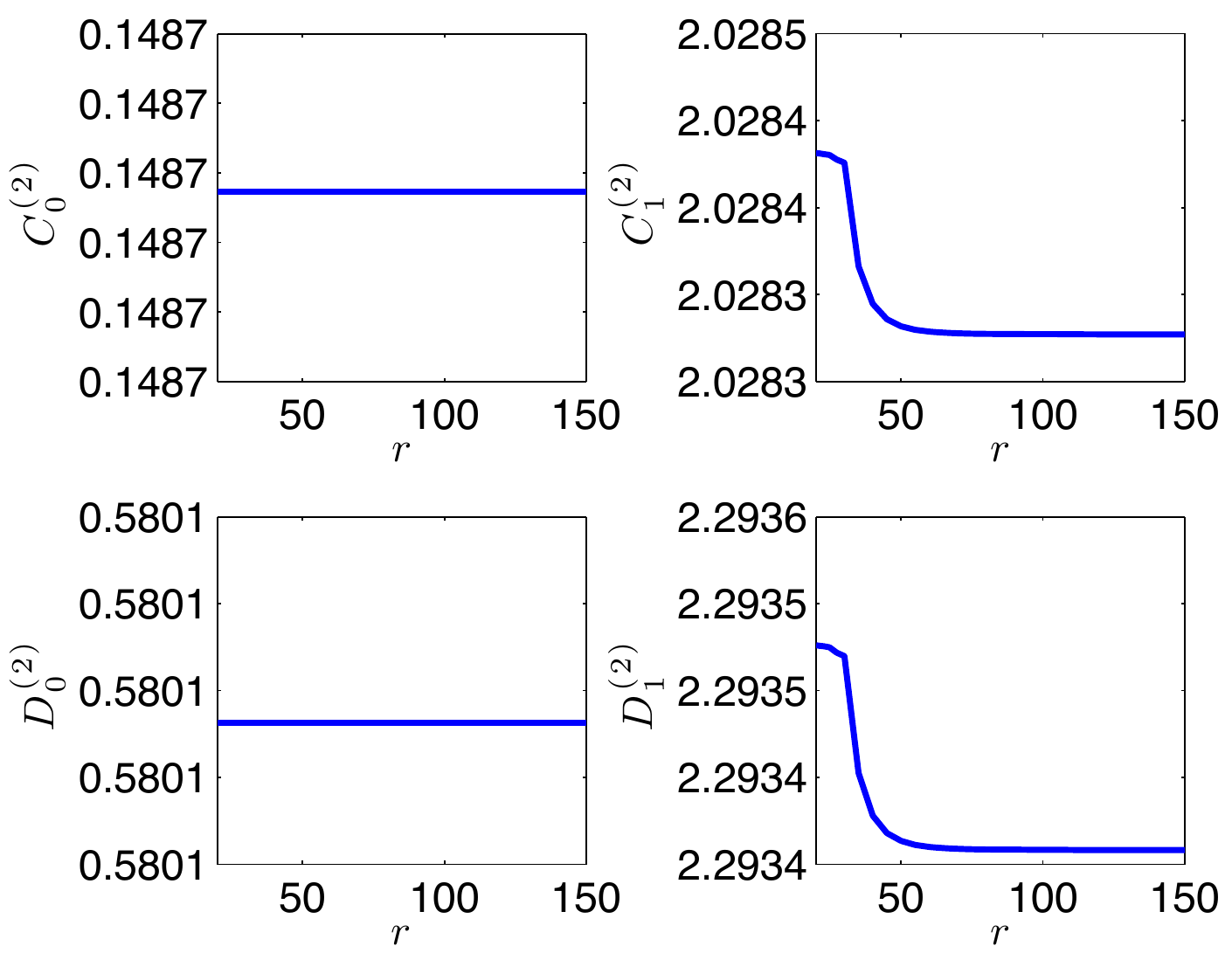}
  }
  \caption{Index computations for 3d Cubic NLS.  The number of zero
    crossings (other than $r=0$), determines the codimension of the
    subspace on which the operator $\calL_\pm^{(k)}$ is positive.}
  \label{fig:3dcubic_index_k2}
\end{figure}

\begin{prop}
  \label{prop:idx_perturbation}
  For the operators in Proposition \ref{prop:index_computations} and
  Corollary \ref{c:idx_mono_cubic} there exists a universal
  $\delta_0>0$, sufficiently small, such that for the perturbed
  operators
  \[
  \overline{\calL}_\pm^{(k)} = {\calL}_\pm^{(k)} - \delta_0 e^{-\abs{\bx}^2}
  \]
  the associated bilinear forms have the property that 
  \[
  \ind(\overline{\calB}_\pm^{(k)}) = \ind(\calB_\pm^{(k)}).
  \]
\end{prop}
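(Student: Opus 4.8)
The plan is to use the variational characterization of the index together with the fact that each $\calL_\pm^{(k)}$ has a spectral gap above its finitely many non-positive directions. Recall that $\ind(\calB_\pm^{(k)})$ equals the number of eigenvalues of $\calL_\pm^{(k)}$ (counted with multiplicity) that are strictly negative, i.e. the dimension of the maximal subspace on which the form fails to be non-negative, together with the (zero or one-dimensional) kernel being handled by the oscillation count in Theorem \ref{thm:rs_idx}. Since by Proposition \ref{prop:index_computations} and Corollary \ref{c:idx_mono_cubic} each operator $\calL_\pm^{(k)}$ has at most one non-positive eigenvalue, and since $\calL_\pm^{(k)}$ differs from $-\Delta$ (on the appropriate radial sector) by a relatively compact potential, its essential spectrum is $[0,\infty)$ and its discrete spectrum below $0$ is finite. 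The key quantitative input is that there is a genuine gap: letting $\mu_1^{(k)} \leq \mu_2^{(k)} \leq \cdots$ enumerate the eigenvalues of $\calL_\pm^{(k)}$ below the essential spectrum, if $\ind = m$ then $\mu_m^{(k)} < 0$ and the next point of the spectrum — whether $\mu_{m+1}^{(k)}$ or the bottom of the essential spectrum $0$ — is strictly positive. Actually, more care is needed at the threshold: $0$ could itself be an eigenvalue or resonance. However, for the $3d$ cubic problem the relevant kernels (spanned by $R$, $\nabla R$, $\partial_\lambda R$, $\bx R$) sit in specific harmonics, so for each fixed $k$ we either know $0 \notin \spec(\calL_\pm^{(k)})$ as an eigenvalue, or we treat the kernel explicitly. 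In the cases at hand ($k=0,1,2$ for $\calL_+$ and $k=0,1$ for $\calL_-$) one checks directly from the known kernel elements which harmonics contain a zero mode; where a zero mode is present the index count already accounts for it via the oscillation theorem, and the gap statement is then: there is $\gamma^{(k)} > 0$ with $\spec(\calL_\pm^{(k)}) \cap (0,\gamma^{(k)}) = \emptyset$ on the orthogonal complement of the non-positive eigenspace. Set $\gamma = \min$ over the finitely many relevant pairs $(\pm, k)$.

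The perturbation argument then runs as follows. Fix $(\pm, k)$ with $m = \ind(\calB_\pm^{(k)})$, and let $P_{\le 0}$ be the spectral projection of $\calL_\pm^{(k)}$ onto its non-positive part (dimension $m$, possibly including a one-dimensional kernel), $P_{>0} = I - P_{\le 0}$. First I would show $\ind(\overline{\calB}_\pm^{(k)}) \geq m$: on the $m$-dimensional space $\ran P_{\le 0}$ the form $\calB_\pm^{(k)}$ is $\leq 0$, hence $\overline{\calB}_\pm^{(k)}(z,z) = \calB_\pm^{(k)}(z,z) - \delta_0 \int e^{-\abs{\bx}^2}\abs{z}^2 \leq -\delta_0 c \norm{z}^2 < 0$ for $z \neq 0$ in this space (using that $e^{-\abs{\bx}^2}$ is bounded below by a positive constant on the support of any fixed finite-dimensional space of rapidly decaying functions — one uses here that eigenfunctions of $\calL_\pm^{(k)}$ decay exponentially, so $\int e^{-\abs{\bx}^2}\abs{z}^2 \geq c\norm{z}^2$ with $c$ uniform on the unit sphere of the finite-dimensional space $\ran P_{\le 0}$). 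This forces the index to be at least $m$, since a form that is strictly negative on an $m$-dimensional subspace cannot be non-negative on any subspace of codimension less than $m$. Next, $\ind(\overline{\calB}_\pm^{(k)}) \leq m$: on $\ran P_{>0}$ we have $\calB_\pm^{(k)}(z,z) \geq \gamma^{(k)} \norm{z}^2$, and since $\delta_0 \int e^{-\abs{\bx}^2}\abs{z}^2 \leq \delta_0 \norm{z}^2$, choosing $\delta_0 < \gamma^{(k)}$ gives $\overline{\calB}_\pm^{(k)}(z,z) \geq (\gamma^{(k)} - \delta_0)\norm{z}^2 > 0$ on $\ran P_{>0}$, a subspace of codimension exactly $m$. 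Combining, $\ind(\overline{\calB}_\pm^{(k)}) = m$. Taking $\delta_0 < \gamma = \min_{(\pm,k)} \gamma^{(k)}$ over the finitely many operators in Proposition \ref{prop:index_computations} handles all of them at once, and the monotonicity Corollary \ref{c:idx_mono} (which transfers verbatim to the perturbed operators, since $\overline{\calL}_\pm^{(k)}$ still has the form $-\frac{d^2}{dr^2} - \frac{d-1}{r}\ddr + \overline{V} + \frac{k(k+d-2)}{r^2}$ with $\overline{V}$ smooth and decaying) handles $k$ large.

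The main obstacle I anticipate is the threshold behavior at $0$: establishing a uniform lower gap $\gamma^{(k)} > 0$ on the positive spectral subspace requires ruling out that $0$ is an accumulation point of eigenvalues from above, and — where $0$ is an eigenvalue — correctly matching the oscillation-theoretic index count of Theorem \ref{thm:rs_idx} with the spectral-subspace count so that $P_{\le 0}$ has exactly the right dimension. For the scalar Schrödinger operators $\calL_\pm^{(k)}$ with exponentially decaying potential on $\R^3$ this is standard (no negative-energy accumulation, and at most a finite-dimensional zero eigenspace by Fredholm theory, as used in the Birman–Schwinger discussion above), but one must be careful that the $k$-dependent centrifugal term $\frac{k(k+d-2)}{r^2}$ does not spoil the argument — it only helps, pushing spectrum up, consistent with Corollary \ref{c:idx_mono}. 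The remaining steps are the routine finite-dimensional perturbation estimates sketched above, plus invoking exponential decay of eigenfunctions (which follows from Agmon estimates, or directly from the ODE since the potential decays exponentially) to get the uniform constant $c$ in the lower bound on $\int e^{-\abs{\bx}^2}\abs{z}^2$.
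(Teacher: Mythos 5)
The lower-bound half of your argument, $\ind(\overline{\calB}_\pm^{(k)}) \geq \ind(\calB_\pm^{(k)})$, is fine: the perturbed form is strictly negative on the finite-dimensional non-positive spectral subspace (the uniform lower bound on $\int e^{-\abs{\bx}^2}\abs{z}^2$ there needs only positivity of the weight and compactness of the unit sphere, not exponential decay), and a dimension count finishes it. The problem is the upper bound. You assert a gap $\gamma^{(k)}>0$ with $\spec(\calL_\pm^{(k)})\cap(0,\gamma^{(k)})=\emptyset$, hence $\calB_\pm^{(k)}(z,z)\geq \gamma^{(k)}\norm{z}_{L^2}^2$ on $\ran P_{>0}$. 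No such gap exists: $\calV_\pm$ decays, so the essential spectrum of $\calL_\pm^{(k)}$ is $[0,\infty)$ and its bottom is $0$ --- which you yourself list as a candidate for ``the next point of the spectrum'' and then call strictly positive. The infimum of $\calB_\pm^{(k)}(z,z)/\norm{z}_{L^2}^2$ over $\ran P_{>0}$ is therefore $0$, and choosing $\delta_0<\gamma^{(k)}$ buys nothing. That this is not a repairable technicality is shown by the model case $-d^2/dx^2-\delta_0 e^{-x^2}$ on $\R$: the unperturbed operator has index $0$, yet the perturbed one acquires a negative eigenvalue for \emph{every} $\delta_0>0$, precisely because the free operator has a zero-energy resonance. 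So the proposition genuinely hinges on threshold non-degeneracy of each $\calL_\pm^{(k)}$ --- the issue you correctly flag as the main obstacle, but then bypass with a gap that is not there.

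The repair, consistent with the paper's framework, goes through Theorem \ref{thm:rs_idx} rather than spectral projections: the index equals the number of zeros of the solution $U^{(k)}$ of $\calL^{(k)}U^{(k)}=0$, and this count is stable under small perturbations of the potential because (i) on any compact interval $[0,R]$ the solution varies continuously in $\delta_0$ and its zeros are simple, and (ii) for large $r$ the asymptotics $U^{(k)}\approx C_0^{(k)} r^k + C_1^{(k)} r^{2-d-k}$ with nonvanishing constants of the observed signs (this nonvanishing \emph{is} the absence of a threshold eigenvalue or resonance, and is what the paper verifies numerically) forbid an additional zero from entering at infinity. The paper's own proof is only the one-line ``follows from the definition'' plus the monotonicity step for $k>2$, which you reproduce correctly; but the content it implicitly leans on is this ODE stability of the zero count, not a spectral gap above $0$.
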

\begin{proof}
  The proof here follows obviously from definition of the index of
  $B$, namely the positivity of the quadratic form $B$ on the
  subspace for individual operators.  Let
  $\delta_0$ be a sufficiently small value that it holds for 
  $\overline{\calL}_+^{(k)}$  for $k = 0, 1, 2$ and
  $\overline{\calL}_-^{(k)}$ for $k = 0,1$.  This $\delta_0$ 
  now holds for all higher values of $k$, again by a monotonicity
  argument.  Indeed, for $k > 2$,
\[
\overline{\calB}_+^{(k)}(f,f) = \overline{\calB}_+^{(2)}(f,f) + \int
\frac{k^2 + 2k - 8 }{\abs{\bx}^2}\abs{f}^2 d\bx  \geq 0.
\]
Because the form remains positive, this confirms that its index of zero is unperturbed.
\end{proof}

\subsection{Invertibility of Operators}

In conjunction with the results on the indexes of operators, we need
to compute a number of inner products of the form $\inner{\calL
  u}{u}$, where $\calL$ is one of our operators and $u$ solves $\calL
u = f$.  These are computed numerically, but we can rigorously justify
the existence and unqiuess of these solutions, $u$, for the problems
under consideration.

\begin{prop}[Numerically Verified for 3d Problems]
  \label{prop:eu_bvp_3d}

  Let $f$ be a smooth, radially symmetric, localized function
  satisfying the bound $\abs{f(r)} \leq C e^{-\kappa r}$ for some
  positive constants $C$ and $\kappa$.  There exists a unique radially
  symmetric solution
\[
(1+r^{k+1})u \in L^\infty([0,\infty))\cap C^2([0,\infty))
\]  to
  \begin{equation}
    \calL u = f,
  \end{equation}
  where $\calL = \calL^{(k)}_\pm$ for one of the 3d problems.
\end{prop}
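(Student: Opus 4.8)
The plan is to treat the ODE $\calL^{(k)}_\pm u = f$ as a singular regular Sturm–Liouville problem on $[0,\infty)$ and build the solution by variation of parameters from a fundamental system with controlled behavior at the two singular endpoints $r=0$ and $r=\infty$. First I would record that, after the substitution $u = r^k \widetilde u$ introduced before Corollary \ref{c:idx_mono}, the operator $\widetilde\calL^{(k)}_\pm = -\tfrac{d^2}{dr^2} - \tfrac{d-1+2k}{r}\tfrac{d}{dr} + V_\pm$ has a regular singular point at $r=0$ with indicial roots $0$ and $2-d-2k$, so there is a one-dimensional space of solutions that are bounded (indeed analytic, $=1+O(r^2)$) at the origin; call a normalized such solution $\phi_0$. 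At $r=\infty$, since $V_\pm \to \lambda = 1 > 0$ exponentially fast, the equation is asymptotically $-\widetilde u'' - \tfrac{d-1+2k}{r}\widetilde u' + \widetilde u = 0$, whose two solutions behave like $r^{-(d-1+2k)/2}e^{\pm r}$; pick $\phi_\infty$ to be the (one-dimensional space of) solutions decaying like $r^{-(d-1+2k)/2}e^{-r}$. These asymptotics can be made rigorous by a standard Levinson/Liouville–Green argument or a contraction-mapping fixed point on the integral equation near infinity.

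Next I would argue that $\phi_0$ and $\phi_\infty$ are linearly independent, i.e. that the homogeneous problem has no solution that is simultaneously bounded at $0$ and decaying at $\infty$. This is exactly where the hypothesis ``for one of the 3d problems'' and the phrase ``Numerically Verified'' enter: such a solution would be an $L^2(\R^3)$ (with the measure $r^2\,dr$, and for $k\ge1$ also with $\int |f|^2/|\bx|^2 < \infty$) zero-energy eigenfunction of $\calL^{(k)}_\pm$, and the index computations of Proposition \ref{prop:index_computations}, together with the explicit asymptotic constants $C_0^{(k)},C_1^{(k)},D_0^{(k)},D_1^{(k)}$ extracted in Section \ref{sec:index_computations}, certify that $0$ is not in the point spectrum of any of the operators $\calL^{(0)}_\pm, \calL^{(1)}_\pm, \calL^{(2)}_\pm$ (the relevant $U^{(k)}, Z^{(k)}$ do not vanish at $\infty$ nor decay there). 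Hence the Wronskian $W(\phi_0,\phi_\infty)(r)$, which by Abel's formula equals $c\, r^{-(d-1+2k)}$ for a constant $c$, is nonzero.

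With a genuine fundamental system in hand, I would write the solution by the standard Green's function formula
\[
u(r) = -\phi_\infty(r)\int_0^r \frac{\phi_0(s) f(s)}{W(s)} s^{d-1+2k}\,ds \;-\; \phi_0(r)\int_r^\infty \frac{\phi_\infty(s) f(s)}{W(s)} s^{d-1+2k}\,ds,
\]
(written here for the $\widetilde u$ form, with the $r^k$ reinstated afterward), and then verify the claimed regularity and decay: near $r=0$ the integrals are controlled using $|f(s)|\le Ce^{-\kappa s}$ and the indicial behavior of $\phi_0,\phi_\infty$, giving $u = O(r^k)$ and $u\in C^2$ up to the origin; near $r=\infty$ the exponential decay of $f$ beats the $e^{r}$ growth of $\phi_\infty$ in the first term, forcing $u$ to inherit the $r^{-(d-1+2k)/2}e^{-r}$ decay of $\phi_\infty$, which in particular gives $(1+r^{k+1})u\in L^\infty$. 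Uniqueness is immediate: the difference of two solutions is a homogeneous solution with the same endpoint behavior, hence zero by the nondegeneracy of the Wronskian established above. The main obstacle is Step 2 — ruling out a zero-energy resonance/eigenfunction for each operator — since it cannot be done by soft abstract arguments and genuinely rests on the numerically verified sign and magnitude of the asymptotic constants from the index computation; everything else is routine Sturm–Liouville theory.
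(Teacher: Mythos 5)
Your overall skeleton (fundamental systems at the two singular endpoints, a Green's function for existence, and reliance on the numerically verified index data to rule out a homogeneous solution that is simultaneously regular at $0$ and admissible at $\infty$) is essentially the route the paper takes: its proof defers to Propositions 2 and 4 of \cite{FMR} together with Proposition \ref{prop:index_computations}, and the detailed $1d$ version in Appendix \ref{sec:inverse_proof} is built from exactly these ingredients (Eastham--Levinson asymptotics, a Green's function from the two subdominant solutions, and the index count). However, there is one concrete error in your setup: you have analyzed the wrong operator at infinity. The operators here are the commutator operators $\calL^{(k)}_\pm = -\Delta + \calV_\pm + k(k+d-2)/r^2$ with $\calV_\pm = \tfrac12 \bx\cdot\grad V_\pm$; they carry \emph{no} mass term $+\lambda$, and $\calV_\pm\to 0$ exponentially. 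So the far-field equation is the free (centrifugal) Laplace equation, whose solutions behave like $r^{k}$ and $r^{2-d-k}$ --- precisely the expansion \eqref{eq:Uk_asympt} the paper uses as its numerical consistency check --- and not like $r^{-(d-1+2k)/2}e^{\pm r}$. There is no exponential dichotomy and no spectral gap; the subdominant solution at infinity decays only algebraically, which is exactly why the proposition asserts $(1+r^{k+1})u\in L^\infty$ and the following corollary claims decay $\propto r^{-1-k}$ rather than exponential decay. Your Liouville--Green/contraction argument as stated does not apply; the correct tool is the Eastham extension of Levinson's theorem for asymptotically polynomial fundamental systems, which is what the paper invokes.

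The error is repairable: with the corrected asymptotics ($\phi_0$ regular at the origin, generically growing like $r^k$ at infinity; $\phi_\infty\sim r^{2-d-k}$ decaying), the Green's function integrals still converge because $f$ decays exponentially, the solution inherits the $r^{2-d-k}=r^{-1-k}$ decay of $\phi_\infty$, and linear independence is indeed certified by the numerically observed nonvanishing of the leading coefficient $C_0^{(k)}$ (resp.\ $D_0^{(k)}$) of $U^{(k)}$ (resp.\ $Z^{(k)}$), which is $\phi_0$ itself. One further caveat worth flagging: because the subdominant behavior at infinity is only algebraic (and, in the $1d$ and low-$k$ cases, merely asymptotically constant), mere boundedness of a homogeneous solution does not immediately place it in $\spn\{\phi_\infty\}$, so Wronskian nondegeneracy alone does not dispose of uniqueness in the natural $L^\infty$ class. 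The paper handles this with a separate argument: a logarithmic cutoff $\chi_A$ showing $\inner{\calL u}{u}=0$ for any bounded homogeneous $u$, followed by the observation that $u$ together with the negative eigenvector would span a two-dimensional negative subspace of $\overline{\calL}$, contradicting the computed index of $1$. In the class stated in the proposition, where $(1+r^{k+1})u\in L^\infty$ forces genuine decay, your shorter Wronskian argument does suffice, but you should be explicit that uniqueness is being proved only in that weighted class.
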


\begin{proof}
  This is Proposition 2 and 4 of \cite{FMR}, along with our
  computations of the indexes in Proposition
  \ref{prop:index_computations}.  See Appendix \ref{sec:inverse_proof}
  for a proof in 1d.
\end{proof}
\begin{cor}
  The solutions in Proposition \ref{prop:eu_bvp_3d} are smooth and
  decay $\propto r^{-1 - k}$ as $r\to \infty$.
\end{cor}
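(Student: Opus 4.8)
The plan is to handle the two assertions — smoothness and the $r^{-1-k}$ decay — separately, reducing each to a one-dimensional analysis of $\calL_\pm^{(k)}u = f$ on $(0,\infty)$ with exponentially decaying coefficients, and in both cases leaning on the a priori weighted bound already supplied by Proposition \ref{prop:eu_bvp_3d}.

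For smoothness I would first note that the radial potential $V$ appearing in $\calL_\pm^{(k)}$ is built algebraically from the ground state $R$, which is a smooth (indeed real-analytic) solution of \eqref{eqn:sol}; hence $V\in C^\infty$ and $V$ decays exponentially. On $(0,\infty)$ the coefficients of $\calL_\pm^{(k)}$ are smooth, so writing $u'' = -\frac{d-1}{r}u' + \big(V + \frac{k(k+1)}{r^2}\big)u - f$ (in $d=3$) and bootstrapping from the a priori $C^2$ regularity, with $f$ smooth by hypothesis, gives $u\in C^\infty((0,\infty))$. To see there is no genuine issue at $r=0$, I would pass to $\widetilde u = r^{-k}u$, which solves the equation with operator $\widetilde L^{(k)} = -\frac{d^2}{dr^2} - \frac{d-1+2k}{r}\frac{d}{dr} + V$ and right-hand side $r^{-k}f$; the $1/r$ term is the same harmless regular-singular term one meets in the radial Laplacian, so $\widetilde u$ extends to a smooth even function at the origin and $u = r^k \widetilde u$ inherits the stated regularity.

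For the decay I would specialize to $d=3$ and set $w(r) = r^{k+1}u(r)$. A direct computation converts $\calL_\pm^{(k)}u=f$ into
\[
w'' - \frac{2k}{r}\,w' \;=\; V\,w - r^{k+1}f .
\]
By Proposition \ref{prop:eu_bvp_3d} we already know $w\in L^\infty([0,\infty))$; since $V$ and $f$ decay exponentially, the right-hand side $g := Vw - r^{k+1}f$ is then $O(e^{-\delta r})$ for some $\delta>0$. Multiplying by the integrating factor $r^{-2k}$ gives $(r^{-2k}w')' = r^{-2k}g$, and because $w$ is bounded the growing homogeneous mode ($\equiv 1$ for $k=0$, $\equiv r^{2k+1}$ for $k\ge1$) must be absent; hence $w'(r) = -r^{2k}\int_r^\infty s^{-2k}g(s)\,ds = O(e^{-\delta r})$, and integrating once more $w(r) = A + O(e^{-\delta r})$ for some constant $A$. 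Unwinding, $u(r) = A\,r^{-1-k} + O(r^{-1-k}e^{-\delta r})$, which is the claimed $\propto r^{-1-k}$ decay and is consistent with \eqref{eq:Uk_asympt}--\eqref{eq:Zk_asympt}; if $A=0$ the solution simply decays faster.

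Since the a priori bound is handed to us by Proposition \ref{prop:eu_bvp_3d}, the heart of the matter is just this short integrating-factor computation, and I do not expect a real obstacle. The points needing care are: (i) the apparent singularities at $r=0$ in the smoothness claim, dealt with by the $r^k$-substitution above; and (ii) reading ``$\propto r^{-1-k}$'' as the existence of a constant $A$ (a priori possibly zero, depending on $f$) with $u = A\,r^{-1-k}+O(r^{-1-k}e^{-\delta r})$ --- note that no logarithmic corrections intrude precisely because the perturbing coefficients are exponentially, not polynomially, small at infinity, which is what makes the step $w\in L^\infty \Rightarrow w\to A$ clean rather than merely asymptotic. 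The cases $k=0$ (no $1/r^2$ term, trivial integrating factor) and $k\ge1$ are covered uniformly by the same calculation.
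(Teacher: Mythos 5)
Your argument is correct, and it is worth noting that it is not the route the paper takes: the paper treats this corollary as immediate from the construction of $u$ in Proposition~\ref{prop:eu_bvp_3d} (via \cite{FMR} and the Green's-function argument of Appendix~\ref{sec:inverse_proof}), where $u$ is assembled from the fundamental solution that decays at infinity; by the Eastham/Levinson asymptotics that solution behaves like $r^{2-d-k}=r^{-1-k}$ in $d=3$, cf.\ \eqref{eq:Uk_asympt}--\eqref{eq:Zk_asympt}, so the decay rate is read off from the representation formula. You instead derive the decay \emph{a posteriori} from the weighted bound $(1+r^{k+1})u\in L^\infty$: your substitution $w=r^{k+1}u$ does kill the $r^{-2}$ term (the coefficient computation checks out), the integrating factor $(r^{-2k}w')'=r^{-2k}g$ is correct, and boundedness of $w$ correctly forces the growing mode to be absent, giving $w\to A$ exponentially fast. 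This is more elementary and self-contained, at the mild cost that it only shows $u=Ar^{-1-k}+O(r^{-1-k}e^{-\delta r})$ with $A$ possibly zero --- a caveat you flag, and which is harmless here (and is in any case implicit in the paper's own reading of ``$\propto$''). One small point to watch in your smoothness step: regularity of $\widetilde u=r^{-k}u$ at $r=0$ requires the right-hand side $r^{-k}f$ to be regular there, i.e.\ $f=O(r^k)$ near the origin; this holds for every $f$ actually used in Proposition~\ref{prop:ip_estimates_3d} (e.g.\ $f=rR$ for $k=1$) but is not a consequence of ``smooth, radially symmetric, localized'' alone, so it deserves a sentence.
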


\subsection{Estimates of Inner Products}
In order to prove the spectral property for each of these NLS
equations, we need to approximate the bilinear forms associated with
$\calL_\pm^{(\alpha)}$ on certain functions.  These particular
functions are, generically, of the form $\calL u = f$, where $f$ is
from one of the orthogonality conditions.

\begin{prop}[Numerical approximation of inner products]
  \label{prop:ip_estimates_3d}
   For the $3d$ cubic problem, let $U_1^{(0}$, $U_2^{(0)}$,
    $U_1^{(1)}$, and $Z_1^{(0)}$ solve
    \begin{align}
      \calL_+^{(0)} U_1^{(0)} &= R, \quad (1+r)U_1^{(0)}\in L^\infty,\\
      \calL_+^{(0)} U_2^{(0)} &=\phi_2, \quad (1+r)U_2^{(0)}\in L^\infty,\\
      \calL_+^{(1)} U_1^{(1)} &= r R, \quad (1+r^2)U_1^{(0)}\in L^\infty,\\
      \calL_-^{(0)} Z_1^{(0)} &= R + r R', \quad (1+r)Z_1^{(0)}\in
      L^\infty,
    \end{align}
    where $\vec{\phi}$ is the eigenstate associated with the positive
    real eigenvalue of $JL$.  Then,
    \begin{align}
      K_1^{(0)}&\equiv \inner{\calL_+^{(0)}
        U_1^{(0)}}{U_1^{(0)}}= 1.04846,\\ 
      K_2^{(0)}&\equiv \inner{\calL_+^{(0)} U_2^{(0)}}{U_2^{(0)}}=
      0.00215981,\\ 
      K_3^{(0)}&\equiv\inner{\calL_+^{(0)}
        U_1^{(0)}}{U_2^{(0)}}=-0.116369, \\ 
      K_1^{(1)}&\equiv\inner{\calL_+^{(1)} U_1^{(1)}}{U_1^{(1)}}=-0.581854,\\ 
      J_1^{(0)}&\equiv\inner{\calL_-^{(0)}
        Z_1^{(0)}}{Z_1^{(0)}}= -0.662038.
    \end{align}
\end{prop}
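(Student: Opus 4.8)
The plan is to reduce each of the five quantities to a one-dimensional integral against the radial measure, to solve the underlying ODE boundary value problems numerically with controlled error, and to bound the resulting quadrature. The key simplification is that every $U$ or $Z$ appearing here solves $\calL u = f$ exactly by construction, so that $\inner{\calL u}{u} = \inner{f}{u}$; in $d=3$ this means each constant equals $\int_0^\infty f(r)\,u(r)\,r^2\,dr$. (For the cross term $K_3^{(0)} = \inner{R}{U_2^{(0)}} = \inner{\phi_2}{U_1^{(0)}}$, either representation may be used, and agreement of the two is a built-in consistency check on the symmetry of the form.) The computation therefore splits into three stages: (a) producing $R$, the eigenpair $\vec\phi$, and hence the potentials $\calV_\pm$ to high accuracy; (b) solving the two-point boundary value problems for $U_1^{(0)}, U_2^{(0)}, U_1^{(1)}, Z_1^{(0)}$; and (c) evaluating the radial integrals with a rigorous tail estimate.

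For stage (a), I would compute the ground state of $\Delta R - R + R^3 = 0$ by radial shooting, using the known far-field behavior $R(r)\sim c\, r^{-1} e^{-r}$ as the outer boundary condition, and compute the eigenfunction $\vec\phi=(\phi_1,\phi_2)^T$ attached to the positive real eigenvalue $\sigma$ of $JL$ the same way (here invoking the standing assumption that it is radial). From these one assembles $\calV_\pm = \frac12\bx\cdot\grad V_\pm$ and the reduced operators $\calL_\pm^{(k)}$. For stage (b), use the substitution $u = r^k\widetilde u$ from \thmref{thm:rs_idx} to regularize the origin, integrate the regular homogeneous solution $\widetilde U^{(k)}$ outward and the decaying homogeneous solution $\widetilde W^{(k)}$ (fixed by its $r^{2-d-k}$ asymptotics) inward, and build the particular solution by variation of parameters against their Wronskian. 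By \propref{prop:eu_bvp_3d} the solution with $(1+r^{k+1})u$ bounded is unique, so this selection is well posed, and the corollary following it supplies the $r^{-1-k}$ decay used for truncation. For stage (c), split $\int_0^\infty = \int_0^{\rmax} + \int_{\rmax}^\infty$, apply a high-order quadrature on the computed mesh over $[0,\rmax]$, and on $[\rmax,\infty)$ bound the integrand by $\abs{f(r)}\le C e^{-\kappa r}$ times $\abs{u(r)} \lesssim r^{-1-k}$, which renders the tail negligible at the displayed precision. Propagating the ODE tolerance (taken to be $10^{-13}$ as in Appendix \ref{sec:numerics}), the interpolation error, and the quadrature error through these linear operations then yields the stated values.

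I expect stage (b) to be the main obstacle. Each $\calL_\pm^{(k)}$ carries a growing homogeneous mode $\sim r^k$, so straightforward outward integration of the inhomogeneous equation is exponentially ill conditioned; the Wronskian/Green's-function construction, together with an a posteriori residual check that the computed $u$ genuinely decays rather than having absorbed a spurious $r^k$ component, is what must be carried out with care. A secondary, more bookkeeping issue is certifying that the accumulated error is smaller than the margin the spectral-property argument of the next subsection actually needs; since only the signs and approximate magnitudes of $K_1^{(0)}, K_2^{(0)}, K_3^{(0)}, K_1^{(1)}, J_1^{(0)}$ enter that argument, a coarse but honest error bound is enough, and the full strength of the interval-arithmetic machinery of \cite{FMR} can be invoked if sharper certification is wanted.
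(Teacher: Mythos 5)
Your proposal is correct in substance --- the paper's ``proof'' is literally ``direct computation,'' with the details relegated to Appendix \ref{sec:numerics} --- but your numerical route differs from the one actually used. The paper does not shoot for $R$ and then build the inhomogeneous solutions by a Wronskian/Green's-function construction; instead it poses everything (the soliton equation, the regularized operators $\widetilde\calL_\pm^{(k)}$ after the substitution $u=r^k\widetilde u$, and the inhomogeneous problems) as one coupled mixed boundary-value/initial-value system solved by an adaptive collocation code (\verb|bvp4c| / its Fortran analogue), with artificial Robin conditions at $\rmax$ derived from the far-field asymptotics $R\sim r^{-1}e^{-r}$ and $q\propto r^{2-d-2k}$. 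This sidesteps exactly the ill-conditioning you flag: the growing $r^k$ mode is excluded by the far-field boundary condition rather than by a carefully normalized inward integration. The inner products are likewise not computed by post hoc quadrature but by adjoining running integrals $\kappa_j^{(\alpha)}(r)$, $\gamma_j^{(\alpha)}(r)$ with $\frac{d}{dr}\kappa_j^{(\alpha)}=\calL_+^{(\alpha)}U_{\ell_1}^{(\alpha)}U_{\ell_2}^{(\alpha)}r^{d-1}$ to the ODE system and reading off their plateau values; your identity $\inner{\calL u}{u}=\inner{f}{u}$ is a cleaner integrand (it avoids differentiating the numerical solution) and your two representations of $K_3^{(0)}$ give a symmetry check the paper does not exploit. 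Where you are more careful than the paper: the paper offers only \emph{a posteriori} consistency checks (decay rates, boundary-condition residuals, asymptotic constancy of $\kappa$, $\gamma$) rather than a propagated or interval-arithmetic error bound, so your closing remark about certifying that the error is below the sign/magnitude margin needed in Section \ref{sec:spec_prop} identifies a genuine gap between what is computed and what is rigorously established --- a gap the paper acknowledges by labelling the proposition ``Numerical.''
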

\begin{proof}
  These result follows from direct computation.
\end{proof}

Finally, we state the following
\begin{prop}
  \label{p:ip_est_3d_peturb}
  For each case in Proposition \ref{prop:ip_estimates_3d}, there
  exists a $\delta_0$ sufficiently small such that inner products
  associated with $\overline{\calL}$ can be made arbitrarily close to
  $\calL$.  These values will be denoted with overlines.
\end{prop}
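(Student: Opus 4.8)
The plan is to express the perturbed solutions as small corrections of the unperturbed ones by a contraction argument, and then to transfer the resulting norm bound to the scalar pairings; since there are only five problems, one takes $\delta_0$ below the minimum of the five thresholds produced below.

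Fix one of the problems, suppress the labels $\pm,(k)$, and write $\norm{v}_W\defeq\norm{(1+r^{k+1})v}_{\linf([0,\infty))}$ for the associated weighted norm; let $f\in\{R,\phi_2,rR,R+rR'\}$ be the (exponentially decaying, $\delta_0$-independent) right-hand side and $u$ the solution of $\calL u=f$ furnished by Proposition~\ref{prop:eu_bvp_3d}. The key input I would use is that the same proposition identifies $\calL\inv$ as a bounded map from the space of exponentially decaying radial data into $\{\norm{\cdot}_W<\infty\}$. Since multiplication by $e^{-\abs{\bx}^2}$ sends $\{\norm{\cdot}_W<\infty\}$ boundedly into that data space, the map $v\mapsto u+\delta_0\,\calL\inv(e^{-\abs{\bx}^2}v)$ is a contraction on $\{\norm{\cdot}_W<\infty\}$ for all $\delta_0$ below a fixed threshold. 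Its unique fixed point $\overline u$ satisfies $\calL\overline u-\delta_0 e^{-\abs{\bx}^2}\overline u=\calL u=f$, i.e.\ $\overline{\calL}\,\overline u=f$, so it is the unique solution of the perturbed problem in this class; moreover $\norm{\overline u}_W\le 2\norm{u}_W$ and $\norm{\overline u-u}_W\le C\delta_0$ uniformly in $\delta_0$. Here $\vec\phi$ and $R$ are unchanged by the perturbation, since they belong to the spectrum of $JL$ rather than of $\calL_\pm$; only the inverses $\calL\inv$ vary with $\delta_0$.

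Now I would pass to the inner products. Each of $K_1^{(0)},K_2^{(0)},K_3^{(0)},K_1^{(1)},J_1^{(0)}$ has the form $\inner{\calL u_1}{u_2}=\inner{f_1}{u_2}$ with $f_1\in\{R,\phi_2,rR,R+rR'\}$ exponentially decaying, and by definition the overlined value is $\inner{\overline{\calL}\,\overline u_1}{\overline u_2}=\inner{f_1}{\overline u_2}$ with the \emph{same} $f_1$. Hence the difference is $\inner{f_1}{\overline u_2-u_2}$, and
\[
\abs{\inner{f_1}{\overline u_2-u_2}}\ \le\ C\,\norm{\overline u_2-u_2}_W\int_0^\infty\frac{\abs{f_1(r)}\,r^{d-1}}{1+r^{k+1}}\,dr\ \le\ C'\delta_0 ,
\]
the integral being finite because $f_1$ decays exponentially. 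Letting $\delta_0\to 0$ gives the assertion, the overlined inner products being \emph{defined} as exactly these perturbed pairings.

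The only step that requires genuine care is the boundedness of $\calL\inv$ in the weighted norm, and hence the uniform-in-$\delta_0$ control of $\overline u$. This is essentially contained in Propositions~2 and~4 of \cite{FMR} (already invoked in Proposition~\ref{prop:eu_bvp_3d}), where the Green's function for the radial ODE is constructed explicitly and the relevant weighted estimates fall out of that construction; so no new analysis is needed and everything else is routine bookkeeping. One could alternatively avoid the contraction step entirely by checking that $\overline{\calL}_\pm^{(k)}=-\Delta+\overline V$ has the same index as $\calL_\pm^{(k)}$ (Proposition~\ref{prop:idx_perturbation}) and re-invoking the \cite{FMR} well-posedness directly for $\overline{\calL}$, but the contraction route is cleaner since it produces the norm bound $\norm{\overline u-u}_W\le C\delta_0$ in the same stroke.
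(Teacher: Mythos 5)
Your proposal is correct and follows essentially the same route as the paper, whose entire proof is the one-line assertion that the claim ``follows immediately from the invertibility of the operator and continuity''; your contraction/Neumann-series construction of $\overline{u}$ and the transfer to the pairings $\inner{f_1}{\overline{u}_2-u_2}$ is precisely the standard way to make that invertibility-plus-continuity argument quantitative. The only input you rely on beyond the paper's own statements --- a weighted operator-norm bound for $\calL\inv$ rather than bare existence-uniqueness --- is correctly flagged and does come out of the Green's function construction underlying Proposition~\ref{prop:eu_bvp_3d}.
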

\begin{proof}
This follows immediately from the invertibility of the operator and continuity.
\end{proof}

\subsection{Proof of the Spectral Property}
\label{sec:spec_prop}
We are now ready to prove the spectral property.  We prove positivity
of $\overline{\calB}_+^{(0)}$, the other cases are similar.  Our proof
closely follows Step 1 and Step 3 of Section 2.4 of \cite{FMR}.

Since $K_1^{(0)}$ and $K_2^{(0)}>0$, orthogonality to $R$ and $\phi_2$
will not give positivity.  However, if $f$ is orthogonal to both of
these, then it is also orthogonal to
\[
q = R - \frac{K^{(0)}_3}{K^{(0)}_2} \phi_2
\]
and
\[
\begin{split}
  \inner{\calL_+^{(0)} q}{q}&= K_1^{(0)} - 2
  \frac{K^{(0)}_3}{K^{(0)}_2} K_3^{(0)}
  + \paren{\frac{K^{(0)}_3}{K^{(0)}_2}}^2 K^{(0)}_2\\ &=
  -\frac{1}{K^{(0)}_2}((K^{(0)}_3)^2 - K^{(0)}_1 K^{(0)}_2)\\ & =
  -5.22138. 
\end{split}
\]
By Proposition \ref{p:ip_est_3d_peturb}, we can take $\delta_0$
sufficiently small such that
\[
-\frac{1}{\overline{K}^{(0)}_2}\paren{(\overline{K}^{(0)}_3)^2 -
  \overline{K}^{(0)}_1 \overline{K}^{(0)}_2}<0 .
\]
We proceed with this value of $\delta_0$.

Let $\overline{Q}$ solve 
\[
\overline{\calL}_+^{(0)} \overline{Q} = q.
\]
Obviously, 
\[
Q = \overline{U}_1^{(0)} -
\frac{K^{(0)}_3}{K^{(0)}_2} \overline{U}_2^{(0)}
\]
and
\[
\overline{\calB}_+^{(0)}(\overline{Q} ,\overline{Q} ) < 0.
\]

For a moment, suppose $\overline{Q} \in H^1_\rad$; it is not since it
decays too slowly to be in $L^2$.  We could then imagine decomposing
$H^1_\rad$ into $\spn\{\overline{Q}\}$ and its orthogonal complement,
where the orthgonalization is done with respect to the
$\overline{\calB}_+^{(0)}$ quadratic form.  Since
$\overline{\calB}_+^{(0)}(\overline{Q},\overline{Q})<0$, the form is
{\it non-degenerate} and this decomposition is well defined.  Since
$\ind_{H^1_\rad} \overline{\calB}_+^{(0)} = 1$,
$\overline{\calB}_+^{(0)}\geq 0$ on $\spn\{\overline{Q}_A\}^\perp$.
To prove this claim, we argue by contradiction.  Suppose there were an
element, $Z \in \spn\{\overline{Q}_A\}^\perp$ for which
$\overline{\calB}_+^{(0)}(Z,Z)<0$.  Then, because of our
decomposition, $\overline{\calB}_+^{(0)}<0$ on $\spn\{Z,
\overline{Q}_A\}$, a space of dimension 2.  This contradicts our index
calculation, proving the claim.

Continuing, if $u\in H^1_\rad$, $u\perp q$ (with respect
to $L^2$), then using the
hypothetical orthogonal decomposition,
\[
u = c \overline{Q} + u^\perp.
\]
If $c =0$, then $u$ lies in a subspace of $H^1_\rad$ on which
$\overline{\calB}_+^{(0)}\geq 0$, giving the desired positivity.
Indeed, the orthogonality condition, $u \perp q$, is sufficient to
ensure $u$ is orthogonal to $\overline{Q} $ with respect to the
$\overline{\calB}_+^{(0)}$ quadratic form. Taking the inner product of
$u$ with $q$,
\begin{equation*}
\begin{split}
0 &= c \inner{\overline{Q}}{ q} +
\inner{u^\perp}{q} = c
\overline{\calB}_+^{(0)}(\overline{Q},\overline{Q}) +
\inner{u^\perp}{\overline{\calL}_+^{(0)}\overline{Q}}\\
&=  c
\overline{\calB}_+^{(0)}(\overline{Q},\overline{Q}) +
\overline{\calB}_+^{(0)}(u^\perp,\overline{Q})=   c
\overline{\calB}_+^{(0)}(\overline{Q},\overline{Q}) + 0.
\end{split}
\end{equation*}
Since 
$\overline{\calB}_+^{(0)}(\overline{Q},\overline{Q})\neq 0$,
we have $c = 0$. 

Unfortunately, the above argument does not work as stated because
$\overline{Q}$ is not in $L^2$!  To get positivity of
$\overline{\calB}_+^{(0)}$, we regularize the problem and follow the
above scheme.  First, we introduce the smooth cutoff function
$\chi_A(r)=\chi(r/A)$, defined such that
\[
\chi(r)=\begin{cases}
1 & r < 1\\
0 & r\geq 2
\end{cases}
\]
and the norm
\[
\norm{f}_{\pm}^2 = \norm{\grad f}_{L^2}^2 + \int \abs{\calV_\pm} \abs{f}^2.
\]
Let $\overline{Q}_A(r) = \overline{Q}(r)\chi_A(r)$. Next, we
observe that 
\begin{equation}
\lim_{A\to +\infty} \norm{\overline{Q}_A - \overline{Q}}_+ +
\abs{\overline{\calB}_+^{(0)}(\overline{Q} ,\overline{Q} ) -
\overline{\calB}_+^{(0)}(\overline{Q}_A ,\overline{Q}_A )}=0.
\end{equation}

Since $\overline{\calB}_+^{(0)}(\overline{Q} ,\overline{Q} ) < 0
$, for sufficiently large $A$,
$\overline{\calB}_+^{(0)}(\overline{Q}_A ,\overline{Q}_A )
<0$ too.  Thus, we can legitimately decompose $H^1_\rad$ as
\begin{equation}
H^1_\rad = \spn\{\overline{Q}_A\} \oplus \spn\{\overline{Q}_A\}^\perp
\end{equation}
with the orthgonalization is done with respect to the quadratic form
$\overline{\calB}_+^{(0)}$.  

Finally, let $u \in H^1_\rad$, $u \perp R$ and $u \perp \phi_2$.  Then
$u \perp q$.  With $A$ sufficiently large to make the
above decomposition valid, 
\begin{equation}
u = c(A) \overline{Q}_A + u^\perp_A.
\end{equation}
As in the heuristic argument $\overline{\calB}_+^{(0)}(u^\perp_A,
u^\perp_A)\geq 0$.  Thus,
\[
\overline{\calB}_+^{(0)}(u,u) = c(A)^2
\overline{\calB}_+^{(0)}(\overline{Q}_A, \overline{Q}_A) +
\overline{\calB}_+^{(0)}(u^\perp_A, u^\perp_A) \geq c(A)^2
\overline{\calB}_+^{(0)}(\overline{Q}_A, \overline{Q}_A) .
\]
We will have our result if $c(A)\to 0$ as $A\to +\infty$.  

Since $\inner{u}{q}=0$, 
\[
\begin{split}
c(A) \inner{\overline{U}_A^\ast} {q} &= -
\inner{u^\perp_A}{q}\\
& = - \inner{u^\perp_A}{\overline{\calL}_+^{(0)}
  \overline{Q}}\\
& = - \inner{u^\perp_A}{\overline{\calL}_+^{(0)}\paren{
  \overline{Q}- \overline{Q}_A }}.
\end{split}
\]  
Therefore,
\[
\abs{c(A)}= \frac{\abs{\inner{u^\perp_A}{\overline{\calL}_+^{(0)}\paren{
  \overline{Q}- \overline{Q}_A
}}}}{\abs{\inner{\overline{U}_A^\ast} {q} }}\leq
\frac{\norm{u_A^\perp}_+\norm{\overline{Q} - \overline{Q}_A}_+}{\abs{\inner{\overline{U}_A^\ast} {q} }}.
\]
Also,
\[
\begin{split}
\abs{\inner{\overline{U}_A^\ast} {q} - \inner{\overline{Q}} {q}} &=
\abs{\inner{\overline{U}_A^\ast- \overline{Q}}
{\overline{\calL}_+^{(0)} \overline{Q}} }\leq \norm{\overline{Q} - \overline{Q}_A}_+\norm{\overline{Q}}_+.
\end{split}
\]
Because this vanishes as $A\to +\infty$, we have that for all $A$
sufficiently large,
\[
\abs{c(A)}\leq C \norm{u_A^\perp}_+\norm{\overline{Q} - \overline{Q}_A}_+
\]
for a constant $C$ independent of $A$.

By construction,
\[
\norm{u_A^\perp}_+ \leq C\paren{\norm{u}_+ +  c(A)
  \norm{\overline{U}_A^\ast}_+}\leq C\paren{\norm{u}_+ +  c(A)
  \norm{\overline{U}_A^\ast - \overline{Q}}_+ + \norm{\overline{Q}}_+}.
\]
Substituting into our previous estimate on $c(A)$,
\[
\abs{c(A)} \leq C \paren{\norm{u}_+ +  c(A)
  \norm{\overline{U}_A^\ast - \overline{Q}}_+ + \norm{\overline{Q}}_+}\norm{\overline{Q} - \overline{Q}_A}_+.
\]
We can clearly see that as $A\to +\infty$, $c(A)\to + 0$.  We
conclude,
\[
\overline{\calB}_+^{(0)}(u,u) \geq 0
\]
for $u \in H^1_\rad$ and $u \perp R$ and $u\perp \phi_2$.    This
yields the estimate
\[
\calB_+^{(0)}(u,u) \geq \delta_0 \int e^{-\abs{\bx}^2} \abs{u}^2dx.
\]

Following the same analysis for $\overline{\calL}_-$, we conclude
\[
\calB_-^{(0)}(g,g) \geq \delta_0 \int e^{-\abs{\bx}^2} \abs{g}^2dx
\]
for $g\in H^1_\rad$ and $g \perp R + r R'$ since $J_1^{(0)}<0$.  Repeating this again for
$f \in H^1_{\rad(1)}$, $f \perp r R$, we get
\[
\calB_-^{(1)}(f,f) \geq \delta_0 \int e^{-\abs{\bx}^2} \abs{f}^2dx
\]
because $K_1^{(1)}<0$.

Let us assume that $\delta_0$ has been taken sufficiently small such
that:
\begin{itemize}
\item The indexes of all operators are unperturbed, as in Proposition \ref{prop:idx_perturbation},
\item The above arguments on the positivity of
  $\overline{\calB}_+^{(k)}$ for $k=0,1$ and
  $\overline{\calB}_-^{(0)}$ hold.
\end{itemize}

Then for $\mathbf{z} = (f,g)^T$ satisfying the orthogonality conditions,
\begin{equation*}
\begin{split}
\overline{\calB}(\mathbf{z},\mathbf{z}) &= \overline{\calB}_+(f,f)+
\overline{\calB}_-(g,g) \\
&=\sum_{k=0}^\infty \overline{\calB}_+^{(k)}(f^{(k)},f^{(k)}) + \sum_{k=0}^\infty
\overline{\calB}_-^{(k)}(g^{(k)},g^{(k)}) \\
& = \sum_{k=0}^\infty {\calB}_+^{(k)}(f^{(k)},f^{(k)}) + \sum_{k=0}^\infty
{\calB}_-^{(k)}(g^{(k)},g^{(k)})  - \delta_0\int
e^{-\abs{\bx}^2} \paren{\abs{f}^2 + \abs{g}^2}d\bx\\
& = \calB(\mathbf{z},\mathbf{z})  - \delta_0\int
e^{-\abs{\bx}^2} \abs{\mathbf{z}}^2 d\bx\geq 0.
\end{split}
\end{equation*}

We almost have the expression in Definition \ref{def:specprop}.  To complete the proof, note that for any $\theta \in (0,1)$,
\[
(1+\theta) \calB(\bz,\bz)  \geq \theta\paren{\int \abs{\grad \bz}^2 d\bx +
  \int V_+\abs{f}^2 + \int V_- \abs{g}^2 } + \delta_0 \int e^{-\abs{\bx}^2} \abs{\bz}^2d\bx.
\]
We can take $\theta=\theta_\star$ sufficiently small such that
\[
\theta_\star\paren{
  \int V_+\abs{f}^2 + \int V_- \abs{g}^2 } + \delta_0 \int
e^{-\abs{\bx}^2} \abs{\bz}^2d\bx\geq \frac{\delta_0}{2}\int
e^{-\abs{\bx}^2} \abs{\bz}^2d\bx.
\]
Then,
\[
\calB(\bz,\bz)\geq \frac{\theta_\star}{1+\theta_\star}\int \abs{\grad \bz}^2
d\bx + \frac{\delta_0}{2(1+\theta_\star)}\int
e^{-\abs{\bx}^2} \abs{\bz}^2d\bx.
\]
Shrinking $\delta_0$ again, so that it is smaller than
\[
\min\set{\frac{\theta_\star}{1+\theta_\star}, \frac{\delta_0}{2(1+\theta_\star)}}
\]
gives us the spectral property.\qed

\section{Other Problems}
In principle, this scheme can be applied to any linearized nonlinear
Schr\"odinger equation.  One finds the indexes of the operators, picks
an appropriate subspace to project away from, and computes the
necessary inner products.  However, our experiments show that the
algorithm is not as universal as might be hoped.  In this section
we exhibit the computations for several 1d NLS equations,
\begin{equation}
\label{e:nls_1d}
i \psi_t + \psi_{xx} + \abs{\psi}^{2\sigma} \psi = 0.
\end{equation}
Sometimes our approach works, ruling out embedded eigenvalues in a range
of supercritical cases, while in others it fails, leaving a large
range of interesting problems unresolved.

\subsection{Numerical Estimates of the Index} 
\label{s:1d_idx}
As in the 3d problem, we first compute the indexes of the operators
$\calL_\pm$ to identify the number of ``bad'' directions.  In contrast
to the multidimensional problems where there are an arbitrarily high,
but finite, number of harmonics which must be examined, 1D problems
only require us to study the operators restricted to even and odd
functions.  This requires the following results, whose proofs are
quite similar to that of Theorem \ref{thm:rs_idx}:
\begin{cor}
  Let $U$ be the even solution to
  \begin{gather*}
    \left\{ \begin{array}{c}
        L U = - U'' + V(r) U = 0, \\
        U(0) = 1, U' (0) = 0,
      \end{array} \right.
  \end{gather*}
  where $V$ is sufficiently smooth and decaying at $\infty$.  Then,
  the number $N(U)$ of zeros of $U$ is finite and
  \begin{equation*}
    \ind_{H^1_e} (B) = N(U),
  \end{equation*}
  where $B$ is the bilinear form associated with $L$.
\end{cor}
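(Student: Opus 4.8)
The plan is to recognize the statement as the $d=1$, $k=0$ specialization of Theorem~\ref{thm:rs_idx}: the radial operator $-\frac{d^2}{dr^2}U-\frac{d-1}{r}\frac{d}{dr}U+VU+\frac{k(k+d-1)}{r^2}U$ reduces, when $d=1$ and $k=0$, to $LU=-U''+VU$, and the limiting initial conditions $\lim_{r\to0}U(r)/r^k=1$, $\lim_{r\to0}\frac{d}{dr}(U(r)/r^k)=0$ become the stated $U(0)=1$, $U'(0)=0$. Accordingly, the proof of Theorem~\ref{thm:rs_idx} — itself a transcription of the proof of Theorem XIII.8 of \cite{RSv4} — carries over, and in fact simplifies, since in one dimension $r=0$ is a regular point of $L$ and no weighted space such as $H^1_{\rad+}$ is needed. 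The first step is the reduction to the half-line: assuming $V$ even (as it is for the operators obtained by linearizing about the even 1d solitons), restriction $f\mapsto f\big|_{[0,\infty)}$ is an isomorphism of $H^1_e$ onto $H^1([0,\infty))$ under which $B(f,f)=\int_\R(|f'|^2+V|f|^2)$ becomes $2\int_0^\infty(|f'|^2+V|f|^2)$. Hence $\ind_{H^1_e}(B)$ equals the index of the half-line form, which by the standard min--max characterization is the number of negative eigenvalues of the self-adjoint realization of $L$ on $L^2([0,\infty))$ with the natural (Neumann) condition at $0$; since $V$ decays, this operator has essential spectrum $[0,\infty)$ and only finitely many negative eigenvalues.

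Second, I would run the Sturm oscillation argument to compute that number. Let $U$ solve $LU=0$ with $U(0)=1$, $U'(0)=0$ — the unique solution, up to scaling, compatible with the Neumann condition at the origin. Because $V\to0$, for $r$ past some $R_0$ the equation is essentially $-U''=0$, whose nontrivial solutions $\sim c_0+c_1r$ are eventually monotone and nonvanishing, so $N(U)<\infty$ and all zeros lie in $(0,R_0)$. For $b>R_0$, consider $L$ on $[0,b]$ with the Neumann condition at $0$ and a Dirichlet condition at $b$; classical Sturm theory on a compact interval (Pr\"ufer angle) identifies the number of its eigenvalues below $0$ with the number of zeros of $U$ in $(0,b)$, which equals $N(U)$. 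A Dirichlet--Neumann bracketing argument in $b$ then shows that the number of negative eigenvalues of the half-line operator is the $b\to\infty$ limit of these counts, hence equals $N(U)$. Combined with the first step, $\ind_{H^1_e}(B)=N(U)$. The borderline case in which $0$ is itself an eigenvalue of the half-line operator — equivalently $U\in L^2$ — is accounted for by the same Pr\"ufer-angle bookkeeping, and does not arise for the potentials considered here in any event.

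The passage to the half-line and the finiteness of $N(U)$ are routine; the one point that needs care — and the main obstacle — is the limit $b\to\infty$ in the Sturm count, i.e.\ verifying that no negative eigenvalue is created or destroyed in the limit and that no zero of $U$ escapes to infinity. This is handled exactly as in \cite{RSv4} (and in Theorem~\ref{thm:rs_idx}): the decay of $V$ confines all negative spectrum and all zeros of $U$ to a fixed compact set, so both counts have stabilized once $b$ exceeds $R_0$. Finally, the companion statement for odd functions follows from the identical argument with a Dirichlet condition at $0$ (so $U(0)=0$, $U'(0)=1$) replacing the Neumann one, the higher-harmonic structure of Theorem~\ref{thm:rs_idx} collapsing in one dimension to precisely these two cases.
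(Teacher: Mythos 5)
Your proposal is correct and follows essentially the route the paper itself intends: the paper offers no written proof of this corollary beyond remarking that it is "quite similar" to Theorem~\ref{thm:rs_idx}, which in turn is said to be adapted from Theorem XIII.8 of \cite{RSv4}, and your argument (restriction of $H^1_e$ to the half-line with the natural Neumann condition, Sturm/Pr\"ufer oscillation counting on $[0,b]$, and the $b\to\infty$ bracketing limit controlled by the decay of $V$) is precisely that adaptation, written out. The only point worth noting is that you supply the details the paper omits rather than doing anything genuinely different.
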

\begin{cor}
  Let $U$ be the odd solution to
  \begin{gather*}
    \left\{ \begin{array}{c}
        L U = - U'' - \frac{d-1}{r} U' + V(r) U = 0, \\
        U(0) = 0, U' (0) = 1,
      \end{array} \right.
  \end{gather*}
  where $V$ is sufficiently smooth and decaying at $\infty$.  Then,
  the number $N(U)$ of zeros of $U$ is finite and
  \begin{equation*}
    \ind_{H^1_o} (B) = N(U),
  \end{equation*}
  where $B$ is the bilinear form associated with $L$.
\end{cor}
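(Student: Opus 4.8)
The plan is to follow the proof of Theorem~\ref{thm:rs_idx}, itself an adaptation of Theorem XIII.8 of \cite{RSv4}, specialized to the degenerate one-dimensional setting (note that since $d=1$ the first-order term $\tfrac{d-1}{r}U'$ in the statement vanishes identically). First I would reduce the whole-line problem to a half-line problem. The restriction map $f\mapsto f|_{[0,\infty)}$ is, up to the harmless factor $\sqrt2$, an isometry of $H^1_o$ onto $\{g\in H^1([0,\infty)):g(0)=0\}$, and under it $B(f,f)=2\int_0^\infty(|g'|^2+V|g|^2)\,dr$. Hence $\ind_{H^1_o}(B)$ is the index, on its form domain, of the quadratic form of the Schr\"odinger operator $L_D=-\tfrac{d^2}{dr^2}+V$ on $[0,\infty)$ with a Dirichlet condition at the origin. (The even corollary is the same with the Dirichlet condition replaced by a Neumann condition, and is proved identically.)

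Second, I would identify this index with the number $n_-$ of negative eigenvalues of $L_D$, counted with multiplicity. Since $V$ is smooth and decays at infinity, $L_D$ is self-adjoint, bounded below, with $\sigma_{\ess}(L_D)=[0,\infty)$ and only finitely many eigenvalues in $(-\infty,0)$. Letting $P$ be the $B$-orthogonal complement of the span of the negative eigenfunctions, $B\ge 0$ on $P$; strict positivity on $P\setminus\{0\}$ follows whenever $0$ is not an eigenvalue of $L_D$, since $\langle L_D g,g\rangle=0$ with $L_D\ge0$ on $P$ forces $L_D^{1/2}g=0$, hence $g=0$. A density argument --- the form domain is the closure of $C_c^\infty((0,\infty))$ in the form norm --- transfers this from the operator domain to all of $H^1_o$, giving $\ind_{H^1_o}(B)\le n_-$; the reverse inequality is the min--max principle. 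The non-generic case of a zero-energy eigenvalue or resonance is handled exactly as in \cite{RSv4}, and I expect this bookkeeping to be the only delicate point.

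Third, I would invoke Sturm oscillation theory at energy zero: for the Dirichlet problem on $[0,\infty)$, $n_-$ equals the number of zeros in $(0,\infty)$ of the solution $U$ of $LU=0$ with $U(0)=0$, $U'(0)=1$. This is the standard consequence of the monotone motion, as the energy $E$ increases through $(-\infty,0]$, of the zeros of the solution $U_E$ with $U_E(0)=0$, $U_E'(0)=1$: each eigenvalue below $0$ contributes exactly one zero to $U=U_0$. Finiteness of $N(U)$ is inherited from the hypothesis, exactly as for $N(U^{(k)})$ in Theorem~\ref{thm:rs_idx}: because $V(r)\to0$, the equation is a near-$\infty$ perturbation of $-U''=0$, so $U$ is asymptotically affine and a Sturm comparison on $[R_0,\infty)$ rules out further zeros. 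Assembling the three steps yields $\ind_{H^1_o}(B)=n_-=N(U)$, and the even-function corollary follows verbatim with Neumann data in place of Dirichlet.
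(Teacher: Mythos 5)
Your proof is correct and takes essentially the route the paper intends: the paper offers no separate argument for this corollary, saying only that its proof is ``quite similar to that of Theorem~\ref{thm:rs_idx},'' which is itself adapted from Theorem XIII.8 of \cite{RSv4} --- precisely the reduction to a half-line Dirichlet problem, the min--max identification of the index with the number of negative eigenvalues, and the zero-energy Sturm oscillation count that you carry out. Your observation that the $\tfrac{d-1}{r}U'$ term vanishes for $d=1$ and your caveat about the non-generic zero-energy eigenvalue/resonance case are both consistent with what the paper leaves implicit.
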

$H^1_e$ and $H^1_o$ are the subspaces of $H^1(\R)$ restricted to even
and odd functions.  In what follows, we shall omit them in the
  subscripts of the indexes.

To proceed, we numerically solve the initial value problems
\begin{align}
  \calL_+^{(e)} U^{(e)} &=0, \quad U^{(e)}(0)=1, \quad\frac{d}{dx}U^{(e)}(0)=0,\\
  \calL_-^{(e)} Z^{(e)} &=0, \quad Z^{(e)}(0)=1, \quad
  \frac{d}{dx}Z^{(e)}(0)=0
\end{align}
for even functions $U^{(e)}$ and $Z^{(e)}$.  We then solve
\begin{align}
  \calL_+^{(o)} U^{(o)} &=0, \quad U^{(o)}(0)=0, \quad\frac{d}{dx}U^{(o)}(0)=1,\\
  \calL_-^{(o)} Z^{(o)} &=0, \quad Z^{(o)}(0)=0, \quad
  \frac{d}{dx}Z^{(o)}(0)=1
\end{align}
for odd functions $U^{(o)}$ and $Z^{(o)}$.  $\calL_\pm$ have the same
definitions as before; the Laplacian is now one dimensional.  As in
the 3d case, we verify {\it a postiori} that, asymptotically, $U^{\alpha}$ and
$Z^{\alpha}$ fit
\begin{align}
U^{\alpha(x)} &\approx C^{(\alpha)}_0 + C^{(\alpha)}_1 x,\\
Z^{\alpha(x)} &\approx D^{(\alpha)}_0 + D^{(\alpha)}_1 x.
\end{align}
In addition, the constants have the right signs to ensure we have the
correct number of zero crossings.

\begin{prop}[Numerically Verified]
  \label{prop:idx_1d}

  The indexes for the 1d NLS equation with $\sigma = 2, 2.1, 2.5, 3$
  are:
  \begin{gather*}
    \ind \calL_+^{(e)} =1, \quad \ind \calL_-^{(e)} =1,\\
    \ind \calL_+^{(o)} =1, \quad \ind \calL_-^{(o)} =0.
  \end{gather*}
\end{prop}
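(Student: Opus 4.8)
The plan is to reduce the index computation to counting zeros of a handful of explicit scalar ODE solutions, exactly as was done for the $3d$ problem in Proposition \ref{prop:index_computations}, and then to perform that count numerically together with an asymptotic check that no zeros escape to infinity.

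First I would record the soliton explicitly. For \eqref{e:nls_1d} with $\lambda=1$ the profile solves $R''-R+R^{2\sigma+1}=0$, so
\[
R_\sigma(x)=(\sigma+1)^{1/(2\sigma)}\,\sech^{1/\sigma}(\sigma x).
\]
Since $g(s)=s^\sigma$, one gets $V_-=g(R^2)=R_\sigma^{2\sigma}$ and $V_+=g(R^2)+2g'(R^2)R^2=(2\sigma+1)R_\sigma^{2\sigma}$, whence
\[
\calV_\pm=\tfrac12\,x\,\partial_x V_\pm
\]
are explicit, smooth, exponentially decaying and even potentials, and $\calL_\pm=-\partial_x^2+\calV_\pm$ with $\calL_\pm^{(e)}$, $\calL_\pm^{(o)}$ their restrictions to $H^1_e$ and $H^1_o$. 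Next I would invoke the two Corollaries following Theorem \ref{thm:rs_idx}: in one dimension the weight $\tfrac{d-1}{r}$ vanishes, so both the even and the odd initial value problems are regular at the origin, and
\[
\ind\calL_\pm^{(e)}=N(U_\pm^{(e)}),\qquad \ind\calL_\pm^{(o)}=N(U_\pm^{(o)}),
\]
where $U_\pm^{(e)}$ solves $\calL_\pm U=0$ with $U(0)=1$, $U'(0)=0$, and $U_\pm^{(o)}$ solves the same equation with $U(0)=0$, $U'(0)=1$, with $N(\cdot)$ the number of zeros on $(0,\infty)$.

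It then remains, for each $\sigma\in\{2,2.1,2.5,3\}$, to count the (finitely many) zeros of these four functions. I would integrate each IVP with the high-accuracy solver of Section \ref{sec:numerics}, record the sign changes in the interior, and — because $\calV_\pm$ decays exponentially — fit the tail of each solution to the free-equation form $U(x)\approx C_0^{(\alpha)}+C_1^{(\alpha)}x$; the signs and magnitudes of $C_0^{(\alpha)},C_1^{(\alpha)}$ forbid any further zero beyond the integration window. The anticipated outcome is $N=1$ for $U_+^{(e)}$, $U_-^{(e)}$, and $U_+^{(o)}$, and $N=0$ for $U_-^{(o)}$, uniformly across the four exponents, which is precisely the asserted table.

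The only step that is not routine is converting the finite‑precision zero count into an actual proof: one must bound the ODE truncation error over the window containing the zeros and justify the asymptotic matching that rules out zeros at infinity. This is the same difficulty handled (and benchmarked) by the scheme of Appendix \ref{sec:numerics} and already used for Proposition \ref{prop:index_computations}; no new analytic ingredient is needed, which is why the statement is flagged as numerically verified. Since we only assert the four listed values of $\sigma$, it suffices to run the computation at those four points rather than to argue that the indices are constant on the whole interval $[2,3]$.
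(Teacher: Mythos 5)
Your proposal is correct and follows essentially the same route as the paper: reduce the index to a zero count via the even/odd Sturm-oscillation corollaries, integrate the four initial value problems for each $\sigma$ with the collocation solver, and confirm via the asymptotic fit $U\approx C_0+C_1 x$ that no further zeros occur beyond the computation window. The explicit $\sech^{1/\sigma}$ formula for the soliton and the closed forms for $\calV_\pm$ are exactly what the paper's 1d computation uses implicitly, so nothing differs in substance.
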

\begin{proof}
  Using the method discussed in Section \ref{sec:numerics}, we compute
  $U^{(\alpha)}$ and $Z^{(\alpha)}$ for each problem, $\alpha = e, o$.
  The profiles appear in Figures \ref{fig:1d_sig21_idx_even},
  \ref{fig:1d_sig21_idx_odd}.  All are computed with a relative
  tolerance of $10^{-10}$ and an absolute tolerance of $10^{-12}$
  using \matlab.

\begin{figure}
  \centering \subfigure[Even functions]{
    \includegraphics[width=2.5in]{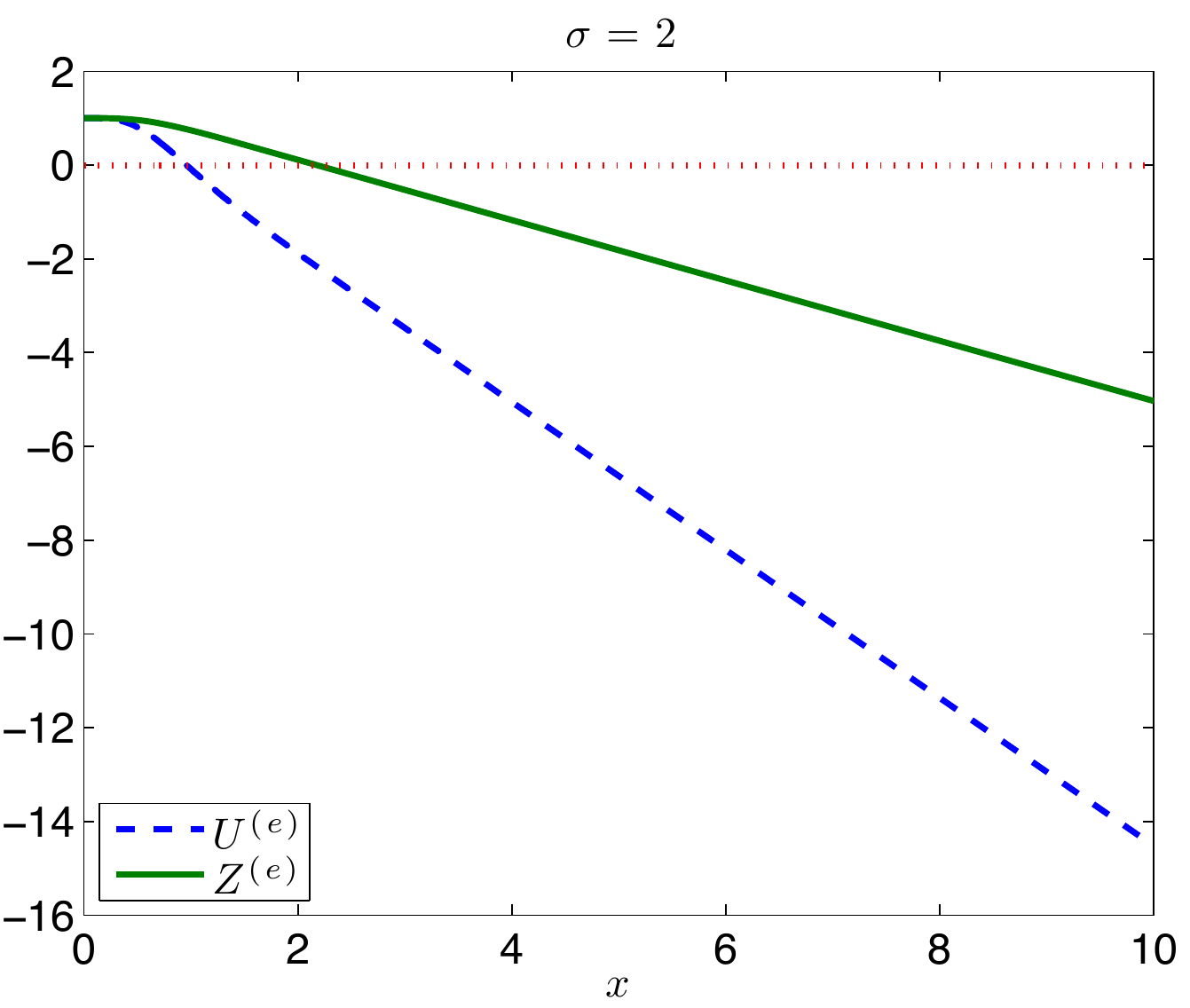}
  } \subfigure[Even function asymptotics]{
    \includegraphics[width=2.5in]{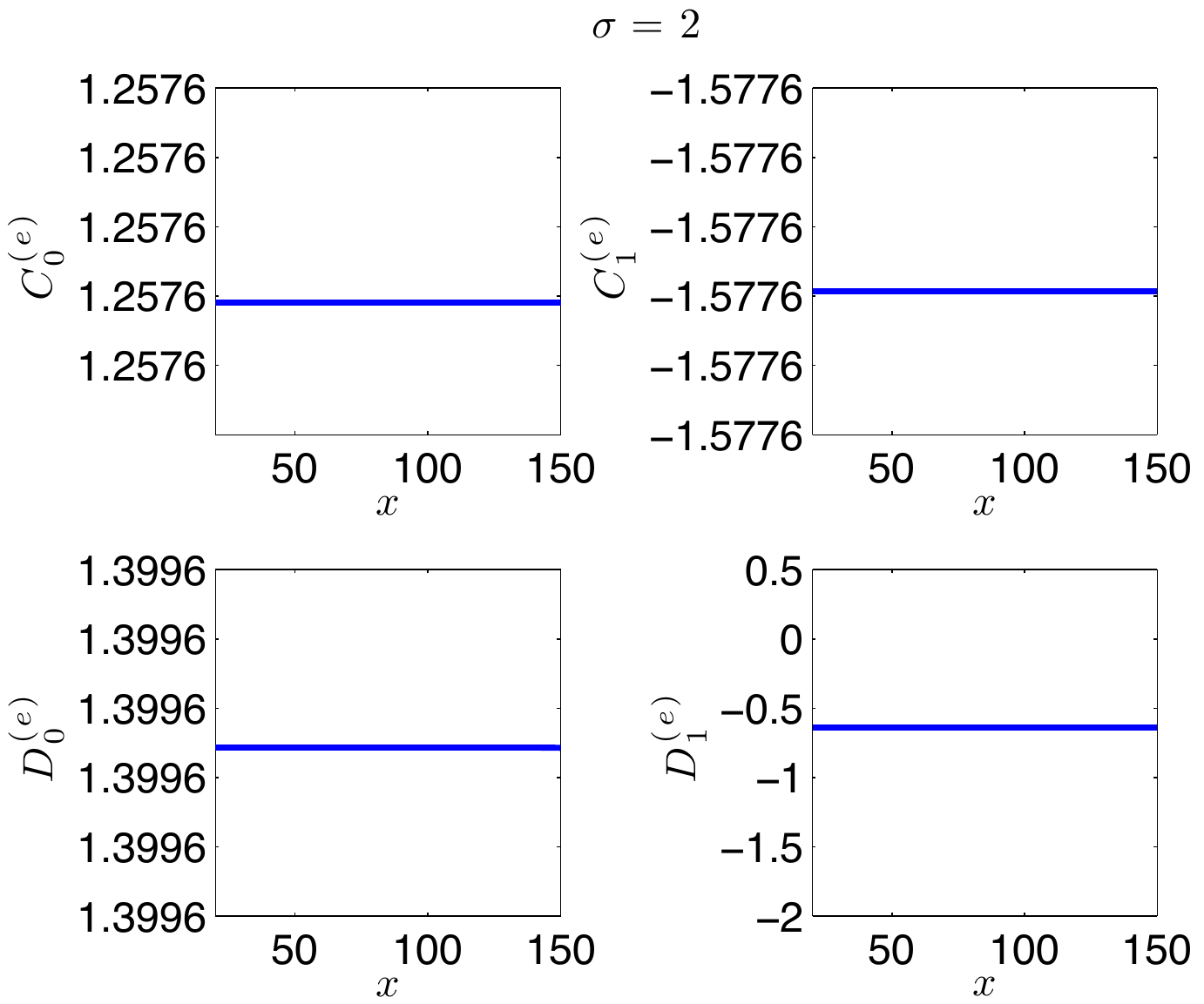}
  }
  \caption{Index computations for critical 1d NLS.  The number of zero
    crossings (other than $x=0$), determines the codimension of the
    subspace on which the operator $\calL_\pm^{(e/o)}$ is positive.}
  \label{fig:1d_sigcrit_idx_even}
\end{figure}

\begin{figure}
  \centering \subfigure[Odd functions]{
    \includegraphics[width=2.5in]{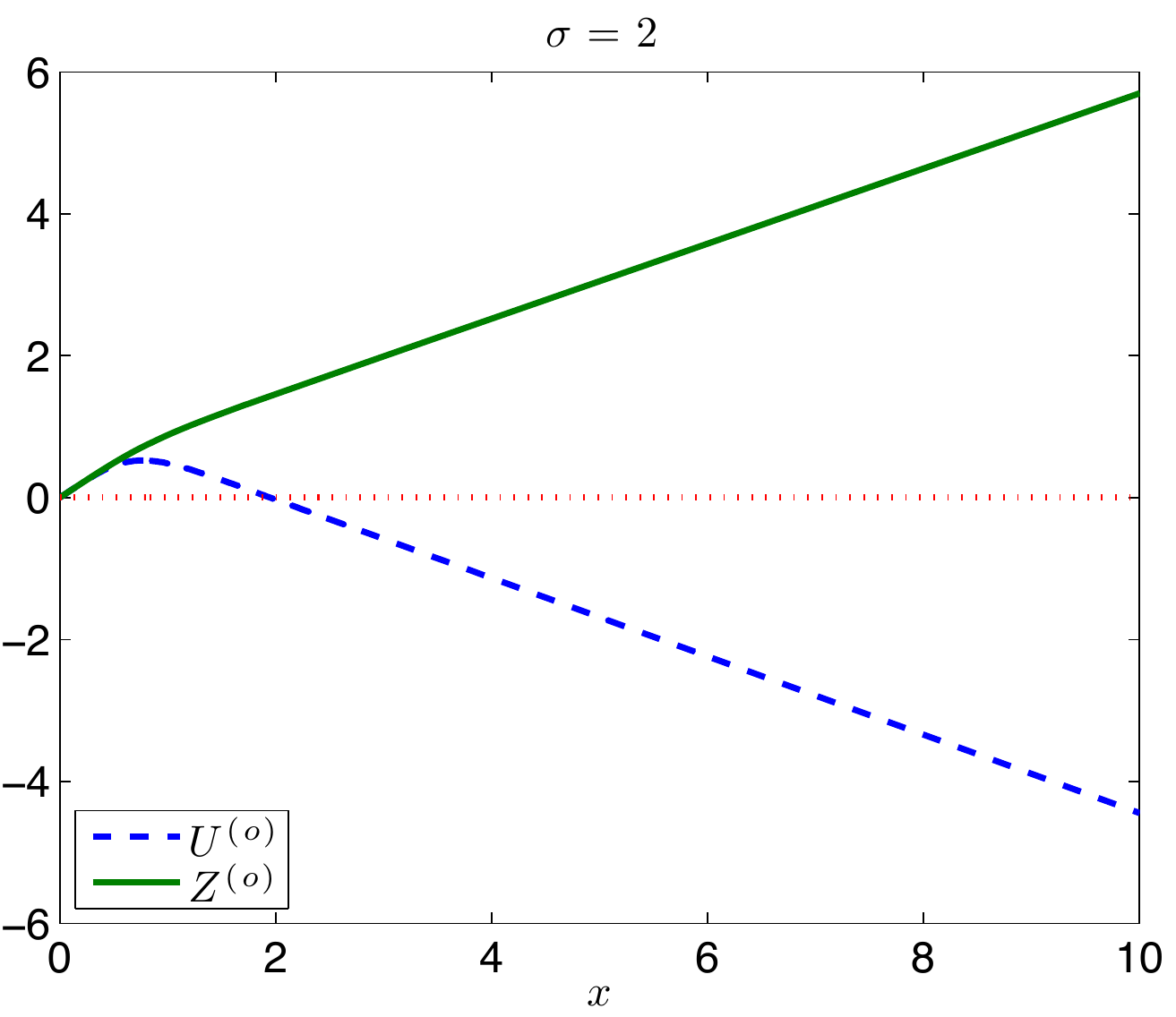}
  } \subfigure[Odd function asymptotics]{
    \includegraphics[width=2.5in]{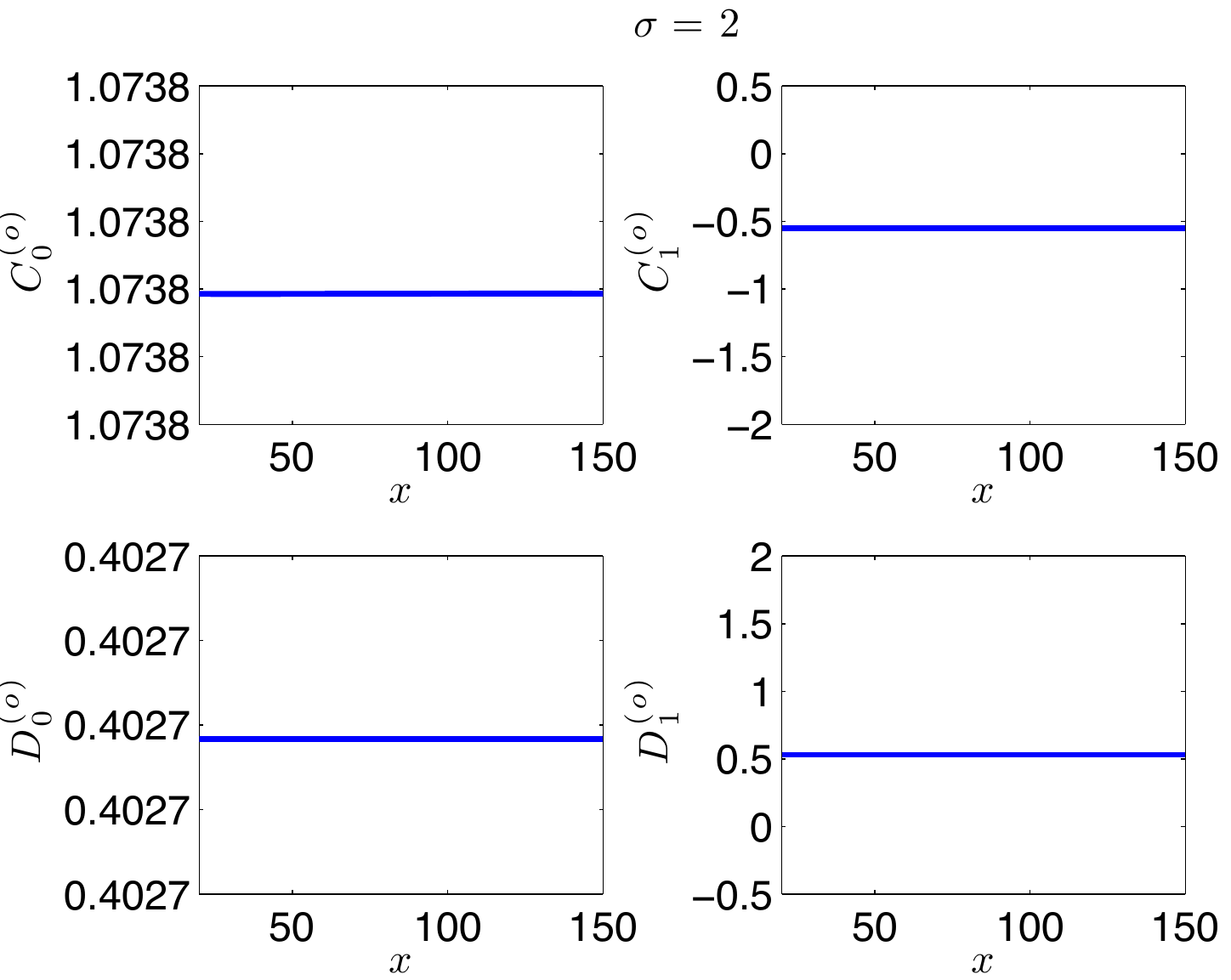}
  }
  \caption{Index computations for critical 1d NLS.  The number of zero
    crossings (other than $x=0$), determines the codimension of the
    subspace on which the operator $\calL_\pm^{(e/o)}$ is positive.}
  \label{fig:1d_sigcrit_idx_odd}
\end{figure}

\begin{figure}
  \centering \subfigure[Even functions]{
    \includegraphics[width=2.5in]{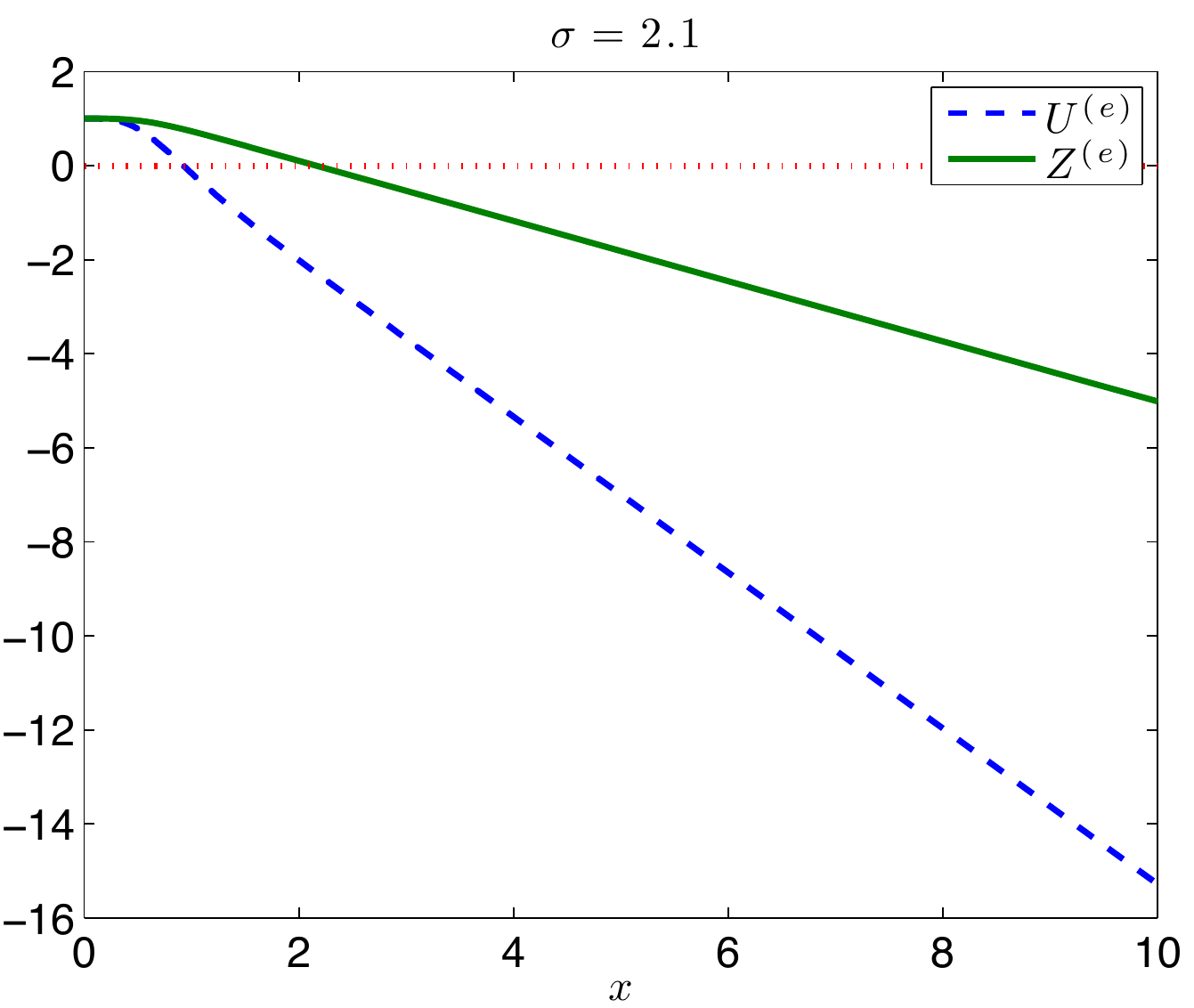}
  } \subfigure[Even function asymptotics]{
    \includegraphics[width=2.5in]{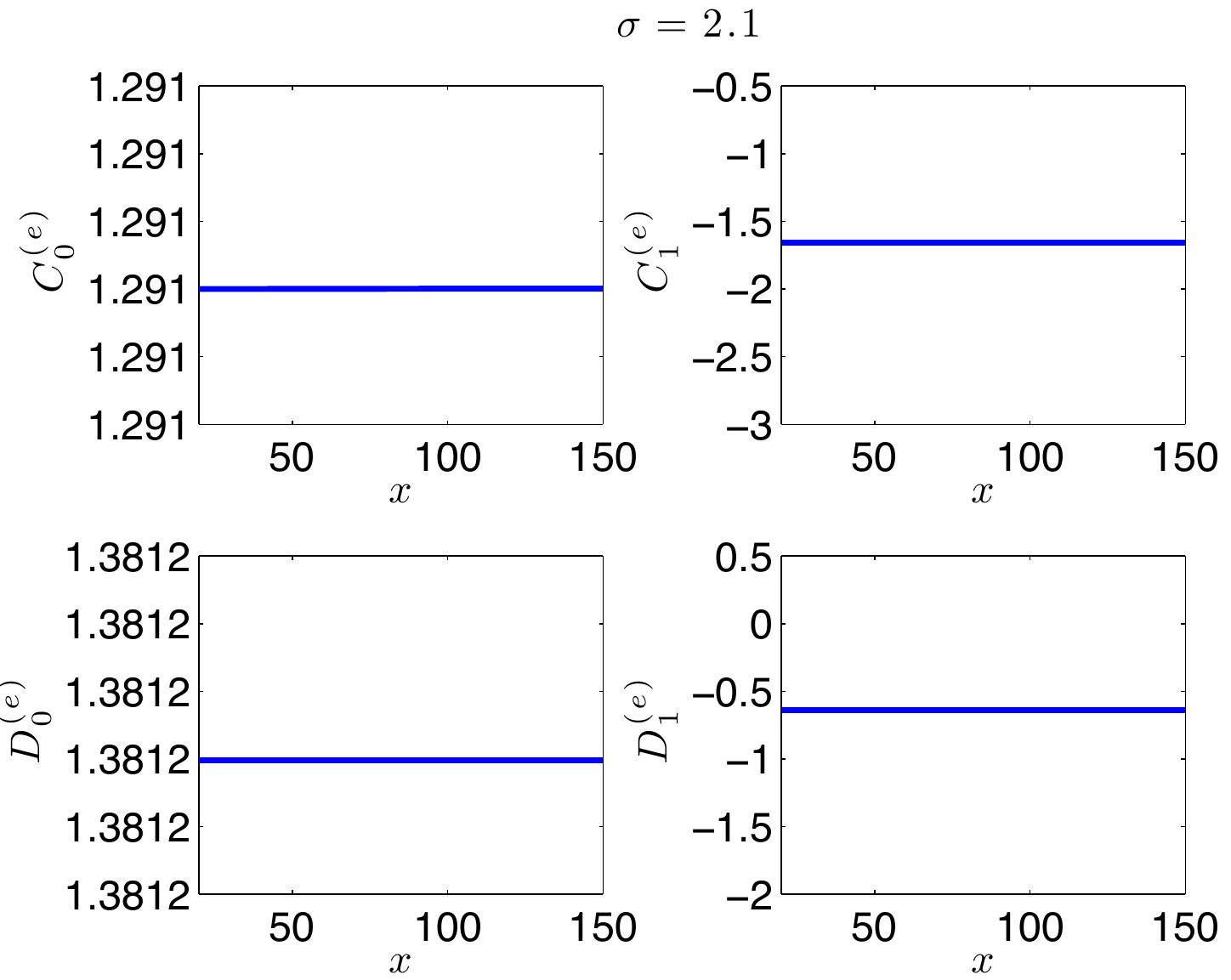}
  }
  \caption{Index computations for 1d NLS with $\sigma=2.1$.  The
    number of zero crossings (other than $x=0$), determines the
    codimension of the subspace on which the operator
    $\calL_\pm^{(e/o)}$ is positive.}
  \label{fig:1d_sig21_idx_even}
\end{figure}

\begin{figure}
  \centering \subfigure[Odd functions]{
    \includegraphics[width=2.5in]{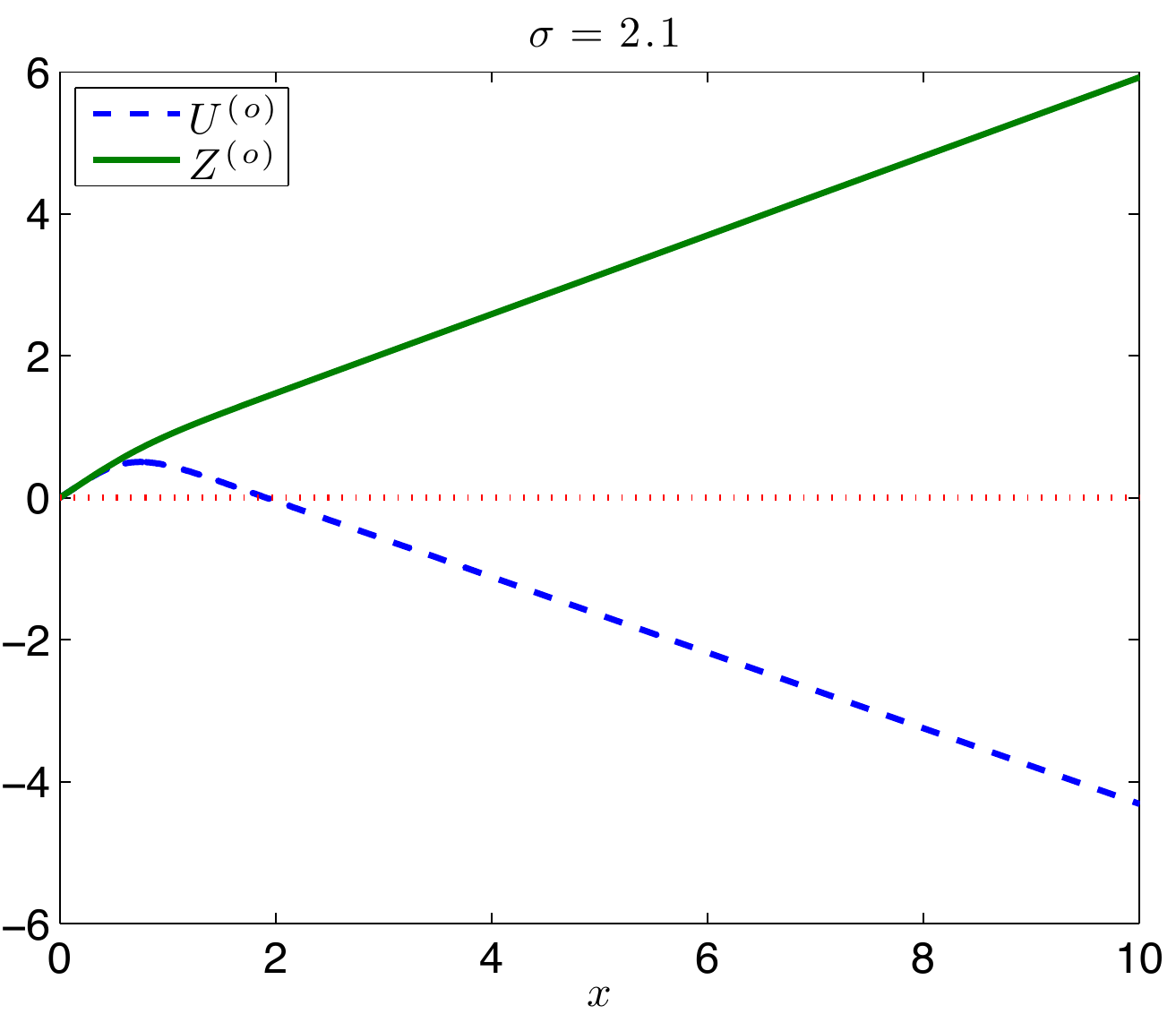}
  } \subfigure[Odd function asymptotics]{
    \includegraphics[width=2.5in]{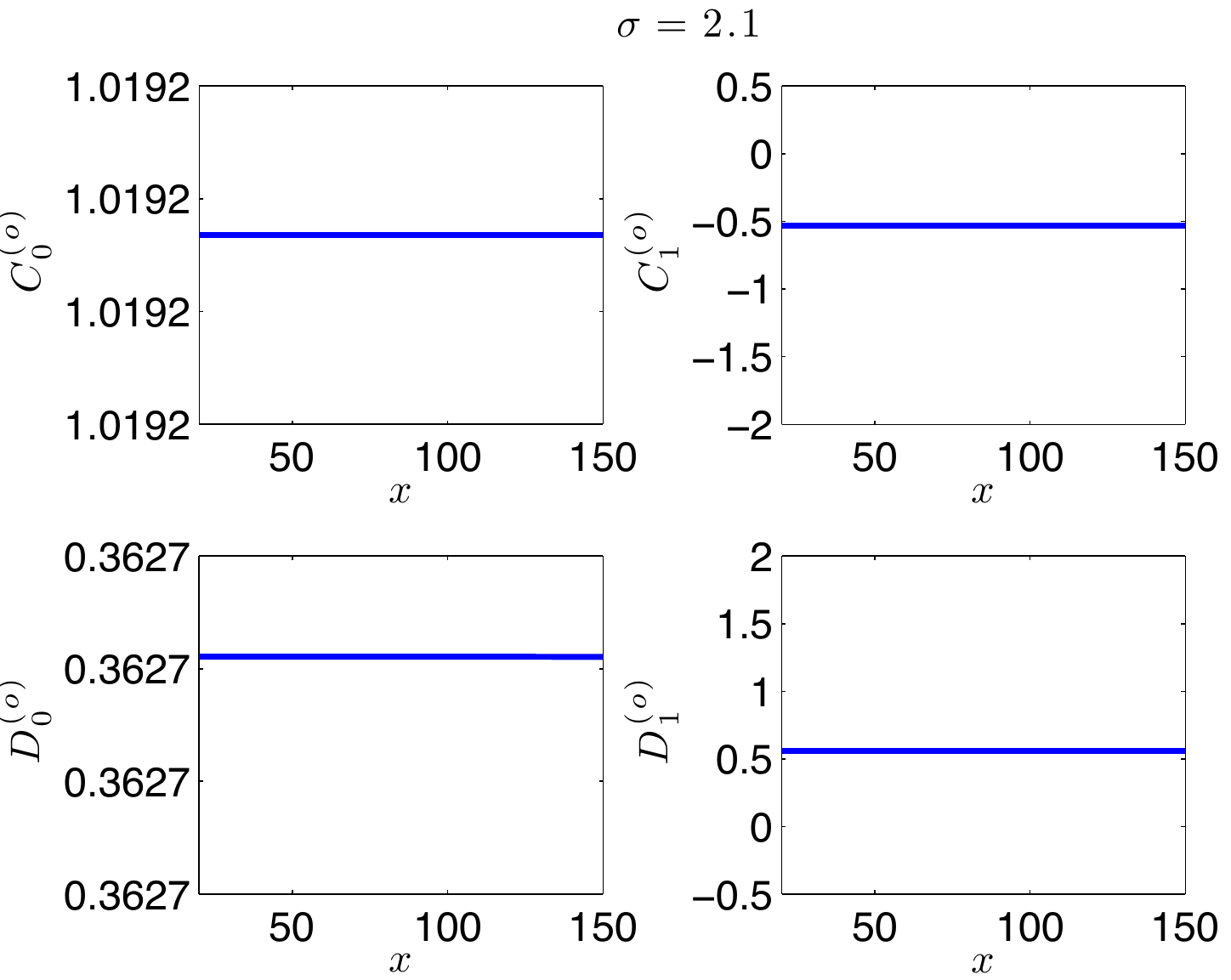}
  }
  \caption{Index computations for 1d NLS with $\sigma=2.1$.  The
    number of zero crossings (other than $x=0$), determines the
    codimension of the subspace on which the operator
    $\calL_\pm^{(e/o)}$ is positive.}
  \label{fig:1d_sig21_idx_odd}
\end{figure}

\begin{figure}
  \centering \subfigure[Even functions]{
    \includegraphics[width=2.5in]{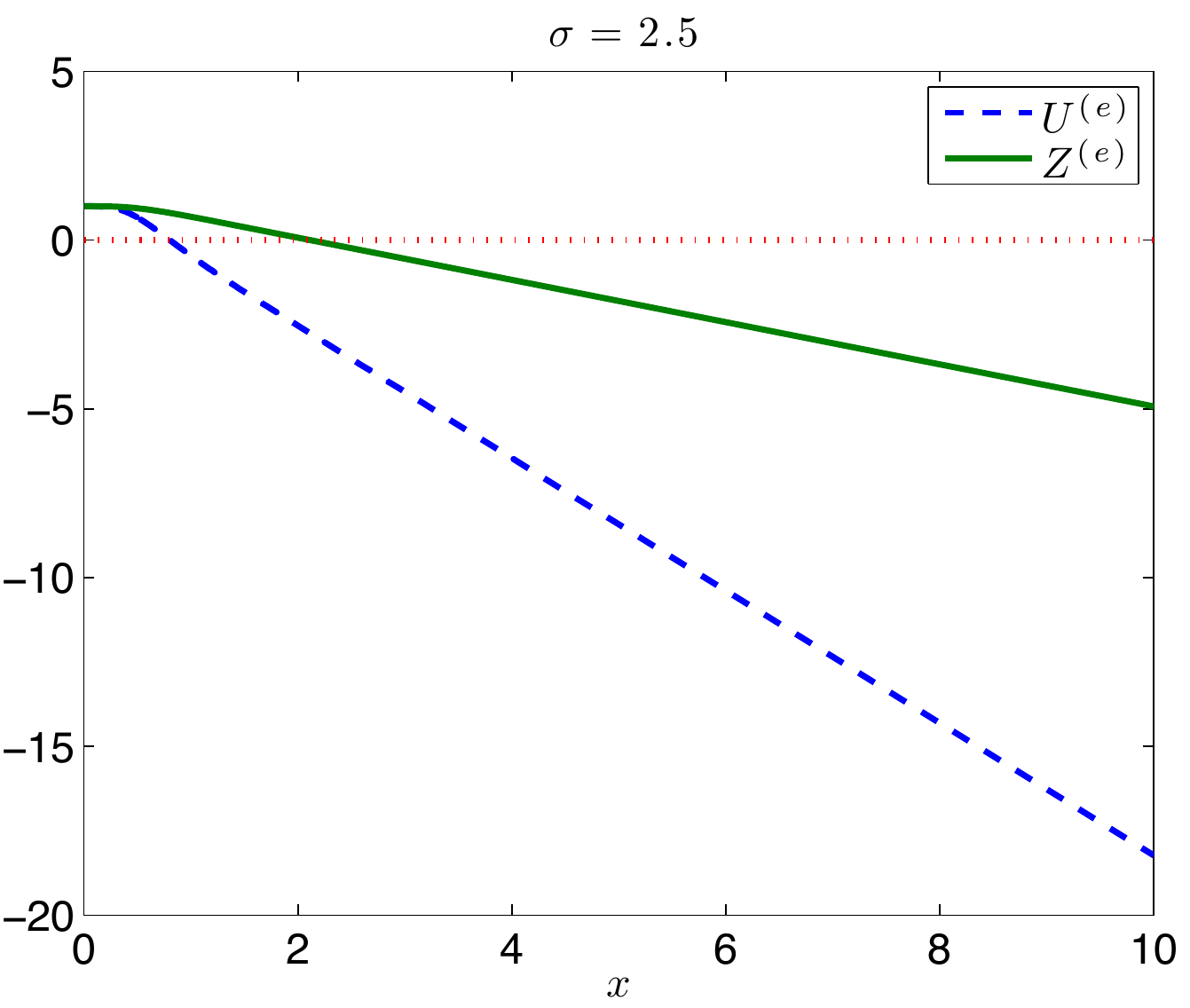}
  } \subfigure[Even function asymptotics]{
    \includegraphics[width=2.5in]{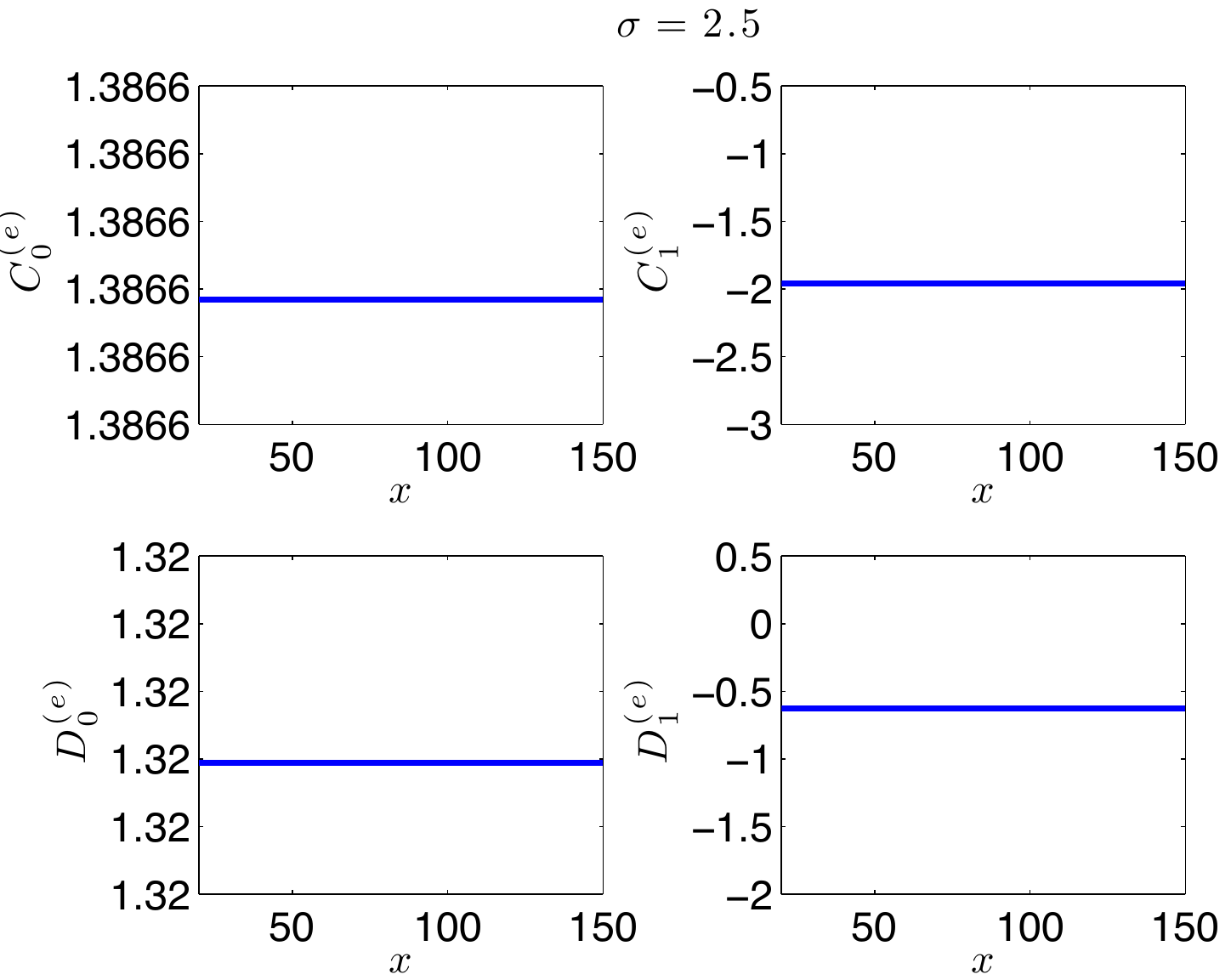}
  }
  \caption{Index computations for 1d NLS with $\sigma=2.5$.  The
    number of zero crossings (other than $x=0$), determines the
    codimension of the subspace on which the operator
    $\calL_\pm^{(e/o)}$ is positive.}
  \label{fig:1d_sig25_idx_even}
\end{figure}

\begin{figure}
  \centering \subfigure[Odd functions]{
    \includegraphics[width=2.5in]{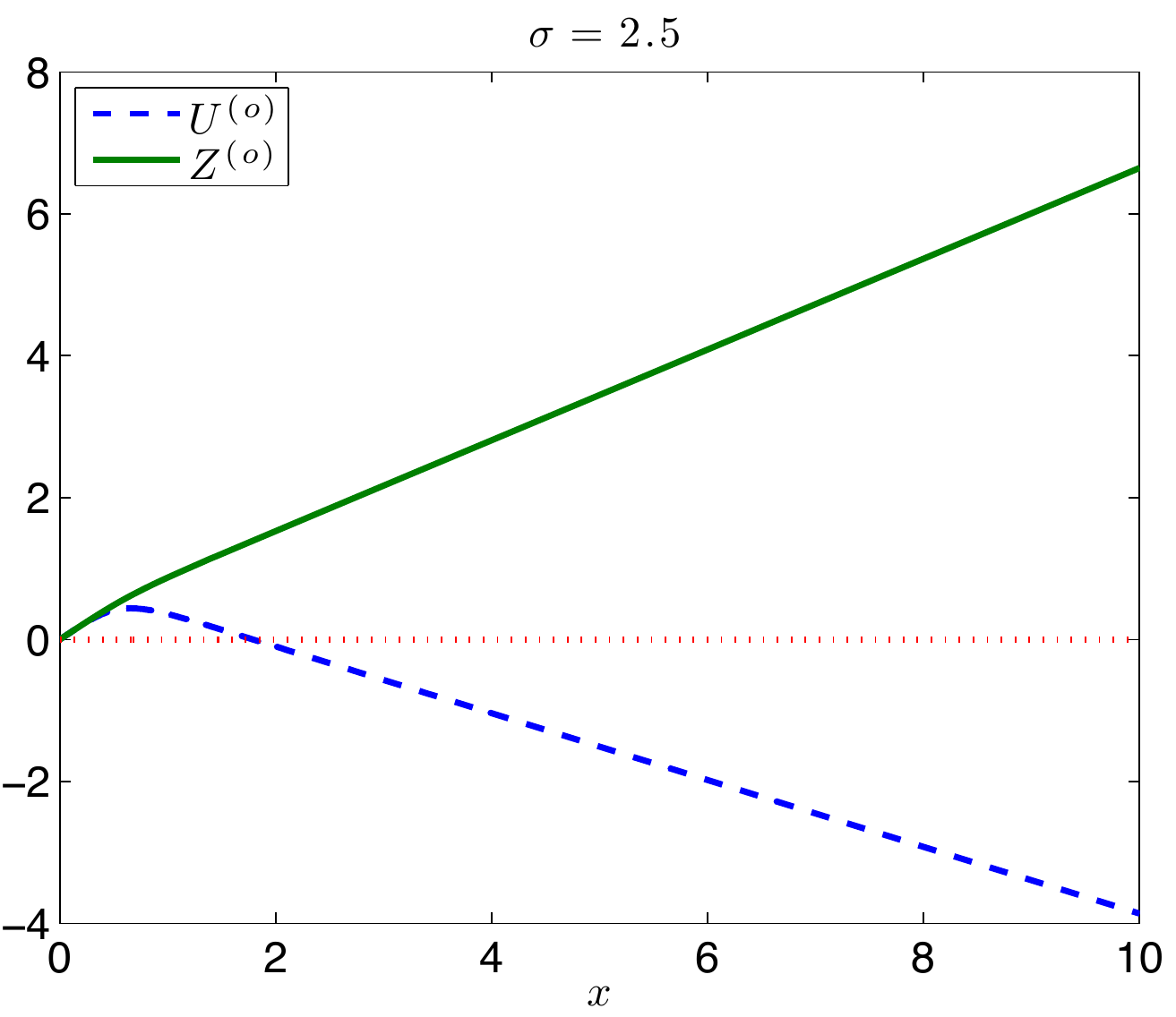}
  } \subfigure[Odd function asymptotics]{
    \includegraphics[width=2.5in]{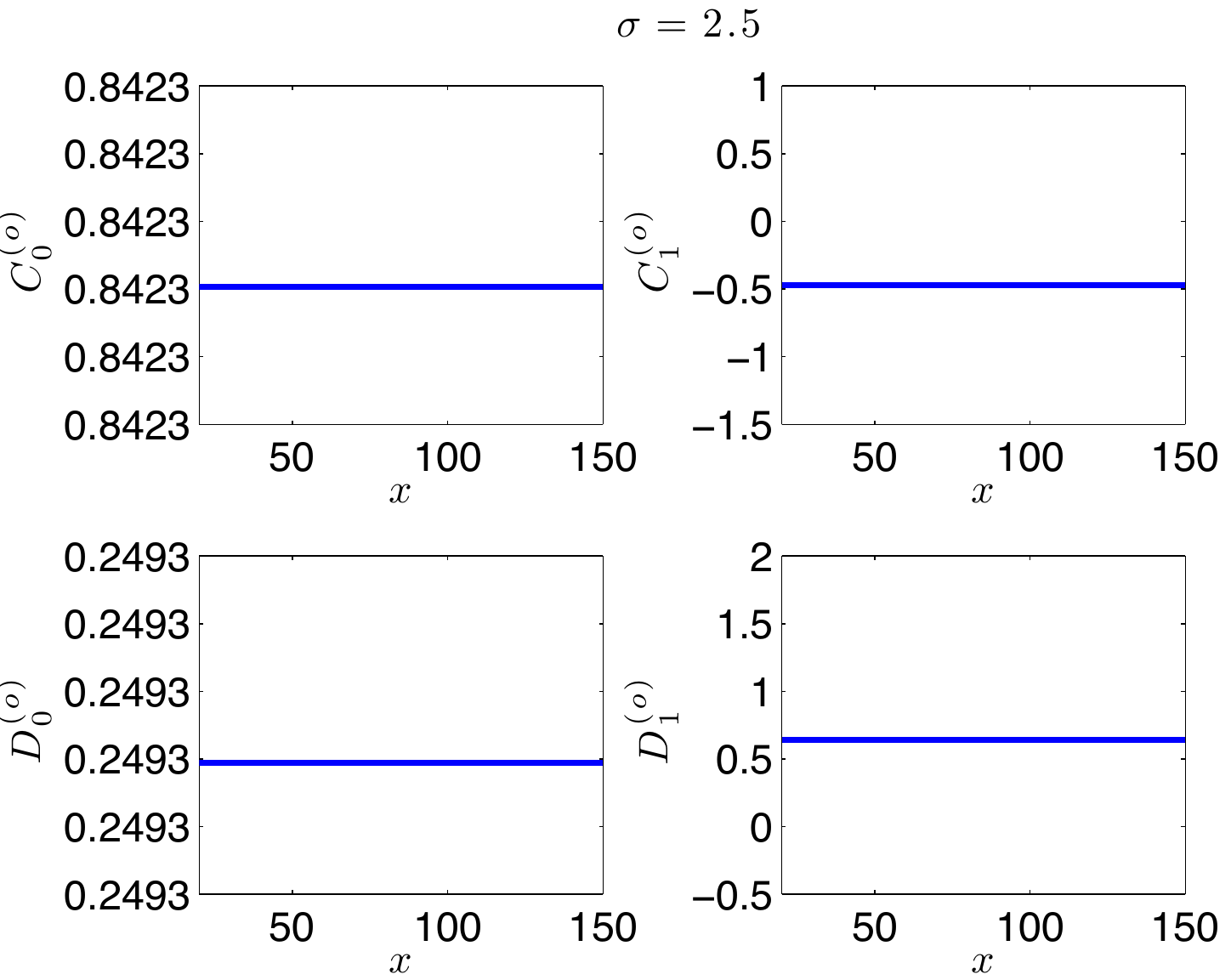}
  }
  \caption{Index computations for 1d NLS with $\sigma=2.5$.  The
    number of zero crossings (other than $x=0$), determines the
    codimension of the subspace on which the operator
    $\calL_\pm^{(e/o)}$ is positive.}
  \label{fig:1d_sig25_idx_odd}
\end{figure}

\begin{figure}
  \centering \subfigure[Even functions]{
    \includegraphics[width=2.5in]{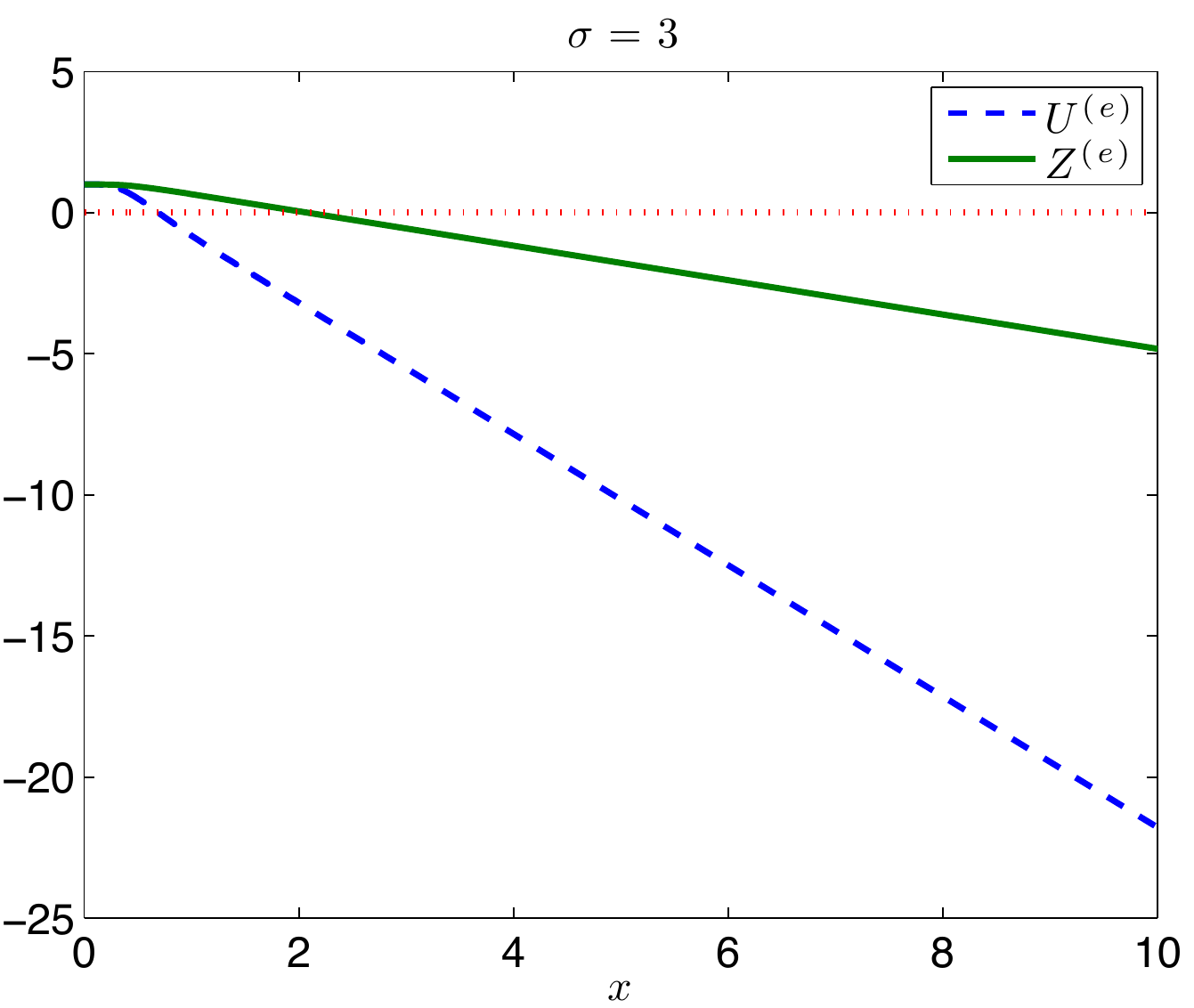}
  } \subfigure[Even function asymptotics]{
    \includegraphics[width=2.5in]{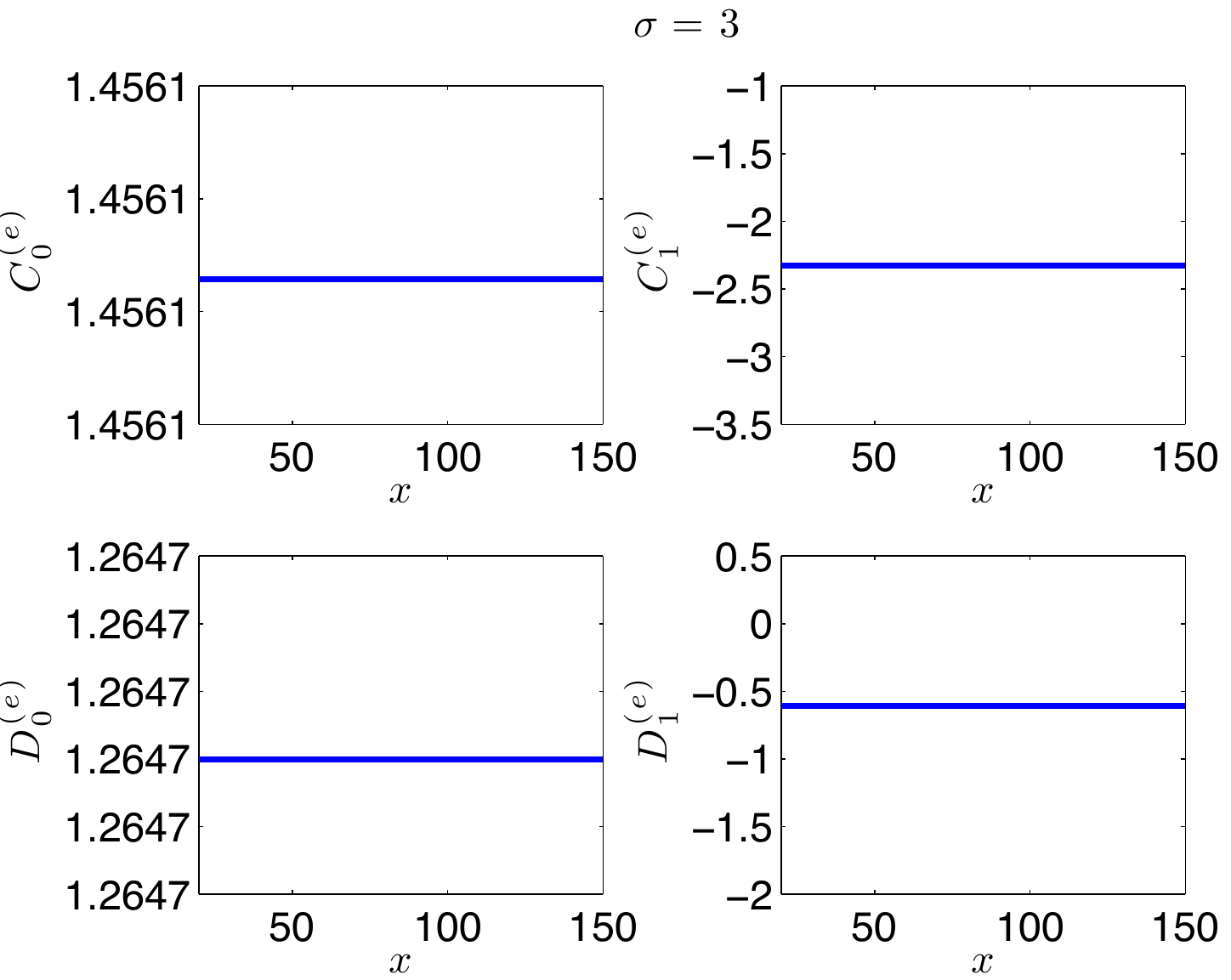}
  }
  \caption{Index computations for 1d NLS with $\sigma=3$.  The number
    of zero crossings (other than $x=0$), determines the codimension
    of the subspace on which the operator $\calL_\pm^{(e/o)}$ is
    positive.}
  \label{fig:1d_sig30_idx_even}
\end{figure}

\begin{figure}
  \centering \subfigure[Odd functions]{
    \includegraphics[width=2.5in]{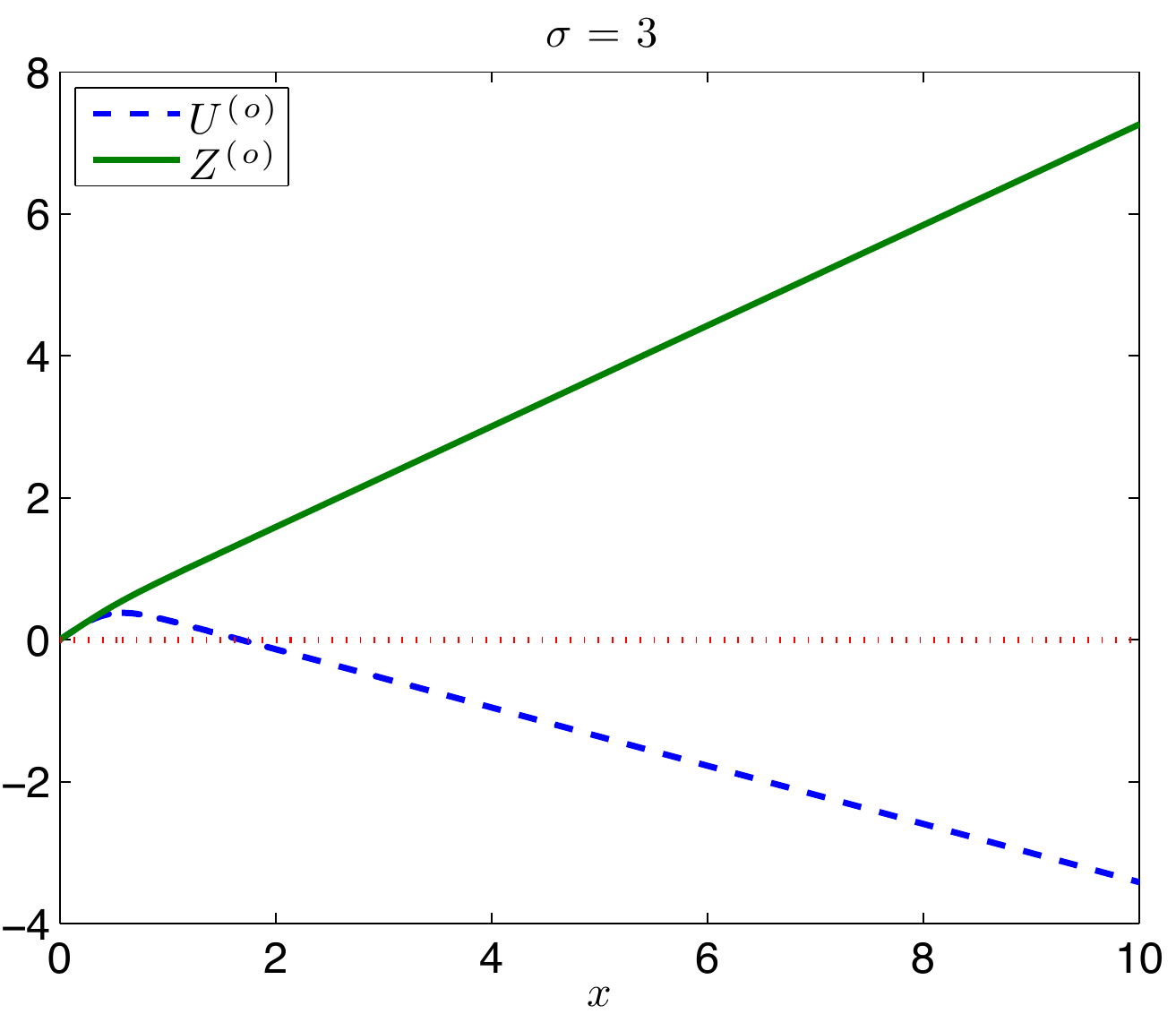}
  } \subfigure[Odd function asymptotics]{
    \includegraphics[width=2.5in]{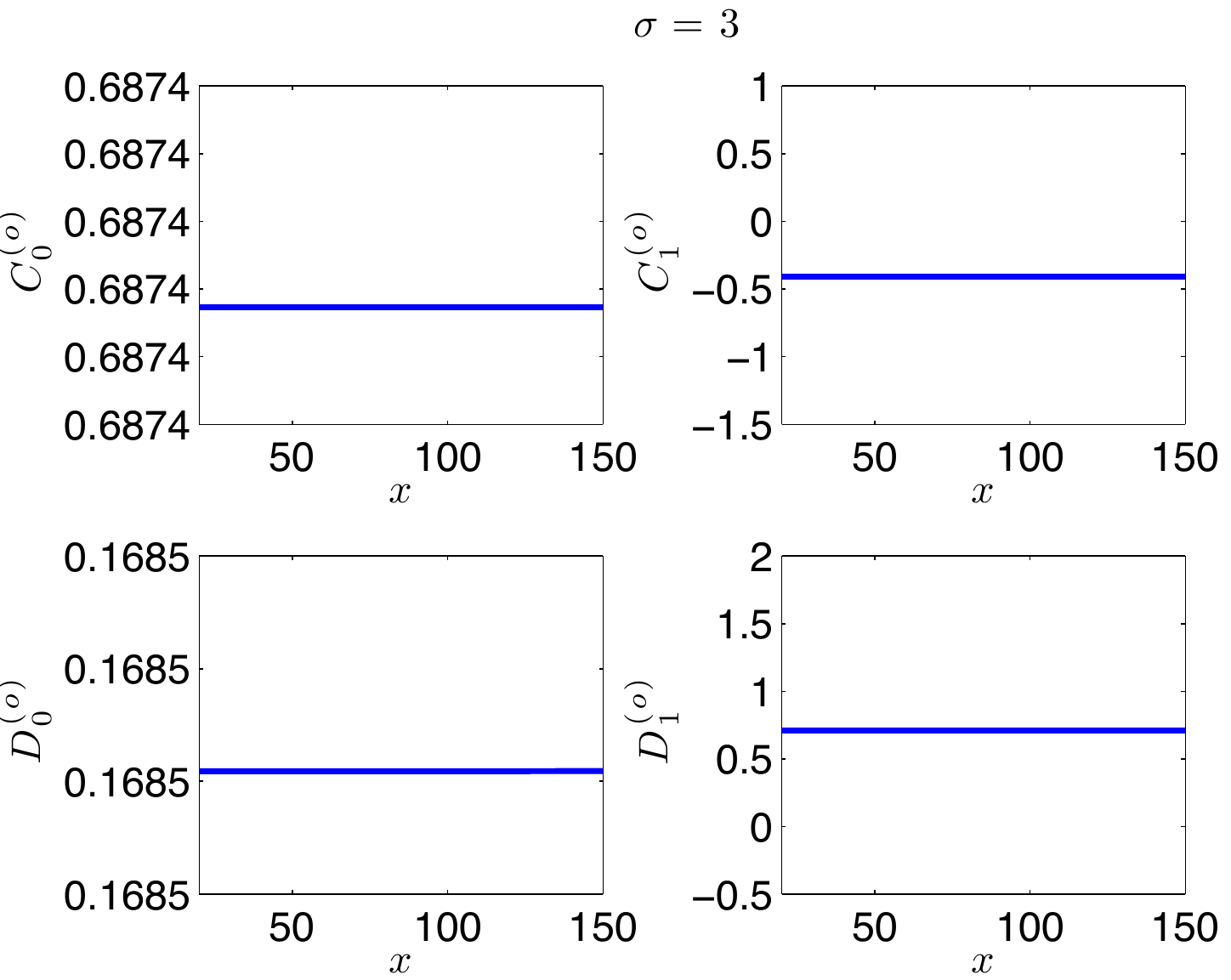}
  }
  \caption{Index computations for 1d NLS with $\sigma=3$.  The number
    of zero crossings (other than $x=0$), determines the codimension
    of the subspace on which the operator $\calL_\pm^{(e/o)}$ is
    positive.}
  \label{fig:1d_sig30_idx_odd}
\end{figure}
\end{proof}
Proposition \ref{prop:idx_perturbation} applies to these 1d problems
too.  As in the 3d case, we ultimately use a perturbed bilinear
form in the proof of the spectral property.

\subsection{Estimates of the Inner Products}
We now compute a series of inner products and show that in some cases
the natural orthogonality conditions are sufficient to yield a
spectral property.  Rigoursly, these results require the following
Proposition on the invertibility of the $\calL_\pm^{(e/o)}$ operators:

\begin{prop}[Numerically Verified for 1d Saturated Problem]
  \label{prop:eu_bvp_1d}

  Let $f$ be a smooth, localized function satisfying the bound
  $\abs{f(x)} \leq C e^{-\kappa \abs{x}}$ for some positive constants
  $C$ and $\kappa$.  If $f$ is even/odd, there exists a unique
  even/odd solution $u \in L^\infty(\R)\cap C^2(\R)$ to
  \begin{equation}
    \calL u = f,
  \end{equation}
  where $\calL = \calL^{(e/o)}_\pm$ for a 1d problem.
\end{prop}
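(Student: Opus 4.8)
The plan is to regard $\calL u = f$ as a second-order linear ODE and solve the resulting boundary value problem on the half-line by variation of parameters, using the finiteness of the index established in Proposition~\ref{prop:idx_1d} to supply the non-degeneracy that makes the problem well posed; this is exactly the one-dimensional specialization of Propositions~2 and~4 of \cite{FMR}. First I would exploit the even/odd symmetry to pass to $[0,\infty)$: an even datum $f$ forces the boundary condition $u'(0)=0$, while an odd datum forces $u(0)=0$ (and, since then $f(0)=0$, also $u''(0)=0$), so that any $C^2([0,\infty))$ solution meeting these conditions extends to a $C^2(\R)$ function of the correct parity. Because $V_\pm$, and therefore $\calV_\pm = \tfrac12\,\bx\cdot\grad V_\pm$, decay exponentially, the homogeneous equation $\calL U=0$ is asymptotically the free equation $-U''=0$; a standard Levinson-type argument (recasting the equation as an integral equation near $+\infty$ and contracting) produces a fundamental system $\{U_1,U_2\}$ in which $U_2(x)\to \text{const}$ is bounded and $U_1(x)\sim C_1 x$ grows linearly, where $U_1$ is the regular solution normalized by the conditions at $r=0$ -- precisely the solution $U^{(e)}$ or $U^{(o)}$ whose zeros were counted in Proposition~\ref{prop:idx_1d}.

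The key step is to know that $U_1$ is genuinely unbounded, i.e.\ that its coefficient along the growing mode is nonzero, equivalently that $0$ is not a resonance of $\calL$ on the half-line. This is where the index computation enters: Theorem~\ref{thm:rs_idx} and its one-dimensional corollaries assert that $N(U_1)$ is \emph{finite} and equals the index of the associated form, so $U_1$ cannot be a zero-energy eigenfunction, and the numerically verified asymptotics $U^{(\alpha)}(x)\approx C_0^{(\alpha)} + C_1^{(\alpha)}x$ with the stated sign information confirm $C_1^{(\alpha)}\neq 0$. I expect this to be the main obstacle, in the sense that it is the one point where the clean analytic statement genuinely rests on the numerical and asymptotic input rather than on soft ODE theory; in particular it is what guarantees that $\{U_1,U_2\}$ is a genuine fundamental system, so that the Wronskian $W$ (a nonzero constant in one dimension) does not vanish.

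Granting that $U_1\sim C_1 x$ with $C_1\neq0$, existence follows from the variation-of-parameters formula built from $\{U_1,U_2\}$: up to an overall sign,
\[
u(x) \;=\; U_1(x)\int_x^{\infty}\frac{U_2(s)f(s)}{W}\,ds \;-\; U_2(x)\int_0^{x}\frac{U_1(s)f(s)}{W}\,ds,
\]
where the limits are chosen so that the condition at $r=0$ holds automatically (the second integral vanishes at $x=0$, and a short computation using $U_1'(0)=0$, resp.\ $U_1(0)=0$, kills the remaining boundary term) and so that the would-be growing contribution $U_1(x)\int_x^\infty$ is annihilated as $x\to\infty$. The exponential decay of $f$ makes both integrals absolutely convergent; the first term is $O(x\,e^{-\kappa x})\to0$ and the second stays bounded, so $u\in L^\infty$, and $u\in C^2$ because $V_\pm$ and $f$ are smooth. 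Uniqueness is then immediate: the difference of two such solutions is a bounded $C^2$ solution of $\calL U=0$ obeying the boundary condition at $0$, hence a scalar multiple of $U_1$, hence $0$ since $U_1$ is unbounded. The multidimensional corollary on decay rates has no one-dimensional counterpart here, consistent with the statement only claiming $u\in L^\infty(\R)\cap C^2(\R)$.
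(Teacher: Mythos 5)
Your argument is correct in substance but is organized quite differently from the paper's proof, and the comparison is instructive. The paper works on the whole line: uniqueness comes first, via a variational argument in which a bounded homogeneous solution $u$ is shown (using the Eastham/Levinson asymptotics and a logarithmic cutoff $\chi_A$) to satisfy $\inner{\calL u}{u}=0$, whereupon $\spn\{u_A,\psi\}$, with $\psi$ the negative eigenvector, would be a two--dimensional subspace on which the perturbed form $\overline{\calL}$ is negative definite, contradicting the numerically computed index $1$ (the index--$0$ case $\calL^{(o)}_-$ being immediate). Existence then uses a whole--line Green's function built from $\rho_1$ (bounded at $+\infty$) and $\tilde{\rho}_1$ (bounded at $-\infty$), whose linear independence is precisely the uniqueness statement just proved, and the parity of $u$ is verified afterwards by the substitution $x\mapsto -x$. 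You instead reduce to the half--line with a boundary condition at the origin and hang both existence and uniqueness on a single nondegeneracy assertion: that the regular solution $U_1$ has nonvanishing coefficient $C_1^{(\alpha)}$ along the linearly growing mode. These two nondegeneracy inputs are logically equivalent (a bounded homogeneous solution of the given parity exists exactly when $C_1^{(\alpha)}=0$), so your route is legitimate and arguably more elementary: it replaces the paper's cutoff--plus--index argument by a direct appeal to the numerically verified asymptotics, which the paper already relies on to certify the zero counts. What it gives up is that the paper's version derives the nondegeneracy from the index computation alone, whereas yours takes it as an independent numerical observation.

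Two corrections, neither fatal. First, your claim that the finiteness of $N(U_1)$ from Theorem \ref{thm:rs_idx} already shows $U_1$ ``cannot be a zero-energy eigenfunction'' is a non sequitur: a bounded resonance solution (or even an $L^2$ eigenfunction) also has finitely many zeros, so the zero count gives no information about boundedness. The only thing carrying the weight at this step is the numerical observation $C_1^{(\alpha)}\neq 0$, and the argument should rest on that explicitly. Second, the relative sign in your variation-of-parameters formula is wrong: with a minus sign between the two terms, differentiating the integrals produces boundary terms that add to $-2U_1U_2f/W$ rather than cancelling, so the formula does not solve $\calL u=f$. The two terms must carry the same sign (the overall sign being absorbed into the normalization of the Wronskian), exactly as in the paper's whole-line formula; with that fix your verification of the boundary condition at the origin and of boundedness goes through unchanged.
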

\begin{proof}
  See Appendix \ref{sec:inverse_proof}.
\end{proof}

\begin{prop}[Numerical]
  \label{prop:ip_estimates_1d}

  Let $U_1^{(e)}$, $Z_1^{(e)}$, and $U_1^{(o)}$, all elements of
  $L^\infty(\R)$, solve the following boundary value problems:
  \begin{align}
    \calL_+^{(e)} U_1^{(e)} & =R , \quad \ddx U_1^{(e)}(0) = 0,\\
    \calL_-^{(e)} Z_1^{(e)} & =\frac{1}{\sigma}R + x R' , \quad \ddx Z_1^{(e)}(0) = 0,\\
    \calL_+^{(o)} U_1^{(o)} & =R' , \quad U_1^{(o)}(0) = 0.
  \end{align}
  Let
  \begin{align}
    K_1^{(e)} & \equiv\mathcal{B}_+^{(e)}(U_1^{(e)}) =
    \inner{\calL_+^{(e)} U_1^{(e)}}{U_1^{(e)}} ,\\
    J_1^{(e)}& \equiv\mathcal{B}_-^{(e)}(Z_1^{(e)}) =
    \inner{\calL_-^{(e)} Z_1^{(e)}}{Z_1^{(e)}} ,\\
    K_1^{(o)} & \equiv\mathcal{B}_+^{(o)}(U_1^{(o)}) =
    \inner{\calL_+^{(o)} U_1^{(o)}}{U_1^{(o)}} .
  \end{align}
  Then,
  \begin{center}
    \begin{tabular}{llll}
      $\sigma$ & $K_1^{(e)}$ & $J_1^{(e)}$ & $K_1^{(o)}$\\
      \hline
      $2.0$ & $-0.557768$
      &  $0.292551$
      & $-1.30410$
      \\
      $2.1$ & $-0.496932$
      & $0.216284$
      & $-1.21364$ 
      \\
      $2.5$ & $-0.297841$
      & $-0.0216292$
      & $-0.924662$ 
      \\
      $3.0$ & $-0.122559$ 
      & $-0.218499$
      & $-0.671783$.
    \end{tabular}
  \end{center}
\end{prop}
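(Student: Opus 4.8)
The plan is to turn each asserted number into an explicit one-dimensional quadrature, in the same spirit as the proof of Proposition~\ref{prop:ip_estimates_3d}. First I would make the operators completely explicit. For $g(s)=s^\sigma$ the $1d$ ground state solves $R''-R+R^{2\sigma+1}=0$ with $R'(0)=0$, and is available in closed form, $R(x)=\bracket{(\sigma+1)\sech^2(\sigma x)}^{1/(2\sigma)}$ (taking $\lambda=1$), though one could equally well compute it to high precision numerically. The potentials are then $V_-=R^{2\sigma}$, $V_+=(2\sigma+1)R^{2\sigma}$, so that $\calV_\pm=\tfrac{1}{2}\,x\,\partial_x V_\pm$ is an explicit exponentially decaying function, and $\calL_\pm^{(e/o)}=-\partial_x^2+\calV_\pm$, restricted to the even, respectively odd, subspace, is a regular Sturm--Liouville operator with a Schwartz-class potential.

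Next, for each of the three boundary value problems in the statement --- $\calL_+^{(e)}U_1^{(e)}=R$ and $\calL_-^{(e)}Z_1^{(e)}=\tfrac{1}{\sigma}R+xR'$ (both with vanishing derivative at the origin, selecting the even solution), and $\calL_+^{(o)}U_1^{(o)}=R'$ (with vanishing value at the origin, selecting the odd solution) --- Proposition~\ref{prop:eu_bvp_1d}, which rests on the index computation of Proposition~\ref{prop:idx_1d}, guarantees a unique bounded solution of the prescribed parity. Since $\calV_\pm$ vanishes exponentially, the homogeneous equation $\calL_\pm h=0$ has solutions behaving like $1$ and $x$ at infinity, so a bounded solution is precisely one whose far-field expansion $U\approx C_0+C_1x$ (respectively $Z\approx D_0+D_1x$) has $C_1=0$ (respectively $D_1=0$). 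Numerically I would integrate the ODE outward from $x=0$ on a long interval $[0,L]$, using the missing initial datum --- $U(0)$ for the even cases, $U'(0)$ for the odd case --- as a shooting parameter tuned to annihilate the growing mode, or equivalently apply the scheme of Section~\ref{sec:numerics}, which fits the above asymptotics directly. As an \emph{a posteriori} check I would confirm that the fitted $C_1,D_1$ vanish to tolerance and that the constants have stabilized, so the computed profile really is the bounded branch.

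The inner products then reduce to scalar integrals: since $\calL$ is self-adjoint and $\calL U=f$, one has $\calB(U)=\inner{\calL U}{U}=\inner{f}{U}=\int_{\R}f(x)\,U(x)\,dx$, the integration by parts being legitimate because $f$ and $U'$ both decay exponentially and the boundary terms vanish (note $U$ itself is bounded but not $L^2$). Each such integral has an exponentially localized integrand --- the right-hand sides $R$, $\tfrac{1}{\sigma}R+xR'$, $R'$ are all Schwartz --- against a bounded factor, so a high-order quadrature on $[-L,L]$ converges rapidly. Carrying this out for $\sigma=2.0,2.1,2.5,3.0$ yields the tabulated values of $K_1^{(e)}$, $J_1^{(e)}$, $K_1^{(o)}$.

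The main obstacle is the one inherent in any numerically assisted assertion: certifying the accuracy of the reported digits. Two points make this delicate here. First, the BVP solutions are not in $L^2$ (they tend to nonzero constants), so an imperfectly imposed far-field condition lets the spurious linearly growing solution leak in and corrupt the inner product; this is exactly why the constant-fitting consistency check is essential, and why the error analysis and benchmarks of Section~\ref{sec:numerics} must be invoked to support the claimed precision. Second, one must ensure the truncation length $L$, the ODE-solver tolerance, and the quadrature rule are jointly fine enough for the tabulated figures to be correct. In the end what these numbers feed into are strict sign conditions in the positivity argument analogous to those in Section~\ref{sec:spec_prop} (indeed $J_1^{(e)}$ changes sign across this family of $\sigma$, which is precisely what makes the method succeed for some exponents and fail for others), so it is principally the sign of each quantity --- and of a few derived combinations --- that must be pinned down, with a margin that is comfortable for most but not all of the listed cases.
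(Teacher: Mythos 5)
Your proposal matches the paper's treatment: the paper offers no argument beyond "direct computation," deferring to the numerical scheme of Appendix~\ref{sec:numerics} (collocation BVP solves with artificial far-field boundary conditions $\ddx U_j^{(e/o)}(\xmax)=0$, \emph{a posteriori} verification of the free-equation asymptotics, and accumulation of the inner products $\inner{\calL U}{U}=\int fU$ as auxiliary ODE variables), which is exactly what you describe. Your remarks on the non-$L^2$ nature of the solutions and on the fact that only the signs of these quantities (and of derived combinations) ultimately matter are consistent with how the paper uses these numbers.
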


\subsubsection{Proof of the Spectral Property for Certain Supercritical Cases}
We restrict our attention to the 1d supercritical problems $\sigma =
2.5$ and $\sigma = 3$.  Repeating the procedure of Section
\ref{sec:spec_prop}, for $\mathbf{z} = (f, g)^T$, the orthogonality of $f$ to $R$
and $x R$ gives us $\overline{\calL}_+\geq 0$ and the orthogonality of $g$ to
$\frac{1}{\sigma }R + x R'$ gives us $\overline{\calL}_-\geq 0$.  This proves the
spectral property on the restricted subspace.  Since these
orthogonality conditions are consistent with those formulated in
Section \ref{sec:verification}, we conclude that there are no non zero
purely imaginary eigenvalues.

\subsubsection{An Inconclusive Supercritical Case}
In the case of $\sigma = 2.1$, we have that $J_1^{(e)}>0$, which means
that orthogonality of $g$ with respect to $\frac{1}{\sigma} R + x R'$,
is insufficient to guarantee positivity of $\calL_-$.  It is possible
that if we extend our scope, as in the 3D cubic problem, to include
orthogonality to the eigenstate associated with the unstable eigenvalue
we will be able to prove the spectral property for this problem.
However, we do not pursue that here; rather we wish to highlight the
failure of our algorithm at a seemingly arbitrary supercritical
nonlinearity.

\subsubsection{The Critical Case}
\label{s:1d_crit}
The critical 1D problem, with $\sigma = 2$, is also inconclusive.  As
in the supercritical problems we will look at the inner products
against $R$ and $\frac{1}{\sigma} R + x R'$.  We also employ 
inner products arising from with the rest of the generalized kernel,
$x^2 R$ and $\beta$, where $\beta$ solves
\begin{equation*}
  L_+ \beta = - x^2 R.
\end{equation*}
See \cite{W1} for details.  This motivates the following numerical result:
\begin{prop}[Numerical]
  Let $Z_2^{(e)}$ solve
  \begin{equation}
    \calL_-^{(e)}Z_2^{(e)} = \rho, \quad \ddx Z_1^{(e)}(0) = 0
  \end{equation}
 and let
  \begin{align}
    J_2^{(e)} & \equiv\mathcal{B}_-^{(e)}(Z_2^{(e)},Z_2^{(e)}) =
    \inner{\calL_-^{(e)} Z_2^{(e)}}{Z_2^{(e)}} ,\\
    J_3^{(e)} &\equiv\mathcal{B}_-^{(e)}(Z_1^{(e)},Z_2^{(e)})
    =\inner{\calL_-^{(e)} Z_1^{(e)}}{Z_2^{(e)}}.
  \end{align}
  Then,
  \begin{align}
    J_2^{(e)}  & =3.77915 ,\\ 
    J_3^{(e)} & =0.864273. 
  \end{align}
\end{prop}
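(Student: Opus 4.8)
The plan is to treat this proposition exactly as Proposition~\ref{prop:ip_estimates_1d} is treated: it is a numerically verified statement, so the work splits into (i) pinning down the boundary value problem so that $Z_2^{(e)}$ is unambiguously defined, (ii) rewriting the two bilinear quantities as absolutely convergent integrals suitable for quadrature, and (iii) carrying out the computation with controlled error. For step (i) I would first observe that $\rho$ is a smooth, even, exponentially decaying function: it is assembled from the $\sigma=2$ soliton $R$ (which decays like $e^{-\abs{x}}$), finitely many of its derivatives, and polynomial weights, together with $\beta$ solving $L_+\beta=-x^2R$, which is itself exponentially decaying because $L_+$ has a spectral gap above $0$. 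Hence Proposition~\ref{prop:eu_bvp_1d} applies to the source $\rho$ and yields a unique even $Z_2^{(e)}\in L^\infty(\R)\cap C^2(\R)$; the stated condition $\ddx Z_2^{(e)}(0)=0$ is merely evenness. The one qualitative fact to keep in mind, just as for $Z_1^{(e)}$ and for the functions $\overline{Q}$ in Section~\ref{sec:spec_prop}, is that $Z_2^{(e)}$ is only bounded --- asymptotically affine in $x$ --- and not in $L^2$.

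Next, for step (ii), I would use the structure $\calL_-^{(e)} = -\frac{d^2}{dx^2}+\calV_-$ with $\calV_-$ exponentially decaying, and the fact that $\calL_-^{(e)}Z_1^{(e)}=\tfrac1\sigma R+xR'$ and $\calL_-^{(e)}Z_2^{(e)}=\rho$ both decay exponentially even though $Z_1^{(e)},Z_2^{(e)}$ do not. A single integration by parts then kills the boundary terms at infinity and gives
\[
J_2^{(e)} = \inner{\rho}{Z_2^{(e)}}, \qquad J_3^{(e)} = \inner{\rho}{Z_1^{(e)}} = \inner{\tfrac1\sigma R + xR'}{Z_2^{(e)}}.
\]
Each of these is an integral of a function decaying like $e^{-c\abs{x}}$ against a function growing at most linearly, hence absolutely convergent; this is precisely the sense in which $\calB_-^{(e)}$ is being evaluated on functions outside $H^1_e$, consistent with the cutoff/regularization scheme used later to harvest positivity. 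The identity $\inner{\rho}{Z_1^{(e)}}=\inner{\tfrac1\sigma R+xR'}{Z_2^{(e)}}$ also furnishes a free consistency check for the numerics.

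For step (iii) I would follow the apparatus of Appendix~\ref{sec:numerics}: solve \eqref{eqn:sol} with $\sigma=2$ to high tolerance for $R$; assemble $\rho$ and $\tfrac1\sigma R+xR'$; solve the two linear ODEs for $Z_1^{(e)},Z_2^{(e)}$ on a truncated interval $[0,\xmax]$, using the change of variables near $r=0$ to handle the regular point and selecting the bounded (rather than linearly growing) solution at $\xmax$ by projecting off the $O(x)$ tail read from the asymptotic constants; then perform the quadrature for the three inner products. Exponential decay of the integrands makes the truncation error at $\xmax$ exponentially small, and the ODE and quadrature tolerances (as in Proposition~\ref{prop:ip_estimates_1d}) leave a comfortable margin on the reported digits.

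The main obstacle, as throughout Section~\ref{sec:normal}, is not the arithmetic but the bookkeeping forced by $Z_1^{(e)},Z_2^{(e)}\notin L^2$: one must be sure that ``$\inner{\calL_-^{(e)}Z}{Z}$'' is being read as the convergent distributional pairing $\inner{\text{(decaying source)}}{Z}$, and that the truncation-at-$\xmax$ error genuinely is the exponentially small quantity it appears to be rather than being contaminated by the slowly decaying affine tail of $Z$. Once that reduction is installed --- and it is the same one already used for $\overline{Q}_A$ in Section~\ref{sec:spec_prop} --- the proposition follows by direct computation.
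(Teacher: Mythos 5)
Your proposal is correct and follows essentially the same route as the paper: the proposition is a direct numerical computation, with well-posedness of the boundary value problem for $Z_2^{(e)}$ supplied by Proposition~\ref{prop:eu_bvp_1d} (since $\rho=\beta$, the solution of $L_+\beta=-x^2R$, is smooth, even, and exponentially decaying) and the inner products evaluated by the quadrature/artificial-boundary-condition machinery of Appendix~\ref{sec:numerics}. Your extra care in reading $\inner{\calL_-^{(e)}Z}{Z}$ as the absolutely convergent pairing of the exponentially decaying source against the merely bounded $Z$ is a sound and welcome elaboration of what the paper leaves implicit.
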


Since $K_1^{(e)}<0$, we may conclude that $\calL_+\geq 0$, when the
operator is restricted to even functions that are orthogonal to $R$.
However, orthogonality to neither $ \frac{1}{2} R + x R'$ nor $\rho$
is, individually, sufficient to gain positivity of $\calL_-^{(e)}$.
We are thus motivated to consider orthogonality to the subspace
$\spn\{\frac{1}{2} R + x R', \rho\}$, as in the proof of the 3D cubic
problem.  We examine the quantity
\begin{equation}
  -\frac{1}{J_2^{(e)}}\paren{(J_3^{(e)})^2 -J_1^{(e)}J_2^{(e)} } = 0.0948958.
\end{equation}
However, we need this to be negative.  Thus, we have no set of natural
orthogonality conditions which yield a spectral
property.  In this case, since $K_1^{(o)}<0$, the only
obstacle to the proof is the $\calL_-^{(e)}$ operator.

\subsubsection{The Critical Case with Other Orthogonality Conditions}
If we had instead used the orthogonality condition, $g \perp R$, and
then solved the boundary value problem $\calL_-^{(e)}\check{Z}_{3}^{(e)} = R$,
the inner product,
\[
\check{J}_1^{(e)} \equiv\inner{\calL_-^{(e)}\check{Z}_1^{(e)}}{\check{Z}_1^{(e)}} = -3.770731.
\]
This would give us a spectral property, but it is not a convenient
subspace.

Suppose we use the orthogonality conditions of \cite{FMR}, and let $g
\perp \Lambda R$ and $g \perp \Lambda^2 R$.  Then, we compute as
follows: Let $\hat{Z}_1^{(e)}$, and $\hat{Z}_2^{(e)}$ solve the
following problems:
\begin{align}
  \calL_-^{(e)} \hat{Z}_1^{(e)} & =\Lambda R , \quad \ddx \hat{Z}_1^{(e)}(0) = 0,\\
  \calL_-^{(e)} \hat{Z}_2^{(e)} & =\Lambda R , \quad \ddx
  \hat{Z}_2^{(e)}(0) = 0,
\end{align}
and define the inner products
\begin{align}
  \hat{J}_1^{(e)} &
  \equiv\mathcal{B}_-^{(e)}(\hat{Z}_1^{(e)},\hat{Z}_1^{(e)}) =
  \inner{\calL_-^{(e)} \hat{Z}_1^{(e)}}{\hat{Z}_1^{(e)}} ,\\
  \hat{J}_2^{(e)} &
  \equiv\mathcal{B}_-^{(e)}(\hat{Z}_2^{(e)},\hat{Z}_2^{(e)}) =
  \inner{\calL_-^{(e)} \hat{Z}_2^{(e)}}{\hat{Z}_2^{(e)}} ,\\
  \hat{J}_3^{(e)}
  &\equiv\mathcal{B}_-^{(e)}(\hat{Z}_1^{(e)},\hat{Z}_2^{(e)})
  =\inner{\calL_-^{(e)} \hat{Z}_1^{(e)}}{\hat{Z}_2^{(e)}}.
\end{align}
We will find that
\begin{align}
  \hat{J}_1^{(e)} & = 0.292551, \\
  \hat{J}_2^{(e)} & =2.57656,\\
  \hat{J}_3^{(e)} & = -1.27657.
\end{align}
As one would hope, given that the 1D spectral property was established
in \cite{MR-Annals},
\begin{equation}
  -\frac{1}{\hat{J}_2^{(e)}}\paren{(\hat{J}_3^{(e)})^2
    -\hat{J}_1^{(e)}\hat{J}_2^{(e)} } =-0.339932.
\end{equation}
This sign ensures that projection away from those two directions is
sufficient to point us away from the negative eigenvalue, rendering
$\overline{\calL}_-^{(e)}\geq 0$.

\section{Discussion}
\label{s:discussion}

We have demonstrated a computer assisted algorithm for proving the
positivity of a bilinear form, $\mathcal{B}$, on a subspace $\calU$.
Because of the relationship between $\mathcal{B}$, $\calL$, and the
linearized operator, $JL$, we infer that there are no embedded
eigenvalues.  We succeeded with this program in the case of the 3d
cubic equation, and a two supercritical 1d problems.  C.~Sulem has
suggested t that is likely to also be successful for solitons (with
$\lambda = 1$) of the 3d cubic-quintic equation,
\[
i \psi_t + \Delta \psi + \abs{\psi}^2 \psi - \gamma \abs{\psi}^4 \psi = 0
\]
for $\gamma$ sufficiently close to zero.  We also anticipate success
for other 1d supercritical problems with $\sigma$ sufficiently large.
These cases warrant further study.

For subcritical problems, a similar algorithm should apply, though it
will certainly require a additional orthogonality
conditions.   Many subcritical problems contian eigenvalues inside the
spectral gap.  Since our approach does not distinguish between
embedded eigenvalues and imaginary eigenvalues in the gap, it would be
essential to project away from those states.

It remains to be seen how to extend our technique to other NLS/GP
equations.  Indeed, the failure in the 1D critical problem is
curious.  The success or failure of the approach is likely related to
the choice of our operator $\Lambda = d/2 + \bx
\cdot \nabla$. In \cite{FMR}, the authors proved the spectral property
using this $\Lambda$, as it is generated by the scaling invariance of
the mass critical problem.  This results in the so-called
``pseudoconformal invariant'' for critical NLS and has great
implications for blow-up.  See \cite{MR-Annals}
and \cite{sulem1999nse} for additional details.

Finally, recall that $\Lambda$ determines the operators $\calL_\pm$.  These
each have an index identifying the number of negative' directions.  We
then choose orthogonality conditions that simultaneously must satisfy
the two properties:
\begin{enumerate}
\item They must be orthogonal to any embedded eigenvalues,
\item Orthogonality in $L^2$ with respect to these directions must
  imply orthogonality to the negative directions of $\calL_\pm$, with
  respect to the $\calB_\pm$ quadratic form.
\end{enumerate}
The first requirement is satisfied by the vectors from the adjoint
problem, as discussed in Section \ref{sec:verification}.  We appear to
have little flexibility in altering these.  Changing
$\Lambda$ will change $\calL_\pm$; in turn this changes the negative
directions.  Thus, a different skew adjoint operator may extend the
applicability of the algorithm.

\appendix
\newpage

\section{Commutator Estimates}
\label{sec:mourre}
\subsection{Large Eigenvalues}
\label{spec:eig}

We now establish upper bounds on the magnitude of embedded $L^2$
eigenvalues of $JL$.  In this analysis, we
examine the fourth order equation comes from squaring the operator.
Therefore, we study eigenfunctions $u \in L^2$ of the differential
operator:
\begin{eqnarray}
  \label{eqn:lplm1}
  L_{-} L_{+} u = \mu^2 u,
\end{eqnarray}
where the $L_\pm$ operators are
\begin{equation*}
L_{\pm} = - \Delta + \lambda - V_{\pm}
\end{equation*}
and $\mu \in \sigma_{\cont}(JL) = (\lambda,\infty)$.  As our proof
applies to many NLS equations, we do not further specify the
potentials $V_\pm$; they are defined as in Section \ref{sec:sollin}.

From the properties of the soliton and the nonlinearity, we have that
any solution to equation \ref{eqn:lplm1} is locally smooth via an 
iteration argument \cite{Mlin}.  Following \cite{BeLi},  asymptotic
analysis shows that a solution decays exponentially fast.  As a
result, the possible range of frequencies is limited by
\begin{eqnarray}
  \label{spec:eqfreq}
  \| \nabla u \|_{L^2} \leq \mu \| u \|_{L^2}.
\end{eqnarray}
See \cite{Tat,Ste} for references on microlocal analysis.
We prove the following:
\begin{thm}

  There exists a $\mu_0 > \lambda$ such that for all $\mu \geq \mu_0$,
  the eigenvalue equation \eqref{eqn:lplm1} has only the trivial zero
  solution in $L^2$.

\end{thm}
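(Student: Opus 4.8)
The plan is to pass from the fourth order equation to the first order matrix equation $\mathcal{H}\vec u=\mu\vec u$ for the operator $\mathcal{H}$ of \eqref{eqn:mathcalH}, and then to play an elementary energy identity against a virial (positive commutator) identity built from the dilation generator $\Lambda$.

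\emph{Reduction.} Suppose $u\in\ltwo$ is a nonzero solution of $L_-L_+u=\mu^2u$ with $\mu>\lambda$. By elliptic regularity and the iteration argument of \cite{Mlin} $u$ is smooth, and by the asymptotics of \cite{BeLi} it and its derivatives decay exponentially; in particular $w:=L_+u\in\ltwo$ decays exponentially, and we may take $u$ (hence $w$) real. Using $L_+u=w$ and $L_-w=\mu^2u$ one checks directly that
\[
\vec u=(u_1,u_2)^T,\qquad u_1=\tfrac12\paren{u+\mu^{-1}w},\quad u_2=\tfrac12\paren{u-\mu^{-1}w},
\]
satisfies $\mathcal{H}\vec u=\mu\vec u$; it is nonzero since $u_1+u_2=u\neq 0$, and it is real, smooth, and exponentially decaying. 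It therefore suffices to show $\vec u=0$, for then $u=u_1+u_2=0$.

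\emph{The two identities.} Writing $\mathcal{H}\vec u=\mu\vec u$ row by row gives $(-\Delta+\lambda-V_1-\mu)u_1=V_2u_2$ and $(-\Delta+\lambda-V_1+\mu)u_2=V_2u_1$. For $\mu$ large the operator on the left of the second equation is bounded below by $\mu+\lambda-\|V_1\|_{\linf}>0$, so $\norm{u_2}_{\ltwo}\le(\mu+\lambda-\|V_1\|_{\linf})^{-1}\|V_2\|_{\linf}\norm{u_1}_{\ltwo}$, while pairing the first equation with $u_1$ yields the lower bound
\[
\norm{\grad u_1}_{\ltwo}^2=(\mu-\lambda)\norm{u_1}_{\ltwo}^2+\inner{V_1u_1}{u_1}+\inner{V_2u_2}{u_1}\ge\paren{\mu-\lambda-\|V_1\|_{\linf}-o(1)}\norm{u_1}_{\ltwo}^2 .
\]
For the opposing upper bound, observe that $\mathcal{H}$ is self-adjoint with respect to the indefinite form $\inner{\sigma_3\,\cdot}{\cdot}$, $\sigma_3=\diag(1,-1)$ (since $\sigma_3\mathcal{H}$ is symmetric), while $\mathcal{A}=\diag(\Lambda,\Lambda)$ is skew for this form; hence $\inner{\sigma_3\vec u}{[\mathcal{H},\mathcal{A}]\vec u}=0$. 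With $[-\Delta,\Lambda]=-2\Delta$ and $[V_j,\Lambda]=-\bx\cdot\grad V_j$,
\[
\sigma_3[\mathcal{H},\mathcal{A}]=\twobytwo{-2\Delta+\bx\cdot\grad V_1}{\bx\cdot\grad V_2}{\bx\cdot\grad V_2}{-2\Delta+\bx\cdot\grad V_1},
\]
so the vanishing commutator reads
\[
2\norm{\grad u_1}_{\ltwo}^2+2\norm{\grad u_2}_{\ltwo}^2=-\int(\bx\cdot\grad V_1)(u_1^2+u_2^2)-2\int(\bx\cdot\grad V_2)u_1u_2\le\paren{\|\bx\cdot\grad V_1\|_{\linf}+\|\bx\cdot\grad V_2\|_{\linf}}\paren{\norm{u_1}_{\ltwo}^2+\norm{u_2}_{\ltwo}^2},
\]
and therefore, using the bound on $\norm{u_2}_{\ltwo}$, $\norm{\grad u_1}_{\ltwo}^2\le\tfrac12(\|\bx\cdot\grad V_1\|_{\linf}+\|\bx\cdot\grad V_2\|_{\linf})(1+o(1))\norm{u_1}_{\ltwo}^2$.

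\emph{Conclusion and main obstacle.} Comparing the two estimates for $\norm{\grad u_1}_{\ltwo}^2$, if $u_1\neq0$ then $\mu$ is forced below an explicit threshold $\mu_0$, namely $\mu_0=\lambda+\|V_1\|_{\linf}+\tfrac12(\|\bx\cdot\grad V_1\|_{\linf}+\|\bx\cdot\grad V_2\|_{\linf})$ up to lower-order terms, all of which are computable directly from the soliton $R$ (and $\mu_0>\lambda$). Hence for $\mu\ge\mu_0$ we must have $u_1=0$, then $u_2=0$ by the norm bound, so $\vec u=0$ and $u=0$. I expect the only delicate point to be the rigorous justification of the virial identity $\inner{\sigma_3\vec u}{[\mathcal{H},\mathcal{A}]\vec u}=0$: $\mathcal{A}$ is an unbounded multiplier whose coefficient grows like $\abs{\bx}$, so one must verify that every integration by parts is legitimate with no boundary contribution, which is precisely where the exponential decay of $\vec u$ and its derivatives, together with the a priori frequency bound \eqref{spec:eqfreq}, are used. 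Note that the argument is insensitive to the dimension and to the precise form of the nonlinearity, since it only uses boundedness of $V_\pm$ and $\bx\cdot\grad V_\pm$; alternatively it can be run directly with $u$ and $w=L_+u$, tracking the combination $\grad u+\mu^{-1}\grad w$.
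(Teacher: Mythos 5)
Your proof is correct, and it belongs to the same family as the paper's argument---both are positive-commutator (Mourre/virial) arguments built on the dilation generator---but the implementation is genuinely different. The paper works directly with the fourth-order scalar operator $L_-L_+$, computes its commutator with $M=\bx\cdot\nabla+\nabla\cdot\bx$, and must then combine the resulting identity (which contains $(\Delta u)^2$ and $\abs{\nabla u}^2$ terms) with the a priori frequency bound \eqref{spec:eqfreq} and with a multiple of the energy identity to extract a definite sign; the bookkeeping is substantial. You instead factor the problem through the first-order matrix system $\mathcal{H}\vec u=\mu\vec u$ --- your substitution $u_1=\tfrac12(u+\mu^{-1}L_+u)$, $u_2=\tfrac12(u-\mu^{-1}L_+u)$ is the correct inverse of $u=u_1+u_2$, $L_+(u_1+u_2)=\mu(u_1-u_2)$, $L_-(u_1-u_2)=\mu(u_1+u_2)$ --- and exploit the $\sigma_3$-self-adjointness of $\mathcal{H}$ so that the commutator identity is a purely second-order virial identity with no $\mu$ in it at all; the $\mu$-dependence enters solely through the elementary energy identity on the first row, which replaces the paper's frequency bound. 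This buys a shorter computation, a fully explicit threshold $\mu_0$, and the useful side observation that $\norm{u_2}_{\ltwo}=O(\mu^{-1})\norm{u_1}_{\ltwo}$, i.e., the eigenvector concentrates in one component as $\mu\to\infty$. The one delicate point you flag---justifying $\inner{\sigma_3\vec u}{[\mathcal{H},\mathcal{A}]\vec u}=0$ when $\mathcal{A}$ has a linearly growing coefficient---is exactly the point the paper also leans on: both arguments require the asserted (via \cite{BeLi}) exponential decay of the eigenfunction and its derivatives so that $\bx\cdot\nabla\vec u\in L^2$ and all boundary terms vanish; with that in hand your integrations by parts are legitimate and the argument closes.
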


\begin{proof}

We begin by defining the standard Mourre commutator as:
\begin{equation}
M \equiv \bx \cdot \nabla + \nabla \cdot \bx.
\end{equation}
Using the structure of these operators, we immeadiately have the two
identities
\begin{align}
\inner{[M L_- L_+ + L_+ L_- M]u }{u} & = 0,\\
\inner{[L_-L_+ - \mu^4]u}{u} & = 0,
\end{align}
where $\langle \cdot, \cdot \rangle$ is the standard $L^2$ inner
product.  Combining these two identities with the frequency
bound of \eqref{spec:eqfreq}, we can rule out $L^2$ solutions for
sufficiently large $\mu$.  Indeed, 
\begin{eqnarray*}
  \langle (\Delta^2 & - & 2 \lambda \Delta + \Delta(V_{+}(\bx)) + V_{-}(\bx) \Delta - \lambda(V_{-}+V_{+})  \\
  && + (\lambda^2 - \mu^2) + V_{-} V_{+}) u, u \rangle = 0, \ (*) 
\end{eqnarray*}
\begin{eqnarray*}
  M L_{-} L_{+} & - & L_{+} L_{-} M = [M,L_{-}]L_{+} + L_{-} [M,L_{+}] + [L_{-},L_{+}]M,\\
  \ [M,-\Delta] & = & 4 \Delta, \ [ M,V_{-} ] = 2 \bx \cdot \nabla V_{-}.
\end{eqnarray*}
Hence,
\begin{eqnarray*}
  \langle ( -8 \Delta^2 & + & 8 \lambda \Delta - 4(V_{-}+V_{+})\Delta + 2(\bx \cdot \nabla V_{-} + \bx \cdot \nabla V_{+}) \Delta  \\
  & - & 2 \lambda (\bx \cdot \nabla V_{-} + \bx \cdot \nabla V_{+}) + 2(\bx \cdot \nabla V_{-}) V_{+} + 2 (\bx \cdot \nabla V_{+}) V_{-} \ (**) \\
  & - & [(V_{-}-V_{+})\Delta - \Delta (V_{-}-V_{+}) ] [d + 2 \bx \cdot \nabla ] ) u,u \rangle = 0.
\end{eqnarray*}
Since the last term is a product of skew-adjoint operators, they commute.

Combining $4(*) + (**)$ and with the frequency bound $\| \nabla u
\|_{L^2} \leq \mu^2 \| u \|_{L^2}$, we have for $\mu > \mu_0$, this is
a negative definite system.  Thus, there are no eigenvalues.
Furthermore, this estimate combined and standard Sobolev embeddings
implies $u \in H^k$ for any $k$; hence, $u$ is smooth.

The system we assess is:
\begin{eqnarray*}
  \int [-4 (\Delta u)^2 & - & 4\lambda (V_{-}+V_{+}) u^2 - 4 (\mu^2 - \lambda^2) u^2] \\
  & + & [4V_{-}V_{+} - 2 \lambda (\bx \cdot \nabla V_{-} + \bx \cdot \nabla V_{+}) \\
  & + & 2 (\bx \cdot \nabla V_{-}) V_{+} + 2( \bx \cdot \nabla V_{+})V_{-} + \Delta (\bx \cdot \nabla V_{-} + \bx \cdot \nabla V_{+})  \\
  & - & \bx \cdot \nabla (\Delta V_{-} - \Delta V_{+}) - d (\Delta V_{-} - \Delta V_{+}) ]u^2 \\
  & + & [ 4 (\bx \cdot \nabla u) (\nabla(V_{-}-V_{+}) \cdot \nabla u) \\
  & - & 2(\bx \cdot \nabla V_{-} + \bx \cdot \nabla V_{+}) \nabla u \cdot \nabla u ] \ d\bx = 0.
\end{eqnarray*}
Hence,
\begin{eqnarray*}
  \int (-4 (\Delta u)^2 & - & 4\lambda (V_{-}+V_{+}) u^2 )d\bx - 4 (\mu^2 - \lambda^2) \|u\|_{L^2}^2 +  \| F \|_{L^\infty} \| u \|^2_{L^2} \\
  & + & ( 2 \| \bx \cdot \nabla V_{-} + \bx \cdot \nabla V_{+} \|_{L^\infty} \\
  & + & C_d \max_{j,k} \| \partial_j (V_{-}-V_{+})  x_k \|) \|\nabla u \|_{L^2}^2 \\
  & \leq & \int (-4 (\Delta u)^2 - 4\lambda (V_{-} +V_{+}) u^2 )d\bx \\
  && - 4 (\mu^2 - \lambda^2 - C_1 - C_2 \mu^2) \|u\|_{L^2}^2,
\end{eqnarray*}
where
\begin{eqnarray*}
  F & = & 4V_{-} V_{+} - 2 \lambda ( \bx \cdot \nabla V_{-} + \bx \cdot \nabla V_{+}) + 2  (\bx \cdot \nabla V_{-})V_{+} \\
  & + & 2 (\bx \cdot \nabla V_{+})V_{-} + \Delta (\bx \cdot \nabla V_{-} + \bx \cdot \nabla V_{+}) \\
  & - & \bx \cdot \nabla (\Delta V_{-} - \Delta V_{+}) - d( \Delta V_{-} - \Delta V_{+})
\end{eqnarray*}
and $C_j = C_j (V_{-},V_{+},\lambda,d)$ for $j = 1,2$. Hence, for $\mu$ large,
we have that:
\begin{eqnarray*}
  \int (-4 (\Delta u)^2 - 4\lambda (V_{-}+V_{+}) u^2 - C_3(\mu,\lambda,V_{-},V_{+}) u^2) \ d\bx \leq 0,
\end{eqnarray*}
for $C_3 > 0$ and $u$ a smooth function, hence $u = 0$.
\end{proof}

\subsection{Spherical Harmonics}
\label{spec:sphhar}

As our potentials are radially symmetric, we expand our
functions in spherical harmonics.  Seperating the radial variable,
$r$, from the angular variables, $\theta$, the expansion takes the form
\begin{eqnarray*}
  \sum_k u_k (r) \phi_k (\theta),
\end{eqnarray*}
where
\begin{eqnarray*}
  \Delta_S \phi_k (\theta) = (k^2 + (d-1) d) \phi_k (\theta).
\end{eqnarray*}
See \cite{Tay2} for a description of the eigenspaces of the spherical
Laplacian, $\Delta_S$.  Then, we have the following ODE eigenvalue
problem:
\begin{equation}
  \label{eqn:lplm2}
\begin{split}
  \left[ \left( -\frac{\partial^2}{\partial r} \right. \right. & - \left. \frac{d-1}{r} \frac{\partial}{\partial r} +
    \frac{\alpha^2}{r^2} + \lambda - V_{-}(r) \right) \times \\
  \left( -\frac{\partial^2}{\partial r} \right. & -  \left. \left.\frac{d-1}{r} \frac{\partial}{\partial r} + \frac{\alpha^2}{r^2} + \lambda - V_{+}(r) \right)  - \mu^2 \right] u_k(r) = 0,
\end{split}
\end{equation}
where $\alpha^2 = k^2 + (d-1) d$ for $k = 0,1,2,\dots$.  Note, we use the notation
$\alpha^2$ since all of the eigenvalues are non-negative, which will
be important in the sequel.

We have following theorem:
\begin{thm}
  There exists some $\alpha_0 > 0$ such that for all $\alpha \geq
  \alpha_0$, the eigenvalue equation \eqref{eqn:lplm2} has only the
  trivial  solution in $L^2$.
\end{thm}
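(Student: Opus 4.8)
\emph{The plan} is to run the positive-commutator (Mourre) argument of \secref{spec:eig}, but to draw the smallness from the centrifugal barrier $\alpha^2/r^2$ rather than from a large spectral parameter. Fix the value $\mu>\lambda$ in question and suppose $u_k\in L^2$ solves \eqref{eqn:lplm2}; equivalently, $u:=u_k(r)\phi_k(\theta)$ solves $L_-L_+u=\mu^2u$ on the $k$-th spherical harmonic sector of $L^2(\R^d)$, which is invariant both under $L_\pm=-\Delta+\lambda-V_\pm$ and under the Mourre generator $M=\bx\cdot\nabla+\nabla\cdot\bx$. Exactly as in \secref{spec:eig}, $u$ is smooth and exponentially decaying (local smoothness by iteration, exponential decay by asymptotic analysis of the ODE), so the frequency bound \eqref{spec:eqfreq} is available, and the elliptic estimates for the fourth-order equation give $\norm{\Delta u}\le C\norm{u}$ with $C$ depending only on $\mu,\lambda,V_\pm$ and \emph{not} on $\alpha$. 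The new observation is that, restricted to this sector, \eqref{spec:eqfreq} reads $\norm{u_k'}^2+\alpha^2\norm{u_k/r}^2\le\mu^2\norm{u_k}^2$, so that
\[
\int_{\R^d}\frac{\abs{u}^2}{\abs{\bx}^2}\,d\bx\le\frac{\mu^2}{\alpha^2}\norm{u}^2,\qquad\text{hence}\qquad\int_{\abs{\bx}\le A}\abs{u}^2\,d\bx\le\frac{A^2\mu^2}{\alpha^2}\norm{u}^2
\]
for every $A>0$: once $\alpha$ is large, the eigenfunction is quantitatively small, in $L^2$, on any fixed ball.

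Next I would reuse the identity $4(*)+(**)=0$ from the proof in \secref{spec:eig} --- four times the eigenvalue relation plus the Mourre commutator relation --- which is an identity on $L^2(\R^d)$ and therefore holds verbatim on our sector. After rearrangement it expresses
\[
4\norm{\Delta u}^2+4\lambda\!\int(V_-+V_+)\abs{u}^2+4(\mu^2-\lambda^2)\norm{u}^2
\]
as $\int F\abs{u}^2$ plus the two ``gradient'' terms $4\int(\bx\cdot\nabla u)\bigl(\nabla(V_--V_+)\cdot\nabla u\bigr)$ and $-2\int\bigl(\bx\cdot\nabla(V_-+V_+)\bigr)\abs{\nabla u}^2$, in each of which the coefficient is built from $V_\pm$ and its derivatives and hence is bounded pointwise by a multiple of $e^{-c\abs{\bx}}$. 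Since $V_\pm\ge0$, the displayed quantity is $\ge4(\mu^2-\lambda^2)\norm{u}^2>0$. I would bound the three right-hand integrals by splitting each at radius $A$: on $\{\abs{\bx}>A\}$ the exponential decay of the coefficients together with the \emph{fixed} bounds $\norm{\nabla u}\le\mu\norm{u}$ and $\norm{\Delta u}\le C\norm{u}$ gives a contribution $\le Ce^{-cA}\norm{u}^2$; on $\{\abs{\bx}\le A\}$, after integrating the two gradient terms by parts over $\R^d$ --- so that locally only $u$ itself appears, multiplied by fixed quantities such as $\norm{\nabla u}$, $\norm{\Delta u}$, $\norm{D^2u}$ --- the ball estimate above gives a contribution $\le C(A/\alpha)\norm{u}^2$. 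Altogether the right-hand side is $\le C\bigl(e^{-cA}+A/\alpha\bigr)\norm{u}^2$.

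To conclude, with $\mu>\lambda$ fixed I would first choose $A$ so large that $Ce^{-cA}<\mu^2-\lambda^2$ and then $\alpha_0$ so large that $CA/\alpha_0<\mu^2-\lambda^2$; for every $\alpha\ge\alpha_0$ the identity then forces $4(\mu^2-\lambda^2)\norm{u}^2\le2(\mu^2-\lambda^2)\norm{u}^2$, i.e.\ $u\equiv0$. The main obstacle I anticipate is the treatment of the two gradient terms: a naive Cauchy--Schwarz controls them only by $\norm{\nabla u}^2\lesssim\norm{u}^2$, which is $\alpha$-independent and hence useless, so one genuinely has to integrate by parts to trade the local $\nabla u$ factors for $u$ (localizable by the Hardy/frequency bound) and for globally bounded higher derivative norms (supplied by elliptic regularity for the fourth-order equation); keeping the constants in these manipulations $\alpha$-independent is the delicate point. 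One caveat worth recording: the $\alpha_0$ obtained this way depends on $\mu$ and degenerates as $\mu\downarrow\lambda$, but it is uniform for $\mu$ in compact subsets of $(\lambda,\infty)$; combined with the large-eigenvalue result of \secref{spec:eig} this leaves only a neighborhood of the band edge $\lambda$, which is instead dealt with by the endpoint-resonance analysis.
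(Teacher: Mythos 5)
Your argument is correct in substance but follows a genuinely different route from the paper. The paper stays entirely in the radial variable: it conjugates the radial operator with the radial dilation generator $P_r = d + 2r\,\partial_r$, writes out the resulting commutator identity with the $\langle\cdot,\cdot\rangle_r$ pairing, and observes that the negative terms it produces --- $-8\alpha^4/r^4$, $-8\lambda\alpha^2/r^2$ on $\abs{u_k}^2$ and $-16\alpha^2/r^2$ on $\abs{u_k'}^2$ --- dominate every potential-dependent term \emph{pointwise} in each of the three regions $r\le 1$, intermediate, and $r\ge r_\star$, once $\alpha$ is large; the payoff is that the resulting $\alpha_0$ is independent of $\mu$, and the only delicate point is justifying the integrations by parts near $r=0$ via the order-$k$ vanishing of $u_k$. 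You instead recycle the full-space identity $4(*)+(**)=0$ from \secref{spec:eig} verbatim on the invariant harmonic sector, take $4(\mu^2-\lambda^2)\norm{u}^2$ as the good term, and use the centrifugal barrier only in integrated form, via the Hardy-type bound $\int\abs{u}^2/\abs{\bx}^2\le(\mu/\alpha)^2\norm{u}^2$, to make $u$ small on any fixed ball; the error terms are then handled by the $A$-splitting plus the $\alpha$-independent elliptic bounds $\norm{\Delta u}+\norm{D^2u}\lesssim\norm{u}$, and your insistence on integrating the gradient terms by parts first (so that only $u$ carries the localization) is exactly the right move, since a direct Cauchy--Schwarz there is $\alpha$-independent and useless. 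Both arguments lean equally on the informally justified frequency bound \eqref{spec:eqfreq}, so you are no worse off there. The one substantive difference you should not gloss over: your $\alpha_0$ degenerates as $\mu\downarrow\lambda$, whereas the paper's does not, and your closing remark that the leftover neighborhood of the band edge is ``dealt with by the endpoint-resonance analysis'' is not accurate --- that analysis treats only the points $\pm\lambda$ themselves, and ruling out embedded eigenvalues at energies near but above $\lambda$ is precisely what the spectral-property machinery of \secref{sec:normal} is for. For the theorem as literally stated (fixed $\mu$ in \eqref{eqn:lplm2}) your proof suffices, but for the intended use --- reducing the embedded-eigenvalue search to a finite set of harmonics uniformly over the residual energy window --- the paper's $\mu$-uniform version is the one you want.
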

\begin{proof}

  Let us denote the radial inner product by:
  \begin{eqnarray*}
    \langle u,v \rangle_r = \int uv r^{d-1} dr,
  \end{eqnarray*}
  and the operators:
  \begin{eqnarray*}
    \Delta_r & = & r^{1-d} \frac{\partial}{\partial r} \left( r^{d-1} \frac{\partial}{\partial r} \right), \\
    P_r & = & d + 2 r \frac{\partial}{\partial r}.
  \end{eqnarray*}

  Using the same commutator approach as in Section \ref{spec:eig},
  \begin{eqnarray*}
    \left\langle [ 4 \Delta_r \right.& - & \frac{4 \alpha^2}{r^2} -2 r V_{-}'(r)][-\Delta_r + \frac{\alpha^2}{r^2} + \lambda - V_{+}(r)] \\
    & + & [-\Delta_r + \frac{\alpha^2}{r^2} + \lambda -V_{-}(r)][ 4 \Delta_r - \frac{4 \alpha^2}{r^2} -2 r V_{+}'(r)] \\
    & + & [(V_{-}-V_{+})\Delta_r - \Delta_r(V_{-}-V_{+})][d+2r \frac{\partial}{\partial r} ] u_k, u_k \rangle_r = 0.
  \end{eqnarray*}
  Thus, 
  \begin{eqnarray*}
    \langle -8 (\Delta_r)^2 u  & + & 16 \Delta_r (\frac{\alpha^2}{r^2} u_k) + 8 \lambda \Delta_r u_k - 4(V_{-}+V_{+})\Delta_r u_k - \frac{8 \alpha^4}{r^4} u_k \\
    & - & \frac{8 \lambda \alpha^2}{r^2} u_k + \frac{4 \alpha^2}{r^2} (V_{-}+V_{+}) u_k + 2 (r V_{-}'(r) + r V_{+}'(r)) \Delta_r u_k \\
    & - & 2 \frac{\alpha^2}{r} (V_{-}'(r) + V_{+}'(r)) u_k - 2 \lambda r (V_{-}'(r)+V_{+}'(r)) u_k \\
    & + & 2 r V_{-}'(r) V_{+}(r) u_k + 2 r V_{-}(r) V_{+}'(r) u_k \\
    & + & (d+2r \frac{\partial}{\partial r})((V_{-}-V_{+}) \Delta_r - \Delta_r (V_{-}-V_{+})) u_k, u_k  \rangle_r \\
    & = & 0.
  \end{eqnarray*}
  This implies:
  \begin{eqnarray*}
    \int \left[ -8 (\Delta_r u_k)^2 \right. & - & (16 \frac{\alpha^2}{r^2} + 8 \lambda )((u_k)_r)^2 - (\frac{8 \alpha^4 }{r^4} + \frac{8 \lambda \alpha^2}{r^2}) u_k^2] r^{d-1} dr \\
    & + & \int [4(V_{-}+V_{+})  + 2r (V_{-})_r -6 r (V_{+})_r] ((u_k)_r)^2 r^{d-1} dr \\
    & + & \int [ \frac{4 \alpha^2}{r^2} (V_{-}+V_{+}) - (d-4) \frac{\alpha^2}{r^4} - \frac{d(d-1)+2\alpha^2}{r} (V_{-})_r \\
    & + & \frac{(d-1)(d-2)-2 \alpha^2}{r} ((V_{+})_r) - (d-2) (V_{-})_{rr} + 3d (V_{+})_{rr}  \\
    & + & 2r ((V_{+})_r V_{-} + V_{-} (V_{+})_r) + 2r (V_{+})_{rrr} - 2 \lambda r ((V_{-})_r+(V_{+})_r)] u_k^2 r^{d-1} dr \\ 
    & \leq & 0.
  \end{eqnarray*}

  In the preceding calculations, we integrated by parts several times above.  To
  justify this, $r=0$ must be a root of $u_k$ of sufficiently high multiplicity to
  compensate for the singular terms.  Fortunately, spherical harmonics
  result from eigenvalues of the spherical Laplacian.  These take the
  values

$$ \nu_k = k^2 + (d-2)k,$$ 
and for each $k$, the eigenfunctions (and hence the spherical
harmonics) are traces of harmonic polynomials of degree $k$.  As a
result, in order to give a smooth solution as guaranteed above, $u_k
(0)$ must be a zero of multiplicity $k$, or $u^{(m)}_k (0) = 0$ for all $m =
0,1,\dots,k$.  Hence, for $k \geq \max\{0,5-d\}$, the behavior of $u$
is sufficient to make the calculations rigorous.  See
\cite{Tay2}, Chapter 8 for a detailed description of eigenfunctions
for the Laplacian on the sphere.

In the commutator expression, the parameter that must dominate is
$\alpha^4$.  Since $V_{-}$, $V_{+}$ are smooth, exponentially decaying
functions by assumption, all terms involving $V_{-}$, $V_{+}$ and
derivatives thereof are nicely bounded at $0$ and exponentially
decaying.  Hence, for $0 \leq r \leq 1$, all of the functions above
are easily controlled by $\frac{\alpha^4}{r^4}$ for $\alpha$
sufficiently large.

Similarly, for $r > r_\star$, $r_\star$ sufficiently large, the exponential decay of $V_{-}$, $V_{+}$
and their derivatives imply that any function above is dominated 
$\frac{\alpha^4}{r^4}$ once $\alpha$ is sufficiently.  In the
intermediate region, using the smoothness of the potential
functions we can find $\alpha^4$ large enough to bound the lower
order terms.  In order to determine $\alpha$ exactly, a careful
analysis must be done involving all of the extrema of $V_{-}$, $V_{+}$
and their derivatives.  As these functions are uniformly bounded,
there exists $\alpha_0$ such that for all $\alpha \geq
\alpha_0$, the operator when conjugated by the radial Mourre operator
gives a negative definite system.  Hence, the result holds.
\end{proof}

\begin{rem}
  Any embedded eigenvalue can be
  expressed purely as a finite sum of spherical harmonics with radial
  coefficients.  Combining this with the result in Section
  \ref{spec:eig} gives a limited range for the calculations one must
  do in order to determine whether or not an operator has no  embedded
  eigenvalues.
\end{rem}

\section{Proof of the Invertibility of the Operators}
\label{sec:inverse_proof}

In this section we give a full proof of Proposition
\ref{prop:eu_bvp_1d}.  This relies on our numerically computed
indexes, from Proposition \ref{prop:index_computations}.  This proof
generalizes to the 3d problem, establishing Proposition
\ref{prop:eu_bvp_3d}.

Following \cite{FMR,MR-Annals}, we first prove uniqueness and then
existence.  Before beginning the uniqueness part, we recall the
following extension of the Levinson Theorem,
\cite{levinson1947asn,coddington1984tod}, from \cite{eastham1989asl}:
\begin{thm}[Eastham] For the equation
  \[
  y^{(n)}+a_1(x)y^{(n-1)}+ \ldots + a_n(x) y(x) = 0,
  \]
  assume
  \[
  \int_a^\infty x^{j-1}\abs{a_j(x)}dx < \infty
  \]
  for some $a>0$ and $j=1,\ldots n$.  Then there exist $n$ solutions,
  $y_k(x)$, such that as $x\to \infty$,
  \begin{align*}
    y_k^{(i-1)}(x) &\sim \frac{x^{k-i}}{(k-i)!}, \quad 1\leq i\leq k,\\
    y_k^{(i-1)}(x)&=o(x^{(k-i)}), \quad k+1 \leq i \leq k.
  \end{align*}
\end{thm}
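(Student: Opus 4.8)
The plan is to prove this by the classical asymptotic‑integration (Levinson‑type) method: reduce the scalar equation to a first‑order system whose unperturbed part is the companion matrix of $y^{(n)}=0$, note that the weighted integrability hypotheses make the perturbation ``$L^1$ in the right sense,'' and then construct each of the $n$ solutions individually by a contraction‑mapping argument on an integral equation whose leading term is the corresponding polynomial model solution.

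Concretely, first I would set $Y=(y,y',\dots,y^{(n-1)})^{T}$, so that the equation becomes $Y'=(N+V(x))Y$, where $N$ is the nilpotent super‑diagonal shift matrix (so $Y'=NY$ is the free system $y^{(n)}=0$) and $V(x)$ has nonzero entries only in its last row, namely $-a_n(x),\dots,-a_1(x)$. The free system has the polynomial fundamental matrix $e^{Nx}$, whose $k$‑th column is precisely $\big(p_k,p_k',\dots,p_k^{(k-1)},0,\dots,0\big)^{T}$ with $p_k(x):=x^{k-1}/(k-1)!$; these $n$ columns are the targets, and the claimed asymptotics for $y_k$ are exactly the statement that the $k$‑th true solution is $e^{Nx}e_k+o(\cdot)$ componentwise.

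Next I would record the point that makes the hypotheses sharp. After the ``conditioning'' substitution $Y=e^{Nx}Z$ one gets $Z'=\big(e^{-Nx}V(x)e^{Nx}\big)Z$, and each entry of $e^{-Nx}V(x)e^{Nx}$ is a sum of terms of the form $x^{m+j-i-1}a_m(x)$; since $m+j-i-1\le m-1$ precisely when $j\le i$, the hypothesis $\int_a^\infty x^{m-1}|a_m|\,dx<\infty$ controls the ``lower'' block of the transformed perturbation but not the strictly ``upper'' one, so one cannot simply invoke a naive $L^1$ perturbation theorem: the $n$ model solutions sit at $n$ genuinely different orders of magnitude, and the construction of each true solution must be anchored so that growth from below does not feed back into it. I would therefore build the $k$‑th solution by itself: write $y_k=p_k+w_k$, reduce $w^{(n)}=-\sum_{j} a_j(p_k+w)^{(n-j)}$ to an $n$‑fold iterated‑integral equation, and choose, in each of the $n$ integrations, the lower limit to be $+\infty$ on exactly those iterates that would otherwise produce a contribution of order $\ge p_k$ and a fixed base point $x_0$ on the others (in the system picture, a splitting of the Green kernel $e^{N(x-s)}$ into its ``$\ge k$'' and ``$<k$'' parts). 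Working in the space of continuous functions on $[x_0,\infty)$ with the weighted norm $\sup_{x\ge x_0}\big|\mathrm{diag}(x^{-(k-1)},\dots,x^{-(k-n)})\,Y(x)\big|$, which measures each component against its model order, the integrability hypothesis shows the map is a contraction once $x_0$ is large, producing a unique $Y_k$; reading off components gives $y_k^{(i-1)}(x)=p_k^{(i-1)}(x)+o(x^{k-i})$ for $1\le i\le k$ and $=o(x^{k-i})$ for $i>k$ (the upper index ``$k$'' in the second displayed asymptotic should read ``$n$''), the corrections being tails $\int_x^\infty$ of $L^1$ functions.

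Finally, linear independence of $y_1,\dots,y_n$: their leading behaviours $1,x,\dots,x^{n-1}/(n-1)!$ are independent, and the Wronskian satisfies $W'=-a_1W$, so $W(x)=W(a)\,e^{-\int_a^x a_1}$ is either identically zero or never zero; the computed leading terms rule out the former, so the $y_k$ form a fundamental system. The main obstacle is the bookkeeping in the iterated‑integral (equivalently, Green‑kernel) step: one must check that integrating an $L^1$ remainder and then multiplying by powers of $x$ from later, finite‑base‑point integrations still closes the weighted estimate, which is exactly what forces the precise choice of which iterates are anchored at $+\infty$. This is the technical heart of the Levinson‑type machinery of \cite{eastham1989asl}, and the weighted conditions $\int_a^\infty x^{j-1}|a_j|\,dx<\infty$ are precisely tailored to make it go through.
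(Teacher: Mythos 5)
The paper itself gives no proof of this theorem; it is quoted directly from \cite{eastham1989asl} (the result goes back to Levinson, Faedo, and Hartman--Wintner), so there is no in-paper argument to compare against. Your proposal reconstructs the standard asymptotic-integration proof, and its skeleton is correct: the companion-system reduction $Y'=(N+V)Y$, the observation that conjugating by $e^{Nx}$ leaves only the entries with $j\le i$ integrable under the weighted hypotheses, and the consequent need to construct each $y_k$ separately from an integral equation in which certain iterated integrals are anchored at $+\infty$ and the rest at a finite base point. You also correctly note that the second range in the statement should read $k+1\le i\le n$.

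One caveat at the transition index, which is exactly where your flagged ``bookkeeping'' lives. Writing $w=y_k-p_k$, the component $w^{(k-1)}$ must itself be anchored at $+\infty$ (a constant of integration at that level contributes at the order of $p_k^{(k-1)}=1$ and would ruin $y_k^{(k-1)}\sim 1$); your selection criterion does capture this, but the weighted norm $\sup_x\abs{\diag(x^{-(k-1)},\dots,x^{-(k-n)})Y(x)}$ alone does not justify it, since it only yields $\abs{w^{(k)}(x)}=O(x^{-1})$, which is not integrable at infinity, so $\int_x^\infty w^{(k)}$ is not obviously convergent. What closes the argument is the sharper tail bound $\abs{w^{(k)}(x)}\le C\int_x^\infty s^{-1}\rho(s)\,ds$ with $\rho=\sum_j s^{j-1}\abs{a_j}\in L^1$, followed by Fubini to get $\int_{x_0}^\infty\abs{w^{(k)}}\le C\int\rho<\infty$; equivalently, one estimates the composite $n$-fold kernel in one pass rather than component by component, which is how Eastham and Hartman arrange it. With that repair the contraction closes, and the independence step is fine (indeed the distinct leading orders already give it, without Abel's formula).
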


To prove uniqueness, let $u \in L^\infty(\R)$ solve $\calL u = 0$.  We
prove $u = 0$.   The equation, $-u'' + V(x) u = 0$, satisfies the
hypotheses of the above theorem,  so
there exist two solutions, $\rho_1(x)$ and $\rho_2(x)$, such that as
$x\to +\infty$,
\begin{align}
  \rho_1(x)\sim 1,&\quad \rho_1'(x)= o(x^{-1}),\\
  \rho_2(x)\sim x, & \quad \rho_2'(x)=o(1).
\end{align}
Due to the behaviour as $x\to +\infty$, $\rho_1$ and $\rho_2$ are
linearly independent.  These can then be extended to all of $\R$ by
the classical theory of linear systems with smooth coefficients.

By the same argument, there exist $\tilde{\rho}_1$ and
$\tilde{\rho}_2$ satisfying
\begin{align}
  \tilde\rho_1(x)\sim 1, &\quad \tilde\rho_1'(x)= o(x^{-1}),\\
  \tilde\rho_2(x)\sim x,& \quad \tilde\rho_2'(x)=o(1)
\end{align}
as $x\to - \infty$.  These two are also  linearly independent.  We will make
use of these two sets of functions in what follows.  An important
relation between them is that, by uniqueness, $\tilde
\rho_j(x) = \rho_j(-x)$ for $j = 1, 2$.

Given the values $u(0) = u_0$ and $u'(0)=u_0'$, we know from the
existence and uniqueness of solutions to linear systems with smooth
coefficients, that there exist two unique pairs of constants,
$\set{c_1, c_2}$ and $\set{\tilde{c}_1, \tilde{c}_2}$ such that
\begin{align*}
  u(x) &= c_1 \rho_1(x) + c_2 \rho_2(x)\\
  & = \tilde{c}_1 \tilde{\rho}_1(x) + \tilde{c}_2 \tilde{\rho}_2(x).
\end{align*}

As $x\to +\infty$,
\[
u(x) \sim c_1 + c_2 x.
\]
Since $u(x) \in L^\infty$, we conclude $c_2=0$.  Since it is proportional
to $\rho_1(x)$, $u'(x)$ vanishes as $x\to +\infty$.  An analogous argument
ensures that $u'(x)$ also vanishes as $x\to-\infty$.  Thus we have
\begin{equation}
  \label{eq:vanishing_homog_soln}
  \lim_{\abs{x}\to\infty} u'(x) = 0.
\end{equation} 

Since $u \in C^2(\R)$ and its derivative vanishes, we conclude $u' \in
L^\infty(\R)$.  Furthermore, we claim $u' \in L^2(\R)$.  Multiplying the
equation by $u$ and integrating by parts,
\[
\int_{-L}^L \abs{u'}^2 dy - u u'|^{L}_L + \int_L^L V u^2 dy=0.
\]
Sending $L\to \infty$ proves the claim.  Additionally, this shows $\inner{\calL
  u}{u}=0$.

Let $\chi_A(x)$ be the cutoff function
\begin{equation}
  \chi_A(x) =\begin{cases} 1, & \abs{x}\leq A,\\
    2\paren{\frac{\log A}{\log\abs{x}}-\frac{1}{2}}, & A < \abs{x} \leq A^2,\\
    0, & A^2 < \abs{x}.
  \end{cases}
\end{equation}
and assume $A>1$. Let $u_A(x)=\chi_A(x)u(x)$.  We prove
\begin{equation}
\lim_{A\to \infty} \inner{\calL u_A}{u_A} = \inner{\calL u}{u}.
\end{equation}
which will be essential to the uniqueness proof.  Trivially, the
potential component, $\int V u_A^2$, will converge to $\int V u^2$,
since $V$ is highly localized.  We now justify the convergence of the
kinetic component,
\begin{equation}
  \begin{split}
    \int \abs{u_A'}dy &= \int \chi_A^2 \abs{u'}^2dy + \int
    \abs{\chi_A'}^2
    \abs{u}^2 dy  + 2 \int \chi_A u' \chi_A' u dy\\
    &= I_1 + I_2 + I_3.
  \end{split}
\end{equation}
The integral $I_1$ converges to $\int \abs{u'}^2$, the desired
quantity.  We must show the other two vanish.  First, we split up
$I_2$ into
\[
I_2 = \int \abs{\chi_A'}^2 u^2 dy = \int_{-A^2}^{-A} \abs{\chi_A'}^2
u^2 dy + \int_{A}^{A^2} \abs{\chi_A'}^2 u^2 dy.
\]
Using our explicit characterization of the cutoff function
\[
\chi_A'(x) = -\frac{2\log A}{x(\log\abs{x})^2},
\]
\[
\int_{A}^{A^2} \abs{\chi_A'}^2 \abs{u}^2 dy \leq \norm{u}_{L^\infty}^2
\int_{A}^{A^2}\frac{4(\log A)^2}{y^2 (\log y)^4} dy.
\]
As $A\to +\infty$, the integral is $\sim 1/((\log(A^{-1}) )^6A)$, which
vanishes as $A \to +\infty$.  The integral over $(-A^2, -A)$ is treated similarly.

The other integral is
\[
I_3 = 2 \int_{-A^2}^A \chi_A u' \chi_A' u dy + 2 \int_{A}^{A^2} \chi_A
u' \chi_A' u dy.
\]
Again, using the explicit characterization of the cutoff function,
\begin{equation*}
  \begin{split}
    \int_{A}^{A^2} \chi_A u' \chi_A' u dy &\leq
    \norm{u}_{L^\infty}\norm{u'}_{L^\infty(A,A^2)}\int_A^{A^2}\frac{2\log
      A}{y(\log\abs{y})^2}dy\\
    & \leq \norm{u}_{L^\infty}\norm{u'}_{L^\infty(A,A^2)}.
  \end{split}
\end{equation*}
Since the derivative vanishes as $x\to +\infty$,
\eqref{eq:vanishing_homog_soln}, this also vanishes.
The other part of $I_3$ is treated analogously.  This proves convergence
of the bilinear form.

We now specialize to either even or odd functions.  By our index computations in Proposition
\ref{prop:index_computations}, $\calL^{(e/o)}_\pm$ each have index
$1$, except for $\calL^{(o)}_-$ which has index $0$. Without loss of
generality, we assume $\calL$ is an 
index 1 operator, and proceed.  Let $\psi$ be the negative eigenvector with
$\norm{\psi}_{L^2}=1$ for the relevant symmetry.  Let $V_A = \spn
\set{\psi, u_A}$.  We now show that $\overline{\calL}$ restricted to
this subspace is negative definite.  By the index computations this will
imply $u_A$ and $\psi$ are collinear, allowing us to conclude that
$u=0$ since $u_A$ has the same symmetry properties to $u$.

Let $q$ be any element of $V_A$,
\[
q = c_1 u_A + c_2 \psi.
\]
Then,
\[
\inner{\overline{\calL} q}{q} = c_1^2 \inner{\overline{\calL}
  u_A}{u_A} + 2 c_1 c_2 \inner{\overline{\calL} u_A}{q} + c_2^2
\inner{\overline{\calL} \psi}{\psi}.
\]
We claim this is negative, which shows $\overline{\calL}$, restricted
to $V_A$, is negative definite.  By the index computation, there is
only one negative eigenvalue.  Thus, $\dim V_A=1$, and we must have
$u_A = c(A) \psi$.  But then
\[
-\lambda c(A) = \inner{u_A}{\lambda \psi}= \inner{u_A}{\calL \psi} =
\inner{\calL u_A}{\psi}.
\]
Since the right hand side vanishes as $A\to \infty$, we conclude that $c(A)
= 0$; hence $u = 0$.

To prove the claim that the form is negative, it is equivalent to show
\begin{equation}
  \label{eq:uniqueness_negdef}
  \inner{\overline{\calL} u_A}{\psi}^2 < \inner{\overline{\calL} u_A}{u_A} \inner{\overline{\calL}\psi}{\psi}.
\end{equation}
As $A\to \infty$,
\begin{align*}
  \inner{\overline{\calL} u_A}{u_A}&\to - \delta_0 \int e^{-\abs{y}}\abs{u}^2,\\
  \inner{\overline{\calL}u_A}{\psi}&\to -\delta_0\int e^{-\abs{y}} u
  \psi
\end{align*}
and
\[
\inner{\overline{\calL}\psi}{\psi} \leq \lambda < 0.
\]
Thus, \eqref{eq:uniqueness_negdef} holds for $A$ sufficiently large
and $\delta_0$ sufficiently small. Indeed, given $u\neq 0$, and
$\psi$, let
\[
\delta_0 \leq \frac{1}{2}\frac{\abs{ \lambda} \int
  e^{-\abs{y}}\abs{u}^2dy}{\int e^{-\abs{y}} \abs{u} \abs{\psi}dy}.
\]
Fixing this value of $\delta_0$, we can then find a value of $A$ sufficiently large such
that the inequality holds.

For the operator $\calL^{(o)}_-$, the proof is simpler, as we need
only observe that
\[
\inner{\overline{\calL}^{(o)}_- u_A}{u_A} < 0,
\]
contradicting the positivity of the operator.  This conludes our proof
of uniqueness of the solutions.

We now prove existence.  Again, this follows \cite{FMR,MR-Annals}.  We
have the two fundamental sets of solutions $\set{\rho_1, \rho_2}$ and
$\set{\tilde{\rho}_1, \tilde{\rho}_2}$.  $\rho_1$ and $\tilde{\rho}_1$
are asymptotically constant at $\infty$ and $-\infty$, respectively.
Note that these two must be linearly independent, for if they were
collinear, we would have a solution in $L^\infty$, solving $\calL u =
0$.  Hence,
\begin{align*}
  \abs{\rho_1(x)}&\leq K \abs{x}\quad\textrm{as $x\to-\infty$},\\
  \abs{\tilde{\rho}_1(x)}&\leq K \abs{x}\quad\textrm{as $x\to+\infty$}
\end{align*}
for some constant $K$.  We construct a Green's function from these two
to get the solution
\[
u(x) = \tilde{\rho}_1(x) \int_{x}^\infty \frac{\rho_1(s)f(s)}{W(s)}ds+
\rho_1(x)\int_{-\infty}^x \frac{\tilde{\rho}_1(s)f(s)}{W(s)}ds,
\]
where $W = \tilde{\rho}_1 \rho_1' - \tilde{\rho}_1' \rho_1$ is the
Wronskian.  The integrals converge and have the appropriate decay due
to the properties of $\rho_1$ and $\tilde{\rho}_1$, and our assumption
that $f$ is highly localized.

Finally, if $f$ is even, consider
\[
\begin{split}
  u(-x) &= \tilde{\rho}_1(-x) \int_{-x}^\infty
  \frac{\rho_1(s)f(s)}{W(s)}ds+ \rho_1(-x)\int_{-\infty}^{-x}
  \frac{\tilde{\rho}_1(s)f(s)}{W(s)}ds\\
  &= \tilde{\rho}_1(-x) \int_{-\infty}^x
  \frac{\rho_1(-s)f(s)}{W(-s)}ds+ \rho_1(-x)\int_x^\infty
  \frac{\tilde{\rho}_1(-s)f(s)}{W(-s)}ds\\
  & = {\rho}_1(x) \int_{-\infty}^x
  \frac{\tilde{\rho}_1(s)f(s)}{W(s)}ds+ \tilde{\rho}_1(x)\int_x^\infty
  \frac{{\rho}_1(s)f(s)}{W(s)}ds.
\end{split}
\]
Thus $u(x) = u(-x)$.  An analogous proof holds for $f$ odd.

\section{Numerical Methods}
\label{sec:numerics}
The software tools we use in our computations are the \matlab and
Fortran 90/95 implementations of an adaptive nonlinear collocation
algorithm discussed in \cite{shampine2006user,shampine2003singular,
  shampine2003solving}.  Though they are quite similar, we found the
Fortran implementation to be faster and more robust for solving the 3d
problems which require us to compute the ground state.  For
the 1d problems, where we have an explicit formula, the \matlab algorithm, \verb bvp4c  sufficed.  We use these tools to solve for the soliton,
compute the index functions, and solve the relevant boundary value
problems and associated inner products.

The codes used to perform these computations are available at \url{http://www.math.toronto.edu/simpson/files/spec_prop_code.tgz}.

\subsection{Singularities at the Origin}
\label{s:numerical_sing}
A useful feature of this algorithm is that it can handle boundary
value problems of the form
\[
\frac{d}{dr}\mathbf{y} = \frac{1}{r}S\mathbf{y} + \mathbf{f}(r,
\mathbf{y}),
\]
where $S$ is some constant coefficient matrix.  The $r^{-1}$
singularity naturally appears in the 3D problems due to the Laplacian.
The higher harmonics introduce a $r^{-2}$ singularity which can be
addressed by a change of variables.  Let $\calL^{(k)}$ denote one of
the operators applied to the $k$-th harmonic,
\[
\calL^{(k)} = -\frac{d^2}{dr^2} - \frac{d-1}{r}\frac{d}{dr} + \calV +
\frac{k(k+d-2)}{r^2}.
\]
Then if $W$ solves $\calL^{(k)} W = f$, where $f$ could be zero and
$W$ is non singular at the origin, then
\[
\lim_{r\to 0} r^{-k}W(r)
\]
is a nonzero constant.  This motivates the change of variable $W(r) =
r^k \widetilde W(r)$.  In terms of $\widetilde W(r)$, the equation
becomes
\[
r^k \widetilde\calL^{(k)} \widetilde W = f,
\]
where
\[
\widetilde\calL^{(k)} = -\frac{d^2}{dr^2} - \frac{d-1 + 2k
}{r}\frac{d}{dr} + \calV
\]
and $\widetilde W$ satisfies the condition $ \widetilde W' (0)=0$.  We
compute with $\widetilde\calL^{(k)} $ to get $\widetilde W$ and then
multiply by $r^k$.  When we compute indexes for these higher harmonic
operators, the other initial condition becomes $\widetilde W(0) = 1$.

\subsection{Artificial Boundary Conditions}
Another subtlety of the computations is the far field boundary
conditions.  The soliton, $R$, vanishes as $r \to +\infty$, but we
only compute out to some finite value, $r_{\max}$.  For simplicity, we
use the notation $r$ and $\rmax$ for both 1D and 3D.  To accommodate this, we
introduce an artificial boundary condition at $r_{\max}$, and then
check \emph{a postiori}, that it is consistent.    Thus, we must do
some asymptotic analysis.

We seek an asymptotic expansion for $R$, using \eqref{eqn:sol}
\begin{eqnarray*}
  (-\Delta + \lambda -f(R)) R = 0.
\end{eqnarray*}
As $r \to \infty$, we look for an expansion of the form
\begin{eqnarray}
  \label{eqn:asymp}
  e^{-\sqrt{\lambda} r} r^\gamma \sum_{n=0}^{\infty} c_n r^{-n}.
\end{eqnarray}
To extract the leading order behavior, we wish to find $\gamma$.
To this end, we have
\begin{align*}
  \frac{\partial}{\partial_r} (e^{-\sqrt{\lambda} r} r^\gamma) & =
  -\sqrt{\lambda} e^{-\sqrt{\lambda} r} r^\gamma + \gamma
  e^{-\sqrt{\lambda} r} r^{\gamma -1}, \\ 
  \frac{\partial^2}{\partial^2 r} (e^{-\sqrt{\lambda} r} r^\gamma) & =
  \lambda e^{-\sqrt{\lambda} r} r^\gamma - 2 \lambda \gamma
  e^{-\sqrt{\lambda} r} r^{\gamma - 1} + \gamma (\gamma -1)
  e^{-\sqrt{\lambda} r} r^{\gamma -2}. 
\end{align*}
Plugging \eqref{eqn:asymp} into \eqref{eqn:sol}, we see
\begin{eqnarray*} [ (-\lambda + \lambda) + \frac{(2 \sqrt{\lambda}
    \gamma + \sqrt{\lambda} (d-1))}{r} + O(r^{-2})] = 0.
\end{eqnarray*}
Hence, $\gamma = -(d-1)/2$ and the leading order behavior is
\begin{eqnarray}
  \label{spec:eqasym}
  r^{-\frac{d-1}{2}} e^{-\sqrt{\lambda} r}.
\end{eqnarray}
Therefore, as $r\to \infty$, 
\begin{equation}
  R(r) \approx \begin{cases}
    R_\star\exp{{-\sqrt{\lambda} r}} & d=1,\\
    R_\star \frac{1}{r}\exp{{-\sqrt{\lambda} r}} & d=3.
  \end{cases}
\end{equation}
From this, we develop the Robin boundary condition,
\[
\lim_{r\to\infty}\frac{R(r)}{R'(r)}\to \begin{cases}
  -\frac{1}{\sqrt\lambda} & d=1,\\
  -\frac{r}{1+ \sqrt\lambda r} & d=3,
\end{cases}
\]
which we formulate as
\begin{subequations}
  \begin{equation}
    \label{eq:bc_R_1d}
    R(r_{\max}) + \frac{1}{\sqrt\lambda}R'(r_{\max})= 0\quad \textrm{for $d=1$}
  \end{equation}
  and
  \begin{equation}
    \label{eq:bc_R_3d}
    R(r_{\max}) + \frac{r_{\max}}{1+ \sqrt{\lambda}r_{\max}}R'(r_{\max})= 0\quad \textrm{for $d=3$}
  \end{equation}
\end{subequations}
assuming we have taken $\rmax$ sufficiently large.  For our
computations, aside from noting that the solver algorithm ends without
errors, we have two {\it a postiori} checks available.  The first is to
verify that we have, in fact, computed the ground state.  Plotting the
computed $R$ on both a linear and a log scale in Figure
\ref{fig:groundstate1_3d_cubic} we verify that $R$ is a hump shaped
monotonically decaying function.  It also has the anticipated $r^{-1} e^{-r}$ decay rate.

\begin{figure}
  \centering

  \subfigure[3d Cubic NLS]{
    \includegraphics[width=2.5in]{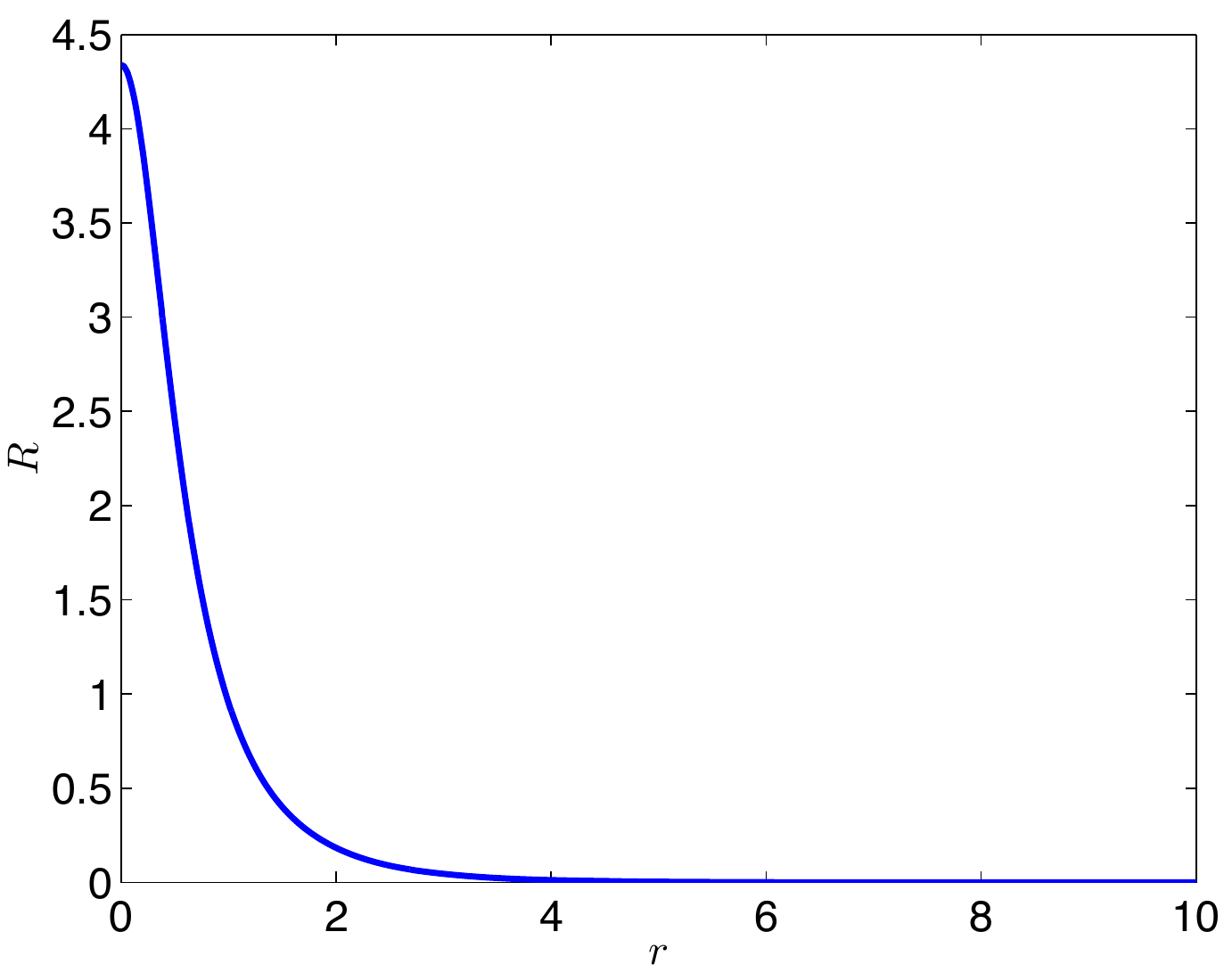}
  }\subfigure[3d Cubic NLS Decay Rate]{
    \includegraphics[width=2.5in]{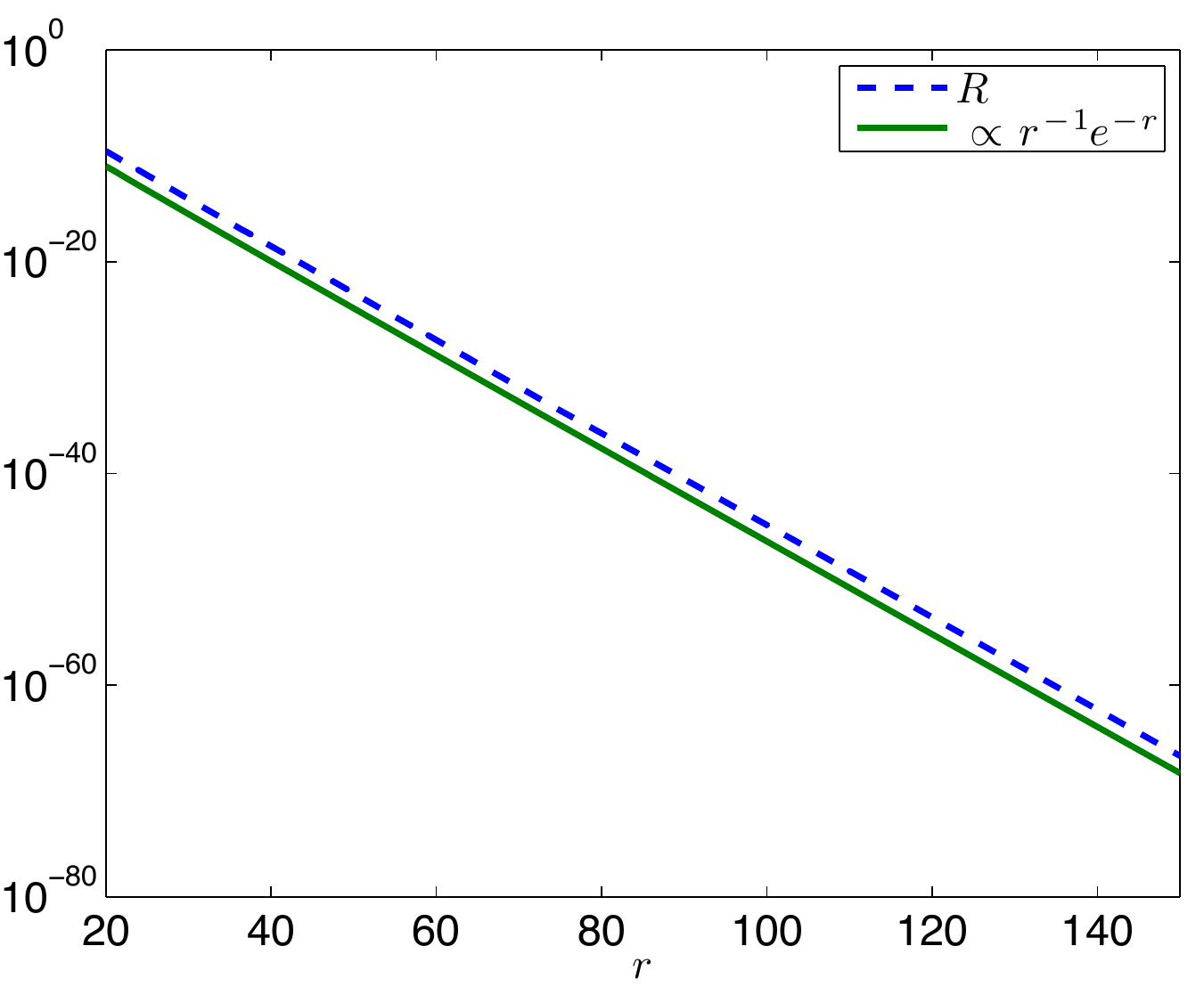}
  }

  \caption{The numerically computed ground state for the different
    problems.  Computed on the indicated domain, with a tolerance of
    $10^{-13}$ while assessing the indexes of the operators restricted
    to even functions for $d=1$ and the zeroth harmonic for $d=3$.
    The computed solitons are monotonic and decay at the anticipated
    rate.}
  \label{fig:groundstate1_3d_cubic}
\end{figure}

The second thing that can be checked is that the numerically computed
$R$ satisfies, asymptotically, the artificial boundary condition
\eqref{eq:bc_R_3d}.  To do this, we plot $R(r) +
\frac{r}{1+\sqrt{\lambda} r} R'(r)$ and observe that it vanishes as $r
\to \infty$.  As can be seen in Figure \ref{fig:groundstate_bc_chk},
the mismatch in the artificial boundary condition is small and
monotonically decaying in $r$.

\begin{figure}
  \begin{center}
    \centering

     \includegraphics[width=2.5in]{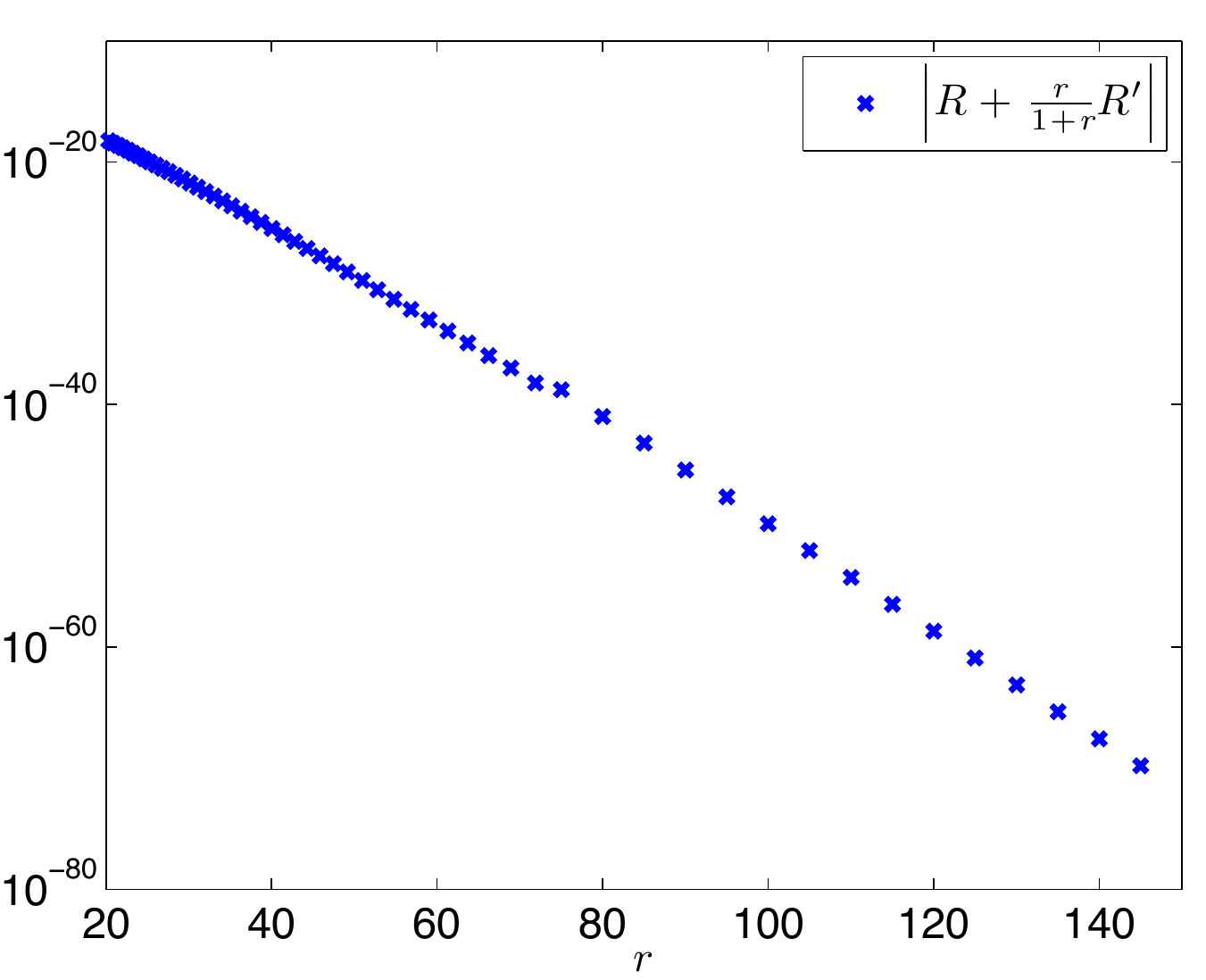}

    \caption{As $r\to \infty$, $R$ asymptotically satisfies $-\Delta
      R + \lambda R \sim 0$. The figures indicate
      that the relevant domain, which is different for different
      problems, is sufficiently large that artificial boundary
      conditions are good approximations.  From the same computation as
      in Figure \ref{fig:groundstate1_3d_cubic}. }
    \label{fig:groundstate_bc_chk}
  \end{center}
\end{figure}

To compute the off axis eigenstates $\vec{\phi}$ for the 3D cubic
problem with eigenvalue $\sigma>0$, we rely on the relationship
\[
L_- L_+ \phi_1 = -\sigma^2 \phi_1.
\]
Asymptotically, this is the free equation
\[
(-\Delta + 1)^2 \phi_1 = - \sigma^2 \phi_1.
\]
Seeking a radially symmetric solution and using similar expansion
techniques as in the case of the soliton, we will find that
\begin{equation}
\phi_1(r) \approx c_1 r^{-(d-1)/2}e^{-r\sqrt{1+ i \sigma}} + c_2 r^{-(d-1)/2}e^{-r\sqrt{1- i \sigma}} 
\end{equation}
as $r\to \infty$.  

Let
\begin{align}
  \theta &\equiv \frac{1}{2}\arctan(\sigma),\\
  \rho &\equiv (1+\sigma^2)^{1/4}.
\end{align}
In 1D, we can construct an artificial boundary condition that, as $r \to \rmax$,
\begin{gather}
  \phi_1' + \rho \cos(\theta) \phi_1 + \rho \sin(\theta) \phi_2=0,\\
  \phi_2' + \rho \cos(\theta) \phi_2 - \rho \sin(\theta) \phi_1=0.
\end{gather}
Analogously, in 3D, as $r\to \rmax$,
\begin{gather}
  \phi_1' + \rho \cos(\theta) \phi_1 +\frac{1}{r} \phi_1 + \rho \sin(\theta) \phi_2=0,\\
  \phi_2' + \rho \cos(\theta) \phi_2 +\frac{1}{r} \phi_2 - \rho
  \sin(\theta) \phi_1=0.
\end{gather}
As in the case of the soliton, we can verify that the solutions have
the appropriate shape, decay as expected, and satisfy the artificial
boundary conditions.  The shape and decay are plotted in Figure
\ref{f:unstable_3d_cubic}.  The functions rapidly reach machine
precision.  If we zoom in on the unstable modes, as in Figure \ref{f:unstable_3dcubic_zoom},
we can see the periodic structure.  However, as is suggested by these
figures, once $\phi_j$'s are sufficiently small, $\lesssim
O(10^{-12})$, this fine structure degrades.  Fortunately, this
numerical error is sufficiently small as to not impact our
computations.  The artificial boundary condition plot appears in
Figure \ref{fig:ip_bcs_3d_cubic}.

\begin{figure}
  \centering

  \subfigure[Unstable Mode of 3d Cubic NLS]{
    \includegraphics[width=2.5in]{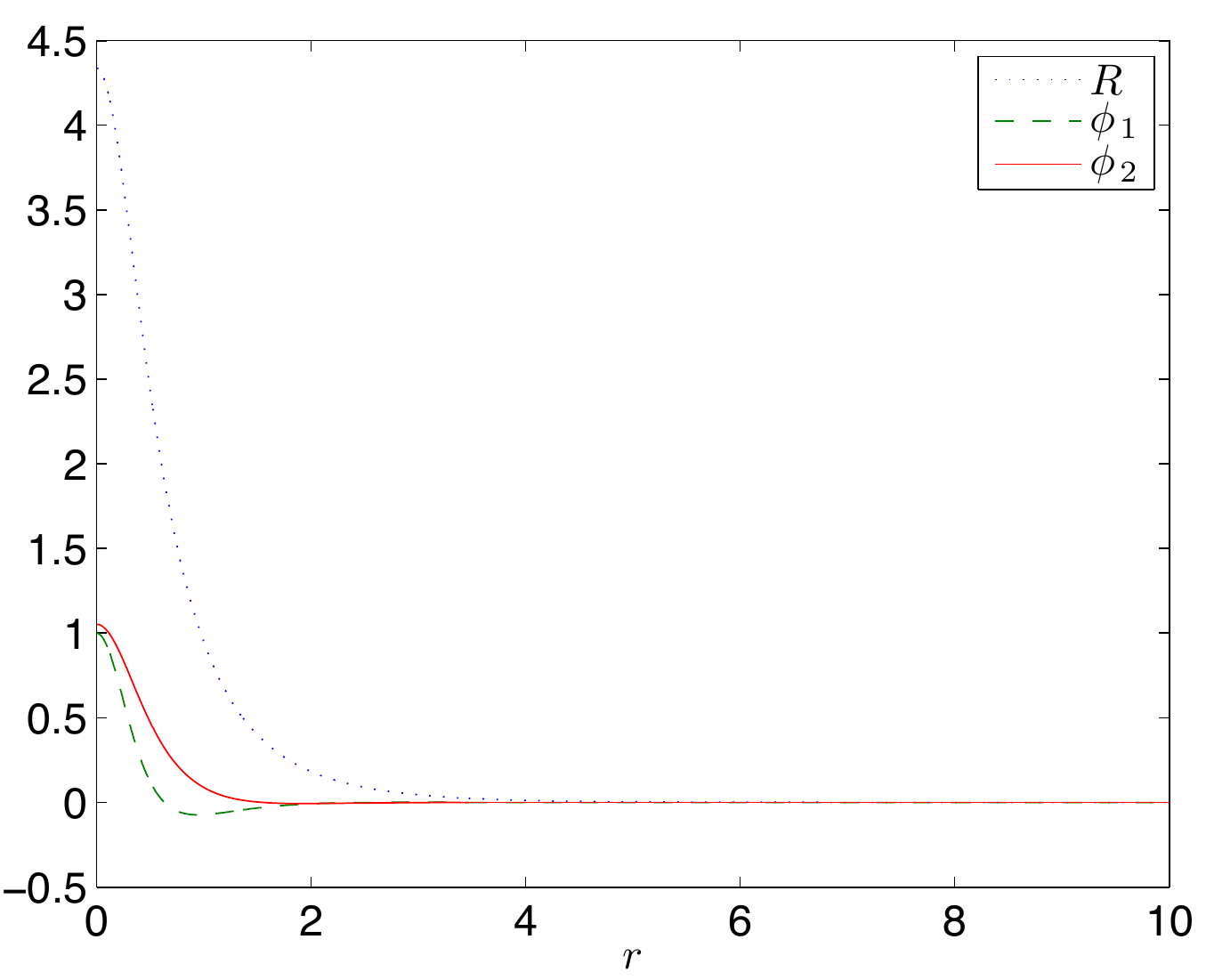}
  }\subfigure[Unstable Mode 3d Cubic NLS Decay Rate]{
    \includegraphics[width=2.5in]{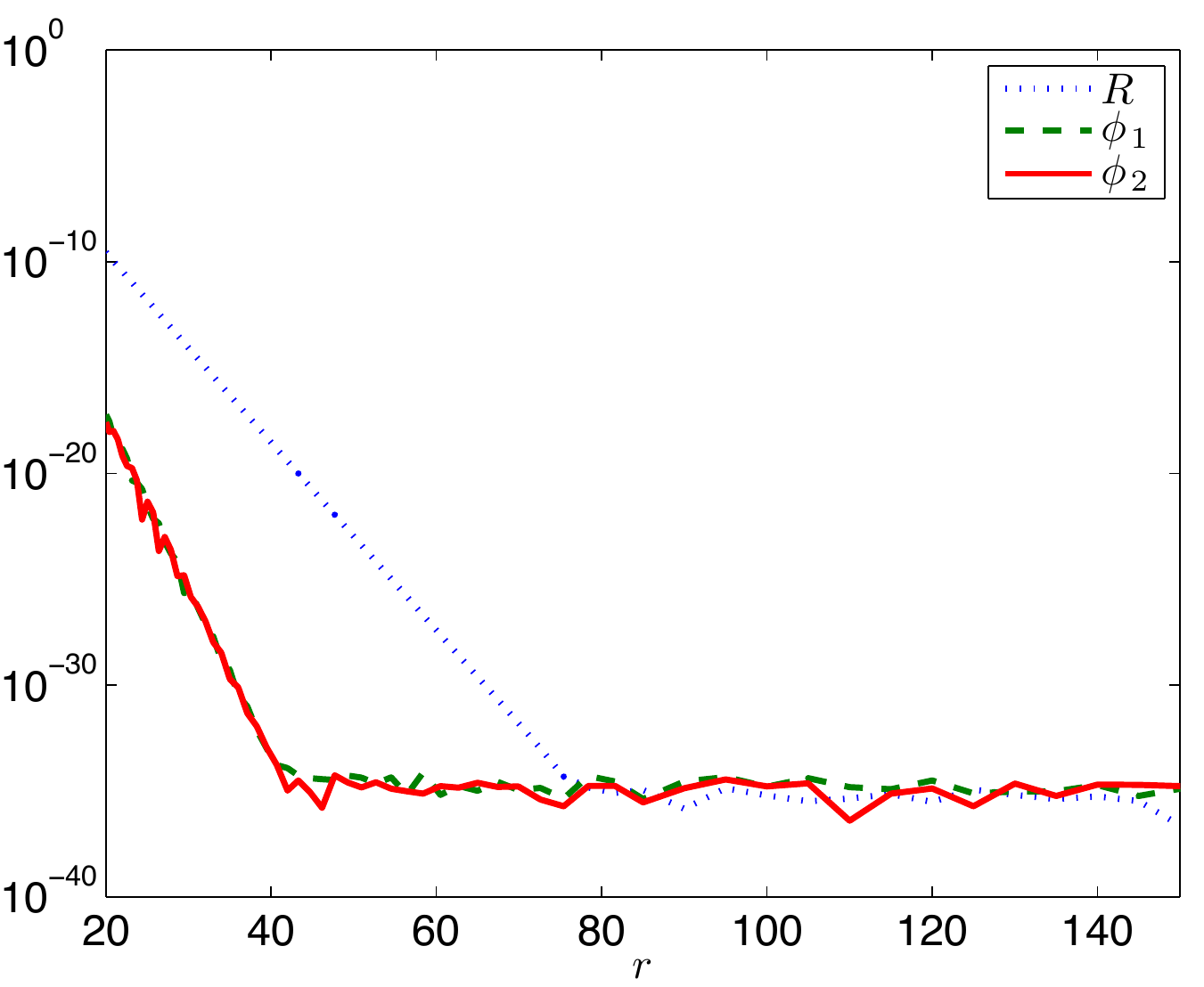}
  }

  \caption{The numerically computed ground state and the off axis
    unstable mode of 3d cubic NLS.  Computed on the indicated domain,
    with a tolerance of $10^{-12}$.}
  \label{f:unstable_3d_cubic}
\end{figure}

\begin{figure}
  \centering
  \includegraphics[width=2.5in]{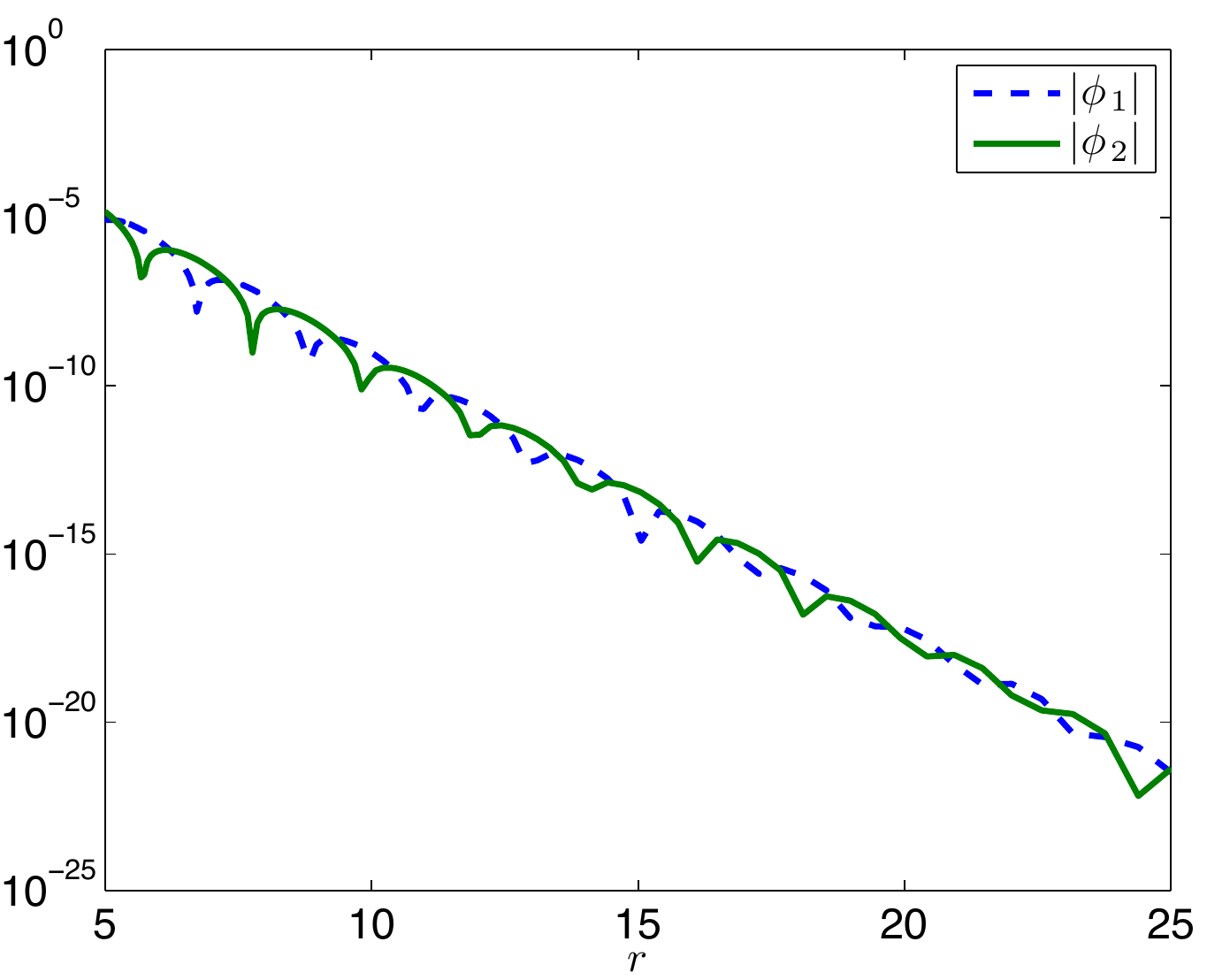}
  \caption{Zooming in on the components of the numerically computed
    unstable mode, $\vec{\phi}$, we see the periodic structure of the
    solution.}
  \label{f:unstable_3dcubic_zoom}
\end{figure}

Another place where we use artificial boundary conditions is in
solving the various boundary value problems for 
$U_j^{(k)}$ or $Z_j^{(k)}$.  Tthe rapid decay of the soliton
leads to the functions satisfying the free equation, for $d>1$, 
\[
-\Delta q +k(k+d-2)/r^{d-2+k} q = 0,
\]
where $q$ is one of these functions.  In this
region, the function must asymptotically be like
\eqref{eq:Zk_asympt}. Of course, we work with the variables
$\widetilde U_j^{(k)}$ and $\widetilde Z_j^{(k)}$.  Since these vanish
as $r\to \infty$, they asymptotically behave as
\[
q \propto r^{2-d-2k}.
\]
Thus, we have the artificial boundary conditions, valid
for all harmonics and $d>2$,
\begin{subequations}
  \begin{gather}
    \widetilde{U}_j^{(k)}(\rmax) + \frac{r}{d-2+2k}
    \frac{d}{dr}\widetilde{U}_j^{(k)}(\rmax) = 0,\\
    \widetilde{Z}_j^{(k)}(\rmax) + \frac{r}{d-2+2k}
    \frac{d}{dr}\widetilde{Z}_j^{(k)}(\rmax) =0.
  \end{gather}
\end{subequations}
Figure \ref{fig:ip_bcs_3d_cubic} shows that these artificial boundary
conditions are asymptotically satisfied.

\begin{figure}
  \begin{center}
    \subfigure[$\calL_+^{(0)}$]{
      \includegraphics[width=2.5in]{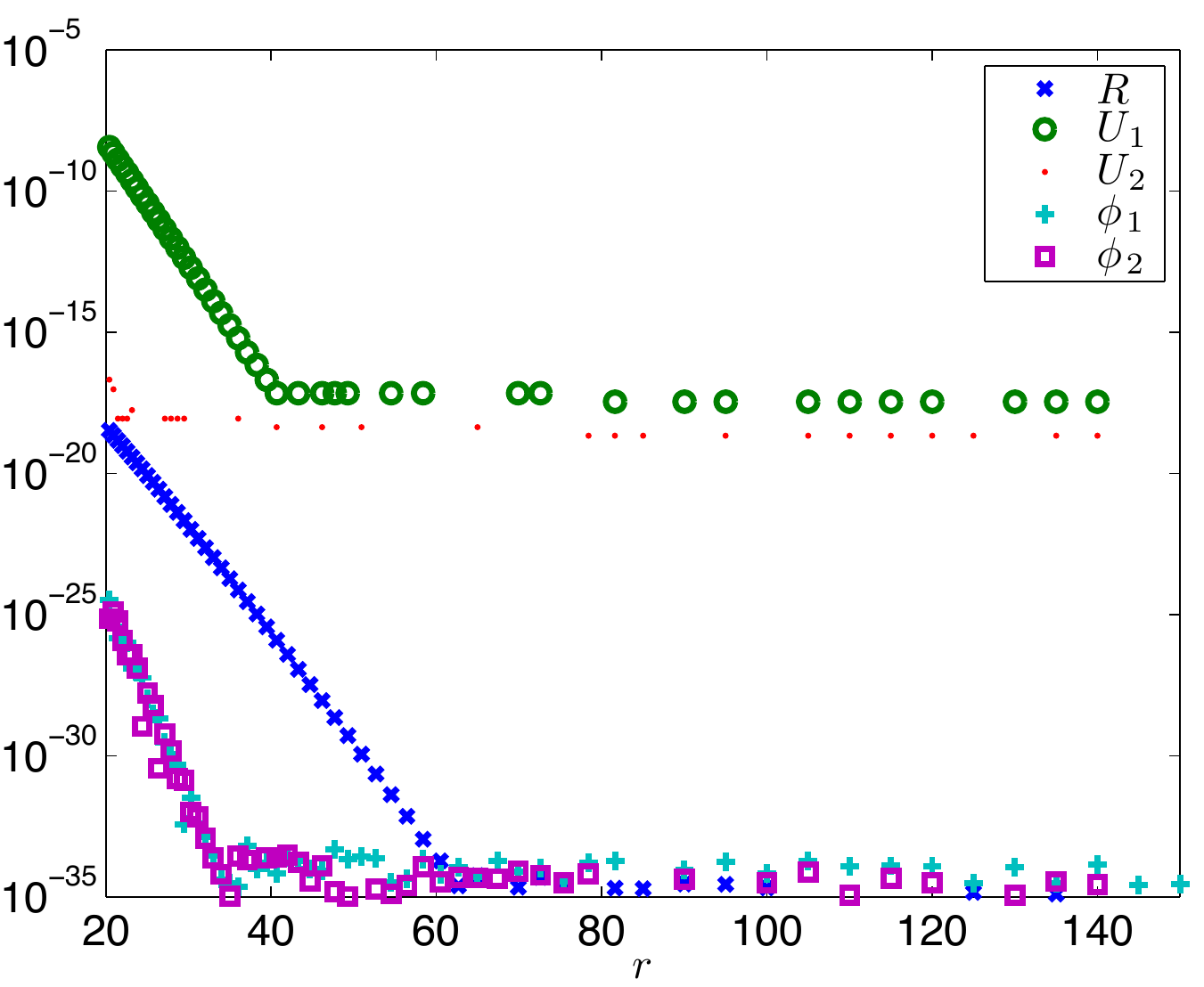}
    } \subfigure[ $\calL_-^{(0)}$]{
      \includegraphics[width=2.5in]{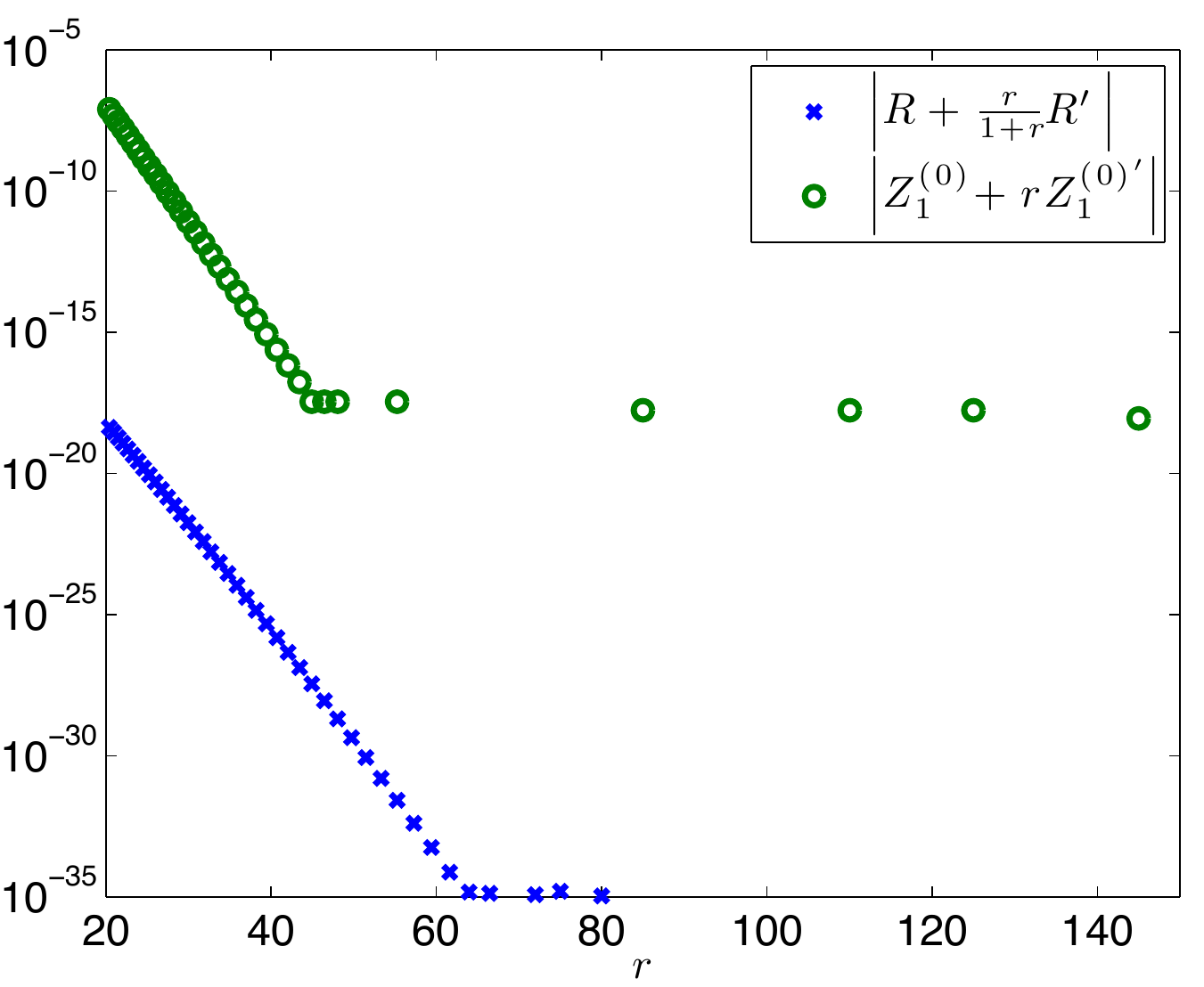}
    } \subfigure[ $\calL_+^{(1)}$]{
      \includegraphics[width=2.5in]{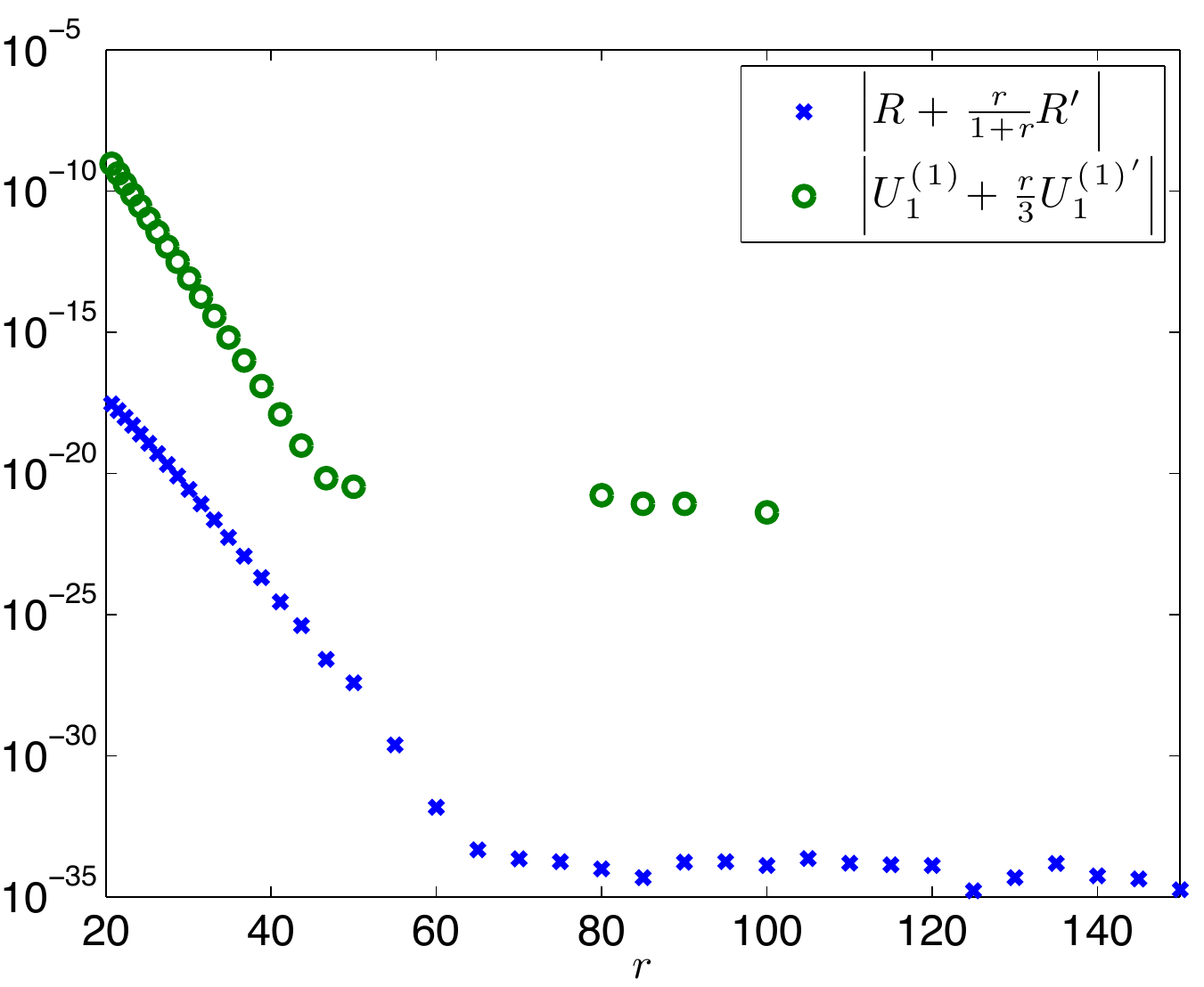}
    }
    \caption{As $r\to \infty$, our functions satisfy the
      asymptotically satisfy the free, linear equations for the
      different parts of the 3d cubic problem. The figures indicate
      that the relevant domain, which is different for different
      problems, is sufficiently large that artificial boundary
      conditions are good approximations.}
    \label{fig:ip_bcs_3d_cubic}
  \end{center}
\end{figure}

For our $d=1$ computations, we have the artificial boundary conditions 
\begin{equation}
\ddx U_j^{(e/o}(\xmax) =0, \quad \ddx Z_j^{(e/o}(\xmax) =0.
\end{equation}
We can similarly check that the inner products are asymptotically
constant and that our computed functions asymptotically satisfy the
artificial boundary conditions.  See Figures
\ref{f:ip_1d_supercrit_bc} and \ref{f:ip_1d_crit_bc}.

\begin{figure}
  \begin{center}
    \subfigure[$\calL_\pm^{(e)}$ with
    $\sigma=2.1$.]{\includegraphics[width=2.1in]{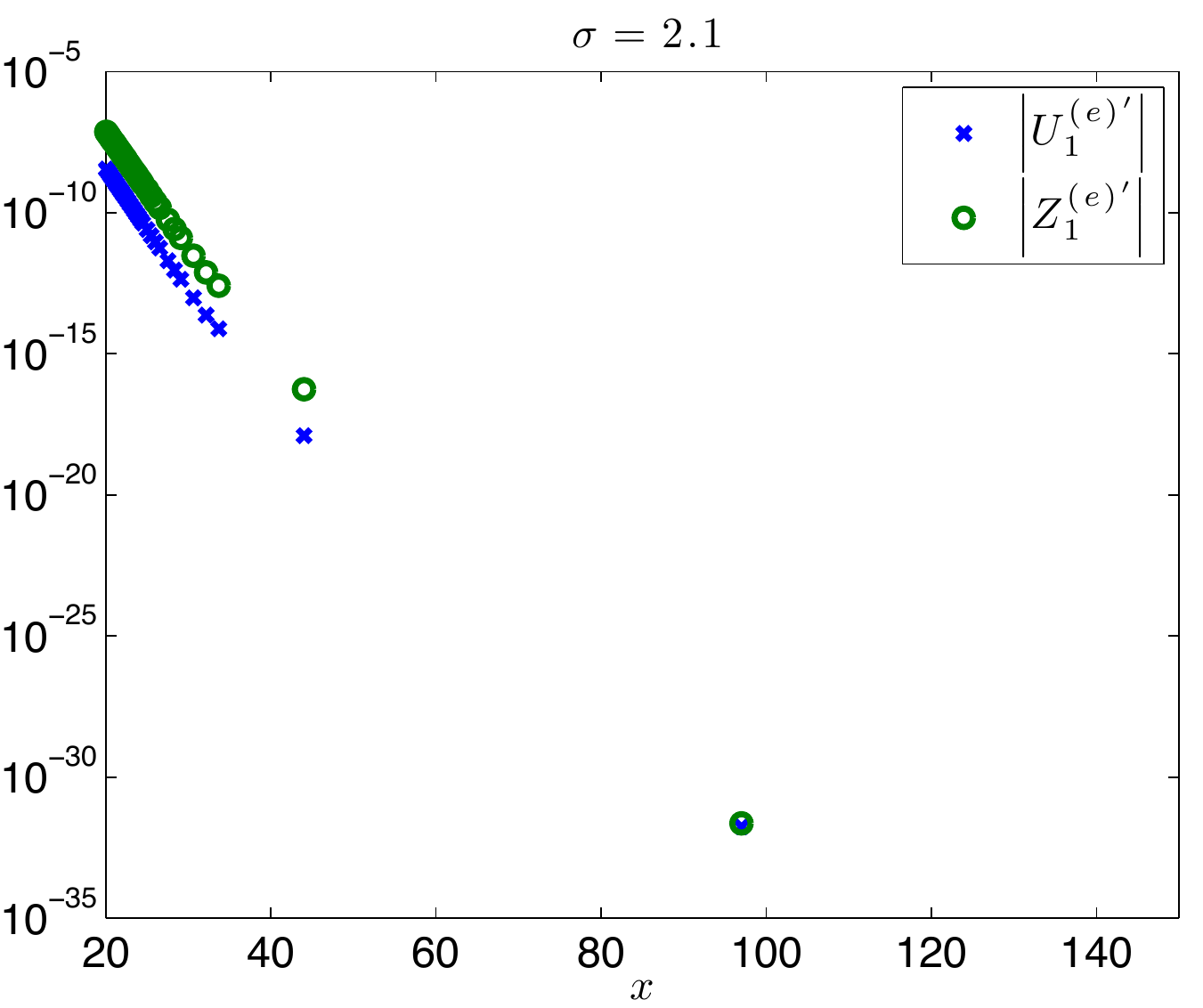}}
    \subfigure[$\calL_+^{(o)}$with
    $\sigma=2.1$.]{\includegraphics[width=2.1in]{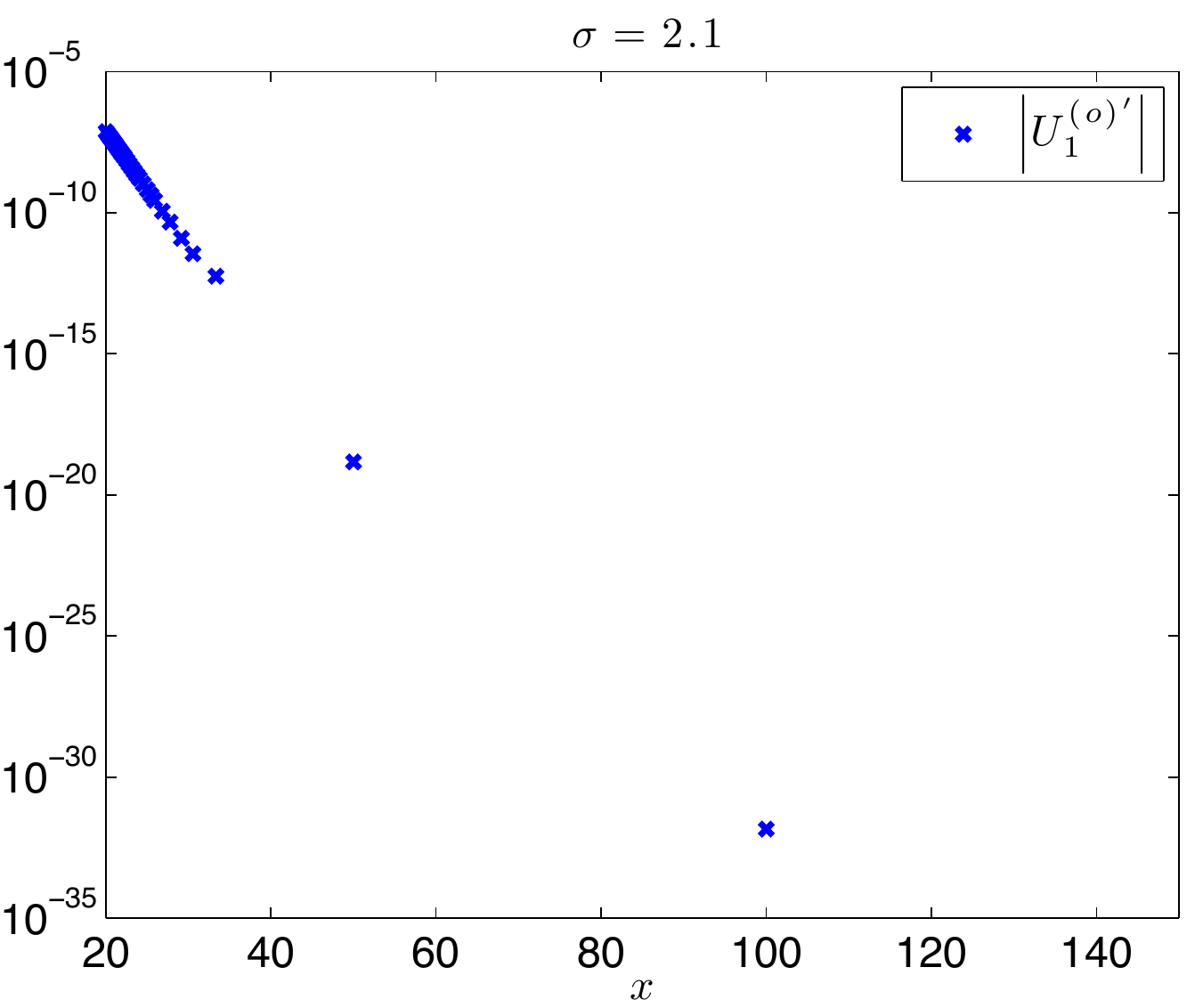}}

    \subfigure[$\calL_\pm^{(e)}$ with
    $\sigma=2.5$.]{\includegraphics[width=2.1in]{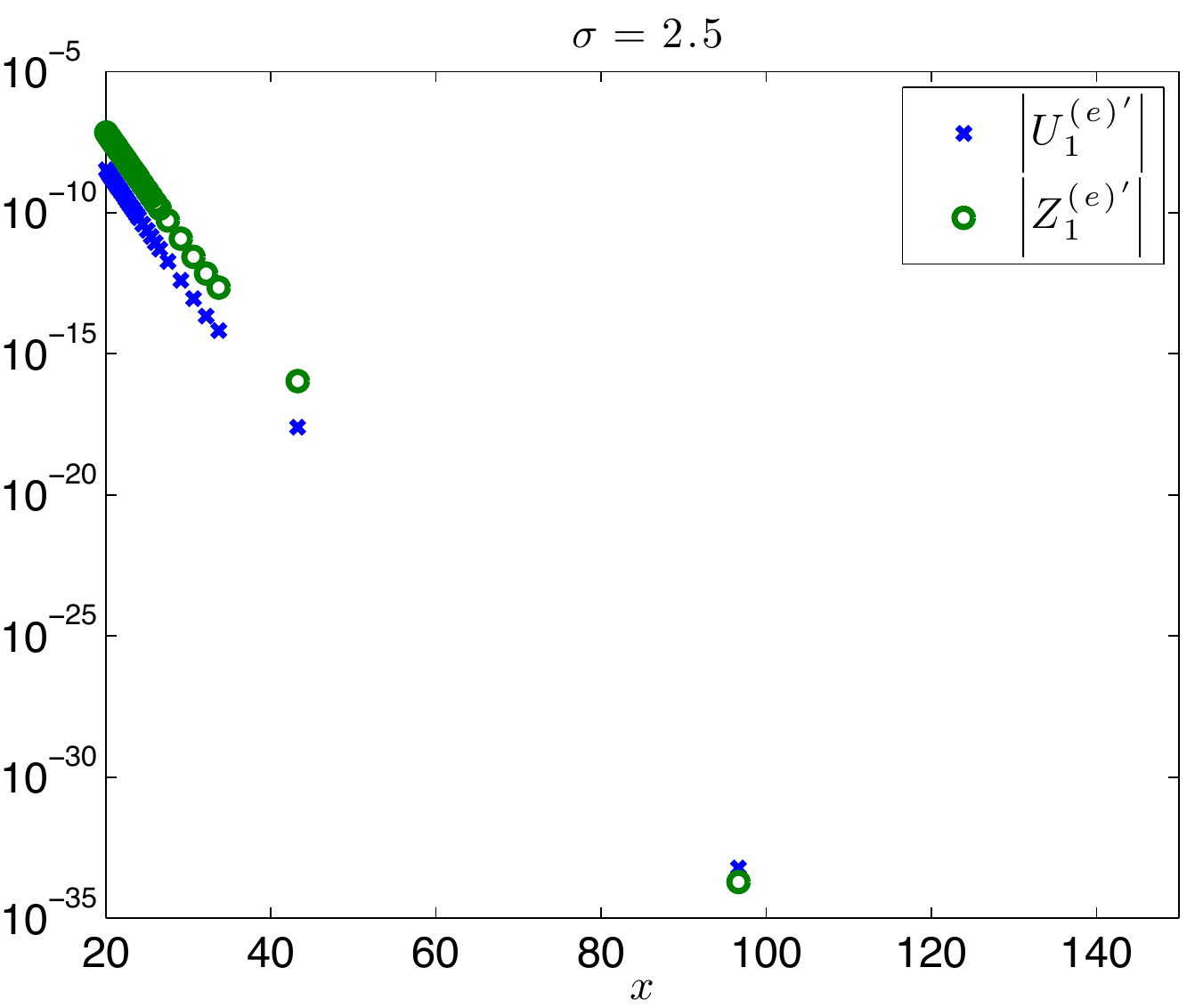}}
    \subfigure[$\calL_+^{(o)}$with
    $\sigma=2.5$.]{\includegraphics[width=2.1in]{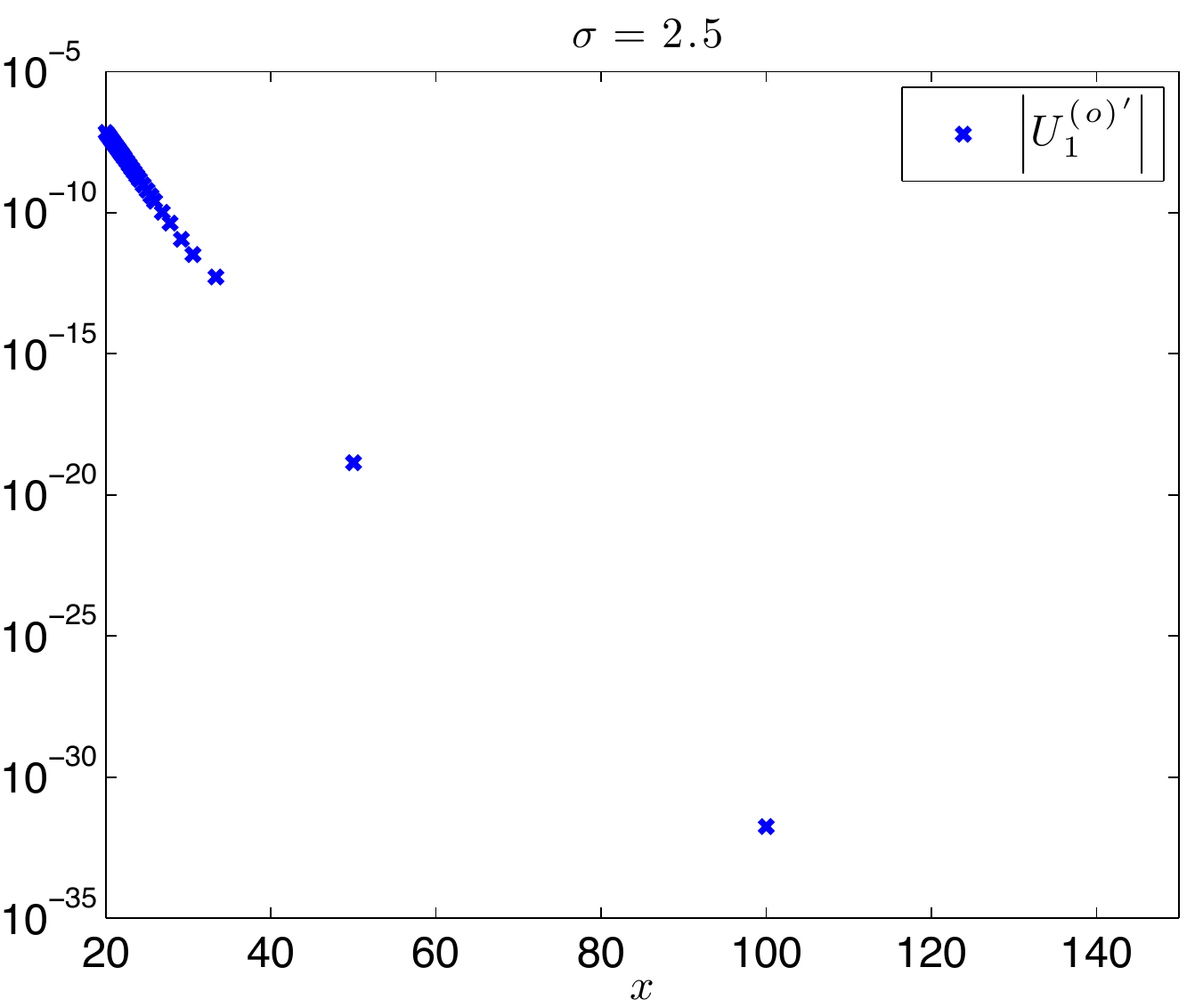}}

    \subfigure[$\calL_\pm^{(e)}$ with
    $\sigma=3.0$.]{\includegraphics[width=2.1in]{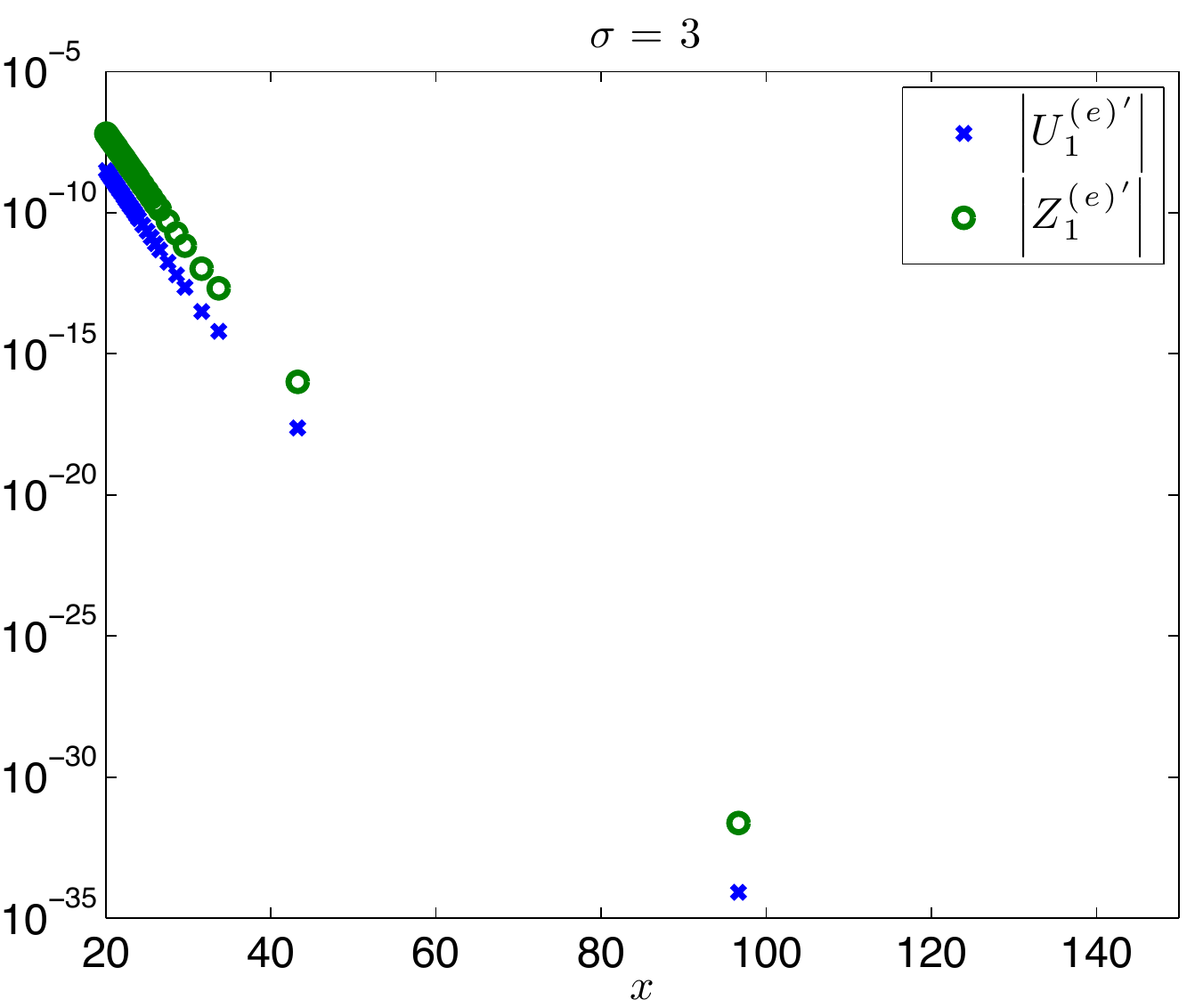}}
    \subfigure[$\calL_+^{(o)}$with
    $\sigma=3.0$.]{\includegraphics[width=2.1in]{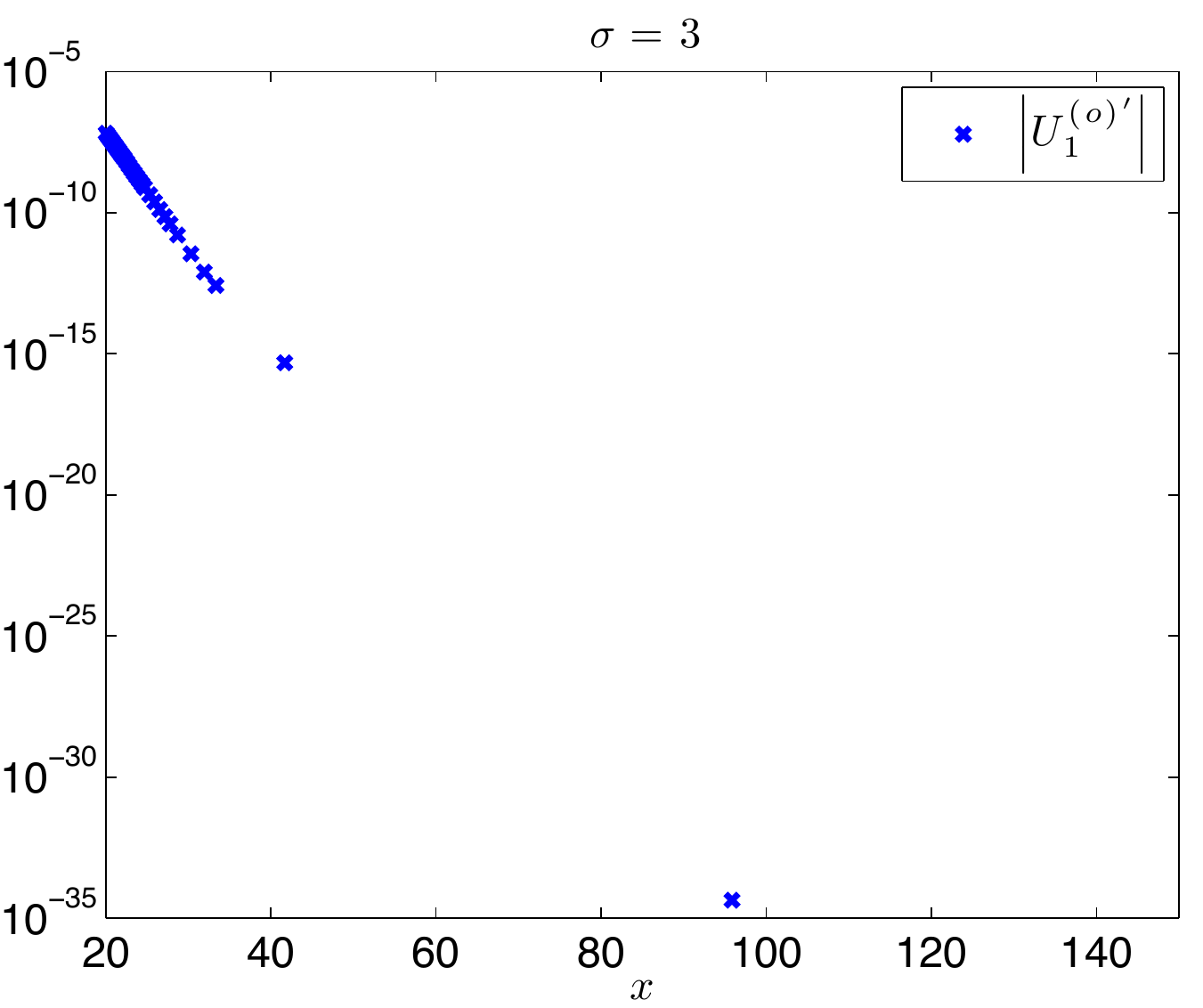}}
    \caption{As $r\to \infty$, our functions asymptotically satisfy
      the free equations for the various 1d supercritical problems.}
    \label{f:ip_1d_supercrit_bc}
  \end{center}
\end{figure}

\clearpage

\begin{figure}
  \begin{center}
    \subfigure[$\calL_\pm^{(e)}$ in the critical case with the {\it
      natural} orthogonality
    conditions.]{\includegraphics[width=2.1in]{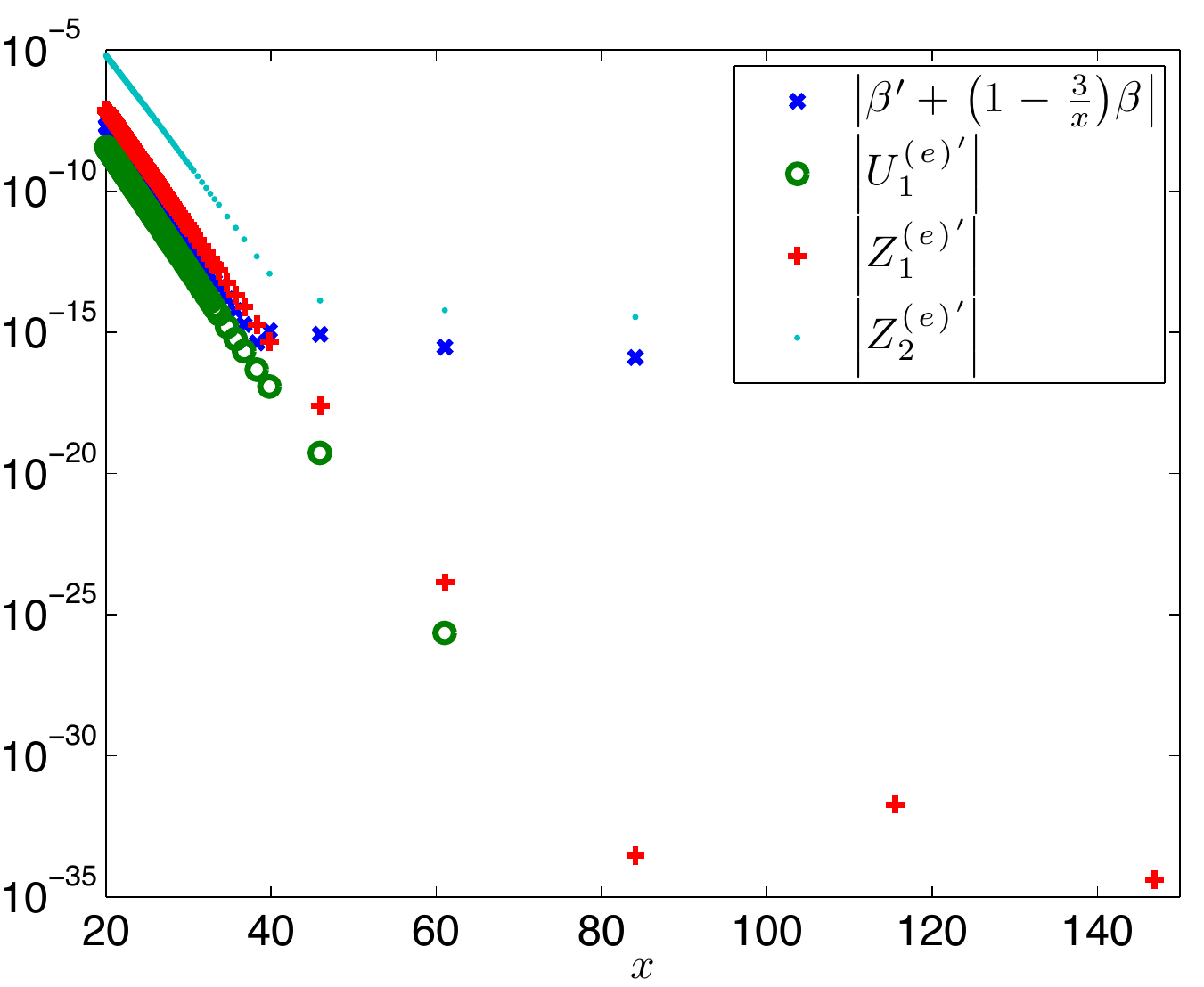}}
    \subfigure[ $\calL_+^{(o)}$ in the critical
    case]{\includegraphics[width=2.1in]{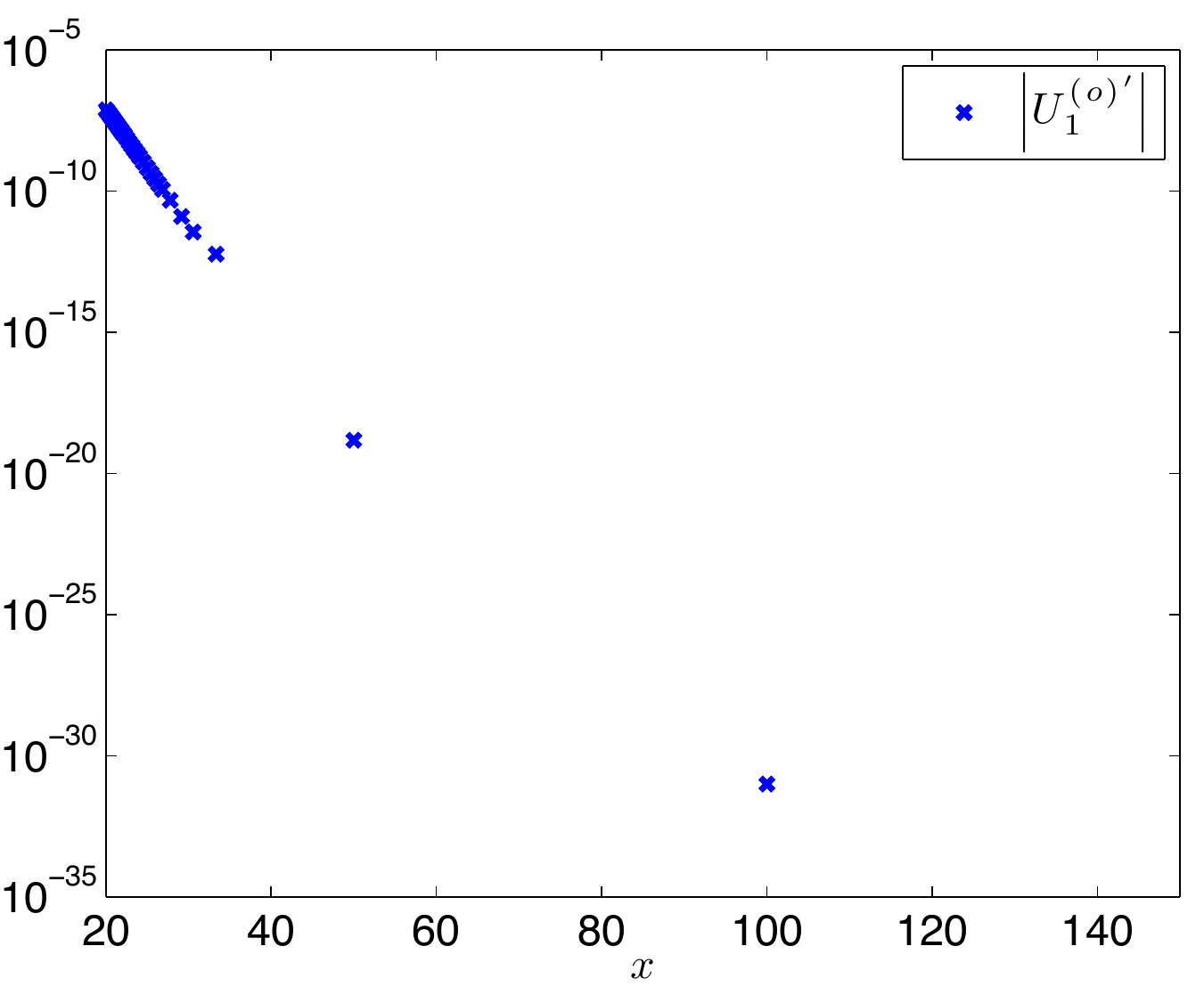}}

    \subfigure[$\calL_-^{(e)}$ in the critical case with the
    alternative orthogonality
    condition.]{\includegraphics[width=2.1in]{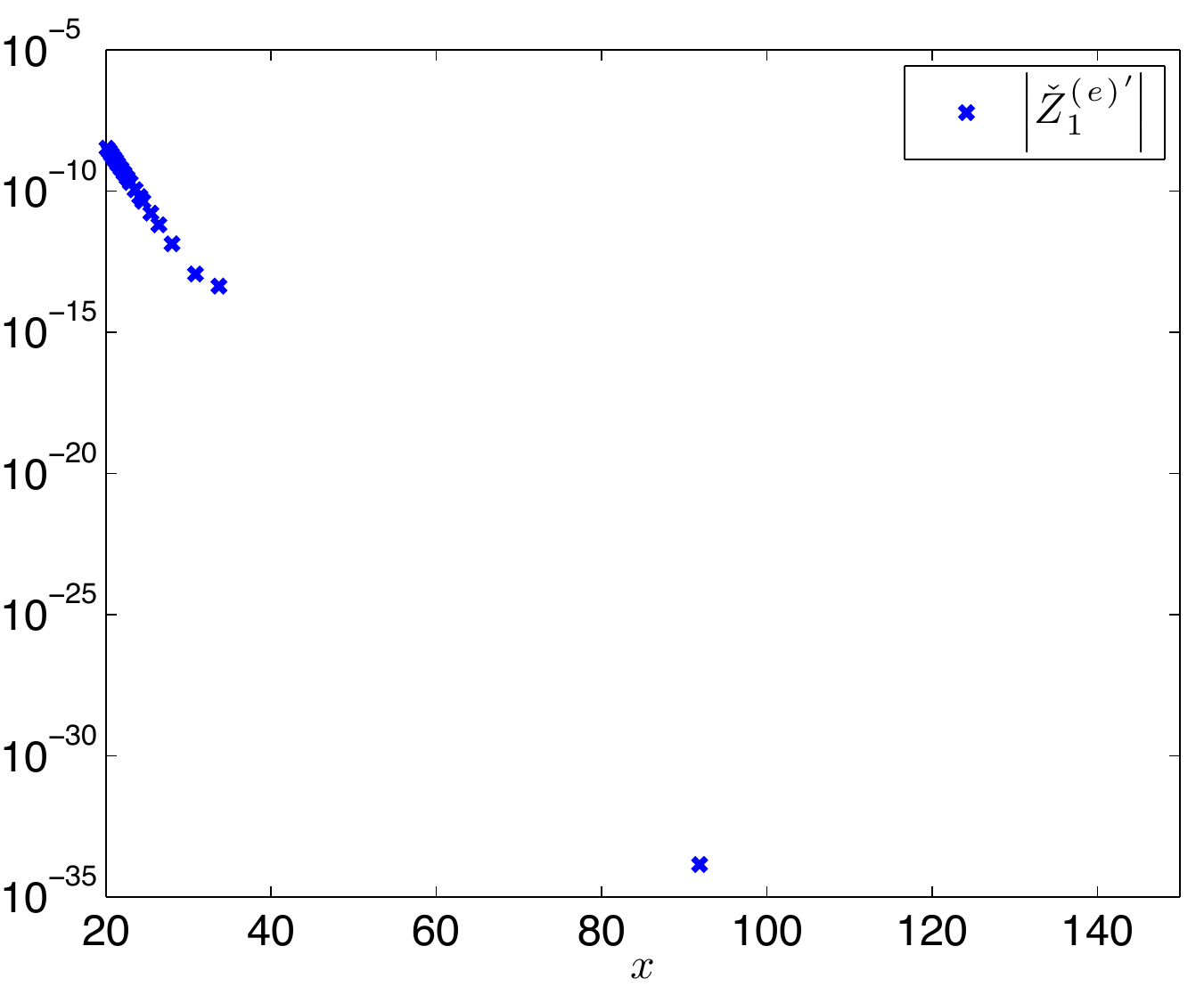}}
    \subfigure[$\calL_-^{(e)}$ in the critical case with the FMR
    orthogonality
    conditions.]{\includegraphics[width=2.1in]{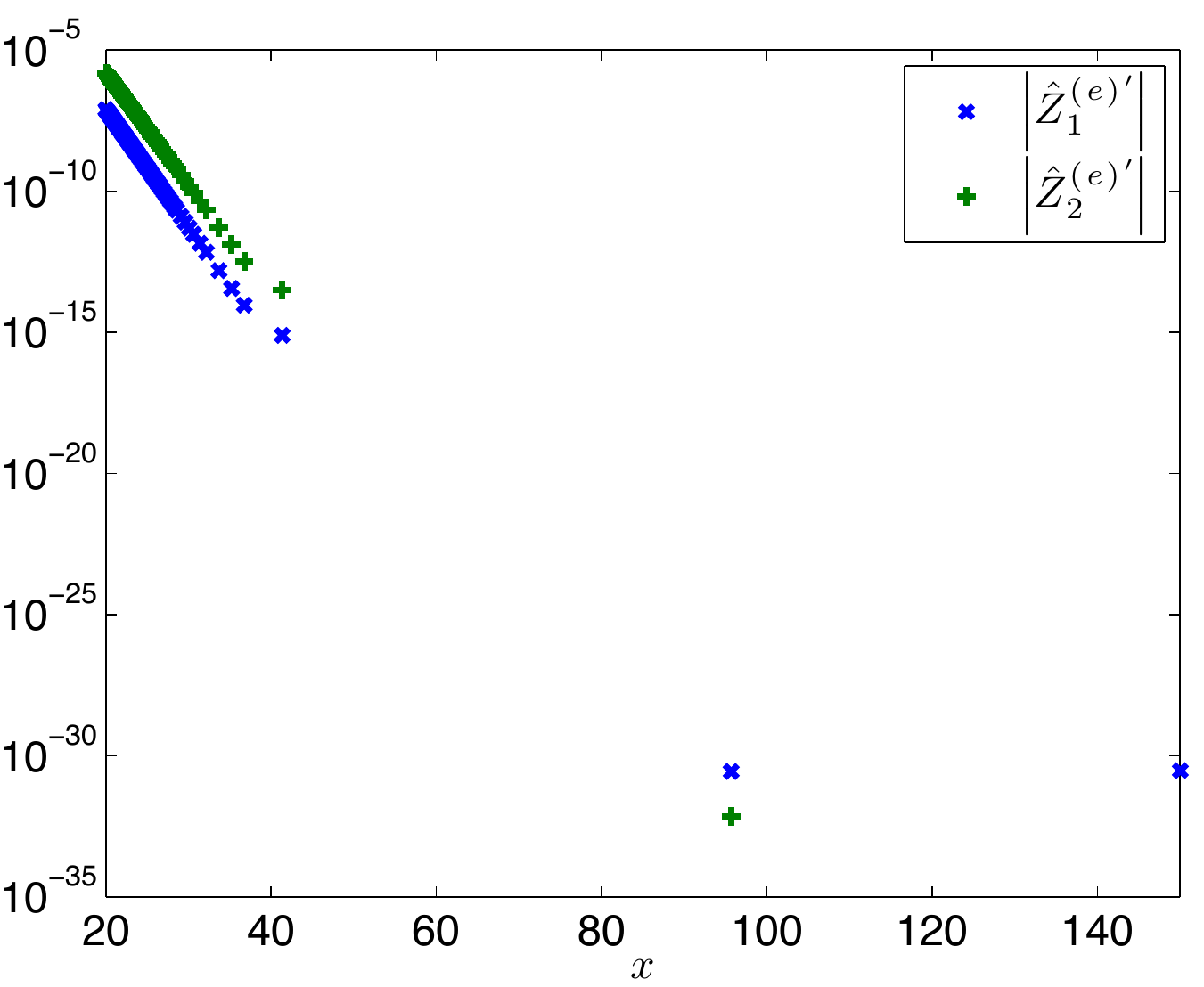}}
    \caption{As $r\to \infty$, our functions asymptotically satisfy
      the free equations for the 1d critical problem with various
      orthogonality conditions.}
  \end{center}
\label{f:ip_1d_crit_bc}
\end{figure}

The last place we make use of artificial boundary conditions is in
solving $L_+\beta = - x^2 R$ for the critical problem in Section
\ref{s:1d_crit}.  Using the same procedure as above, one will find
that $\beta \propto x^3 e^{-x}$ form which the artificial boundary
condition
\begin{equation}
\label{e:rho_abc}
\beta'(r) + \paren{1- \frac{3}{r}} \beta (r) = 0
\end{equation}
can be constructed.

\subsection{Computation of the Indexes}
In computing the indexes of operators $\widetilde\calL_\pm^{(k)}$
(from which we recover the indexes of $\calL_\pm^{(k)}$), we
simultaneously solve the mixed boundary value problem/initial value
problems
\begin{gather}
  -\Delta R + \lambda R - g(\abs{R}^2)R =0, \quad R'(0) = 0,\quad\textrm{\eqref{eq:bc_R_1d} or \eqref{eq:bc_R_3d}},\\
  \widetilde\calL_+^{(k)} \widetilde U^{(k)} = 0, \quad \widetilde
  U^{(k)}(0)=1, \quad \frac{d}{dr}\widetilde U^{(k)}(0)=0,\\
  \widetilde\calL_-^{(k)} \widetilde Z^{(k)} = 0, \quad \widetilde
  Z^{(k)}(0)=1, \quad \frac{d}{dr}\widetilde Z^{(k)}(0)=0.
\end{gather}
For the 3d cubic equation, this is the complete set of equations;
$\lambda = 1$ and $f(s) = s$.  The analogous computations are made in
$d=1$.  In addition to verifying that the soliton was adequately
computed, we can check, {\it a postiori}, that the index functions,
$U^{(k)}$ and $Z^{(k)}$ asymptotically satisfy the free equation.
This was the shown in the index figures of Sections \ref{sec:index_computations} and \ref{s:1d_idx},
where we checked the constants.

\subsection{Computation of the Inner Products}
As in the computation of the indexes, we similarly solve mixed
boundary value/initial value problem for $R$, the $U_j^{(\alpha)}$ and
$Z_j^{(\alpha)}$, and the inner products

In computing the inner products, we introduce the dependent variables
$\kappa_j^{(\alpha)}(r)$ and $\gamma_j^{(\alpha)}(r)$, where
\begin{align}
  \frac{d}{dr}\kappa_j^{(\alpha)}(r) &= \calL_+^{(\alpha)} U_{\ell_1}^{(\alpha)} U_{\ell_2}^{(\alpha)} r^{d-1}, \quad \kappa_j^{(\alpha)}(0)=0,\\
  \frac{d}{dr}\gamma_j^{(\alpha)}(r) &= \calL_-^{(\alpha)}
  Z_{\ell_1}^{(\alpha)} Z_{\ell_2}^{(\alpha)} r^{d-1}, \quad \gamma_j^{(\alpha)}(0)
  =0
\end{align}
for $\ell_1$ and $\ell_2$ the appropriate indexes. Clearly,
\begin{align*}
\lim_{r\to \infty} \kappa_j^{(\alpha)}(r) &= K_j^{(\alpha)},\\
\lim_{r\to \infty} \gamma_j ^{(\alpha)}(r) &= J_j^{(\alpha)}.
\end{align*}
We approximate the inner products by computing to $\rmax$.  As
demonstrated in Figures \ref{fig:ips_3d_cubic},
\ref{f:ip_1d_supercrit}, \ref{f:ip_1d_crit} these converge 
rapidly and are essentially constant in the region of the free
equation.  This is entirely consistent with the exponential decay of
the soliton and functions related to it, such as its derivative.

\begin{figure}
  \begin{center}
    \subfigure[Inner Products for $\calL_+^{(0)}$ functions for 3D
    Cubic]{
      \includegraphics[width=2.5in]{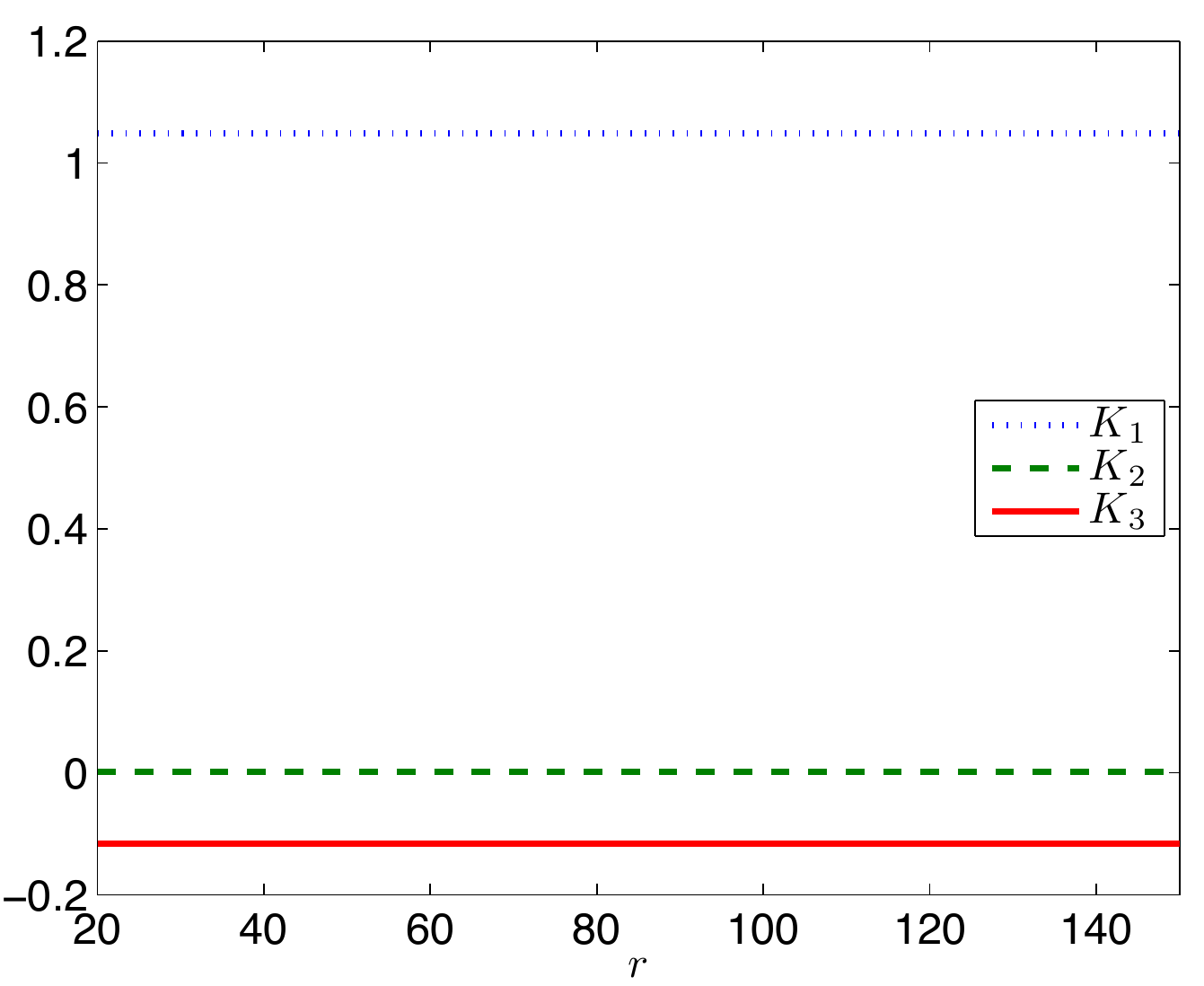}
    } \subfigure[Inner Products for $\calL_-^{(0)}$ functions for 3D
    Cubic]{
      \includegraphics[width=2.5in]{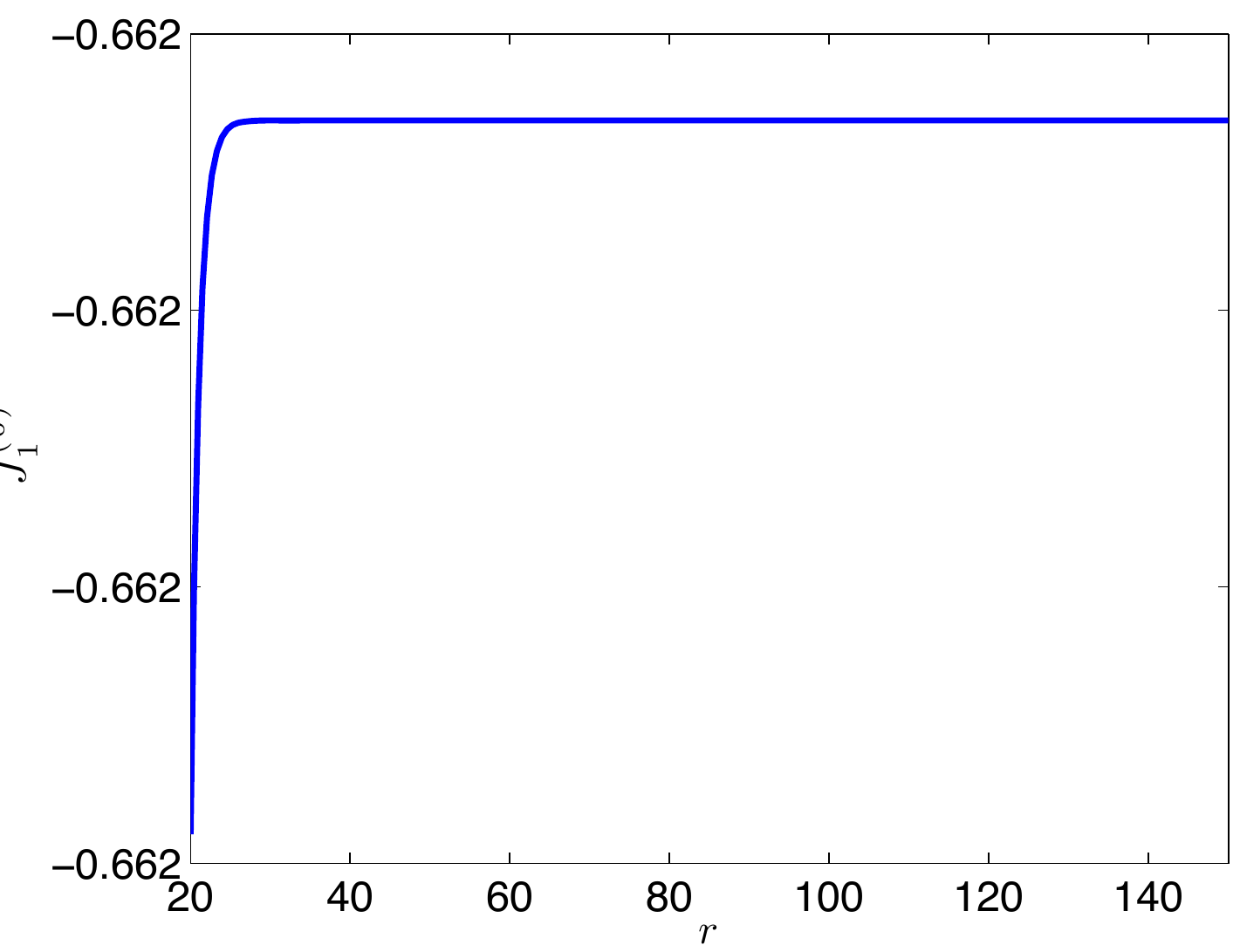}
    } \subfigure[Inner Products for $\calL_+^{(1)}$ functions for 3D
    Cubic]{
      \includegraphics[width=2.5in]{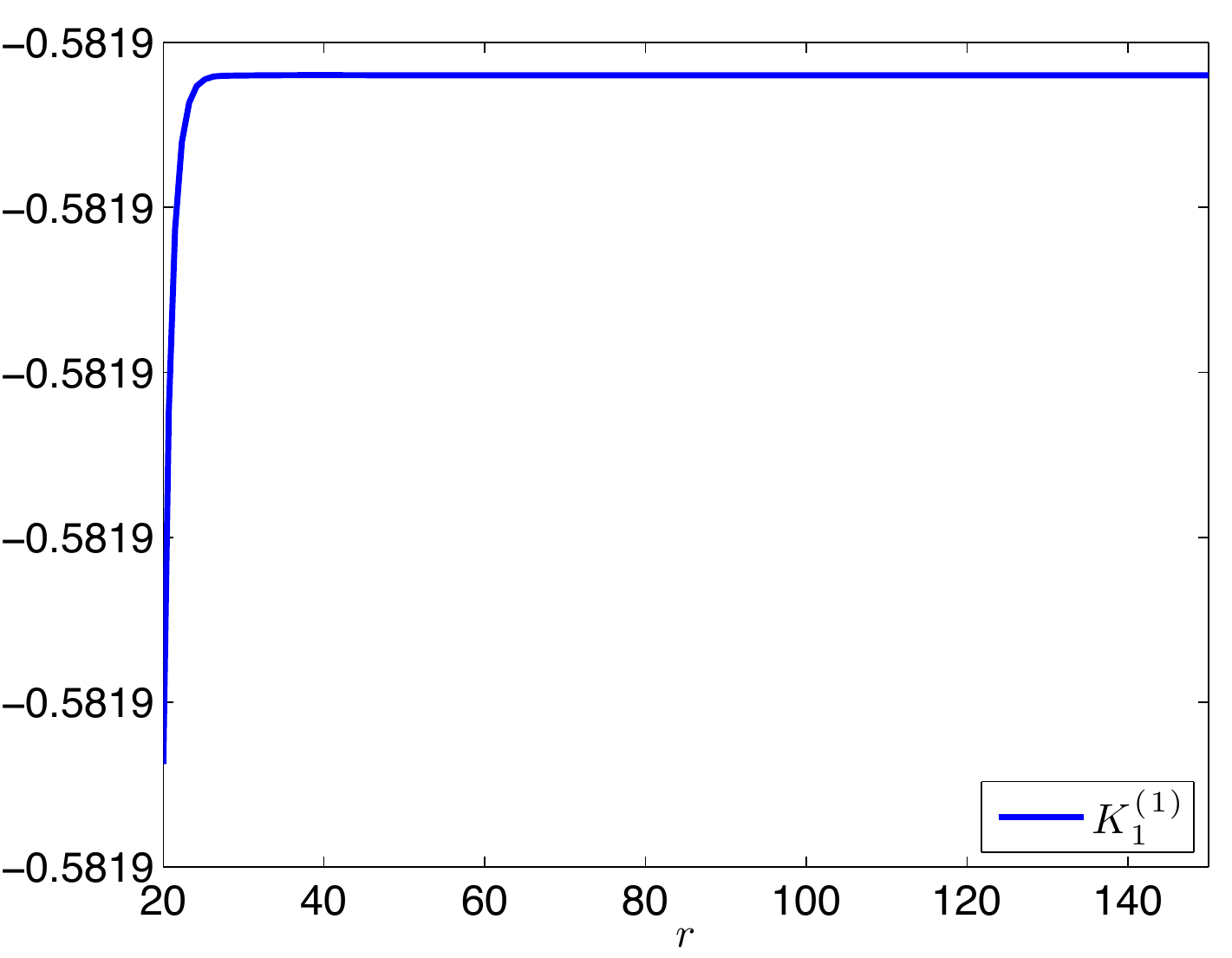}
    }
    \caption{Inner products for the 3d cubic equation.}
    \label{fig:ips_3d_cubic}
  \end{center}
\end{figure}

\begin{figure}
  \begin{center}
    \subfigure[$\calL_\pm^{(e)}$ with
    $\sigma=2.1$.]{\includegraphics[width=2.1in]{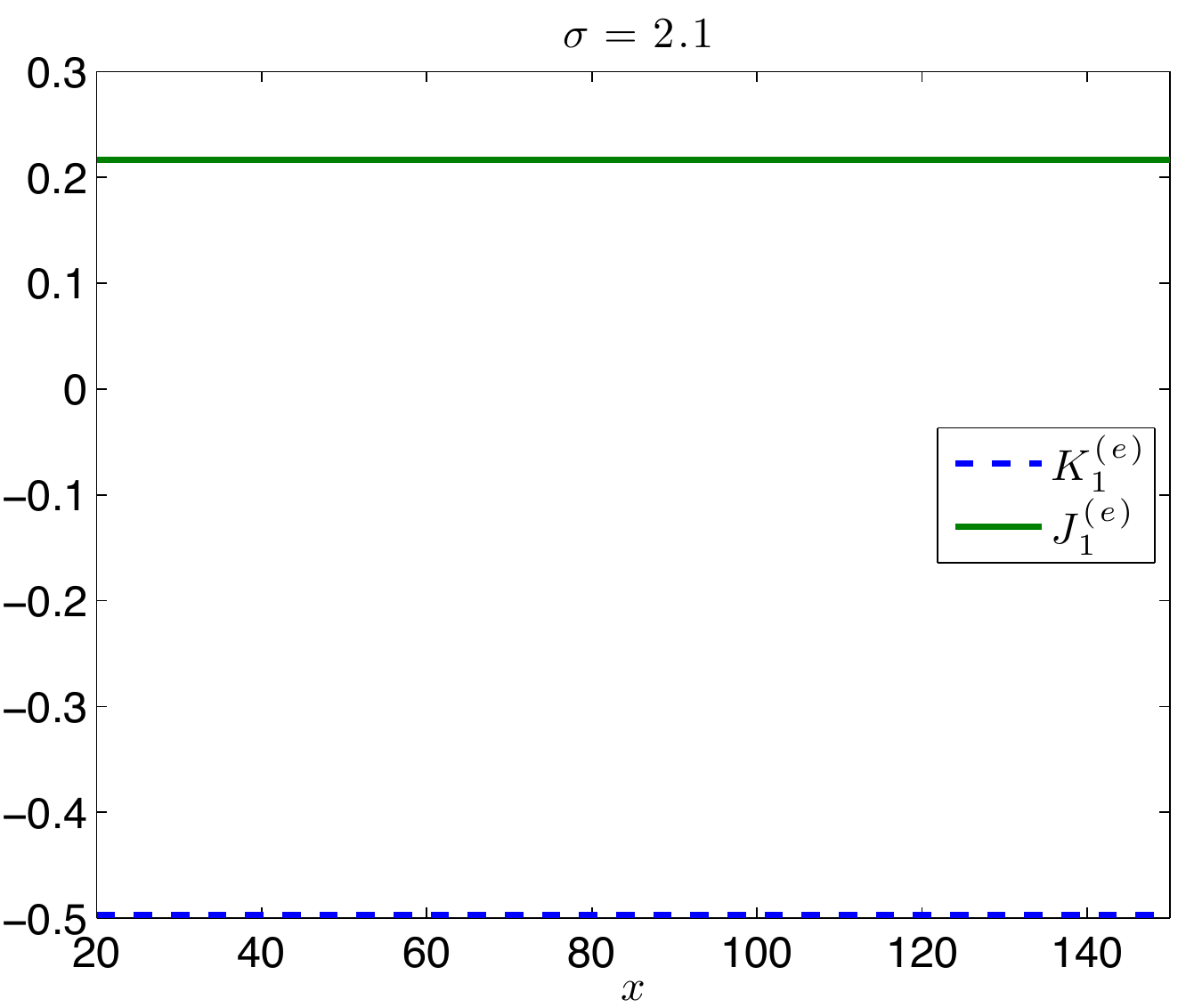}}
    \subfigure[$\calL_+^{(o)}$with
    $\sigma=2.1$.]{\includegraphics[width=2.1in]{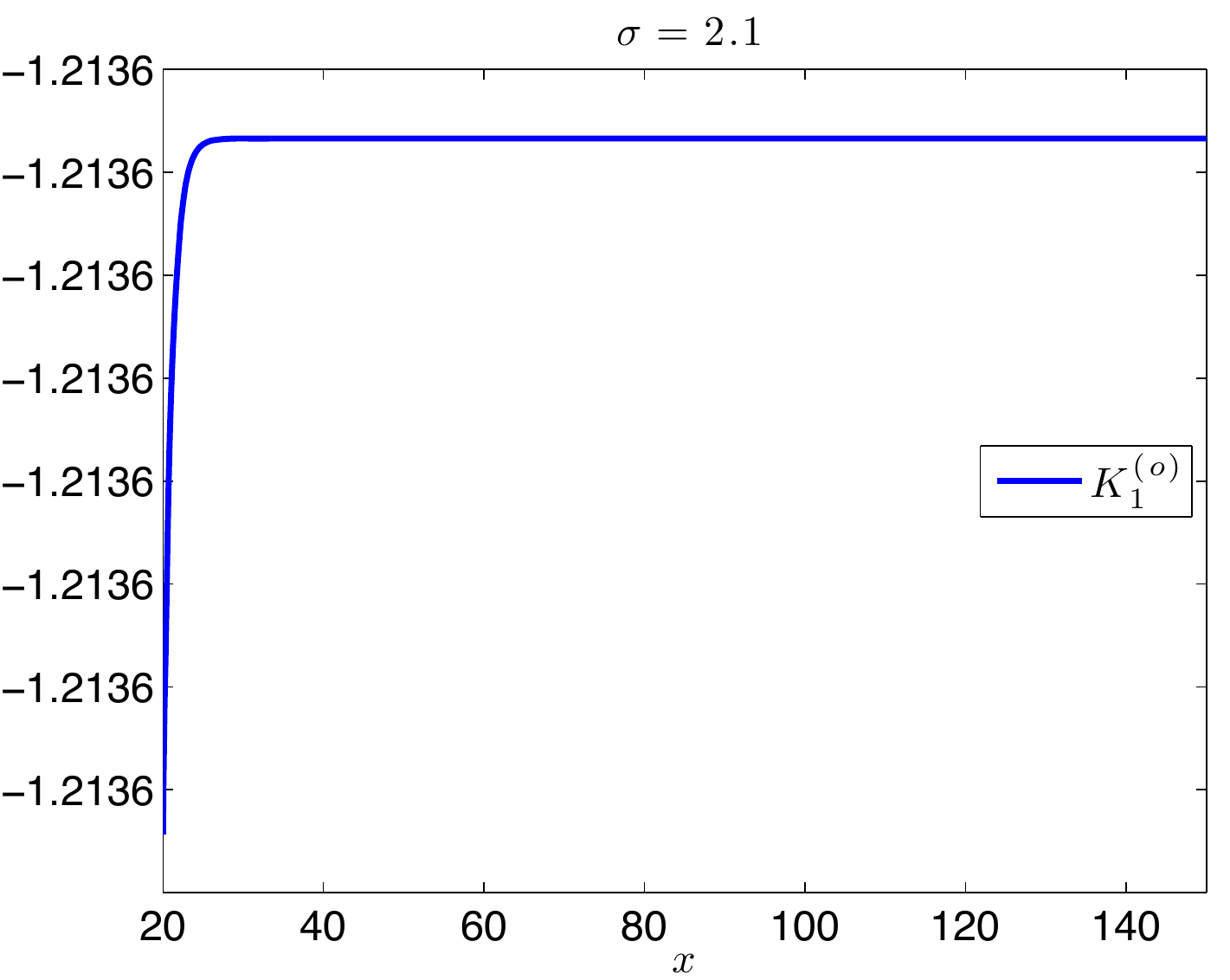}}

    \subfigure[$\calL_\pm^{(e)}$ with
    $\sigma=2.5$.]{\includegraphics[width=2.1in]{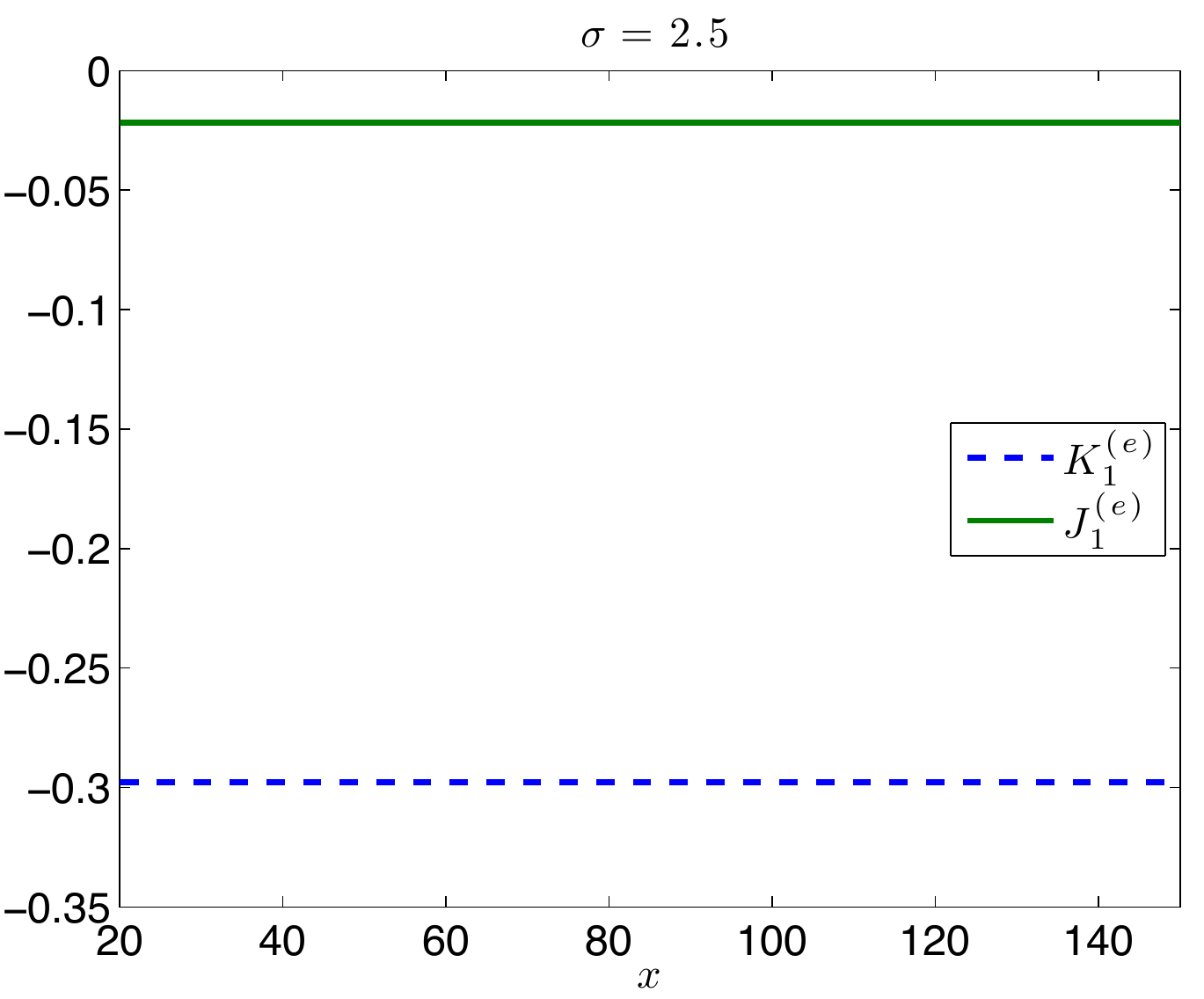}}
    \subfigure[$\calL_+^{(o)}$with
    $\sigma=2.5$.]{\includegraphics[width=2.1in]{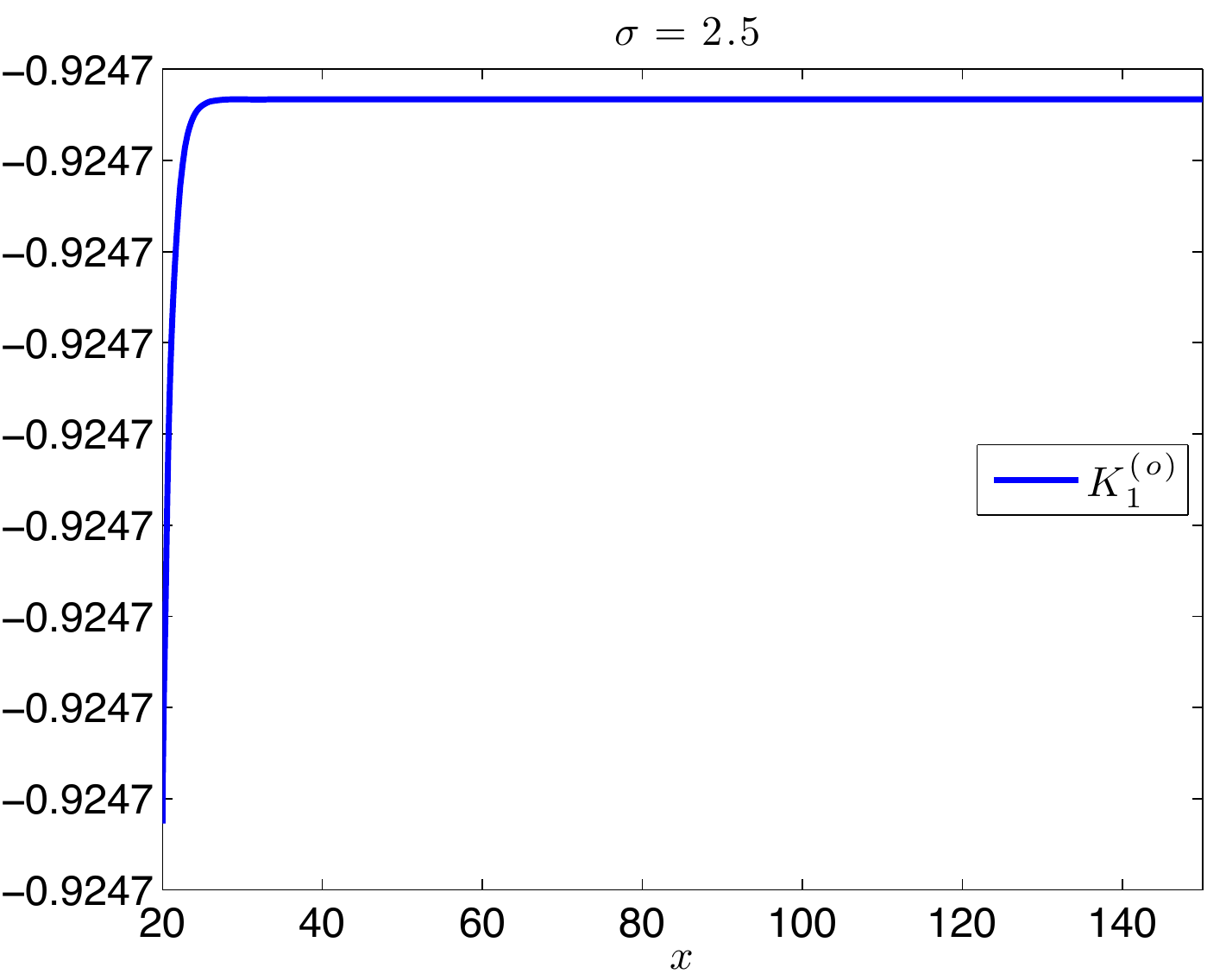}}

    \subfigure[$\calL_\pm^{(e)}$ with
    $\sigma=3.0$.]{\includegraphics[width=2.1in]{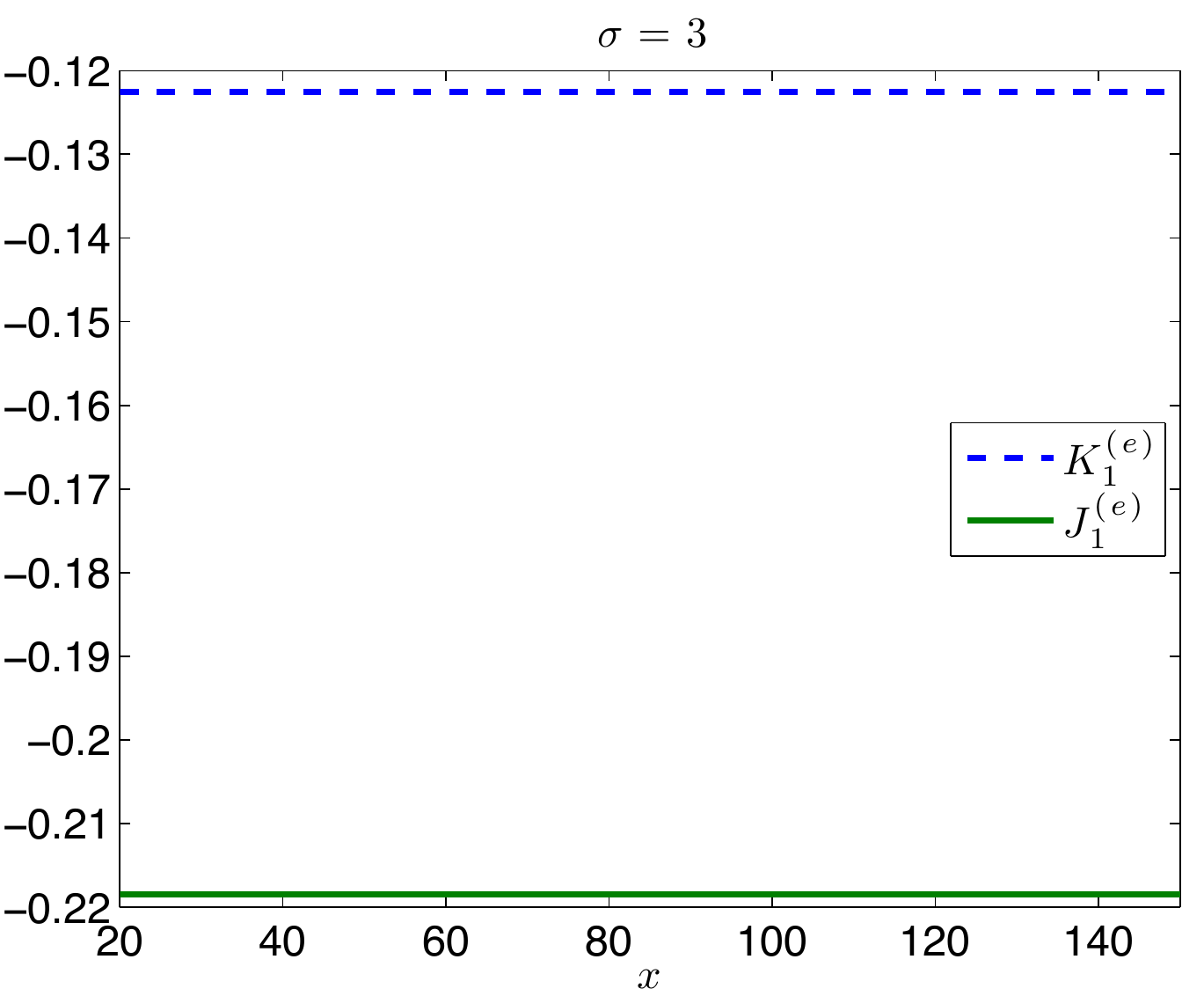}}
    \subfigure[$\calL_+^{(o)}$with
    $\sigma=3.0$.]{\includegraphics[width=2.1in]{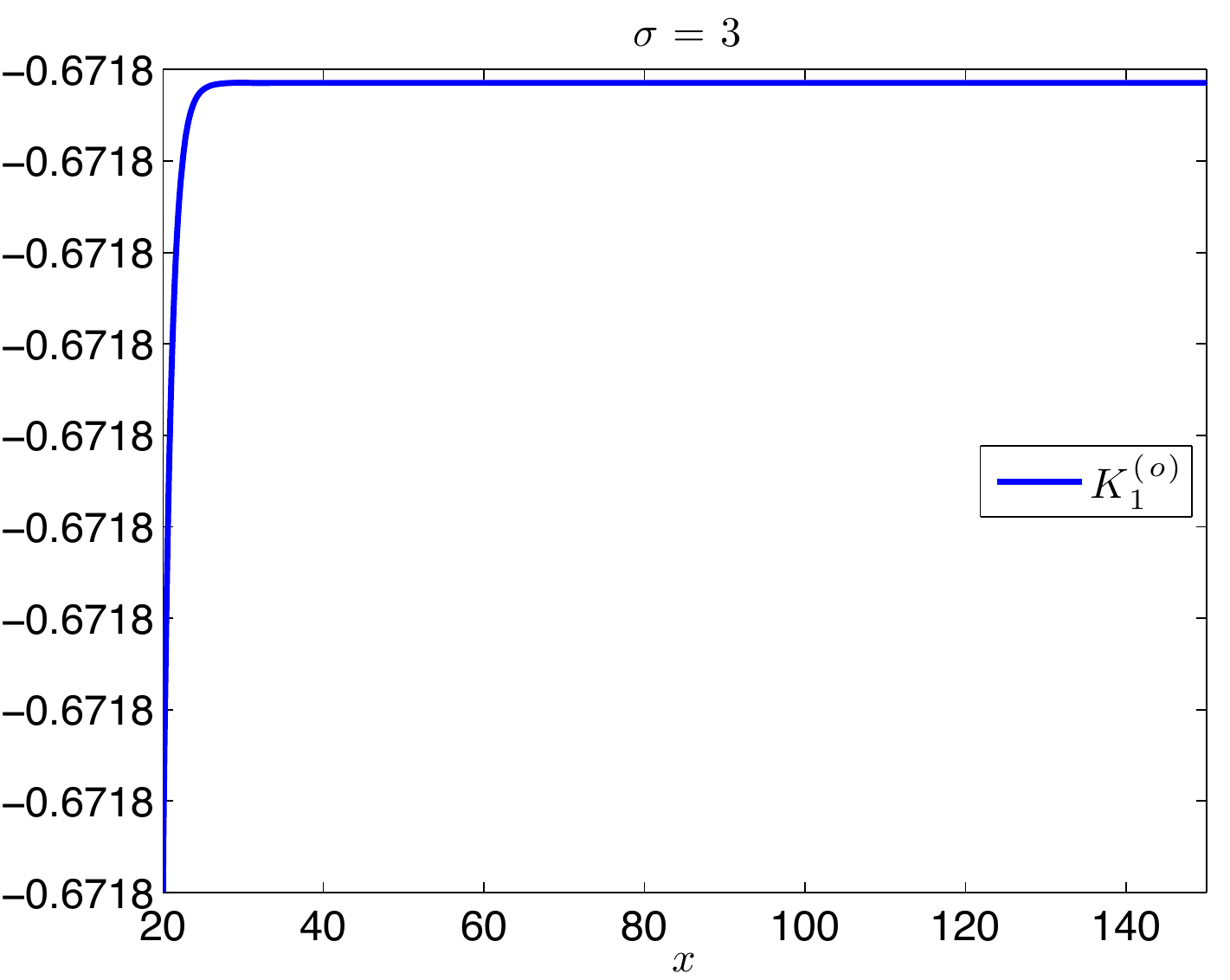}}
    \caption{Inner products for the 1d supercritical
      problems}
    \label{f:ip_1d_supercrit}
  \end{center}
\end{figure}

\clearpage

\begin{figure}
  \begin{center}
    \subfigure[$\calL_\pm^{(e)}$ in the critical case with the {\it
      natural} orthogonality
    conditions.]{\includegraphics[width=2.1in]{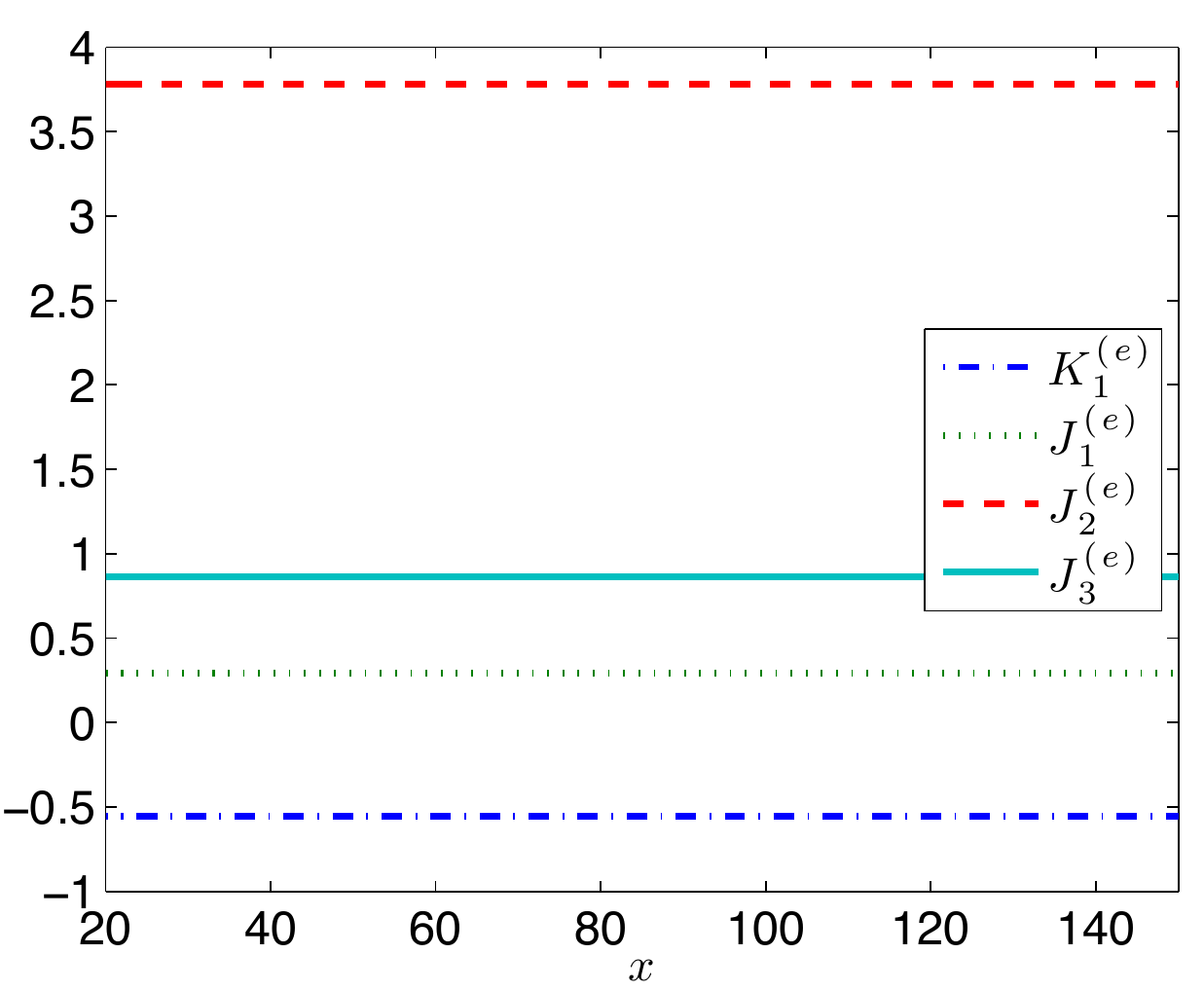}}
    \subfigure[$\calL_+^{(o)}$ in the critical
    case]{\includegraphics[width=2.1in]{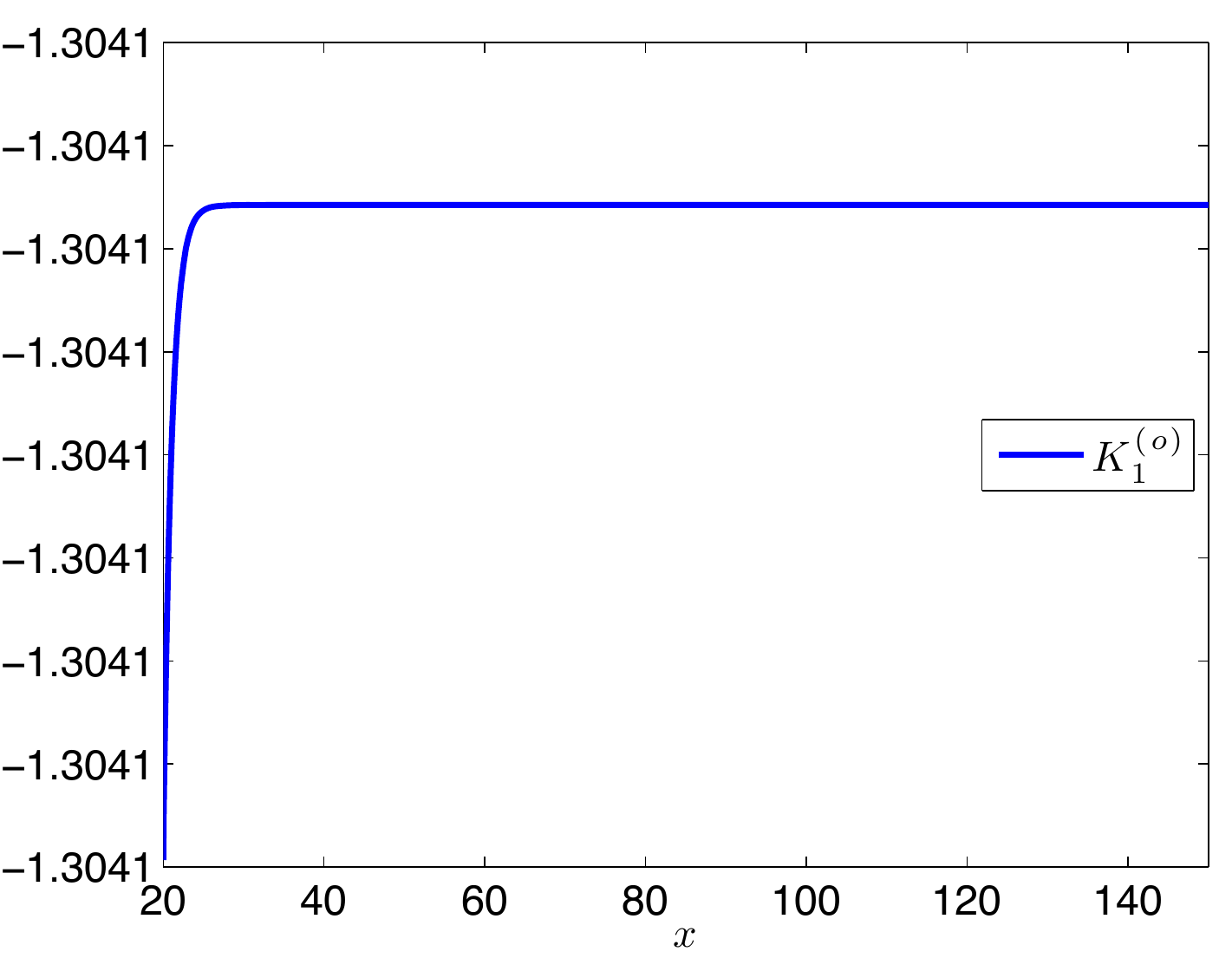}}

    \subfigure[$\calL_-^{(e)}$ in the critical case with the
    alternative orthogonality
    condition.]{\includegraphics[width=2.1in]{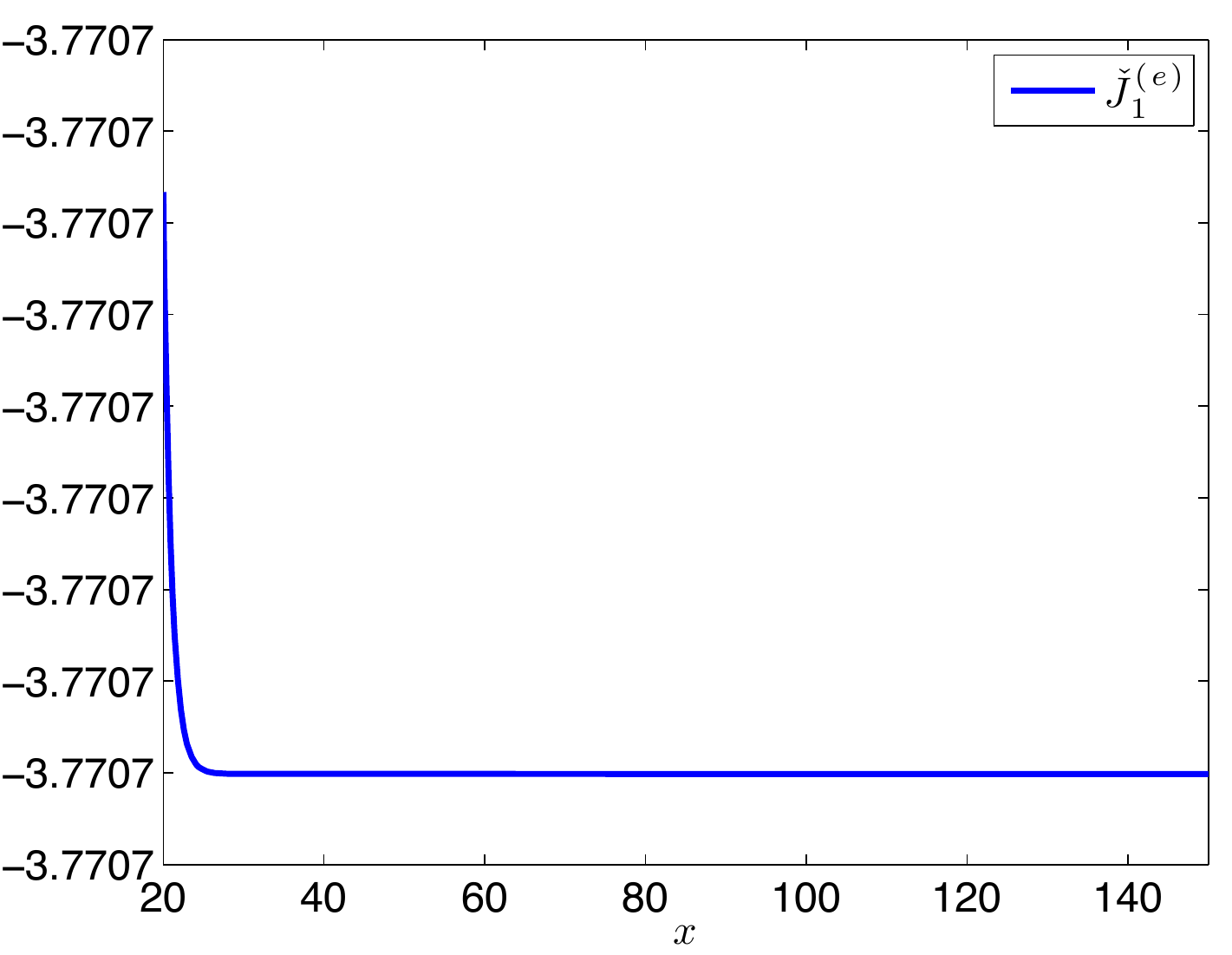}}
    \subfigure[$\calL_-^{(e)}$ in the critical case with the FMR
    orthogonality
    conditions.]{\includegraphics[width=2.1in]{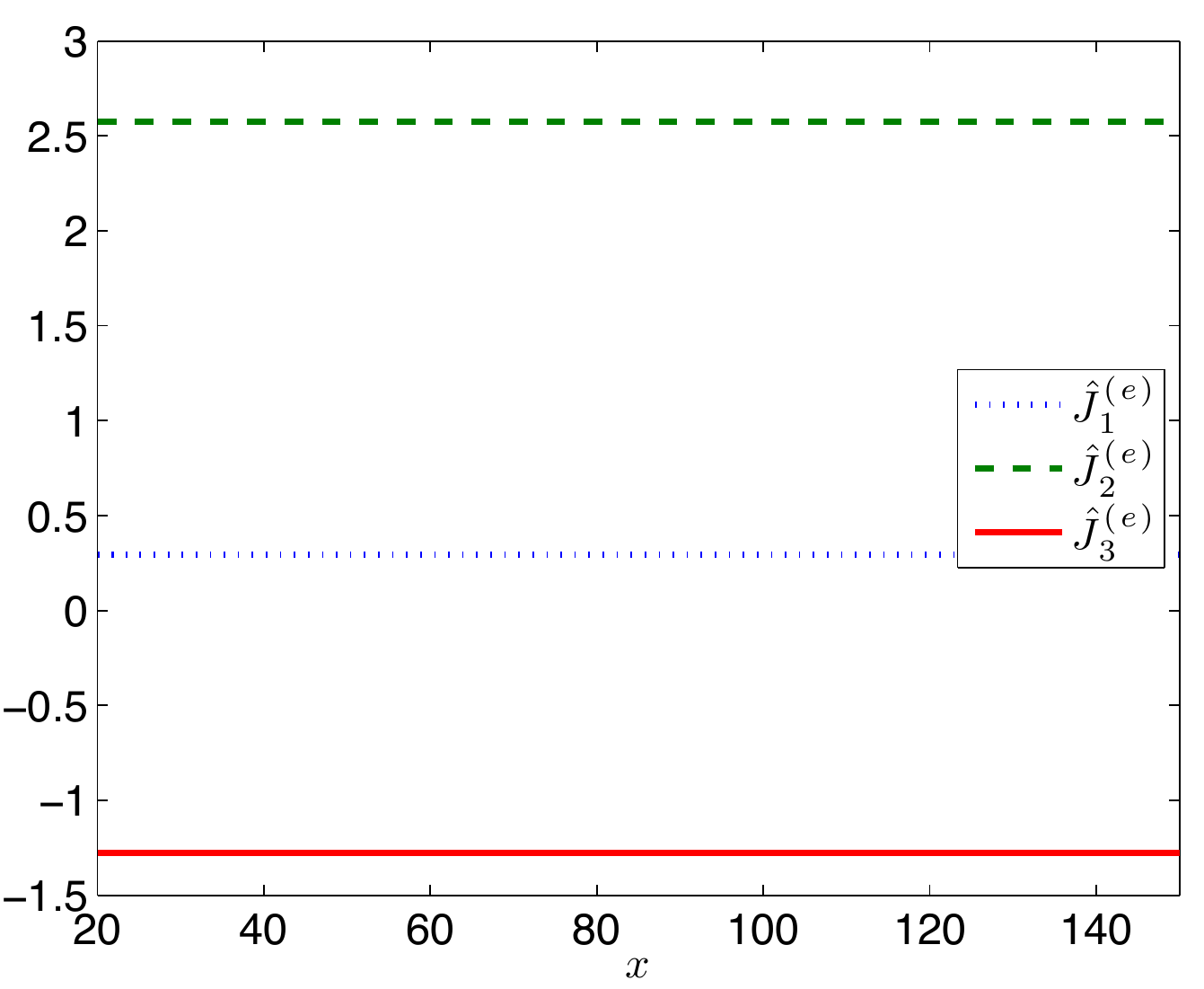}}
  \end{center}
  \caption{Inner products for the 1d critical problem with various
    orthogonality conditions.}
  \label{f:ip_1d_crit}
\end{figure}

\newpage \bibliography{ms-bib}

\end{document}